\DeclareSymbolFont{rsfs}{U}{rsfs}{m}{n}
\DeclareSymbolFontAlphabet{\mathrsfs}{rsfs}
\definecolor{darkspringgreen}{rgb}{0.09, 0.45, 0.27}
\definecolor{deepjunglegreen}{rgb}{0.0, 0.29, 0.29}
\newenvironment{NB}{
\color{red}{\bf NB}. \footnotesize
}{}
\newenvironment{NB2}{
\color{blue}{\bf NB2}. \footnotesize
}{}
\let\RR\relax
\let\bN\relax
\let\SL\relax
\let\cO\relax
\let\oldref=\ref
\renewcommand{\ref}[1]{%
  \def\@mystring{affine_pre-#1}%
  \@ifundefined{r@\@mystring}{%
    \def\@mystring{blowup_pre-#1}%
    \@ifundefined{r@\@mystring}{%
      \oldref{#1}}{%
      \cite[\namecref{blowup_pre-#1}\oldref{blowup_pre-#1}]{blowup}%
      \begin{NB} Check an external reference !\end{NB}%
    }%
  }%
  {\cite[\namecref{affine_pre-#1}\oldref{affine_pre-#1}]{affine}%
    \begin{NB} Check an external reference !\end{NB}%
  }%
}
\renewcommand{\eqref}[1]{%
  \def\@mystring{affine_pre-#1}%
  \@ifundefined{r@\@mystring}{%
    \def\@mystring{blowup_pre-#1}%
    \@ifundefined{r@\@mystring}{%
      \textup{\tagform@{\oldref{#1}}}}{%
      \text{\cite[(\oldref{blowup_pre-#1})]{blowup}}%
      \begin{NB} Check an external reference !\end{NB}%
    }%
  }%
  {\text{\cite[(\oldref{affine_pre-#1}]{affine}}%
  \begin{NB} Check an external reference !\end{NB}%
}%
}
\renewcommand{\thesubsection}{\thesection(\@roman\c@subsection)}
\newcounter{number}
\newtheorem{Theorem}[equation]{Theorem}
\newtheorem{Corollary}[equation]{Corollary}
\newtheorem{Lemma}[equation]{Lemma}
\newtheorem{Proposition}[equation]{Proposition}
\theoremstyle{definition}
\newtheorem{Definition}[equation]{Definition}
\newtheorem{Example}[equation]{Example}
\newtheorem*{Convention}{Convention}
\theoremstyle{remark}
\newtheorem{Remark}[equation]{Remark}
\newtheorem{Remarks}[equation]{Remarks}
\newtheorem*{Claim}{Claim}
\numberwithin{equation}{section}
\crefname{Theorem}{Theorem\xspace}{Theorems}
\crefname{section}{\S}{\S\S}
\crefname{Lemma}{Lemma\xspace}{Lemmas\xspace}
\crefname{Proposition}{Proposition\xspace}{Propositions\xspace}
\crefname{Corollary}{Corollary\xspace}{Corollaries\xspace}
\crefname{Definition}{Definition}{Definitions}
\crefname{Remark}{Remark\xspace}{Remarks\xspace}
\crefname{Remarks}{Remark\xspace}{Remarks\xspace}
\crefname{Conjecture}{Conjecture\xspace}{Conjectures\xspace}
\crefname{figure}{Figure\xspace}{Figure\xspace}
\newcommand{\thmref}[1]{Theorem~\ref{#1}}
\newcommand{\secref}[1]{\S\ref{#1}}
\newcommand{\lemref}[1]{Lemma~\ref{#1}}
\newcommand{\propref}[1]{Proposition~\ref{#1}}
\newcommand{\corref}[1]{Corollary~\ref{#1}}
\newcommand{\subsecref}[1]{\S\ref{#1}}
\newcommand{\remref}[1]{Remark~\ref{#1}}
\newcommand{\defeq}{\overset{\operatorname{\scriptstyle def.}}{=}}
\newcommand{\CC}{{\mathbb C}}
\newcommand{\ZZ}{{\mathbb Z}}
\newcommand{\QQ}{{\mathbb Q}}
\newcommand{\RR}{{\mathbb R}}
\newcommand{\proj}{{\mathbb P}}
\newcommand{\CP}{\proj}
\newcommand{\SL}{\operatorname{\rm SL}}
\newcommand{\SU}{\operatorname{\rm SU}}
\newcommand{\GL}{\operatorname{GL}}
\newcommand{\PGL}{\operatorname{PGL}}
\providecommand{\U}{\operatorname{\rm U}}
\renewcommand{\U}{\operatorname{\rm U}}
\newcommand{\grpSp}{\operatorname{\rm Sp}}
\newcommand{\algsl}{\operatorname{\mathfrak{sl}}} 
\newcommand{\su}{\operatorname{\mathfrak{su}}}
\newcommand{\gl}{\operatorname{\mathfrak{gl}}}
\newcommand{\g}{{\mathfrak g}}
\newcommand{\Spec}{\operatorname{Spec}\nolimits}
\newcommand{\Proj}{\operatorname{Proj}\nolimits}
\newcommand{\End}{\operatorname{End}}
\newcommand{\Hom}{\operatorname{Hom}}
\newcommand{\Ker}{\operatorname{Ker}}
\newcommand{\Coker}{\operatorname{Coker}}
\newcommand{\Ima}{\operatorname{Im}}
\newcommand{\codim}{\mathop{\text{\rm codim}}\nolimits}
\newcommand{\rank}{\operatorname{rank}}
\newcommand{\tr}{\operatorname{tr}}
\newcommand{\id}{\operatorname{id}}
\renewcommand{\MR}[1]{}
\newcommand{\HH}{\mathbb H}
\newcommand{\dslash}{/\!\!/}
\newcommand{\bM}{\mathbf M}
\newcommand{\bN}{\mathbf N}
\newcommand{\shfO}{\mathcal O}
\newcommand{\tslabar}{\mathbin{
\setbox0=\hbox{/\!\!/\!\!/}\rule[0.4\ht0]{\wd0}{.3\dp0}\kern-\wd0\box0}}
\newcommand{\la}{\lambda}
\newcommand{\Gr}{\mathrm{Gr}}
\newcommand{\cR}{\mathcal R}
\newcommand{\cT}{\mathcal T}
\newcommand{\cK}{\mathcal K}
\newcommand{\cO}{\mathcal O}
\newcommand{\scP}{\mathscr P}
\newcommand{\scA}{\mathscr A}
\newcommand{\St}{\mathrm{St}}
\newcommand{\Uh}[2][G]{{\mathcal U}_{#1}^{#2}}
\newcommand{\Bun}[2][G]{\operatorname{Bun}_{#1}^{#2}}
\newcommand{\br}{\mathbf r}
\newcommand{\loc}{{\ft^\circ}}
\newcommand{\cA}[1][{}]{%
  \@ifmtarg{#1}%
  {\mathcal A}
  {\mathcal A(#1)}
}
\newcommand{\cAh}[1][{}]{%
  \@ifmtarg{#1}%
  {\mathcal A_\hbar}
  {\mathcal A_\hbar(#1)}
}
\newcommand{\leftmapsto}{\mapsfrom}
\newcommand{\DD}{\mathbb D}
\newcommand{\DC}{\boldsymbol\omega}
\newcommand{\Stab}{\operatorname{Stab}}
\newcommand{\Lie}{\operatorname{Lie}}
\newcommand{\ft}{\mathfrak t}
\newcommand{\gr}{\operatorname{gr}}
\newcommand{\bNT}{\bN_T}
\newcommand{\calR}{\cR}
\newcommand{\calO}{\cO}
\newcommand{\bfN}{\bN}
\newcommand{\Iw}{\tilde G_\cK^\cO}
\newcommand{\laF}{{\lambda_F}}
\newcommand{\cL}{\mathcal L}
\newcommand{\po}{\ar@{}[dr]|{\text{\pigpenfont R}}}
\newcommand{\pb}{\ar@{}[dr]|{\text{\pigpenfont J}}}
\newcommand{\pp}{\ar@{}[dr]|{\text{\pigpenfont P}}}
\newcommand{\intsys}{\varpi}
\newcommand{\bz}{\mathbf z}
\begin{document}

\title[Coulomb branches of $3d$ $N=4$ gauge theories, II] {Towards a
  mathematical definition of Coulomb branches of $3$-dimensional
  $\mathcal N=4$ gauge theories, II
}
\author[A.~Braverman]{Alexander Braverman}
\address{
Department of Mathematics,
University of Toronto and Perimeter Institute of Theoretical Physics,
Waterloo, Ontario, Canada, N2L 2Y5
}
\email{braval@math.toronto.edu}
\author[M.~Finkelberg]{Michael Finkelberg}
\address{National Research University Higher School of Economics,
Russian Federation,
Department of Mathematics, 6 Usacheva st, Moscow 119048;
Institute for Information Transmission Problems}
\email{fnklberg@gmail.com}
\author[H.~Nakajima]{Hiraku Nakajima}
\address{Research Institute for Mathematical Sciences,
Kyoto University, Kyoto 606-8502,
Japan}
\email{nakajima@kurims.kyoto-u.ac.jp}
\curraddr{Kavli Institute for the Physics and Mathematics of the Universe (WPI),
  The University of Tokyo,
  5-1-5 Kashiwanoha, Kashiwa, Chiba, 277-8583,
  Japan}
\email{hiraku.nakajima@ipmu.jp}

\subjclass[2000]{}
\begin{abstract}
    Consider the $3$-dimensional $\mathcal N=4$ supersymmetric gauge
    theory associated with a compact Lie group $G_c$ and its
    quaternionic representation $\bM$. Physicists study its Coulomb
    branch, which is a noncompact hyper-K\"ahler manifold with an
    $\SU(2)$-action, possibly with singularities.
    We give a mathematical definition of the Coulomb branch as an
    affine algebraic variety with $\CC^\times$-action when $\bM$ is of
    a form $\bN\oplus\bN^*$, as the second step of the proposal given
    in \cite{2015arXiv150303676N}.
%
\end{abstract}

\maketitle

\setcounter{tocdepth}{2}

\begin{NB}
\section*{Some macros}
\begin{verbatim}
\newcommand{\Gr}{\mathrm{Gr}}
\newcommand{\cR}{\mathcal R}
\newcommand{\cF}{\mathcal F}
\newcommand{\cT}{\mathcal T}
\newcommand{\cK}{\mathcal K}
\newcommand{\cO}{\mathcal O}

\newcommand{\scP}{\mathscr P}


\newcommand{\St}{\mathrm{St}}

\newcommand{\Uh}[2][G]{{\mathcal U}_{#1}^{#2}}
\newcommand{\Bun}[2][G]{\operatorname{Bun}_{#1}^{#2}}
\newcommand{\DD}{\mathbb D}
\newcommand{\DC}{\boldsymbol\omega}
\newcommand{\Stab}{\operatorname{Stab}}
\end{verbatim}

\verb+\cAh+ yields $\cAh$.

\verb+\cAh[G,\bN]+ yields $\cAh[G,\bN]$.

\verb+\cAh[G]+ yields $\cAh[G]$.

\verb+\cAh[]+ yields $\cAh[]$.

\end{NB}

\section{Introduction}

This paper is a sequel to \cite{2015arXiv150303676N}. It is written for mathematicians so that no knowledge on physics is required to read. It can be read even independently of \cite{2015arXiv150303676N} except the next background subsection. We by no means intend to ignore past research in physics. We try our best to give appropriate physics references either here or in \cite{2015arXiv150303676N} so that the reader could see how we understand them.
%
\begin{NB}
A reader in a hurry can safely ignore the motivation and skip the next
subsection.
\end{NB}%
There are two companion papers \cite{blowup,affine} where specific
topics arising from this paper are discussed.

\subsection{Background}

In \cite{2015arXiv150303676N} the third named author proposed an
approach to define the Coulomb branch $\mathcal M_C$ of a
$3$-dimensional $\mathcal N=4$ SUSY gauge theory in a mathematically
rigorous way.
The Coulomb branch $\mathcal M_C$ is a hyper-K\"ahler manifold with an
$\SU(2)$-action possibly with singularities. It has been studied by
physicists intensively over the years, nevertheless lacks a firm
mathematical footing as the physical definition involves quantum
corrections.

A key idea in \cite{2015arXiv150303676N} was to use a certain moduli
stack, motivated by reduction of the generalized Seiberg-Witten type
equation to $S^2 = \CP^1$, associated with a compact Lie group $G_c$ and
its representation $\bM$ over the quaternions $\mathbb H$,
corresponding to a given SUSY gauge theory.
Then the space of holomorphic functions on $\mathcal M_C$ is proposed
as the dual of the critical cohomology with compact support of the
moduli stack, associated with an analog of the complex Chern-Simons
functional.

The goal of this paper is to define a commutative
\emph{multiplication} on the dual cohomology group, under the
assumption that $\bM$ is of cotangent type, i.e., $\bM = \bN\oplus
\bN^*$ for a complex representation $\bN$ of $G$. 
The commutative ring of functions determines $\mathcal M_C$ as an
affine algebraic variety as its spectrum. Therefore the remaining step
is to find a hyper-K\"ahler metric, or equivalently the twistor space.

Under the cotangent type assumption, it was heuristically shown
(\cite{2015arXiv150303676N}) that the critical cohomology group can be
replaced by the ordinary cohomology group with compact support for a
\emph{smaller} moduli stack $\cR$ of pairs of holomorphic principal
$G$-bundles $\scP$ with holomorphic sections of the associated vector
bundle $\scP\times_G\bN$ over $\CP^1$. Here $G$ is a complexification of
$G_c$.
In fact, we further change the moduli stack of pairs over $\CP^1$ by a
stack modeled after the affine Grassmannian. In other words, we
consider the moduli stack of pairs over the non-separated scheme
$\tilde D$ obtained by gluing two copies of the formal disk
$D=\Spec\CC[[z]]$, along the punctured disk $D^*$.
This allows us to apply various techniques used in study of affine
Grassmannian.
(See below for some detail, and the main body for full detail.)
The graded dimension of the cohomology group does not change as we
will see later, and we conjecture that two cohomology groups are naturally isomorphic.

The original intuition for the multiplication in
\cite{2015arXiv150303676N} was a conjectural $3d$ topological quantum
field theory associated with the gauge theory. Namely the dual
cohomology group is regarded as the quantum Hilbert space $\mathcal
H_{S^2}$ associated with $S^2$, and the multiplication is given by the
vector corresponding to the $3$-ball with two balls removed from the
interior.
Intuitively our definition of the product uses the same picture, but
one of three boundary components is very small. Let us view it as
$\Sigma\times [0,1]$ with small $3$-ball $B^3$ removed from the
interior, where $\Sigma = S^2$.  Then it can be regarded as a movie of
$\Sigma$ on which a small $2$-ball appears and disappears, where
$[0,1]$ is the time direction. (See Figure~\ref{meteorite}.)
We have a multiplication $\mathcal H_{\Sigma}\otimes \mathcal
H_{\partial B^3}\to \mathcal H_{\Sigma}$ from the $3d$ point of
view. On the other hand, from a $2d$ observer on $\Sigma$, the
intersection $B^3\cap (\Sigma\times t)$ appears in a small
neighborhood of a point. Therefore the observer sees something happening
on the formal disk $D$. From this point of view, $\mathcal H_{\Sigma}$
is a module of a ring $\mathcal H_{\partial B^3}$, where the
multiplication on $\mathcal H_{\partial B^3}$ is given by considering
two formal disks. This $2d$ movie is nothing but the convolution
diagram for the affine Grassmannian, or more precisely
Beilinson-Drinfeld Grassmannian.

\begin{figure}[htbp]
\centering
\def\svgscale{.5}
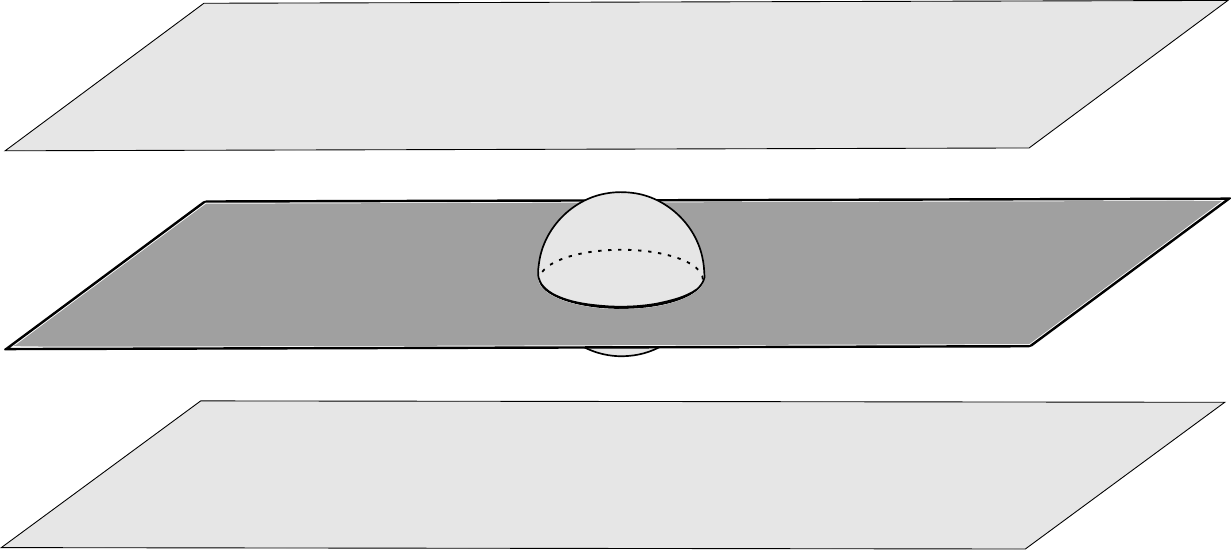
\caption{3$d$ picture vs 2$d$ movie.}
\label{meteorite}
\end{figure}

We do not expect our technique to be applied to more general
$3$-manifolds with boundaries, but we think that it is a good starting
point nevertheless.
\begin{NB}
    I am not sure that we can extend our result to general Riemann
    surface $\Sigma$. Therefore I delete the previous paragraph.
\end{NB}%

Let us also mention that the relevance of the affine Grassmannian (and
\cite{MR2135527}, see below) in $3d$ SUSY gauge theories with $\bN=0$ was pointed
earlier by Teleman \cite{2014arXiv1404.6305T}.

\subsection{Construction}

We believe that our definition of the affine algebraic variety
$\mathcal M_C$ is interesting in its own, even ignoring physical
motivation. For mathematicians who do not know physics, it can be
regarded as new construction of a class of algebraic varieties with
interesting structures.
\begin{NB}
    See \subsecref{subsec:new_properties} below for review of properties.
\end{NB}%
The class contains moduli spaces of based maps from $\CP^1$ to a flag
variety (also conjecturally instantons on multi Taub-NUT spaces), and
spaces studied in the context of geometric representation theory, such
as slices in affine Grassmannian. But we expect $\mathcal M_C$ is an
unknown space in most cases.

Let us give a little more detail on the definition. Let $G$ be a
complex reductive group, and $\bN$ be its representation.
We consider the moduli space $\cR$ of triples $(\scP,\varphi,s)$,
where $\scP$ is a $G$-bundle on the formal disk $D = \Spec \CC[[z]]$,
$\varphi$ is its trivialization over the punctured disk $D^* = \Spec
\CC((z))$ and $s$ is a section of the associated vector bundle
$\scP\times_G\bN$ such that it is sent to a regular section of a
trivial bundle under $\varphi$. We have a natural action of $G_\cO =
G[[z]]$, the group of $\cO$-valued points of $G$ by changing
$\varphi$. If we ignore $s$, we get the moduli space of pairs
$(\scP,\varphi)$, which is nothing but the affine Grassmannian
$\Gr_G$.
When $\bN = \g$, the adjoint representation, $\cR$ is 
the affine Grassmannian Steinberg variety.
For general $\bN$, we have a projection $\cR\to\Gr_G$ whose fibers are
infinite dimensional vector spaces.

It is a direct limit of inverse limits of schemes of finite type, but
its $G_\cO$-equivariant Borel-Moore homology group $H^{G_\cO}_*(\cR)$
is well-defined, as we will explain in the main text.

\begin{NB}
Geometric Satake correspondence says that the category of
$G_\cO$-equivariant perverse sheaves on $\Gr_G$ has a monoidal
structure so that it is equivalent to the tensor category of finite
dimensional representations of the Langlands dual group of $G$. This
is due to Lusztig, Ginzburg, Drinfeld, Mirkovic-Vilonen,
Beilinson-Drinfeld (see \cite{MV2} and the references therein).
The monoidal structure is given by a convolution diagram, which is
roughly regarded as a correspondence between products of $\Gr_G$ given
by modifications of $G$-bundles.
\end{NB}%

We then introduce a convolution diagram for $\cR$ as an analog
of one for $\Gr_G$, used in \cite{MV2} for geometric Satake correspondence.
It defines a convolution product $\ast$ on $H^{G_\cO}_*(\cR)$. This construction has originally appeared in \cite{MR2135527} for $\Gr_G$ itself (i.e.,
$\bN=0$) and $\bN=\g$. There is a closely related earlier work
\cite{MR2115259} for $\bN=\g$, and also \cite{MR3013034}.
We thus get a graded commutative ring $\cA$, and define the Coulomb
branch as its spectrum $\mathcal M_C = \Spec\cA$.

We can thus regard these earlier results as computation of examples of
$\mathcal M_C$: For $\bN=0$, $\mathcal M_C$ is the the algebraic variety ${\mathfrak Z}^{G^\vee}_{{\mathfrak g}^\vee}$ formed by the pairs $(g,x)$ such that $x$ lies in a (fixed) Kostant slice in ${\mathfrak g}^\vee$, and
$g\in G^\vee$ satisfies $\operatorname{Ad}_g(x)=x$.
\begin{NB}
    the symplectic reduction of the cotangent bundle $T^* G^\vee$ of
    the Langlands dual group $G^\vee$ with respect to a nondegenerate
    character of $N^\vee\times N^\vee$, where $N^\vee\subset G^\vee$
    is a unipotent subgroup.
\end{NB}%
See \subsecref{subsec:prev}\ref{subsub:pure} for more
detail.

For $\bN=\g$, the adjoint representation, $\mathcal M_C$ is
$(\ft\times T^\vee)/W$, where $\ft$ is the Lie algebra of a maximal
torus $T$, $T^\vee$ is the dual torus of $T$, and $W$ is the Weyl
group. See \subsecref{subsec:prev}\ref{subsub:adjoint} for more detail.

\begin{NB}
As a byproduct of our construction we get a $G_\cO$-equivariant
constructible complex $\scA$ on $\Gr_G$ defined by
$\pi_*\DC_{\cR}[-2\dim\bN_\cO]$, where $\DC_{\cR}$ is the dualizing
complex on $\cR$ and $\pi\colon \cR\to\Gr_G$ is the projection
mentioned above. See \ref{sec:sheav-affine-grassm}. We can recover
$H^{G_\cO}_*(\cR)$ as $H^*_{G_\cO}(\Gr_G,\scA)$. Moreover the
construction of the convolution product gives us a homomorphism
$\mathsf m\colon\scA\star\scA\to \scA$, where $\star$ is the
convolution product on $D_{G_\cO}(\Gr_G)$, the $G_\cO$-equivariant
derived category on $\Gr_G$. It is an associative multiplication on
$\scA$. We could not prove that it is commutative, but check a weaker
statement instead. Therefore the convolution product on
$H^{G_\cO}_*(\cR)$ can be also recovered from $\mathsf m$.

This $(\scA,\mathsf m)$ reminds us the perverse sheaf
$\cR_{\mathrm{ABG}}$ studied in \cite{MR2053952}. It corresponds to
the regular representation $\CC[G^\vee]$ of the Langlands dual group
$G^\vee$ under the geometric Satake correspondence, and hence is a
commutative ring object in $\operatorname{Perv}_{G_\cO}(\Gr_G)$. It
was used to construct the cotangent bundle of the flag variety for
$G$ as its cohomology.
We do not expect $\cR_{\mathrm{ABG}}$ arises as $(\scA,\mathsf m)$
except in type $ABCD$. It means we have more examples of commutative
ring objects in $D_{G_\cO}(\Gr_G)$ than our construction. It is
interesting to look for other constructions. We will propose a gluing
construction in \ref{subsec:glue} as an example.
We understand that this gluing construction has appeared in the
construction of Higgs branches of $3d$ Sicilian theories associated
with punctured Riemann surfaces by Ginzburg-Kazhdan \cite{GK} via
\cite{MR3366026}. See the last paragraph in \ref{subsec:glue}.
\end{NB}%

\subsection{Quantization of \texorpdfstring{$\mathcal M_C$}{M\textunderscore c}}\label{subsec:intro_quantum}

As a byproduct of our definition, we obtain a noncommutative
deformation (\emph{quantized Coulomb branch}) $\cAh$, flat over
$\CC[\hbar]$. This is defined as the equivariant homology group
$H^{G_\cO\rtimes\CC^\times}_*(\cR)$, where $\CC^\times$ is the group
of loop rotations, as in earlier works
\cite{MR2115259,MR3013034,MR2422266}.
Therefore the classical limit $\cA$ has a Poisson bracket. 
We show that it is symplectic on the regular locus of $\mathcal M_C$
(\propref{prop:symplectic}).
When $\bN=0$, $\cAh$ is a quantum hamiltonian reduction of the ring of
differential operators on $G^\vee$ (\cite[Th.~3]{MR2422266}). When
$\bN=\g$, $\cAh$ is expected to be the spherical subalgebra of the
graded Cherednik algebra, based on \cite{MR2115259,MR3013034}. (See
\subsecref{subsec:prev}\ref{subsub:adjoint} for a precise statement.)

The quantized Coulomb branch $\cAh$ contains
$H^*_{G\times\CC^\times}(\mathrm{pt})1$ as a commutative subalgebra
(\emph{Cartan subalgebra}). Correspondingly we have a morphism
$\intsys\colon\mathcal M_C\to \ft/W\cong \CC^\ell$ such that all
functions factoring through $\intsys$ are Poisson commuting. A generic
fiber of $\intsys$ is $T^\vee$
(Propositions~\ref{prop:comm},~\ref{prop:integrable}).
Thus $\intsys$ is an \emph{integrable system} in the sense of
Liouville.
In examples $\bN=0$ and $\g$, the morphism $\intsys$ is an obvious
projection to $\ft/W$.
\begin{NB}
    For $\bN=0$, we get the Toda system if we introduce a birational
    isomorphism $\mathcal M_C\approx T^*T^\vee$. But we do not know
    how to interpret $\mathcal M_C\approx T^*T^\vee$ in our
    framework. In particular, we do not have a recipe to give analog
    of $\mathcal M_C\approx T^*T^\vee$ for general $\bN$.

    For $\bN=\g$, we expect to get the Calogero-Moser system if we
    deform $\mathcal M_C$ by the flavor symmetry group $G_F=\CC^\times$.
\end{NB}%

\begin{Remarks}
    (1) A physical explanation of the `quantization'
    is given in
    \cite[\S3.4]{2015arXiv150304817B}. It is coming from the
    $\Omega$-background, i.e., the $\CC^\times$-action on
    $\RR^3$. This should have the same origin with our definition.

    (2) 
    In $4$-dimension, it is known that Coulomb branches are closely
    related to the Hitchin system (see e.g., \cite{MR1492533}, a
    review for mathematicians). But the integrable system
    above 
    is not a simple limit of the $4$-dimensional integrable system
    even for $(G,\bN) = (\SL(2),0)$: $\mathcal M_C$ is the
    Atiyah-Hitchin space, which is the complement of the infinity
    section in $4d$ Coulomb branch for $\RR^3\times S^1$, the total
    space of Seiberg-Witten curves $y^2 = x^3 - x^2 u + x$
    (\cite[(3.18)]{MR1490862}), but the hamiltonian is $u$ in $4d$
    while it is $v \defeq x-u$ in $3d$.
    Anyhow the integrable system is coming from monopole operators
    corresponding to the cocharacter $\la = 0$. And it is proposed
    that they form a Poisson commuting subalgebra in \cite[\S3.2,
    \S4.2]{2015arXiv150304817B}.
\end{Remarks}

\begin{NB}
\subsection{Properties of \texorpdfstring{$\mathcal M_C$}{M\textunderscore c}}\label{subsec:new_properties}

Various interesting properties of the Coulomb branch $\mathcal M_C$
have been discussed in the physics literature, and they are realized
in our definition. Let us not reproduce those which have been already
explained in \cite{2015arXiv150303676N}, and mention only properties
which are not appeared there.

\begin{enumerate}[leftmargin=0cm,
itemindent=2pt,
labelwidth=-\parindent,
itemsep=2pt,
label=(\alph*),
align=left
]

  \item\label{item:pure} \cite{MR2135527} For $\bN=0$, $\mathcal M_C$
is the symplectic reduction of the cotangent bundle $T^* G^\vee$ of
the Langlands dual group $G^\vee$ with respect to a nondegenerate
character of $N^\vee\times N^\vee$, where $N^\vee\subset G^\vee$ is a
unipotent subgroup. It can be also regarded as the moduli space of
solutions of Nahm's equation associated with the Langlands dual group
$G_c^\vee$ of a maximal compact subgroup $G_c$ of $G$. (See
\secref{sec:monop-moduli-spac}.)

  \item \cite{MR2115259,MR2135527,MR3013034} For $\bN=\g$, the adjoint
representation, $\mathcal M_C$ is $(\ft\times T^\vee)/W$, where $\ft$
is the Lie algebra of a maximal torus $T$, $T^\vee$ is the dual torus
of $T$, and $W$ is the Weyl group.

\begin{NB2}
  \item \cite[\S8]{2015arXiv150303676N} There is a combinatorial
formula (\emph{monopole formula}) of the graded dimension of $\cA$, in
other words the $\CC^\times$ character of $\cA = \CC[\mathcal
M_C]$. (See \propref{prop:monopole_formula} for the reproduced proof).
\end{NB2}%

  \item \label{item:fg} $\cA$ is integral, finitely generated and
normal (\corref{cor:integral},
Propositions~\ref{prop:fg},~\ref{prop:normality}).

  \item\label{item:quantization} $\cA$ has a noncommutative
deformation (\emph{quantized Coulomb branch}) $\cAh$, flat over
$\CC[\hbar]$. Therefore $\cA$ has a Poisson bracket. When $\bN=0$,
$\cAh$ is a quantum hamiltonian reduction of the ring of differential
operators on $G^\vee$ \cite{MR2422266}. When $\bN=\g$, $\cAh$ is
expected to be the spherical subalgebra of the graded Cherednik
algebra, based on \cite{MR2115259,MR3013034}. (See
\subsecref{subsec:prev}\ref{subsub:adjoint}.)
    
\begin{NB2}
  \item\label{item:action} \cite[\S4(iv)(c)]{2015arXiv150303676N} We
have an action of $\pi_1(G)^\vee$, the Pontryagin dual of $\pi_1(G)$,
on $\mathcal M_C$. (See \secref{sec:grading}).
\end{NB2}%

  \item\label{item:integrable_system} The quantized Coulomb branch
$\cAh$ contains $H^*_{G\times\CC^\times}(\mathrm{pt})1$ as a
commutative subalgebra (\emph{Cartan subalgebra}). Correspondingly we
have a morphism $\intsys\colon\mathcal M_C\to \ft/W\cong \CC^\ell$
such that all functions factoring through $\intsys$ are Poisson
commuting. A generic fiber of $\intsys$ is $T^\vee$
(Propositions~\ref{prop:comm},~\ref{prop:integrable}).

\begin{NB2}
      \item \label{item:flavor} \cite[\S5]{2015arXiv150303676N}
    Suppose $\bN$ is a representation of a larger group $\tilde G$
    containing $G$ as a normal subgroup. Then $\mathcal M_C$ has a
    deformation parametrized by $\Spec H^*_{G_F}(\mathrm{pt})$ where
    $G_F = \tilde G/G$.  (\propref{prop:deformation}).

      \item \cite[\S5]{2015arXiv150303676N} Let $T_F$ be a maximal
    torus of $G_F$ above. For a cocharacter $\la_F\colon \CC^\times\to
    T_F$, one can construct a $\ZZ_{\ge 0}$-graded algebra such that
    the coordinate ring of the above deformation of $\mathcal M_C$ is
    its degree $0$ piece.  It is expected to be similar to the
    Grothendieck-Springer resolution of $\g^*$. (See
    \subsecref{subsec:flav-symm-group2}.)

      \item\label{item:Hamiltonianreduction} Suppose $G_F = \tilde G/G$ is a
    torus in the situation of \ref{item:flavor}. Then $\mathcal
    M_C(G,\bN)$ is the Hamiltonian reduction of $\mathcal M_C(\tilde
    G,\bN)$ by the action of $G_F^\vee$ given by \ref{item:action}
    (\propref{prop:reduction}). Here we write groups and
    representations to distinguish two Coulomb branches.

      \item \cite[\S5(ii)]{2015arXiv150303676N} When $G$ is a torus,
    $\cA$ has a linear basis with explicit structure constants. (See
    \thmref{abel}.) When $\bN$ is coming from a short exact sequence
    $1\to G=T\to \tilde T \to T_F \to 1$ of tori, $\mathcal M_C$ is a
    toric hyper-K\"ahler manifold \cite{MR1792372}
    (\subsecref{sec:toric-hyper-kahler}).
\end{NB2}%

      \item\label{item:abelianization} Let $T^\vee$ be the dual torus
    of $T$ and $\bNT = \bN$ regarded as a $T$-module. We have dominant
    birational morphisms
\begin{equation*}
    \qquad
    \ft\times T^\vee/W \to \mathcal M_C(T,\bNT)/W
    \leftarrow \mathcal M_C(G,\bN), \qquad
    	\mathcal M_C(G,\bN) \leftarrow \mathcal M_C(G,0).
\end{equation*}
(See \secref{sec:abel}.) Here we write groups and representations to
distinguish two Coulomb branches.

  \item \label{item:recipe} \cite[\S4]{MR2135527} There is a recipe to
identify $\mathcal M_C$ with a known space: Suppose we have
$\varPi\colon\mathcal M \to \ft/W$ such that $\mathcal M$ is normal
and all the fibers of $\varPi$ have the same dimension. If $(\mathcal
M,\varPi)$ and $(\mathcal M_C,\intsys)$ coincide up to codimension
$2$, they are isomorphic. (See \thmref{prop:flat} for the precise
statement.)

  \item \label{item:degeneration} There is a filtration of $\cA$
induced by the stratification of $\Gr_G$ by $G_\cO$-orbits. The
associated graded ring $\gr\cA$ has a linear basis with explicit
structure constants. It means that $\mathcal M_C$ has a degeneration
to a variety with combinatorial flavor. (See
\secref{sec:degeneration}.)

  \item\label{item:PGL2} When $G=\PGL(2)$, $\mathcal M_C$ is a
hypersurface $\xi^2 - \delta\eta^2 = \delta^m$ in
$\CC^3$ for some $m\in\ZZ_{\ge 0}$. (See \lemref{lem:SL2}(2).)

  \item\label{item:monopole} When $(G,\bN)$ is coming from a quiver of
type $ADE$, $\mathcal M_C$ is the moduli space of based maps from
$\CP^1$ to the flag variety of the corresponding type $ADE$
(\ref{pestun}). More generally, if $\bN$ is coming from a framed
quiver of type $ADE$, $\mathcal M_C$ is a slice in the affine
Grassmannian of the corresponding type $ADE$ under a dominance
condition, and its generalization in general. (See \ref{QGT}.)
\end{enumerate}

Some are known in physics literature:

\begin{Remarks}
\begin{enumerate}[leftmargin=0cm,
itemindent=2pt,
labelwidth=-\parindent,
itemsep=2pt,
label=(\arabic*),
align=left
]

  \item \ref{item:pure} was known, at least for type $A$, in the
physics literature. See \cite{MR1490862} for $k=2$, \cite{MR1443803}
for arbitrary $k$.

  \item A physical explanation of the `quantization' in
\ref{item:quantization} is given in
\cite[\S3.4]{2015arXiv150304817B}. It is coming from the
$\Omega$-background, i.e., the $\CC^\times$-action on $\RR^3$. This
should have the same origin with our definition.

  \item 
In $4$-dimension, it is known that Coulomb branches are closely
related to the Hitchin system (see e.g., \cite{MR1492533}, a review
for mathematicians). But \ref{item:integrable_system} is not a simple
limit of the $4$-dimensional integrable system even for $(G,\bN) =
(\SL(2),0)$: $\mathcal M_C$ is the Atiyah-Hitchin space, which is the
complement of the infinity section in $4d$ Coulomb branch for
$\RR^3\times S^1$, the total space of Seiberg-Witten curves $y^2 = x^3
- x^2 u + x$ (\cite[(3.18)]{MR1490862}), but the hamiltonian is $u$ in
$4d$ while it is $v \defeq x-u$ in $3d$. Therefore it is not clear for
us whether our integrable system has been known in the physics
literature or not.
Anyway it is coming from monopole operators corresponding to the
cocharacter $\la = 0$. And it is proposed that they form a Poisson
commuting subalgebra in \cite[\S3.2, \S4.2]{2015arXiv150304817B}.

\begin{NB2}
  \item The property \ref{item:Hamiltonianreduction} is naturally
predicted from the monopole formula \eqref{eq:19}, as was observed in
\cite[\S5.1]{Cremonesi:2013lqa}.
\end{NB2}%

  \item In physics literature (see e.g., \cite[\S2]{MR1490862}), it is
usually said that the Coulomb branch is \emph{classically}
$(\RR^3\times S^1)^{\rank G}/W$. This is compatible with
\ref{item:abelianization}, as $\ft\times T^\vee$ is diffeomorphic to
$(\RR^3\times S^1)^{\rank G}$. 
In \cite[\S4]{2015arXiv150304817B} a relation between $\mathcal
M_C(T,\bNT)$ and $\mathcal M_C(G,\bN)$ is proposed. See
\remref{rem:BDG1} for more detail.

  \item A filtration \ref{item:degeneration} is not explicitly
mentioned in the physics literature, but a closely related claim is given 
 in \cite[\S4]{2015arXiv150304817B}. See \remref{rem:BDG2} for detail.

   \item \ref{item:monopole} are known in physics literature. More
 precise references will be given in \ref{blowup-intro}.

\begin{NB2}
The following should be moved to \cite{blowup}.

Physicists claim that $\mathcal M_C$ is the moduli space of
monopoles on $\RR^3$ in the first case of \ref{item:monopole}. (See
\cite{MR1451054} for type $A$, \cite{MR1677752} in general.) It is
likely that a known bijection between two moduli spaces (given in
\cite{MR709461,MR769355} for $A_1$, \cite{MR994495,MR987771} for
classical groups and \cite{MR1625475} for general groups) is an
isomorphism of affine algebraic varieties, but it is not clear to
authors whether the proofs give this stronger statement. For $A_1$,
one can check it by using \cite{MR1215288}, as it is easy to check
that the bijection between the moduli space of solutions of Nahm's
equation and the moduli space of based maps is an isomorphism.

In the second case of \ref{item:monopole}, physicists say that
$\mathcal M_C$ is the moduli space of \emph{singular} monopoles on
$\RR^3$ (see \cite{MR1636383}). It is natural to conjecture that this
moduli space and ours are isomorphic as affine algebraic varieties,
but it is not known even at the level of a bijection.
\end{NB2}%
\end{enumerate}
\end{Remarks}
\end{NB}

\subsection{The organization of the paper}
\secref{sec:triples} is devoted to the definition of $\cR$ and its
equivariant Borel-Moore homology group.
We give the definition of the convolution product in
\secref{sec:definition}. Some basic properties of $\cA$, except the
commutativity of the multiplication, are established also in
\secref{sec:definition}.
In \secref{sec:abelian} we determine $\cA$ when $G$ is a torus. We give a linear basis with explicit structure constants.
\secref{sec:abel} is a technical heart of the paper. We analyze
$\mathcal M_C$ using the localization theorem in equivariant homology
groups. The compatibility of convolution products with the
localization is studied. One of proofs of the commutativity of the multiplication is given. (Another proof will be given in \cite{affine}.)
\begin{NB}
The recipe~\ref{item:recipe} is given here.
\end{NB}%
We also give a recipe to identify $\mathcal M_C$ with a known space,
following \cite{MR2135527}: Suppose we have $\varPi\colon\mathcal M
\to \ft/W$ such that $\mathcal M$ is normal and all the fibers of
$\varPi$ have the same dimension. If $(\mathcal M,\varPi)$ and
$(\mathcal M_C,\intsys)$ coincide up to codimension $2$, they are
isomorphic. (See \thmref{prop:flat} for the precise statement.)
\begin{NB}
    \secref{sec:integrable-systems} is expository in nature, and will
    not appear in the final text.
\end{NB}%
In~\secref{sec:degeneration} we introduce a degeneration
\begin{NB}
 \ref{item:degeneration}   
\end{NB}%
of $\mathcal M_C$ to a variety with combinatorial flavor,
and prove that $\cA$ is finitely generated and normal
\begin{NB}
    \ref{item:fg}
\end{NB}%
as applications.
\begin{NB}
we introduce a filtration of $\cA$ whose
associated graded $\gr\cA$ has a linear basis with explicit structure
constants. Finite generation of $\cA$ and the normality of $\mathcal
M_C$ are proved as applications.
\end{NB}%
We also determine $\mathcal M_C$ when $G$ is $\PGL(2)$ or $\SL(2)$.
\begin{NB}
In \ref{sec:sheav-affine-grassm} we study constructible complex $\scA$ on $\Gr_G$ mentioned above.
\end{NB}%
\begin{NB}
\ref{QGT} is devoted to computation of $\mathcal M_C$ when
$(G,\bN)$ is coming from a (framed) quiver of type $ADE$ by using the
recipe~\ref{item:recipe}.
In \ref{sec:twist} we consider a folding of a quiver. This allows
us to generalize the result of the previous section to type $BCFG$.
\end{NB}%

There is one
\begin{NB}
    the number could be changed.
\end{NB}%
appendix.
\begin{NB}
In \ref{sec:commute}, we give the second proof of the commutativity
of the multiplication using Beilinson-Drinfeld Grassmannian.
\end{NB}%
\begin{NB}
In \ref{sec:associativeBD} we study the associativity of the
multiplication defined using Beilinson-Drinfeld Grassmannian.
\end{NB}%
In \secref{sec:monop-moduli-spac} we compare the answer for $\bN=0$ in
\cite{MR2422266} and one in the physics literature for type $A$. Both
are understood in a uniform way that $\mathcal M_C$ is the moduli
space of solutions of Nahm's equations for the Langlands dual group
$G_c^\vee$.

\subsection{Companion papers}

In \cite{blowup} we study Coulomb branches of quiver gauge theories.
When $(G,\bN)$ is coming from a quiver of type $ADE$, $\mathcal M_C$
is the moduli space of based maps from $\CP^1$ to the flag variety of
the corresponding type $ADE$ (\ref{pestun}).
More generally, if $\bN$ is coming from a framed quiver of type $ADE$,
$\mathcal M_C$ is a slice in the affine Grassmannian of the
corresponding type $ADE$ under a dominance condition, and its
generalization in general. (See \ref{QGT}.)

In \cite{affine} we study the following object:
As a byproduct of our construction we get a $G_\cO$-equivariant
constructible complex $\scA$ on $\Gr_G$ defined by
$\pi_*\DC_{\cR}[-2\dim\bN_\cO]$, where $\DC_{\cR}$ is the dualizing
complex on $\cR$ and $\pi\colon \cR\to\Gr_G$ is the projection.
We can recover $H^{G_\cO}_*(\cR)$ as $H^*_{G_\cO}(\Gr_G,\scA)$. Moreover the construction of the convolution product gives us a homomorphism
$\mathsf m\colon\scA\star\scA\to \scA$, where $\star$ is the
convolution product on $D_{G_\cO}(\Gr_G)$, the $G_\cO$-equivariant
derived category on $\Gr_G$. Therefore $(\scA,\mathsf m)$ is a ring object in $D_{G_\cO}(\Gr_G)$.


\subsection*{Notation}

\begin{enumerate}[leftmargin=0pt,
itemindent=2pt,
labelwidth=-\parindent,
itemsep=2pt,
align=left
]
      \item We basically follow the notation in Part I
    \cite{2015arXiv150303676N}. However we mainly use a complex
    reductive group instead of its maximal compact subgroup. Therefore
    we denote a reductive group by $G$, and its maximal compact by
    $G_c$. On the other hand, we use the notation $\cR$ for the
    variety of triples, though it was used for the corresponding space
    associated with $\proj^1$ in Part~I.

      \item Let us choose and fix a maximal torus $T$ of $G$. Let $W$
    be the Weyl group of $G$. Let $Y$ denote the coweight lattice of $G$.
    The Lie algebra of $G$ (resp.\ $T$) is denoted by $\g$ (resp.\ $\ft$).

      \item The constant sheaf on a space $X$ is denoted by
    $\CC_X$. The dualizing complex is denoted by $\DC_X$. The Verdier
    duality is denoted by $\DD$. (We take $\CC$ as the base ring.) 

      \item We will not use the usual homology group, and denote the
    Borel-Moore homology group with complex coefficients by
    $H_i(X)$. It is $H^{-i}(\DC_X)$, and the dual of cohomology group
    with compact support. When a group $G$ acts on $X$, the
    equivariant Borel-Moore homology group is denoted by $H^G_i(X)$.

    \begin{NB}
          \item The following should be fixed throughout the
        paper:
        \begin{equation*}
            \begin{split}
                & i\colon \cR\to \cT, \\
                & \pi\colon \cT\to\Gr_G, \\
                & z\colon \Gr_G\to \cT, \\
                & \Pi\colon \cT \to \bN_\cK,\\
                & \intsys\colon \mathcal M_C \to \Spec(H^*_G(\mathrm{pt})).
            \end{split}
        \end{equation*}
    \end{NB}
\end{enumerate}

\subsection*{Acknowledgments}

We thank
S.~Arkhipov,
R.~Bezrukavnikov,
T.~Braden,
M.~Bullimore,
S.~Cherkis,
T.~Dimofte,
P.~Etingof,
B.~Feigin,
D.~Gaiotto,
D.~Gaitsgory,
V.~Ginzburg,
A.~Hanany,
K.~Hori,
J.~Kamnitzer,
R.~Kodera,
A.~Kuznetsov,
A.~Oblomkov,
V.~Pestun,
N.~Proudfoot,
L.~Rybnikov,
Y.~Tachikawa,
C.~Teleman,
M.~Temkin
for the useful discussions.
We also thank J.~Hilburn and B.~Webster for pointing out mistakes in
an earlier version of the proof of \thmref{abel}, and the formula in
\eqref{q-relation} respectively.
This work was started in the Fall 2014, while H.N.\ was
visiting the Columbia University, and he wishes to thank its warm
hospitality.
H.N.\ also thanks
the Simons Center for Geometry and Physics,
the Institut Mittag-Leffler,
the Isaac Newton Institute for Mathematical Sciences, and
the Higher School of Economics, Independent University of
Moscow, where he continued the study.
All of us thank the Park City Math Institute,
where parts of this work were done.

A.B.\  was partially supported by the NSF grant DMS-1501047.
The research of M.F.\ was supported by the grant 
RSF 19-11-00056.
%
The research of H.N.\ was supported by in part by the World Premier
International Research Center Initiative (WPI Initiative), MEXT,
Japan, and JSPS Kakenhi Grant Numbers
23224002, 
23340005, 
24224001, 
25220701, 
16H06335. 

\section{A variety of triples and its equivariant Borel-Moore
homology group}\label{sec:triples}

Our construction of a space $\cR$, a direct limit of inverse limits
of schemes of finite type, and its equivariant Borel-Moore homology
group are based on similar well-known results for the affine
Grassmannian and the affine Grassmannian Steinberg variety. See
\cite[\S4.5]{Beilinson-Drinfeld}, \cite[\S7]{MR2135527} and the
references therein for the detail and proofs.

\subsection{A variety of triples}
\label{triples}

Let $G$ be a complex connected reductive group. 
The connectedness assumption is not essential. See
Remark~\ref{discrepancy}(3) below.
Let $\cO$ denote the formal power series ring $\CC[[z]]$ and $\cK$ its
fraction field $\CC((z))$.
Let $\Gr_G = G_\cK/G_\cO$ be the affine Grassmannian, where $G_\cK$,
$G_\cO$ are groups of $\cK$ and $\cO$-valued points of $G$. It is an
ind-scheme representing the functor from affine schemes to sets:
\begin{equation*}
    S \mapsto \{ (\scP,\varphi) \mid
    \text{a $G$-bundle $\scP$ on $D\times S$,
      a trivialization $\varphi\colon \scP|_{D^*\times S}
      \to G\times D^*\times S$}\},
\end{equation*}
where $D = \Spec(\cO)$ (resp.\ $D^* = \Spec(\cK)$) is the formal disk
(resp.\ punctured disk). We simply say $\Gr_G$ is the moduli space of
pairs $(\scP,\varphi)$ of a $G$-bundle $\scP$ on $D$ and its
trivialization $\varphi$ over $D^*$. The same applies to the variety
of triples, introduced below.

The set $\pi_0(\Gr_G)$ of connected components of $\Gr_G$ is known to
be in bijection to the fundamental group $\pi_1(G)$ of $G$. It is a
classical result that $\pi_1(G)$ is isomorphic to the quotient of the
coweight lattice by the coroot lattice.
\begin{NB}
    So $\pi_0(\Gr_G)\cong\ZZ$ if $G = \GL(r)$.
\end{NB}

Let $\bN$ be a (complex) representation of $G$. We consider a {\it
  variety of triples\/} $\cR\equiv\cR_{G,\bN}$, the moduli space
parametrizing triples $(\scP,\varphi,s)$, where $(\scP,\varphi)$ is in
$\Gr_G$, and $s$ is a section of an associated vector bundle
$\scP_{\bN} = \scP\times_G \bN$ such that it is sent to a regular
section of a trivial bundle under $\varphi$.
We use the notation $\cR$ when $(G,\bN)$ is clear from the context.
It is an ind-scheme of ind-infinite type, as we will explain
below. But we simply call it a variety.

Let us explain the last condition.  Let $\varphi_\bN$ denote the
induced isomorphism $\scP_\bN|_{D^*}\to D^*\times\bN$ of vector
bundles over $D^*$. It sends a section $s\in H^0(\scP_\bN)$ to a
rational section of the trivial bundle $D\times\bN$, i.e., an element
in $\bN_\cK$. It may have a pole at the origin, as $\varphi$ is not
regular there in general. The last condition means that it is regular,
i.e., $\varphi_\bN(s)\in\bN_\cO$. It defines a finite codimensional
subspace in $H^0(\scP_\bN)$.

The variety $\cR$ is a closed subvariety of a variety
$\cT\equiv\cT_{G,\bN}$, the moduli space of $(\scP,\varphi, s)$ as
above, but $s$ is merely a section of $\scP_\bN$, no further condition
on the behavior under $\varphi_\bN$. Under the projection
$\cT\to\Gr_G$, it has a structure of a vector bundle over $\Gr_G$,
whose fiber at $(\scP,\varphi)$ is $H^0(\scP_\bN)$. The rank of the
vector bundle is infinite. This is the reason why $\cT$ is {\it not\/}
an ind-scheme of ind-finite type.

\begin{NB}
    Added on May 5.
\end{NB}%
The closed embedding $\cR\to\cT$ is denoted by $i$. The projection
$\cT\to\Gr_G$ is denoted by $\pi$.

As sets, $\cT$ is the quotient $G_\cK\times_{G_\cO}\bN_\cO$. Let us
write its point as $[g,s]$ with $g\in G_\cK$, $s\in\bN_\cO$. The
equivalence relation is given by $(g,s)\sim (g b^{-1}, bs)$ for $b\in
G_\cO$, and $[g,s]$ denote the equivalence class for $(g,s)$. We have
a map $\cT\ni [g,s]\mapsto gs\in \bN_\cK$. In terms of the description
of $\cT$ as a moduli space, $gs$ is nothing but $\varphi_\bN(s)$.
\begin{NB}
    Added on May 5
\end{NB}%
Let us denote it by $\Pi$.
Together with the projection $\pi\colon \cT\to\Gr_G$, it gives a
closed embedding $(\pi,\Pi)\colon \cT\hookrightarrow
\Gr_G\times\bN_\cK$.
\begin{NB}
Let us keep the notation $\pi$ for something else.

Let us denote it by $\pi$. We have $\cR = \pi^{-1}(\bN_\cO)$.
\end{NB}%
We have $\cR = \cT\cap\left(\Gr_G\times\bN_\cO\right)$.

We have an action of $G_\cK$ on $\cT$ given by the left
multiplication. As a moduli space, it is given by the change of the
trivialization $\varphi$. Its restriction to $G_\cO$ preserves $\cR$.
There is also $\CC^\times$-actions on $\Gr_G$, $\cT$, $\cR$ induced
from the loop rotation of $D$. It is combined to actions of the
semi-direct product $G_\cO\rtimes\CC^\times$.

\begin{Convention}
In order to make formulas look cleaner, we let the loop rotation
$\CC^\times$ act on $\bN$ by weight $1/2$. (So we take a double
covering of $\CC^\times$, but we do not introduce a new notation for
the double cover for brevity.)
\end{Convention}

\begin{Remark}\label{rem:St}
    Let $\St$ be the usual (finite dimensional) Steinberg variety,
    i.e.,
    \begin{equation*}
        \St = \{ (B_1,x,B_2)\in\mathfrak B\times\g\times\mathfrak B \mid
        x\in\mathfrak n_1\cap\mathfrak n_2\},
    \end{equation*}
    where $\mathfrak B$ is the flag variety, considered as the space
    of Borel subgroups, and $\mathfrak n_a$ is the nilradical of the
    Lie algebra of $B_a$ ($a=1,2$). We have an action of
    $G\times\CC^\times$ on $\St$, and the equivariant Borel-Moore homology
    group $H^{G\times\CC^\times}_*(\St)$ gives a geometric realization of
    the degenerate affine Hecke algebra \cite{Lu-cus} (see also
    \cite{CG} for the $K$-theory version).

    If we fix a point $B_2 = B\in\mathfrak B$, we have an isomorphism
    $\mathfrak B = G/B$, and the induced isomorphism
    $\St\cong G\times_B\overline\St$,
    \begin{NB}
        $G\backslash \St \cong B\backslash\overline\St$,
    \end{NB}%
    where
    \begin{equation*}
        \overline\St = \{ (B_1,x)\in\mathfrak B\times\g \mid
        x\in\mathfrak n_1\cap\mathfrak n\},
    \end{equation*}
    where $\mathfrak n$ is the nilradical of the Lie algebra of $B$.
    \begin{NB}
        The isomorphism is given by $G\times_B\St\ni [g, B_1, x] \mapsto
        (gB_1, \operatorname{Ad}_gx, g\bmod B)$.
    \end{NB}%
    Our space $\cR$ is an analogue of $\overline\St$. We have
    \begin{equation*}
        H^{G\times\CC^\times}_*(\St) \cong H^{B\times\CC^\times}_*(\overline\St),
    \end{equation*}
    hence we can understand the geometric realization in terms of
    $\overline\St$.
    \begin{NB}
        Let $EG\to BG$ be the classifying space of $G$. Then $EG\to
        EG/B$ is the classifying space of $B$. We have $EG\times_G \St
        \cong EG\times_B \overline\St$, hence
        $H^{G\times\CC^*\times}_*(\St) \cong
        H^{B\times\CC^\times}_*(\overline\St)$. In fact, we need to
        replace $EG\to BG$ by its finite-dimensional approximation,
        but the argument is the same.
    \end{NB}%
\end{Remark}

Let $Y$ be the coweight lattice of $G$. It is well-known that
$G_\cO$-orbits in $\Gr_G$ are parametrized by dominant coweights $Y^+$
: $\Gr_G = \bigsqcup_{\la\in Y^+} \Gr_{G}^{\la}$. The closure relation
corresponds to the usual order on $Y^+$ : $\overline{\Gr}_{G}^{\la} =
\bigsqcup_{\mu\le\la} \Gr_{G}^{\mu}$.
It is well-known that $\overline{\Gr}_{G}^{\la}$ is a scheme of finite
type. The $G_\cO$-action on $\overline{\Gr}_{G}^{\la}$ factors through a
finite dimensional quotient.
\begin{NB}
    Let $g(z) =
    \begin{pmatrix}
        z^d & 0 \\ 0 & z^{-d}
    \end{pmatrix}\in \SL_2(\cK)$. Take $g'(z) =
    \begin{pmatrix}
        g_{11}(z) & g_{12}(z) \\ g_{21}(z) & g_{22}(z)
    \end{pmatrix}\in \SL_2(\cO)$. Consider
    \begin{equation*}
       g(z)^{-1} g'(z) g(z) =
       \begin{pmatrix}
           g_{11}(z)& z^{2d} g_{12}(z) \\ z^{-2d} g_{21}(z) & g_{22}(z)
       \end{pmatrix}.
    \end{equation*}
    If $g'(z) - 1 \in z^{2d}\cO$, $g(z)^{-1} g'(z) g(z)$ is in
    $\SL_2(\cO)$. It means that $[g(z)]\in\Gr_{\SL_2}$ is fixed by
    the action of $g'(z)$. Therefore $K_i$ (see below) acts trivially
    on the closure of $G_\cO\cdot [g(z)]$.
\end{NB}

Recall $\pi\colon \cT\to \Gr_G$ denote the projection forgetting
sections. Let $\cT_{\le\la} \defeq
\pi^{-1}(\overline{\Gr}_{G}^{\la})$. This is a scheme of infinite type,
and $\cT = \bigcup \cT_{\le\la}$. We define $\cR_{\le\la} \defeq
\cR\cap \pi^{-1}(\overline{\Gr}_{G}^{\la})$. It is also a scheme of
infinite type and $\cR = \bigcup \cR_{\le\la}$.

For a sufficiently large $d\gg 0$ consider the fiberwise translation by
$z^d \bN_\cO$ on $\cT = G_\cK\times_{G_\cO}\bN_\cO$. The quotient is
$\cT = G_\cK\times_{G_\cO}\bN_\cO\to \cT^d\defeq
G_\cK\times_{G_\cO}(\bN_\cO/z^d\bN_\cO)$. We have a surjective vector
bundle homomorphism $p^d_e\colon \cT^d\to \cT^e$ for $d > e$. The
original $\cT$ can be understood as the inverse limit of this system.
\begin{NB}
    The index notation in \cite{MR2135527} is {\it not\/} standard
    matrix notation. In particular $p^d_e p^e_f = p^d_f$ is not
    true. We have $p^e_f p^d_e = p^d_f$ instead. But $(p^d_e)^*
    (p^e_f)^* = (p^d_f)^*$, on the other hand.
\end{NB}%
Let $\cT^d_{\le\la} = (\pi^d)^{-1}(\overline{\Gr}_{G}^{\la})$ where
$\pi^d\colon \cT^d\to\Gr_G$ is the projection. It is a scheme of
finite type. Moreover we have an induced homomorphism
$\cT^d_{\le\la}\to \cT^e_{\le\la}$, and $\cT_{\le\la}$ is the inverse
limit of this system.

The order of pole of $\varphi$ at $0$ is bounded by a constant
depending on $\la$ for $(\scP,\varphi)\in
\overline{\Gr}_{G}^{\la}$. Therefore $\cR_{\le\la}$ is invariant under
the translation by $z^d\bN_\cO$ if we choose $d$ larger than the
order. Let $\cR_{\le\la}^d$ denote the quotient. It is a closed
subscheme of $\cT_{\le\la}^d$. Moreover we have an affine fibration
$\tilde p^d_e \colon \cR_{\le\la}^d\to \cR_{\le\la}^e$, as the
restriction of $p^d_e$ for $d>e$. Then $\cR_{\le\la}$ is the inverse
limit of this system.

\begin{NB}
    I am not sure whether we should use the different notation $\tilde
    p^k_l$ from $p^k_l$, but the pull-back with support depends on
    $p^k_l$, not only on the restriction. Therefore I use a different
    notation at this stage.
\end{NB}

\begin{NB}
    The following is added on Apr. 21.
\end{NB}%

Let $\cR_{\la} = \cR\cap\pi^{-1}(\Gr_{G}^{\la})$, the inverse image of
the $G_\cO$-orbit $\Gr_{G}^{\la}$. It is an open subvariety in a closed
subvariety $\cR_{\le\la}$, hence locally closed in $\cR$. Let
$\cR_{<\la}$ be the complement $\cR_{\le\la}\setminus\cR_{\la}$. It is
closed subvariety. Let us define $\cT_{\la}$, $\cT_{<\la}$ in the same
way.

\begin{Lemma}\label{lem:Rla}
    The restriction of $\pi$ to $\cR_{\la}$ is a vector bundle
    $\cR_{\la}\to\Gr_{G}^{\la}$ of infinite rank. It is a subbundle of
    another infinite rank vector bundle $\cT_{\la}\to\Gr_{G}^{\la}$ such
    that the quotient bundle has a finite rank given by the formula
    \begin{equation*}
        d_\la \defeq \operatorname{rank}(\cT_{\la}/\cR_{\la}) =
        \sum_\chi \max(-\langle\chi, \la\rangle,0) \dim \bN(\chi),
    \end{equation*}
    where $\bN(\chi)$ is the weight $\chi$ subspace of $\bN$.
\end{Lemma}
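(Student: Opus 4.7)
The plan is to exploit the $G_\cO$-equivariance of $\pi\colon\cT\to\Gr_G$ and of the subvariety $\cR\subset\cT$, which reduces everything over $\Gr_G^\la$ to the fiber over the base point $[z^\la]$ equipped with the action of the stabilizer $G_\cO(\la)\defeq\{g\in G_\cO : z^{-\la}gz^\la\in G_\cO\}$. From $\cT = G_\cK\times_{G_\cO}\bN_\cO$ one reads off $\pi^{-1}([z^\la]) = \{[z^\la, s]:s\in\bN_\cO\}\cong\bN_\cO$, and since $\Gr_G^\la\cong G_\cO/G_\cO(\la)$, the associated bundle construction gives $\cT_\la \cong G_\cO\times_{G_\cO(\la)}\bN_\cO$, a vector bundle of infinite rank.

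Next I would compute the fiber of $\cR$ over $[z^\la]$. By definition $[z^\la, s]\in\cR$ iff $z^\la s\in\bN_\cO$; decomposing $\bN = \bigoplus_\chi \bN(\chi)$ into $T$-weight spaces and writing $s = \sum_\chi s_\chi$, the element $z^\la$ acts on $\bN(\chi)_\cK$ by multiplication by $z^{\langle\chi,\la\rangle}$, so the condition becomes $s_\chi\in z^{k_\chi}\bN(\chi)_\cO$ with $k_\chi\defeq\max(-\langle\chi,\la\rangle,0)$. This identifies the fiber with the $\cO$-submodule
\begin{equation*}
    W_\la \defeq \bigoplus_\chi z^{k_\chi}\bN(\chi)_\cO \subset \bN_\cO,
\end{equation*}
of finite codimension $\sum_\chi k_\chi\dim\bN(\chi) = d_\la$.

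Finally I would verify that $W_\la$ is stable under the induced $G_\cO(\la)$-action on the fiber, which is $b\cdot s = (z^{-\la}bz^\la)s$: for $s\in W_\la$ one has $z^\la(b\cdot s) = b(z^\la s)\in b\cdot\bN_\cO = \bN_\cO$. Hence $\cR_\la \cong G_\cO\times_{G_\cO(\la)}W_\la$ is a vector subbundle of $\cT_\la$ with quotient of rank $d_\la$. The one piece of bookkeeping is making sense of these infinite-rank objects rigorously, but this follows from the pro-finite-dimensional structure already introduced: for $d > \max_\chi|\langle\chi,\la\rangle|$ one has $z^d\bN_\cO\subset W_\la$, so the $G_\cO(\la)$-action descends to $\bN_\cO/z^d\bN_\cO$, yielding honest finite-rank (sub)bundles $\cR^d_\la\subset \cT^d_\la$ over $\Gr_G^\la$ whose quotient stabilizes to rank $d_\la$ as $d\to\infty$. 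I do not expect any serious obstacle.
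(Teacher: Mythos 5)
Your proof is correct and follows essentially the same route as the paper: reduce to the fiber over $z^\la$ by $G_\cO$-equivariance, identify it inside $\bN_\cO$ via the condition $z^\la s\in\bN_\cO$, check stability under $\Stab_{G_\cO}(z^\la)$, and compute the codimension weight space by weight space. The only cosmetic difference is that the paper parametrizes the fibers by $\Pi([z^\la,s])=z^\la s$ (so $\cT_\la$, $\cR_\la$ have fibers $z^\la\bN_\cO$ and $\bN_\cO\cap z^\la\bN_\cO$) whereas you parametrize by $s$ itself; the resulting quotient is the same.
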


\begin{NB}
    Misha and Sasha, The formula of $d_\la$ is corrected on Aug. 27.
\end{NB}

\begin{NB}
    Since $\dim \bN(\chi) = \dim \bN(w\chi)$ for a Weyl group element
    $w\in W$, $d_\lambda = d_{w\lambda}$. Since $\Gr_{G}^{\la}$ depends
    only of the Weyl group orbit through $\la$, the formula must be of
    course invariant under $W$.
\end{NB}%

\begin{proof}
    This is obvious since $\cR_{\la}$ is $G_\cO$-invariant and
    $\Gr_{G}^{\la}$ is a $G_\cO$-orbit: Consider a coweight $z^\la$ as
    an element $G_\cK$, and also a point in $\Gr_{G}^{\la}$. Then the
    fiber $\cR\cap\pi^{-1}(\la)$ is $\bN_\cO\cap z^{\la}\bN_\cO = \{
    s\in\bN_\cO \mid z^{-\la} s \in\bN_\cO\}$.
    \begin{NB}
        This is via $\Pi$. Consider $[g(z),s(z)]\in\cT$ with $[g(z)] =
        [z^\lambda]\in \Gr_G$, $s(z)\in\bN_{\cO}$. Then $[g(z),s(z)]$
        is in $\cR$ if and only if $g(z)s(z)\in\bN_{\cO}$.  Therefore
        $g(z)s(z)\in \bN_{\cO}\cap z^\lambda\bN_{\cO}$. Also $\cT$ is
        just $g(z)s(z)\in z^\lambda\bN_{\cO}$.
    \end{NB}%
    This is a subspace of
    $\bN_\cO$ invariant under the stabilizer $\Stab_{G_\cO}(z^\la)$.
    \begin{NB}
        Let $g\in \Stab_{G_\cO}(z^\la)$. Then $g z^\la \in z^\la
        \bmod G_\cO$, i.e., $z^{-\la} g z^\la \in G_\cO$. Then
        $z^{-\la} (g s) = (z^{-\la} g z^{\la})(z^{-\la} s)\in\ \bN_\cO$.
    \end{NB}%
    Then $\cR_\la$ is the vector bundle over $\Gr_{G}^{\la} =
    G_\cO/{\Stab_{G_\cO}(z^\la)}$ associated with $\bN_\cO\cap
    z^{\la}\bN_\cO$.

    The rank of the quotient is the dimension of $z^\la\bN_\cO/\bN_\cO\cap
    z^{\la}\bN_\cO$.
    \begin{NB}
        Misha and Sasha : This was originally $\bN_\cO/\bN_\cO\cap
        z^{\la}\bN_\cO$. It is corrected on Aug. 28.
    \end{NB}%
    This can be computed by decomposing $\bN$ into weight spaces. If
    $s$ is contained in the weight $\chi$ subspace, we replace $\bN$ by
    a $1$-dimensional subspace, and find that the contribution is
    $\dim z^{\langle\chi,\la\rangle}\CC[[z]]/\CC[[z]]\cap
    z^{\langle\chi,\la\rangle}\CC[[z]]$. This is equal to
    $\max(-\langle\chi,\la\rangle, 0)$. The above formula follows.
\end{proof}

\begin{NB}
    The contribution to the monopole formula is
    \begin{equation*}
        |\langle\chi, \la\rangle| =
        2 \max(-\langle\chi,\la\rangle,0) +
        \langle\chi,\la\rangle.
    \end{equation*}
    Therefore we need to correct above by $1/2 \sum_\chi
    \langle\chi,\la\rangle \dim \bN(\chi)$ later. This correction term
    is $0$ for $\SL(2)$, as $\chi$ and $-\chi$ appear in pairs. On the
    other hand, this could be nontrivial if $G$ is a torus. In
    practice, we need to show that $\sum_\chi \langle\la,\chi\rangle
    \dim \bN(\chi)$ depends only on $[\la]\in\pi_1(G)$. (More
    precisely, it depends only the free part of the abelian group
    $\pi_1(G)$.)
    \begin{NB2}
        This should be something elementary. How to prove ?
    \end{NB2}%

    If so, since $\pi_0(\cR) = \pi_1(G)$, the correction term depends
    only on the component, and is harmless. In
    \cite{2015arXiv150303676N}, the correction term appears as the
    index of an associated vector bundle. So it should really depend
    on the Chern class (or the degree), not how it decomposes into
    line bundles.

    \begin{NB2}
        See below. Added on Apr.\ 24.
    \end{NB2}
\end{NB}%

\subsection{Equivariant Borel-Moore homology of the variety of triples}
\label{subsec:equiv-borel-moore}

We shall use the equivariant Borel-Moore homology groups
$H^{G_\cO}_*(\cR)$ and $H^{G_\cO\rtimes\CC^\times}_*(\cR)$ to define
the Coulomb branch and its quantization. Since both $G_\cO$ and $\cR$
are infinite dimensional, these homology groups must be treated
carefully.

We have the ind-scheme $\calR$ together with a map $\pi\colon\calR\to
\Gr_G$.  We would like to define its $G_\calO$-equivariant Borel-Moore
homology.  To this end it is sufficient to define the
$G_\calO$-equivariant Borel-Moore homology of $\cR_{\le\la} =
\cR\cap\pi^{-1}(\overline{\Gr}_{G}^{\la})$ in such a way that an
embedding $\cR_{\le\mu}\hookrightarrow \cR_{\le\la}$ will induce a map
$H_*^{G_\calO}(\cR_{\le\mu})\to H_*^{G_\calO}(\cR_{\le\la})$ for $\mu\le\la$.

Now, given such $\cR_{\le\la}$, we choose an integer $d\ge 0$ so that
we have a finite dimensional scheme $\cR^d_{\le\la}$ as its quotient
as above. We have the induced $G_\cO$-action on $\cR^d_{\le\la}$.
Let $G_i=G(\calO/z^i\calO)$. This is a quotient of $G_\calO$ and for
large $i$ the action of $G_\calO$ factorizes through $G_i$.

\begin{NB}
Now, given such $X$, we have an embedding $\pi^{-1}(X)\hookrightarrow X\times \bfN_\calO$.
Moreover, there exists an integer $d\geq 0$ such that $\pi^{-1}(X)$ is invariant with respect to
$z^d\bfN(\calO)$. Hence we can take the quotient $\pi^{-1}(X)/z^d\bfN(\calO)$.
This is a finite-dimensional scheme on which the group $G(\calO)$ acts. Let $G_i=G(\calO/z^i\calO)$. This is a quotient
of $G(\calO)$ and for large $i$ the action of $G(\calO)$ factorizes through $G_i$.
\end{NB}%

We now set
\begin{equation*}
    H_{*}^{G_\calO}(\cR_{\le\la})\defeq
    H_{G_i}^{-*}(\cR_{\le\la}^d, \DC_{\cR_{\le\la}^d})
    [-2\dim (\bfN_\calO/z^d\bfN_\calO)].
\end{equation*}
We claim that this definition depends neither on $i$ nor on
$d$. Indeed, independence of $i$ follows from the fact that for $i>j$
we have a surjective map $G_i\to G_j$ with unipotent kernel.
Independence of $d$ follows from the fact that for $d > e$ we have a
$G_\calO$-equivariant map $\tilde{p}^{d}_{e}\colon
\cR^{d}_{\le\la}\to \cR^{e}_{\le\la}$ which is a locally trivial
fibration with fibers being affine spaces of dimension $\dim
(z^{e}\bfN_\calO/z^{d}\bfN_\calO)$.
Note that if $p\colon Z\to W$ is a locally trivial fibration of
finite-dimensional schemes over $\CC$ with fibers being affine spaces
of dimension $r$ then we have a canonical isomorphism
$H^*(Z,\DC_Z)\simeq H^*(W,\DC_W)[2r]$ and the same is true for
equivariant Borel-Moore homology with respect to any algebraic group
$K$ acting on $Z$ and $W$ (and such that the morphism $p$ is
$K$-equivariant).

Note also that the degree of this homology group is given relative to
`$2\dim\bN_\cO$'. Namely if a homology classes has degree $k$, it means
that we consider homology classes $\cR^d_{\le\la}$ for all
sufficiently large $d$ whose degree is $k+2\dim(\bN_\cO/z^d\bN_\cO)$. As $d\to\infty$, the degree goes to `$k+2\dim\bN_\cO$'.
Since it is not illuminating to go back to finite dimensional
approximations every time, we use this convention hereafter: we just
write $2\dim\bN_\cO$ (and later $2\dim G_\cO$) without mentioning finite dimensional approximations.

Given an embedding $\cR_{\le\mu}\hookrightarrow \cR_{\le\la}$ as
above, note that for sufficiently large $d$ and $i$ we have a
$G_i$-equivariant closed embedding $\cR_{\le\mu}^d\hookrightarrow
\cR_{\le\la}^d$ and we can use the push-forward with respect to this
closed embedding to define the map
\begin{equation*}
H_{G_i}^*(\cR^d_{\le\mu}, \DC_{\cR^d_{\le\mu}})[-2\dim (\bfN_\calO/z^d\bfN_\calO)]\to 
H_{G_i}^*(\cR^d_{\le\la}, \DC_{\cR^d_{\le\la}})[-2\dim (\bfN_\calO/z^d\bfN_\calO)].
\end{equation*}

\begin{NB}
For the affine Grassmannian $\Gr_G$, it is well-known (see e.g.,
\cite{MV2}): $\Gr_G$ is an ind-scheme $\Gr_G = \varinjlim \Gr_G^n$,
where $\Gr_G^n$ is a finite dimensional scheme with
$G_\cO\rtimes\CC^\times$-action, and $\Gr_G^m\to \Gr_G^n$ ($m \le n$)
is a $G_\cO\rtimes\CC^\times$-equivariant closed embedding. The group
$G_\cO$ acts on $\Gr_G^n$ via a finite dimensional quotient. We define
$H^{G_\cO}_*(\Gr_G)$ as the limit $\varinjlim H^{G_\cO}_*(\Gr_G^n)$.
\end{NB}

The equivariant Borel-Moore homology group $H_*^{G_\cO}(\cR)$ is a
module over $H^*_{G_\cO}(\mathrm{pt})$, the equivariant cohomology
group of a point, defined using $G_i$ above.
Since any $G_i$ acts (trivially) on $\mathrm{pt}$, we have a natural
isomorphism $H^*_{G_\cO}(\mathrm{pt}) \cong H^*_G(\mathrm{pt})$.

The definition of the $G_\cO\rtimes\CC^\times$-equivariant homology group is
the same.

In the definition of the convolution product, we also use equivariant
Borel-Moore homology groups of other spaces (see \eqref{eq:12}). A
prototype of such homology groups is $H^{G_\cO\times
  G_\cO}_*(G_\cK)$. Let us explain how this is defined. Homology
groups of spaces actually needed are simple variants of
$H^{G_\cO\times G_\cO}_*(G_\cK)$, and hence can be defined in the same way.

Let $G_i = G(\cO/z^i\cO)$ as before. We have a surjective homomorphism
$G_\cO\to G_i$, and let $K_i$ be its kernel. Take a dominant coweight
$\la$, and let $G_{\cK}^{\le\la}$ be the inverse image of
$\overline{\Gr}_{G}^{\la}$ under $G_\cK\to\Gr_G$. We take $j\gg i$ so
that $K_j$ acts trivially on $G_{\cK}^{\le\la}/K_i$. (This is possible by
the same well-known argument that $K_i$ acts trivially on
$\overline{\Gr}_{G}^{\la}$ for $i\gg 0$.) Then we have an action of
$G_j\times G_i$ on $G_{\cK}^{\le\la}/K_i$. We define
\begin{equation*}
    H^{G_\cO\times G_\cO}_*(G_{\cK}^{\le\la}) \defeq
    H^{-*}_{G_j\times G_i}(G_{\cK}^{\le\la}/K_i,\DC_{G_{\cK}^{\le\la}/K_i})[-2\dim G_i].
\end{equation*}
This definition is independent of $i$ or $j$ by the same argument as
above. Note also that the degree is given relative to `$2\dim G_\cO$'
in the same sense as above.

We have a homomorphism $H^{G_\cO\times G_\cO}_*(G_{\cK}^{\le\mu})\to
H^{G_\cO\times G_\cO}_*(G_{\cK}^{\le\la})$ for $\mu\le\la$ as
above. Therefore we define $H^{G_\cO\times G_\cO}_*(G_\cK)$ as the
direct limit of this system.

Furthermore, as $G_{\cK}^{\le\la}/K_i\to \overline{\Gr}_{G}^{\la}$ is a
principal $G_i$-bundle, we have
\(
    H^{G_\cO\times G_\cO}_*(G_{\cK}^{\le\la})\cong
    H^{G_\cO}_*(\overline{\Gr}_{G}^{\la}).
\)
As a direct limit, we have an isomorphism
\begin{equation}\label{eq:15}
    H^{G_\cO\times G_\cO}_*(G_\cK) \cong H^{G_\cO}_*(\Gr_G).
\end{equation}

\begin{Remark}
    We consider Borel-Moore homology groups with \emph{complex}
    coefficients. But many of results below remain true for
    \emph{integer} coefficients. In particular, our Coulomb branch
    $\mathcal M_C$ will be an affine scheme over the integers. We
    leave to the interested reader the consideration of possible
    applications, and stick to complex coefficients in this paper.
\end{Remark}

\subsection{Monopole formula}

We compute the equivariant Poincar\'e polynomial of $\cR$ in this
subsection. This computation is essentially the same as \cite[\S8]{2015arXiv150303676N}, but we give the detail as it is simple.

We take the following convention:
\begin{equation*}
    P_t^{G_\cO}(\cR) = \sum_k t^{-k} \dim H^{G_\cO}_k(\cR),
\end{equation*}
and similarly for other spaces.

Let $\Gr_{G}^{\la}$ be the $G_\cO$-orbit corresponding to a dominant
coweight $\la$ as before. Let
\begin{equation*}
    P_G(t;\la) \defeq \prod \frac1{1-t^{2d_i}},
\end{equation*}
where $d_i$ are degrees of generators of invariant polynomials of
$\operatorname{Stab}_G(\la)$. In other words, they are {\it
  exponents\/} plus one.

\begin{Lemma}\label{lem:orbit}
    The equivariant Poincar\'e polynomial of $\Gr_{G}^{\la}$ is given by
    the formula
    \begin{equation*}
        P_t^{G_\cO}(\Gr_{G}^{\la}) 
        = t^{-4\langle\rho,\la\rangle} P_G(t;\la),
    \end{equation*}
    where $\rho$ is the half sum of positive roots, and $\la$ is taken
    so that it is dominant.
\end{Lemma}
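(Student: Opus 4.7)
The plan is to identify $\Gr_{G}^{\la}$ with a homogeneous space of $G_\cO$, apply Poincar\'e duality to convert equivariant Borel-Moore homology into equivariant cohomology, and then compute the latter via the classifying space of the stabilizer.

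First I would note that $\Gr_{G}^{\la} = G_\cO\cdot [z^\la]$ is a single $G_\cO$-orbit, hence
\[
\Gr_{G}^{\la} \;\cong\; G_\cO/\Stab_{G_\cO}(z^\la).
\]
It is a smooth quasi-projective variety of complex dimension $2\langle\rho,\la\rangle$, as is classical for the affine Grassmannian. Since the Borel-Moore homology with complex coefficients of a smooth variety $X$ of complex dimension $n$ satisfies $H_k(X)\cong H^{2n-k}(X)$, and this isomorphism is equivariant, we obtain
\[
H^{G_\cO}_k(\Gr_{G}^{\la}) \;\cong\; H^{\,4\langle\rho,\la\rangle - k}_{G_\cO}(\Gr_{G}^{\la}).
\]

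Next I would pass to the finite-dimensional quotient $G_i$ of $G_\cO$ through which the action on $\Gr_{G}^{\la}$ factors, and identify the image of $\Stab_{G_\cO}(z^\la)$ in $G_i$. This image has a Levi decomposition whose reductive part is the centralizer $\Stab_G(\la)$ (a Levi subgroup of $G$ since $\la$ is dominant) and whose unipotent radical is a connected unipotent group. Because equivariant cohomology is insensitive to unipotent extensions, the identification $H^*_{G_\cO}(G_\cO/H) \cong H^*_H(\mathrm{pt})$ then gives
\[
H^*_{G_\cO}(\Gr_{G}^{\la}) \;\cong\; H^*_{\Stab_G(\la)}(\mathrm{pt}).
\]
By Chevalley's theorem, this is the polynomial ring $\CC[\ft]^{W_\la}$, whose Poincar\'e series (with $t$ tracking cohomological degree) is $\prod \frac{1}{1-t^{2d_i}} = P_G(t;\la)$, the $d_i$ being the fundamental degrees of $\Stab_G(\la)$.

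Combining these two steps, the change of variable $j = 4\langle\rho,\la\rangle - k$ gives
\[
P_t^{G_\cO}(\Gr_{G}^{\la}) = \sum_k t^{-k}\dim H^{G_\cO}_k(\Gr_{G}^{\la})
= t^{-4\langle\rho,\la\rangle}\sum_j t^{j}\dim H^{j}_{\Stab_G(\la)}(\mathrm{pt}) = t^{-4\langle\rho,\la\rangle}P_G(t;\la),
\]
as desired. The only point requiring care is the verification that the prounipotent part of the stabilizer really does not contribute; this is the main (mild) obstacle, and it is handled precisely by the finite-dimensional approximation conventions set up in \S\ref{subsec:equiv-borel-moore}, where surjections $G_j \to G_i$ have unipotent kernel and hence induce isomorphisms on equivariant cohomology.
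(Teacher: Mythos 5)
Your proof is correct and follows essentially the same route as the paper's: Poincar\'e duality on the smooth orbit of dimension $2\langle\rho,\la\rangle$ produces the shift $t^{-4\langle\rho,\la\rangle}$, the equivariant cohomology of the orbit is identified with $H^*_{\Stab_G(\la)}(\mathrm{pt})$ after discarding (pro)unipotent parts of the stabilizer, and Chevalley's theorem gives $P_G(t;\la)$. The only cosmetic difference is that the paper first contracts $\Gr_G^\la$ to its base $G/P_\la$ via the vector-bundle structure and applies duality on the compact flag manifold, whereas you dualize directly on the non-compact homogeneous space $G_\cO/\Stab_{G_\cO}(z^\la)$; both are valid.
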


If $\lambda$ is dominant, we have
\begin{equation*}
    -4\langle\rho,\la\rangle
    = -2\sum_{\alpha\in\Delta^+} | \langle\alpha,\la\rangle|
    = -\sum_{\alpha\in\Delta} | \langle\alpha,\la\rangle|,
\end{equation*}
where $\Delta^+$ (resp.\ $\Delta$) is the set of positive (resp.\ all)
roots. The right hand side is invariant under the Weyl group action,
while the left hand side is {\it not}. Since
$P_t^{G_\cO}(\Gr_{G}^{\la})$ is $W$-invariant, it is better to replace
$-4\langle\rho,\la\rangle$ by the right hand side.

\begin{proof}
    It is known that $\Gr_{G}^{\la}$ is a vector bundle over a flag
    manifold $G/P_\la$, where $P_\la$ is the parabolic subgroup
    associated with $\la$.  It is also known that dimension of
    $\Gr_{G}^{\la}$ is $2\langle\rho,\la\rangle$. (See e.g.,
    \cite[\S2]{MV2}.
    \begin{NB}
        Claim.
        \begin{equation*}
            2\langle\rho,\la\rangle
            = \sum_{\alpha\in\Delta^+} | \langle\alpha,\la\rangle|
            = \sum_{\alpha\in\Delta} \max(0,\langle\alpha,\la\rangle).
        \end{equation*}
        This is manifestly true, as $2\rho =
        \sum_{\alpha\in\Delta^+}\alpha$ and
        $\langle\alpha,\la\rangle\ge 0$ as we assume $\la$ is
        dominant. The second equality holds for any $\la$, by
        considering $\alpha$, $-\alpha\in\Delta^+$, $-\Delta^+$
        respectively. Now the last expression is manifestly invariant
        under the Weyl group $W$, as $w\Delta = \Delta$. Therefore the
        middle expression is also $W$-invariant. The middle expression
        was used in the monopole formula. See the first term in
        \cite[(4.1)]{2015arXiv150303676N}.
    \end{NB}%
    In fact, the tangent space of $\Gr_{G}^{\la}$ at $z^\mu$ is
    isomorphic to $\bigoplus_{\alpha\in\Delta}
    \bigoplus_{n=0}^{\max(0,\langle\alpha,\mu\rangle)-1} \g_\alpha z^n$.)

    We have
    \begin{equation*}
        \begin{split}
            & H^{G_\cO}_*(\Gr_{G}^{\la}) = H^{G}_*(\Gr_{G}^{\la})
        \cong H^G_{*-4\langle\rho,\la\rangle+2\dim G/P_\la}(G/P_\la)
\\
        \cong \; & H_G^{-*+4\langle\rho,\la\rangle}(G/P_\la)
        \cong H^{-*+4\langle\rho,\la\rangle}_{\operatorname{Stab}_G(\la)}
        (\mathrm{pt}),
        \end{split}
    \end{equation*}
    where $\operatorname{Stab}_G(\la)$ is as above, which is the Levi
    quotient of $P_\la$.
    \begin{NB}
        For the last isomorphism : we have an isomorphism $EG\times_G
        (G/P_\lambda) \cong EG/P_\lambda$ given by $[e, g\bmod
        P_\lambda]\mapsto eg\bmod P_\lambda$, $[e, \id\bmod P_\lambda]
        \leftmapsto e\bmod P_\lambda$. Further we have a homotopy equivalence
        $EG/P_\lambda \simeq EG/\operatorname{Stab}_G(\la)$.
    \end{NB}%
    Now the assertion follows from the well-known
    result $P^{\operatorname{Stab}_G(\la)}_{1/t}(\mathrm{pt}) = \prod
    1/(1-t^{2d_i})$.
\end{proof}

\begin{Lemma}\label{lem:MayerV}
    \textup{(1)} The equivariant Poincar\'e polynomial of
    $H^{G_\cO}_*(\cR_{\la})$ is given by
    \begin{equation*}
        P_t^{G_\cO}(\cR_\la) = t^{2d_\la}P_t^{G_\cO}(\Gr_{G}^{\la}).
    \end{equation*}
    In particular, homology group vanishes in odd degrees.

    \textup{(2)} The homology group 
    $H^{G_\cO}_*(\cR_{\le\la})$ vanishes in odd degrees. Hence the
    Mayer-Vietoris sequence splits into short exact sequences
    \begin{equation*}
        0\to H^{G_\cO}_*(\cR_{<\la})\to H^{G_\cO}_*(\cR_{\le\la})
        \to H^{G_\cO}_*(\cR_{\la})\to 0.
    \end{equation*}
\end{Lemma}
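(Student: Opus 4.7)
My plan for~(1) is to apply the Thom isomorphism to the vector bundle structure supplied by~\lemref{lem:Rla}. Working with the finite-dimensional approximations of~\secref{triples}, for $d$ sufficiently large $\cR_\la^d$ is a vector subbundle of $\cT_\la^d\to\Gr_G^\la$ of rank $r_d-d_\la$, where $r_d\defeq\dim(\bN_\cO/z^d\bN_\cO)$. The Thom isomorphism shifts equivariant Borel--Moore degree by twice the rank, so $H^{G_\cO}_*(\cR_\la^d)\cong H^{G_\cO}_{*-2(r_d-d_\la)}(\Gr_G^\la)$. After the renormalization by $-2r_d$ described in~\subsecref{subsec:equiv-borel-moore}, this descends to a canonical isomorphism $H^{G_\cO}_k(\cR_\la)\cong H^{G_\cO}_{k+2d_\la}(\Gr_G^\la)$, which translates directly into the Poincar\'e polynomial identity $P_t^{G_\cO}(\cR_\la)=t^{2d_\la}P_t^{G_\cO}(\Gr_G^\la)$. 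Vanishing in odd degrees is then immediate from~\lemref{lem:orbit}, since $t^{-4\langle\rho,\la\rangle}P_G(t;\la)$ is a formal power series in $t^2$.

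For~(2) I would argue by induction on $\la$ in the closure order on dominant coweights. The base case $\la=0$ is covered by~(1) because $\cR_{\le 0}=\cR_0$. For the inductive step, I invoke the long exact sequence in equivariant Borel--Moore homology for the closed embedding $\cR_{<\la}\hookrightarrow\cR_{\le\la}$ with open complement $\cR_\la$,
\[ \cdots\to H^{G_\cO}_k(\cR_{<\la})\to H^{G_\cO}_k(\cR_{\le\la})\to H^{G_\cO}_k(\cR_\la)\to H^{G_\cO}_{k-1}(\cR_{<\la})\to\cdots. \]
The induction hypothesis, together with a secondary induction on the number of maximal orbits in $\cR_{<\la}$ using the same long exact sequence, gives that $H^{G_\cO}_*(\cR_{<\la})$ vanishes in odd degrees; by~(1) so does $H^{G_\cO}_*(\cR_\la)$. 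The connecting map shifts parity by one, so it lands in a group of the wrong parity and must vanish. This simultaneously forces the middle term $H^{G_\cO}_*(\cR_{\le\la})$ to be concentrated in even degrees and splits the long exact sequence into the asserted short exact sequences.

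The only real subtlety is the bookkeeping in~(1): one must verify that the Thom isomorphisms on the $\cR_\la^d$ are compatible with the affine-fibration pullbacks $\tilde p^d_e$ used in~\subsecref{subsec:equiv-borel-moore}, so that the identification survives passage to the limit. This is routine because the pullbacks $\tilde p^d_e$ are themselves affine bundles of rank $\dim(z^e\bN_\cO/z^d\bN_\cO)$, which exactly cancels the shift in the renormalization convention. Beyond this routine check, both parts of the lemma are formal consequences of~\lemref{lem:Rla}, \lemref{lem:orbit}, and standard properties of equivariant Borel--Moore homology.
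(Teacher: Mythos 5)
Your proof is correct and takes essentially the same approach as the paper: part (1) is the Gysin (Thom) isomorphism for the vector bundle of \lemref{lem:Rla} combined with the degree normalization of \subsecref{subsec:equiv-borel-moore}, and part (2) is the same induction on $\la$ using the long exact sequence for the closed embedding $\cR_{<\la}\hookrightarrow\cR_{\le\la}$ with open complement $\cR_\la$. The only cosmetic difference is that the base case of the induction should be the minimal elements of the dominance order (one for each connected component of $\Gr_G$, where $\cR_{\le\la}=\cR_\la$), not just $\la=0$; the argument is unchanged.
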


\begin{proof}
    (1)
    Since $\cR_\la\to \Gr_{G}^{\la}$ is a vector bundle (see
    \lemref{lem:Rla}), we have the Gysin isomorphism
    $H^{G_\cO}_*(\cR_\la)\cong H^{G_\cO}_{*+2d_\la}(\Gr_{G}^{\la})$.
    Here note that the rank of $\cR_\la$ is $2\dim\bN_\cO-2d_\la$, as
    the rank of $\cT$ is $2\dim\bN_\cO$. Since the degree of
    $H^{G_\cO}_*(\cR_\la)$ is relative to $2\dim\bN_\cO$, we have the
    above shift of the degree. The formula of the equivariant
    Poincar\'e polynomial follows.
    The vanishing of odd degree homology follows from
    \lemref{lem:orbit} above.

    (2) We prove the vanishing of $H^{G_\cO}_*(\cR_{\le\la})$ by
    induction on $\la$. If $\la$ is a minimal element, $\cR_{\le\la} =
    \cR_{\la}$, and hence the assertion is true by above. For general
    $\la$, we have odd degree vanishing of $H^{G_\cO}_*(\cR_{<\la})$,
    $H^{G_\cO}_*(\cR_\la)$ by the induction hypothesis and the
    above. Looking at the Mayer-Vietoris long exact sequence for the
    triple $(\cR_{<\la}, \cR_{\le\la}, \cR_\la)$, we have the odd
    vanishing of $H^{G_\cO}_*(\cR_{\le\la})$.
\end{proof}

We thus get
\begin{Proposition}\label{prop:monopole_formula}
    Fix a dominant coweight $\overline\la$. Then
    \begin{equation*}
        P_t^{G_\cO}(\cR_{\le\overline\la})
        = \sum_{\la\le\overline\la} t^{2d_\la-4\langle\rho,\la\rangle}
        P_G(t;\la),
    \end{equation*}
    where the sum runs over dominant coweights $\la$ with
    $\la\le\overline\la$.
\end{Proposition}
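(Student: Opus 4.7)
The plan is to deduce this directly from the two preceding lemmas by inducting on the dominant coweight $\overline\la$ with respect to the standard order on $Y^+$.

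First I would note that the finite union $\cR_{\le\overline\la} = \bigsqcup_{\la\le\overline\la} \cR_\la$ gives a stratification into the locally closed pieces $\cR_\la$, each of which is the inverse image of a single $G_\cO$-orbit $\Gr_G^\la$. Lemma~\ref{lem:MayerV}(2) tells us that $H^{G_\cO}_*(\cR_{\le\la})$ vanishes in odd degrees, so for every $\la\le\overline\la$ we have a short exact sequence
\begin{equation*}
    0\to H^{G_\cO}_*(\cR_{<\la})\to H^{G_\cO}_*(\cR_{\le\la})
    \to H^{G_\cO}_*(\cR_{\la})\to 0.
\end{equation*}
Taking equivariant Poincar\'e polynomials, this yields the additive identity
\begin{equation*}
    P_t^{G_\cO}(\cR_{\le\la}) = P_t^{G_\cO}(\cR_{<\la}) + P_t^{G_\cO}(\cR_{\la}).
\end{equation*}
Iterating over all strata below $\overline\la$, which is a finite set since the order is finitary, gives
\begin{equation*}
    P_t^{G_\cO}(\cR_{\le\overline\la}) = \sum_{\la\le\overline\la} P_t^{G_\cO}(\cR_\la).
\end{equation*}

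It remains to compute each stratum contribution. By Lemma~\ref{lem:MayerV}(1), $P_t^{G_\cO}(\cR_\la) = t^{2d_\la} P_t^{G_\cO}(\Gr_G^\la)$, the shift reflecting that $\cR_\la\to\Gr_G^\la$ is a vector subbundle of $\cT_\la$ of corank $d_\la$, where degrees are measured relative to $2\dim\bN_\cO$. Then Lemma~\ref{lem:orbit} evaluates the orbit Poincar\'e polynomial as $t^{-4\langle\rho,\la\rangle} P_G(t;\la)$. Substituting gives exactly the claimed formula
\begin{equation*}
    P_t^{G_\cO}(\cR_{\le\overline\la})
    = \sum_{\la\le\overline\la} t^{2d_\la-4\langle\rho,\la\rangle}
    P_G(t;\la).
\end{equation*}

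There is really no hard step here: all the work has been done in the preceding lemmas. The only thing worth double-checking is the bookkeeping of the two degree shifts, namely that the Gysin shift by $2d_\la$ (coming from the corank of $\cR_\la\subset\cT_\la$, once the convention that degrees are taken relative to $2\dim\bN_\cO$ is used) and the shift by $-4\langle\rho,\la\rangle$ (coming from $\dim_\CC \Gr_G^\la = 2\langle\rho,\la\rangle$ and the passage from Borel-Moore homology to cohomology on a smooth even-dimensional variety) are both recorded with the correct sign under our convention $P_t = \sum t^{-k}\dim H_k$.
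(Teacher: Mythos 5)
Your proof is correct and is exactly the argument the paper intends: the proposition is stated with "We thus get" immediately after Lemmas~\ref{lem:orbit} and~\ref{lem:MayerV}, and the intended proof is precisely the assembly you describe (additivity of $P_t^{G_\cO}$ from the split short exact sequences of Lemma~\ref{lem:MayerV}(2), then the stratum computation from Lemma~\ref{lem:MayerV}(1) and Lemma~\ref{lem:orbit}). Your bookkeeping of the two degree shifts matches the paper's conventions.
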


\begin{Remarks}
\label{discrepancy}
    (1) Taking $\overline\la\to\infty$, we formally get
    \begin{equation}\label{eq:19}
        P_t^{G_\cO}(\cR)
        = \sum_{\la} t^{2d_\la-4\langle\rho,\la\rangle}
        P_G(t;\la).
    \end{equation}
    However this infinite sum may not be well-defined even as a formal
    Laurent series, as we do not have a control on
    $2d_\la-4\langle\rho,\la\rangle$ in general.

    (2) The above formal infinite sum is essentially the same as the
    monopole formula of the Hilbert series of the Coulomb branch of
    the $3$-dimensional $\mathcal N=4$ SUSY gauge theory associated
    with $(G_c,\bN\oplus\bN^*)$, proposed by Cremonesi, Hanany and
    Zaffaroni \cite{Cremonesi:2013lqa}. Here there is a slight
    difference: $d_\la-2\langle\rho,\la\rangle$ is replaced by
    \begin{equation}\label{eq:18}
        \Delta(\la) \defeq
        -\sum_{\alpha\in\Delta^+} |\langle\alpha,\la\rangle|
        + \frac12\sum_\chi |\langle\chi,\la\rangle|\dim\bN(\chi).
    \end{equation}
It is a simple exercise to check that the difference
\begin{equation*}
    d_\la-2\langle\rho,\la\rangle - \Delta(\la)
    = - \frac12  \sum_\chi \langle\chi, \la\rangle \dim\bN(\chi)
\end{equation*}
\begin{NB}
    Though $d_\la$ is changed on Aug. 28, the difference is just
    changed by sign. So it is not essential.
\end{NB}%
depends only on the equivalence class $[\la]$ in $\pi_1(G) =
\pi_0(\cR)$. (In fact, it depends only on the free part of the abelian
group $\pi_1(G)$.)
\begin{NB}
    Recall $\pi_1(G)$ is $Y/X$, where $Y$ is the coweight lattice, $X$
    is the coroot lattice. We claim that $\sum_\chi
    \langle\chi,\la\rangle \dim \bN(\chi)$ depends only on the class of
    $\la$, the free part of $Y/X$.

    It is enough to check that $\sum_\chi \langle\chi,\la\rangle \dim
    \bN(\chi) = 0$ if $\la\in X$. If so, it is also zero if $\la\bmod
    X$ is a torsion element in $Y/X$. Indeed the left hand side is an
    integer, hence it is zero if its multiple is zero. Let $d'_\la =
    \sum_\chi \langle\chi,\la\rangle \dim \bN(\chi)$. Since $\dim
    \bN(\chi) = \dim \bN(w\chi)$ for a Weyl group element $w\in W$,
    $d'_\lambda = d'_{w\lambda}$. On the other hand, $d'_\la$ is
    linear in $\la$. Therefore $d'_{\alpha_i^\vee} =
    -d'_{-\alpha_i^\vee}$ for a simple coroot $\alpha_i^\vee$. The
    Weyl group invariance implies $d'_{\alpha_i^\vee} = 0$. Since
    $\alpha_i^\vee$ gives a base of $X$, we have $d'_\la = 0$ for
    $\la\in X$.
    \end{NB}%
    Therefore this correction term is harmless: we just shift the
    degree on each component of $\cR$. This shift turns out to be
    natural when we identify the Coulomb branch with known
    examples. The degree $\Delta(\la)$ is determined so that the
    corresponding $S^1$-action, the restriction of the
    $\CC^\times$-action in \subsecref{sec:grading}, extends to an
    $\SU(2)$-action on the Coulomb branch which rotates the
    hyperK\"ahler structure.\footnote{The third named author thanks
      Amihay Hanany for his explanation.}
    See \subsecref{subsec:abel_examples} below. See also
    \cite[Remarks~\ref{blowup_pre-compare_degrees_zas} and
      \ref{blowup_pre-compare_degrees_slice}]{blowup}.
    \begin{NB}
        Misha:

        It is a little difficult to identify the $\SU(2)$-action in
        the examples in Remarks~\ref{compare_degrees_zas}
        and~\ref{compare_degrees_slice} for HN. Therefore it is better
        to add a comment that the $\Delta(\la)$-degree should be
        coming from the $\SU(2)$-action.
    \end{NB}%

    It should be remarked also that the monopole formula is proposed
    under the assumption $2\Delta(\la)\ge 1$ for any $\la\neq 0$ (the
    so-called `good' or `ugly' theory.) This is to avoid a possible
    divergence in the infinite sum above pointed out in (1). On the
    other hand, our $H^{G_\cO}_*(\cR)$ is always well-defined even
    without this assumption. We do not have any problem even if its
    degree piece is infinite dimensional.

    Nevertheless the monopole formula is very useful to investigate
    expected properties of the Coulomb branch.

    (3) Suppose that $G$ is possibly disconnected.
    Since $\Gr_G$ depends on the connected component $G^0$ of $G$, our
    variety $\cR$ of triples does not see the component group. However
    the equivariant homology group \emph{does} see the component
    group: $H^{G_\cO}_*(\cR) \cong H_*^{G^0_\cO}(\cR)^\Gamma$, where
    $\Gamma = G/G^0$. (See e.g., \cite[Chap.~3, \S1,
    Example~3]{MR0423384}.)
\begin{NB}
    From the fibration $\Gamma\to EG\times_{G^0} X \to EG\times_G X$.
\end{NB}%
For the Coulomb branch defined in the next section, it means $\mathcal
M_C(G,\bN) = \mathcal M_C(G^0,\bN)/\Gamma$. For the monopole formula
above, we should understand $P_G(t;\la)$ as the Poincar\'e polynomial
of $H^*_{\Stab_G(\la)}(\mathrm{pt})$, where $\Stab_G(\la)$ is possibly
disconnected. See \cite[App.~A]{Cremonesi:2014uva} for examples of
computation.
\end{Remarks}

\section{Definition of Coulomb branches as affine schemes}
\label{sec:definition}

We define the convolution product on $H^{G_\cO}_*(\cR)$, following
\cite[\S7]{MR2135527} in this section. This gives us a definition of
the Coulomb branch as the spectrum of $H^{G_\cO}_*(\cR)$.

We use a sheaf theoretic framework for later applications, hence need
to make some points in the construction \cite[\S7]{MR2135527} to
actual statements, e.g., \lemref{lem:pull}.

\subsection{Convolution diagram}\label{subsec:convolution-diagram}

Recall the convolution diagram for the affine Grassmannian
(\cite[(4.1)]{MV2}):
\begin{equation}\label{eq:1}
    \begin{CD}
        \Gr_G\times\Gr_G @<p<< G_\cK\times\Gr_G 
    @>q>> \Gr_G\tilde\times\Gr_G @>m>> \Gr_G,
    \end{CD}
\end{equation}
Here $\Gr_G\tilde\times\Gr_G$ is the quotient
$G_\cK\times_{G_\cO}\Gr_G$. The maps $p$, $q$ are projections and $m$
is the multiplication. For $G_\cO$-equivariant perverse sheaves
$A_1$,$A_2$, the pullback $p^*(A_1\boxtimes A_2)$ descends to
$\Gr_G\tilde\times\Gr_G$ by the equivariance. Let us denote it by
$A_1\tilde\boxtimes A_2$.
\begin{NB}
$(q^*)^{-1}p^*(A_1\boxtimes A_2)$.
\end{NB}%
Then we define $A_1\star A_2$ by $m_*(A_1\tilde\boxtimes A_2)$.
\begin{NB}
$m_*(q^*)^{-1}p^*(A_1\boxtimes A_2)$.
\end{NB}
It defines a symmetric monoidal structure on the category of
$G_\cO$-equivariant perverse sheaves on $\Gr_G$, and is equivalent to
the monoidal category of finite dimensional representations of the
Langlands dual of $G$ \cite{MV2}.

\begin{NB}
Let us relate this diagram to the usual convolution diagram
\begin{equation*}
    \begin{CD}
    X\times X \times X\times X @<{p_{12}\times p_{23}}<<
    X\times X\times X @>{p_{13}}>> X\times X;
    \qquad p_{13*}(p_{12}^*(\bullet)\otimes p_{23}^*(\bullet)),
    \end{CD}
\end{equation*}
where $X = G/P$. We use the isomorphism of stacks
\begin{equation}\label{eq:3}
    \frac{G/P}{P} \cong \frac{G/P \times G/P}{G}.
\end{equation}
\begin{NB2}
    An object of the left hand side is a principal $P$-bundle $\scP\to
    S$ together with a $P$-equivariant morphism $\xi\colon \scP\to
    G/P$, i.e., $\xi(fp) = p^{-1} \xi(f)$ for $p\in P$. Morphisms are
    morphisms of $P$-bundles compatible with maps to $G/P$. Similarly
    an object in the right hand side is a principal $G$-bundle
    $\scP'\to S$ together with a $G$-equivariant morphism $\xi'\colon
    \scP'\to G/P\times G/P$. Now we define a functor
\begin{equation*}
    (\scP\to S, \xi) \longmapsto (\scP\times_P G\to S, \xi'), \qquad
    \xi'([f,g]) = (g^{-1}\bmod P, g^{-1}\xi(f)).
\end{equation*}
Since $\xi'([fp, p^{-1}g]) = (g^{-1} p\bmod P, g^{-1}p\xi(fp))
= (g^{-1}p\bmod P, g^{-1}\xi(f))$, this is well-defined. The $G$-action on $\scP\times_P G$ is given by $[f,g]\mapsto [f,gg']$ for $g'\in G$. Therefore $\xi'$ is $G$-equivariant.

The inverse functor is given by
\begin{equation*}
    (\scP',\xi') \longmapsto (\scP \defeq \xi^{\prime-1}(e\bmod P,G/P), \xi'|_\scP).
\end{equation*}

A naive map is given by
\begin{equation*}
    \begin{split}
    \frac{G/P}P \ni [g\bmod P] & \mapsto [(e\bmod P, g\bmod P)]
    \in \frac{G/P\times G/P}G
\\
    [g_1^{-1} g_2 \bmod P]
    & \leftmapsto [(g_1\bmod P, g_2\bmod P)].
    \end{split}
\end{equation*}
\end{NB2}%
The above diagram is using the left side picture, where $P$ is
hidden. We have
\begin{equation*}
    \begin{CD}
     \frac{G/P \times G/P}{G}\times
     \frac{G/P \times G/P}{G}
@<{p_{12}\times p_{23}}<< \frac{G/P \times G/P\times G/P}{G}
@= \frac{G/P \times G/P\times G/P}{G}
@>{p_{13}}>> \frac{G/P \times G/P}{G}
\\
  @| @| @| @|
\\
\frac{G/P}P\times \frac{G/P}P
@<<p<
\frac{G\times (G/P)}{P\times P}
@>\cong>q>
\frac{G\times_P (G/P)}P
@>>m>\frac{G/P}P.
    \end{CD}
\end{equation*}
The $G$-actions on the upper row are diagonal ones on the double and
triple products. The $P\times P$-action (resp. $P$-action) on the
second (resp.\ third) term in the lower row is
\begin{equation*}
    (g_1, g_2\bmod P)\cdot
    (p_1,p_2) = (p_1^{-1} g_1 p_2, p_2^{-1} g_2\bmod P),\quad
    \text{(resp.\ }
    [g_1, g_2\bmod P]\cdot p
    = [p^{-1} g_1, g_2\bmod P]\text{)}.
\end{equation*}
Then $p$, $q$, $m$ are equivariant morphisms. Here we take the first
projection $P\times P\to P$ as the group homomorphism for the morphism
$q$. Because $[p_1^{-1}g_1p_2, p_2^{-1}g_2\bmod P] = [p_1^{-1}g_1,
g_2\bmod P]$, it is indeed equivariant.

The map $q$ is an isomorphism of stacks: for a $P\times P$-bundle
$\scP$ with a $P\times P$-equivariant morphism $\xi\colon \scP\to
G\times G/P$, we take the quotient $\nicefrac{\scP}{\{1\}\times P}$ and
the induced morphism $\nicefrac{\xi}{\{1\}\times P}\colon
\nicefrac{\scP}{\{1\}\times P} \to G\times_P G/P$. The inverse is given
by the fiber product: given $\scP'$ with $\xi$, construct $\scP$ with
$\xi$ by
\begin{equation*}
    \begin{CD}
        \scP @>\xi>> G\times G/P
\\
@VVV @VVV
\\
    \scP' @>>\xi'> G\times_P G/P.
    \end{CD}
\end{equation*}
In the third vertical isomorphism, we use an isomorphism
$\frac{G/P\times G/P\times G/P}G\cong \frac{G/P\times G/P}P$, which is
given by just adding one more $G/P$ to \eqref{eq:3}, and an isomorphism
$G\times_P G/P\cong G/P\times G/P$ given by
\begin{alignat*}{3}
    G\times_P G/P \ni
    & [g_1, g_2\bmod P] && \quad\mapsto\quad && [g_1\bmod P, g_1g_2\bmod P]
    \in G/P\times G/P
\\
    & [g_1,g_1^{-1}g_2'\bmod P] && \quad\leftmapsto\quad &&
    [g_1\bmod P, g_2'\bmod P].
\end{alignat*}
\begin{NB2}
    Let us check the commutativity of the rightmost square. Let us
    take a $P$-bundle $\scP$ with a $P$-equivariant map
    $\xi\colon\scP\to G\times_P (G/P)$. We regard $\xi\colon
    \scP \to G/P\times G/P$ via the above isomorphism
    $G\times_P (G/P) \cong G/P \times G/P$.
    Applying $m$, we get $(\scP, \xi_2)$ where $\xi_2\colon\scP\to
    G/P$ is the second component of $\xi$. Then we go up to get
    $(\scP\times_P G, \xi')$ with $\xi'([f,g]) = (g^{-1}\bmod P,
    g^{-1}\xi_2(f))$. On the other hand, if we go up by the third
    vertical isomorphism, we get
    \(
      (\scP\times_P G,\xi'')
    \)
    with $\xi''([f,g]) = (g^{-1}\bmod P, g^{-1}\xi(f))$. Applying
    $p_{13}$, we get $(\scP\times_P G, \xi')$. Therefore the rightmost
    square is commutative.

    By the commutativity of the middle square of the diagram the
    second vertical isomorphism is given as follows: for a $P\times
    P$-bundle $\scP$ with a $P\times P$-equivariant morphism
    $\xi\colon \scP \to G\times (G/P)$, we take a $G$-bundle
    \(
    (\nicefrac{\scP}{\{1\}\times P})\times_P G
    = \scP\times_{P\times P} G
    \)
    with the induced morphism $\xi'$, where $P\times P$ acts on $G$ by
    $(p_1,p_2) g = p_1 g$, and
    \(
       \xi'([f,g]) = \left(g^{-1}\bmod P, g^{-1}\xi_G(f)\bmod P,
       g^{-1}\xi_G(f)\xi_{G/P}(f)\right)
    \)
    with $\xi = (\xi_G, \xi_{G/P})$. We have
    \begin{equation*}
        \begin{split}
            & \xi'([f\cdot(p_1,p_2),g]) = \left(g^{-1}\bmod P,
              g^{-1}\xi_G(f\cdot (p_1,p_2))\bmod P, g^{-1}\xi_G(f\cdot
              (p_1,p_2))\xi_{G/P}(f\cdot (p_1,p_2))\right)
            \\
            =\; &
            \left(g^{-1}\bmod P,
              g^{-1}p_1^{-1} \xi_G(f)\bmod P,
              g^{-1}p_1^{-1}\xi_G(f)p_2\,
              p_2^{-1}\xi_{G/P}(f)\right)
            \\
            =\; &
            \left(g^{-1}\bmod P,
              g^{-1}p_1^{-1} \xi_G(f)\bmod P,
              g^{-1}p_1^{-1}\xi_G(f)\xi_{G/P}(f)\right)
            = \xi'([f, p_1 g]).
        \end{split}
    \end{equation*}
    Therefore $\xi'$ is well-defined.

    Let us check the commutativity of the leftmost square: take a
    $P\times P$-bundle $\scP$ with $\xi\colon \scP\to G\times
    (G/P)$. If we go up and apply $p_{12}\times p_{23}$, we get a pair
    $(\nicefrac{\scP}{\{1\}\times P})\times_P G, \xi'_{12})$,
    $(\nicefrac{\scP}{\{1\}\times P})\times_P G, \xi'_{23})$, where
    $\xi'_{12}$, $\xi'_{23}$ are components of $\xi'$ above, i.e.,
    $\xi'_{12}([f,g]) = (g^{-1}\bmod P, g^{-1}\xi_G(f)\bmod P)$,
    $\xi'_{23}([f,g]) = (g^{-1}\xi_G(f)\bmod P,
    g^{-1}\xi_G(f)\xi_{G/P}(f))$.

    On the other hand, if we apply $p$, we get
    a pair
    \(
    \left(\nicefrac{\scP}{\{1\}\times P},
    \nicefrac{\xi_G}{\{1\}\times P}\bmod P\right),
    \)
    and
    \(
    \left(\nicefrac{\scP}{P\times \{1\}},
    \nicefrac{\xi_{G/P}}{P\times \{1\}}\right).
    \)
    Then we go up to get a pair
    \(
        \left((\nicefrac{\scP}{\{1\}\times P})\times_P G,
          \xi''_{12}
        \right),
        \)
        \(
    \left((\nicefrac{\scP}{P\times \{1\}})\times_P G,
      \xi''_{23}
  \right)
\)
with
\begin{equation*}
    \begin{split}
        & \xi''_{12}([f,g]) = \left(g^{-1}\bmod P,
   g^{-1}(\nicefrac{\xi_G}{\{1\}\times P})(f)\bmod P\right),\\
 &    \xi''_{23}([[f,g]]) = \left(g^{-1}\bmod P,
   g^{-1}(\nicefrac{\xi_{G/P}}{P\times \{1\}})(f)\right),
    \end{split}
\end{equation*}
where $[[f(p_1,p_2),g]] = [[f, p_2 g]]$, $[f(p_1,p_2),g] = [f, p_1
g]$. We have $\xi'_{12} = \xi''_{12}$. Hence the first factor is the
same.
    Note that the morphism $\xi_G\colon\scP\to G$ induces an isomorphism
    of $G$-bundles
    \begin{equation*}
        (\nicefrac{\scP}{P\times \{1\}})\times_P G \ni
        [[f, g]]\longmapsto
        [f,\xi_G(f)g]
        \in(\nicefrac{\scP}{\{1\}\times P})\times_P G.
    \end{equation*}
    Since $\xi_{23}''$ is mapped to $\xi'_{23}$ under this
    isomorphism, the second factor also matches. Hence the leftmost
    square is commutative.
\end{NB2}%
\end{NB}

\begin{NB}
    \begin{Remark}\label{rem:abelian}
    Let us suppose $G$ is abelian. In this case we have
    \begin{equation*}
        p'\colon \Gr_G\tilde\times\Gr_G\ni [g_1,[g_2]]
        \mapsto ([g_1], [g_2]) \in \Gr_G\times\Gr_G
    \end{equation*}
    is well-defined. In fact, the equivalence relation is given by
    $[g_1,[g_2]] = [g_1 b, [b^{-1}g_2]]$ for $b\in G_\cO$, but we have
    $[b^{-1}g_2] = [g_2 b^{-1}] = [g_2]$ as $G$ is abelian. We thus have
    $p = p'\circ q$. Therefore $p^* = q^* p^{\prime*}$. Thus
    \begin{equation*}
        A_1\star A_2 = m_* (q^*)^{-1} p^*(A_1\boxtimes A_2)
        = m_* p^{\prime*} (A_1\boxtimes A_2).
    \end{equation*}

    Moreover, $\Gr_G\tilde\times\Gr_G$ and $\Gr_G\times\Gr_G$ is
    isomorphic. The inverse of $p'$ is given by
    $([g_1],[g_2])\mapsto [g_1, [g_2]]$. This is well-defined again as
    $[g_1b, [g_2]] = [g_1, [bg_2]] = [g_1, [g_2b]] = [g_1, [g_2]]$. Then
    \begin{equation*}
        A_1\star A_2 = m'_*(A_1\boxtimes A_2),
    \end{equation*}
    where $m'\colon\Gr_G\times\Gr_G\to \Gr_G$ is given by
    $([g_1],[g_2])\mapsto [g_1g_2]$.
    \end{Remark}
\end{NB}%

Let us describe the convolution diagram \eqref{eq:1} in terms of
functors, as in \subsecref{triples}. This is given in
\cite[\S5]{MV2}.
\begin{NB}
    Let us just take a test scheme $S$ to be a point for brevity, as a
    generalization to arbitrary affine schemes is automatic. This is
    just saying a scheme is the {\it moduli space\/} parametrizing
    something.
\end{NB}%
The leftmost space $\Gr_G\times \Gr_G$ is the moduli space of
$(\scP_1,\varphi_1,\scP_2,\varphi_2)$, two $G$-bundles on the formal
disk $D$ with trivializations over the punctured disk $D^*$.
The next $G_\cK\times\Gr_G$ is the moduli of
$(\scP_1,\varphi_1,\kappa, \scP_2,\varphi_2)$, the above data together
with a trivialization $\kappa$ of $\scP_1$ over $D$.
The third space $\Gr_G\tilde\times\Gr_G$ is the moduli of
$(\scP_1,\varphi_1,\scP_2,\eta)$, where $\eta$ is an isomorphism
between $\scP_1$ and $\scP_2$ over $D^*$.
\begin{NB}
    In \cite[\S5]{MV2}, $\scP_2$ in $\Gr_G\tilde\times\Gr_G$, more precisely
    in the definition of $q$, is another $G$-bundle $\scP$. But $\scP$
    is the $G$-bundle obtained by gluing $\scP_1|_{X\setminus x_2}$
    and $\scP_2|_{\hat{X}_{x_2}}$ by $\varphi_2^{-1}
    \circ\kappa$. Therefore it is the same.
\end{NB}%

The map $p$ just forgets $\kappa$. The map $q$ is
$(\scP_1,\varphi_1,\kappa,\scP_2,\varphi_2)\mapsto
(\scP_1,\varphi_1,\scP_2,\varphi_2^{-1}\circ\kappa|_{D^*})$. Note
$\varphi_2^{-1}\circ\kappa|_{D^*}$ is an isomorphism from $\scP_1|_{D^*}$
to $\scP_2|_{D^*}$, as required. Finally $m$ is given by
$(\scP_1,\varphi_1,\scP_2,\eta)\mapsto (\scP_2,\varphi_1\circ\eta^{-1})$.

\begin{NB}
    Next we need to introduce the diagram for $\cR$. We first consider
    the finite dimensional situation. The usual convolution diagram is
    \begin{equation*}
        \begin{CD}
            T^*\mathfrak B\times T^*\mathfrak B\times T^*\mathfrak
            B\times T^*\mathfrak B
            @<{\id\times\Delta\times\id}<<
            T^*\mathfrak B\times T^*\mathfrak B\times T^*\mathfrak B
            @>{p_{13}}>> T^*\mathfrak B\times T^*\mathfrak B
\\
    @AAA @AAA @AAA
\\
            \St \times \St @<<< p_{12}^{-1}(\St)\cap p_{23}^{-1}(\St)
            @>>> p_{13}(p_{12}^{-1}(\St)\cap p_{23}^{-1}(\St)) = \St.
        \end{CD}
    \end{equation*}
    Let us rewrite the convolution product in terms of
    $\overline{\St}$. The closed embedding $T^*\mathfrak
    B\times T^*\mathfrak B\supset\St$ is replaced by
    \begin{equation*}
        \{ (B_1,x) \in
        \mathfrak B\times\mathfrak g\mid
        x\in\mathfrak n_1 \} = T^*\mathfrak B
          \supset
          \overline{\St} = \{ (B_1,x)\in T^*\mathfrak B \mid
          x\in\mathfrak n\},
    \end{equation*}
    as we fix $B_2 = B$. The diagram above is replaced as
    \begin{equation*}
        \begin{CD}
            \frac{\St}G
            \times \frac{\St}G
            @=
            \frac{\St}G
            \times \frac{\St}G
            @<<<
            \frac{p_{12}^{-1}(\St)\cap p_{23}^{-1}(\St)}{G}
            @>{p_{13}}>>
            \frac{\St}G
\\
            @| @| @| @|
\\
            \frac{\overline{\St}}B \times \frac{\overline{\St}}B @=
            \frac{\overline{\St}}B\times\frac{\St}G
            @<<< \frac{ \{ (B_1,B_2,x)\in\St \mid x\in\mathfrak n\}}B @>>> \frac{\overline{\St}}B,
        \end{CD}
    \end{equation*}
    where three projections from $\{ (B_1,B_2,x)\in\St\mid
    x\in\mathfrak n\}$ to $\overline{\St}$ (twice) and $\St$ are given
    by taking $(B_1,x)$, $(B_2,x)$ and $(B_1,B_2,x)$.

    From this replacement, it is more or less clear that the above
    convolution is the same as usual one. But we still need smoothness
    to define a pull-back of a homology class.
\end{NB}

The goal of this subsection is to introduce corresponding diagrams for
$\cR$.
Recall that $\cT$ is the quotient $G_\cK\times_{G_\cO}\bN_\cO$, and we
have an embedding $\cT\hookrightarrow \Gr_G\times \bN_\cK$ such that $\cR =
\cT\cap (\Gr_G\times\bN_\cO)$.
\begin{NB}
    Let $\tilde\cR \defeq \{ (g,v) \mid gv\in \bN_\cO\}$ denote the
    inverse image of $\cR$ in $G_\cK\times \bN_\cO$.
\end{NB}%
We consider the induced space
\(
     G_\cK\times_{G_\cO}\cR.
\)
It consists of $\bigl[g_1, [g_2,s]\bigr]$ with $g_1\in G_\cK$,
$[g_2,s]\in \cR\subset \cT = G_\cK\times_{G_\cO}\bN_\cO$. We have
$\bigl[g_1, [g_2,s]\bigr] = \bigl[g_1 b, [b^{-1}g_2,s]\bigr]$ for $b\in
G_\cO$. We consider the diagram
\begin{equation}\label{eq:12}
    \begin{CD}
        \cR \times\cR @<\tilde p<< p^{-1}(\cR\times\cR)
        @>\tilde q>> q(p^{-1}(\cR\times\cR))
        @>\tilde m>> \cR
        \\
        @V{i\times\id_\cR}VV @V{i'}VV @VVV @VV{i}V
        \\
        \cT\times \cR @<p<< G_\cK\times\cR @>q>>
        G_\cK\times_{G_\cO}\cR @>m>> \cT,
    \end{CD}
\end{equation}
\begin{NB}
I slightly change the definition. Apr. 10, 2015.
\begin{equation*}
    \begin{CD}
        \tilde\cR \times\cR @<\tilde p<< p^{-1}(\tilde\cR\times\cR)
        @>\tilde q>> q(p^{-1}(\tilde\cR\times\cR))
        @>\tilde m>> \cR
        \\
        @V{i}VV @V{i'}VV @VVV @VVV
        \\
        G_\cK\times\bN_\cO\times \cR @<p<< G_\cK\times\cR @>q>>
        G_\cK\times_{G_\cO}\cR @>m>> \cT,
    \end{CD}
\end{equation*}
\end{NB}%
where
\begin{NB}
    $p^{-1}(\cR\times\cR) = \{ (g_1, [g_2, s]) \mid g_1 g_2
    s\in\bN_\cO\}$, $q(p^{-1}(\cR\times\cR)) = \{ [g_1, [g_2,
    s]] \mid g_1 g_2 s\in\bN_\cO\}$.
\end{NB}%
the first row consists of closed subvarieties in spaces in the second
row. Maps in the second row are given by
\begin{equation}\label{eq:42}
    \left([g_1,g_2s], [g_2,s]\right) \leftmapsto \left(g_1,[g_2,s]\right)
    \mapsto
    \bigl[g_1, [g_2,s]\bigr] \mapsto [g_1g_2, s].
\end{equation}
Since $p^{-1}(\cR\times\cR) = \{ (g_1, [g_2, s]) \mid g_1 g_2
s\in\bN_\cO\}$, the target of $\tilde m$ is $\cR$ as required.
The map $m$ factors as
$G_\cK\times_{G_\cO}\cR\xrightarrow{\id_{G_\cK}\times_{G_\cO}i}
G_{\cK}\times_{G_\cO}\cT\xrightarrow{m'}\cT$, and $m'$ (and hence also
$m$) is ind-proper.

Let us introduce the following group actions on terms in the second
row:
\begin{equation}\label{eq:41}
    \begin{split}
        G_\cO \times G_\cO\curvearrowright
        \cT\times\cR ;\ &
        (g,h)\cdot \left([g_1,s_1],[g_2,s_2]\right)
        = \left([gg_1, s_1], [hg_2,s_2] \right),
        \\
        G_\cO \times G_\cO \curvearrowright G_\cK\times\cR ;\ &
        (g,h)\cdot \left(g_1, [g_2,s]\right)
        = \left(gg_1h^{-1}, [hg_2, s]\right),
        \\
        G_\cO \curvearrowright G_\cK\times_{G_\cO}\cR ;\ &
        g\cdot \left[g_1, [g_2,s]\right] = \left[gg_1, [g_2, s]\right],
        \\
        G_\cO \curvearrowright \cT ;\ &
        g\cdot [g_1,s] = [gg_1, s].
    \end{split}
\end{equation}
These actions preserve spaces in the first row. Moreover morphisms
$p$, $q$, $m$ are equivariant, where we take a group homomorphism
$p_1\colon G_\cO\times G_\cO\to G_\cO$ for $q$.

Let us understand spaces in \eqref{eq:12} as moduli spaces.
Let us first consider spaces in the lower row.

The space $\cT\times\cR$ is clear. It is the moduli space of
$(\scP_1,\varphi_1,s_1, \scP_2, \varphi_2, s_2)$ such that
$(\scP_1,\varphi_1)$, $(\scP_2,\varphi_2)\in\Gr_G$ and $s_1$, $s_2$
are sections of the associated bundles $\scP_{1,\bN}$,
$\scP_{2,\bN}$. We require $\varphi_{2,\bN}(s_2)\in\bN_\cO$.
The second space $G_\cK\times\cR$ is the moduli of
$(\scP_1,\varphi_1,\kappa,\scP_2, \varphi_2, s_2)$, where $\kappa$ is
a trivialization of $\scP_1$ as in the affine Grassmannian case.
The third space $G_\cK\times_{G_\cO}\cR$ is the moduli of
$(\scP_1,\varphi_1,\scP_2,s_2,\eta)$, where $\eta$ is an isomorphism
between $\scP_1$ and $\scP_2$ over $D^*$.
\begin{NB}
    Added on Apr. 21.
\end{NB}%
We require $\eta_{\bN}^{-1}(s_2)\in H^0(\scP_{1,\bN})$.

Let us describe maps. The map $p$ is given as follows. Let us take
$(\scP_1,\varphi_1,\kappa,\scP_2,\varphi_2,s_2)$ from
$G_\cK\times\cR$.  Since $\varphi_{2,\bN}(s_2)\in\bN_\cO$, it can be
considered as a section of the trivial bundle $D\times\bN$ over
$D$. We transfer it to a section of $\scP_{1,\bN}$ by
$\kappa_\bN\colon \scP_{1,\bN}\to D\times\bN$. We denote it by
$\kappa_\bN^{-1}\circ \varphi_{2,\bN}(s_2)$. Then we have
$(\scP_1,\varphi_1,\kappa_\bN^{-1}\circ
\varphi_{2,\bN}(s_2),\scP_2,\varphi_2,s_2)\in\cT\times\cR$.
The map $q$ is given by
\(
    (\scP_1,\varphi_1,\kappa,\scP_2,\varphi_2,s_2)\mapsto
    (\scP_1,\varphi_1,s_2,\scP_2,\varphi_2^{-1}\circ\kappa|_{D^*}).
\)
\begin{NB}
    Added on Apr. 21.
\end{NB}%
The condition $\varphi_{2,\bN}(s_2)\in\bN_\cO$ is equivalent to
$\eta_\bN^{-1}(s_2) = \kappa_\bN^{-1}\varphi_{2,\bN}(s_2)\in H^0(\scP_{1,\bN})$.
Since we do not need to touch the section $s_2$, it is essentially the
same as the corresponding map in the affine Grassmannian case.
Finally the map $m$ is given by
\(
   (\scP_1,\varphi_1,\scP_2,s_2,\eta)\mapsto
   (\scP_2,\varphi_1\circ\eta^{-1},s_2).
\)
This is again the same as the affine Grassmannian case, but we note that
there is no reason that the trivialization $\varphi_1\circ\eta^{-1}$ sends $s_2$ to $\bN_\cO$. Therefore the target of $m$ is $\cT$, not~$\cR$.

Let us go to the spaces in the upper row. They are sub-ind-schemes of
spaces in the lower row. So we describe the conditions defining the
upper spaces in the lower spaces. The space $\cR\times\cR$ is
clear. We impose $\varphi_{1,\bN}(s_1)\in\bN_\cO$ in
$\cT\times\cR$. The second space $p^{-1}(\cR\times\cR)$ is given by
the condition
$\varphi_{1,\bN}\circ\kappa_\bN^{-1}\circ\varphi_{2,\bN}(s_2)\in\bN_\cO$.
For the third space $q(p^{-1}(\cR\times\cR))$, we need to rewrite this
condition as $\varphi_{1,\bN}\circ\eta_\bN^{-1}(s_2)\in\bN_\cO$. Since
$q$ is given by setting $\eta = \varphi_2^{-1}\circ\kappa|_{D^*}$, two
conditions are equivalent as required.
Finally $m$ sends $q(p^{-1}(\cR\times\cR))$ to $\cR$, as
$\varphi_1\circ\eta^{-1}$ is the new trivialization of $\scP_2$ given
by the map $m$.

\begin{NB}

 We first introduce relevant moduli spaces. Let $X$ denote an
ind-scheme defined as the moduli space for the following data
\begin{enumerate}
      \item $\scP_1,\scP_2$ -- $G$-bundles on the formal disc $D$.
      \item A trivialization $\varphi_1$ of $\scP_1$ on $D^*$ and an isomorphism $\eta$ between $\scP_1$ and $\scP_2$ on $D^*$.
    \item Sections $s_i$ of $\scP_{i,\bN}$ (on $D$) for $i=1,2$.
\end{enumerate}
This is the space corresponding to $\Gr_G\tilde\times\Gr_G$, but we need more
in our case. Let $X_1$, $X_2$ be closed sub-ind-schemes of $X$ where
$X_1$ is given by the condition that $\varphi_{1,\bN}(s_1)\in
\bN_{\cO}$ ({\it a priori\/} it is an element of $\bN_{\cK}$), and
$X_2$ is given by the condition that $s_2=\eta(s_1)$.

Note that we have a natural map $m$ from $X\to \cT$ sending the above
data to $(\scP_2,\varphi_1\circ\eta^{-1},s_2)$.  This map is
ind-proper when restricted to $X_2$ and it sends $X_1\cap X_2$ to
$\cR$. In addition we have a map $p_1\colon X\to \cT$ sending the
above data to $(\scP_1,\varphi_1,s_1)$, which sends $X_1$ to $\cR$.

We consider the space $\tilde X_2$ consisting of the above data (for
$X_2$) together with a trivialization $\kappa$ on $\scP_1$ on $D$.
It is endowed with a map $p_2\colon \tilde X_2\to \cT$ sending the
above data to $(\scP_2,\kappa|_{D^*}\circ\eta^{-1},s_2)$.
But $\eta^{-1}(s_2) = s_1$ is a regular section of $\scP_{1,\bN}$ and
$\kappa$ is defined over $D$. Therefore
$\kappa_\bN\circ\eta_\bN^{-1}(s_2)\in\bN_\cO$, hence $p_2$ sends
$\tilde X_2$ to $\cR$.
Furthermore, the only remaining data to describe a point in $\tilde
X_2$ is just $\varphi_1$, a trivialization of $\scP_1$ on
$D^*$. Therefore $\tilde X_2$ is isomorphic to $G_\cK\times \cR$.
We have a $G_\cO$-action on $\tilde X_2$ changing the trivialization
$\kappa$ on $D$. This action is free and we have $X_2 = \tilde X_2/G_\cO$.
Let $q\colon\tilde X_2\to X_2$ denote the quotient map.
The map $p_1\colon X\to\cT$ induces $\tilde X\to\cT$, which is denoted
also by $p_1$.

Spaces and maps are summarized in the diagram as
\begin{equation*}
    \begin{CD}
        \cR \times\cR @<\tilde p<< p^{-1}(\cR\times\cR)
        @>\tilde q>> q(p^{-1}(\cR\times\cR)) = X_1\cap X_2
        @>\tilde m>> \cR
        \\
        @V{i}VV @V{i'}VV @VVV @VVV
        \\
        \cT\times \cR @<p<< \tilde X_2 @>q>>
        X_2 @>m>> \cT,
    \end{CD}
\end{equation*}
where we denote $(p_1,p_2)$ by $p$.

\begin{NB2}
A diagram including the space $X$:
\begin{equation*}
    \xymatrix{\cR \ar@{->}[d] && X_1 \ar@{->}[ll] \ar@{->}[d] & \\
      \cT && X \ar@{->}[ll] \ar@{->}[r] & \cT \\
      \cR & \tilde X_2 \ar@{->}[l] \ar@{->}[r] & X_2 \ar@{->}[u] &
    }
\end{equation*}
\end{NB2}
\end{NB}

\subsection{Abstract nonsense}\label{subsec:AN}

We need some preparatory material before we give a definition of the
convolution product.

\subsubsection{}\label{subsubsec:a}

Let $A$ be a complex of sheaves on a space $X$. Then we have a canonical
homomorphism $A\otimes \DD A\to \DC_X$. (See \cite[(8.3.17)]{CG}.)

\subsubsection{}\label{subsubsec:b}
We define the pull-back homomorphism with support (cf.\
\cite[8.3.21]{CG}).

Let
\begin{equation*}
    \begin{CD}
        M @>>f> N \\ @A{i}AA @AA{j}A \\ X @>\tilde f>> Y
    \end{CD}
\end{equation*}
be a Cartesian square. Let $A$, $B$ be complexes on $N$, $M$
respectively. For a homomorphism
\(
     \varphi\in \Hom(A,f_* B) = \Hom(f^* A,B),
\)
we define the {\it pull-back with support homomorphism\/} in
\(
     \Hom(j^! A, \tilde f_* i^! B)
\)
as the composite of
\begin{equation*}
    j^! A \to
    j^! f_* f^* A \cong \tilde f_* i^! f^* A
    \xrightarrow{\tilde f_* i^!\varphi} \tilde f_* i^! B,
\end{equation*}
where the first map is the adjunction and the middle isomorphism is
base change. It induces a homomorphism $H^*(j^!A)\to H^*(i^!  B)$ on
hypercohomology. We denote both homomorphisms by $f^*$. Note that it
depends on the map $f$ between $M$ and $N$, though complexes $j^!  A$,
$i^! B$ are on $Y$, $X$, and the map between $X$ and $Y$ is $\tilde
f$.

\subsubsection{}\label{subsubsec:c}

We define an `intersection pairing'. Let $A$ be a complex on a space
$X$. By \ref{subsubsec:a} above, we have
\begin{equation*}
    H^*(A)\otimes H^*(\DD A)\to H^*(\DC_X).
\end{equation*}
It is constructed as follows. Let $\Delta\colon X\to X\otimes X$ be
the diagonal embedding. We have $\Delta^*(A\boxtimes \DD A) = A\otimes
\DD A$. We have the adjunction homomorphism $H^*(A)\otimes H^*(\DD A)
= H^*(A\boxtimes\DD A) \to H^*(\Delta_* \Delta^*(A\boxtimes\DD A)) =
H^*(A\otimes \DD A)$. We now compose \ref{subsubsec:a}.

\subsection{Convolution product}\label{subsec:convolution-product}

We return back to \eqref{eq:12}.
The leftmost square is Cartesian. Thanks to the above definition, a
homomorphism $p^*\DC_{\cT\times\cR}\to \DC_{G_\cK\times\cR}$ induces
pull-back with support homomorphism $\DC_{\cR\times\cR} =
(i\times\id_\cR)^!\DC_{\cT\times\cR}\to \tilde p_* i^{\prime !}\DC_{G_\cK\times\cR} =
\tilde p_* \DC_{p^{-1}(\cR\times\cR)}$.
Therefore we want to understand $p^* \DC_{\cT\times\cR}$.
Let us write $p=(p_\cT,p_\cR)$ according to factors of $\cT\times\cR$. Then
$p^*\DC_{\cT\times\cR} = p_\cT^*\DC_\cT\otimes p_\cR^*\DC_\cR$.

\begin{Lemma}\label{lem:pull}
    We have isomorphisms of $G_\cO\times G_\cO$-equivariant complexes
\begin{equation}\label{eq:13}
    \begin{split}
    p_\cR^*\DC_{\cR} & \cong \CC_{G_\cK}\boxtimes\DC_{\cR},\\
    p_\cT^* \DC_{\cT} 
    & \cong \DC_{G_\cK}\boxtimes\CC_{\cR}[2\dim\bN_\cO-2\dim G_\cO],
\\
    p^* \DC_{\cT\times\cR}
    & \cong \DC_{G_\cK\times\cR}[2\dim\bN_\cO-2\dim G_\cO].
    \end{split}
\end{equation}
\end{Lemma}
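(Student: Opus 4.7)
The plan is to reduce the three isomorphisms to a single key computation. The first formula is immediate since $p_\cR$ is literally the projection $G_\cK \times \cR \to \cR$ onto the second factor; hence $p_\cR^*\DC_\cR \cong \CC_{G_\cK} \boxtimes \DC_\cR$ by combining $\DC_{G_\cK\times\cR} = \DC_{G_\cK}\boxtimes\DC_\cR$ with the smooth pullback shift $-2\dim G_\cK$ that converts $\DC_{G_\cK}$ into $\CC_{G_\cK}$. The third formula will follow from the first two via $p^*\DC_{\cT\times\cR} = p_\cT^*\DC_\cT \otimes p_\cR^*\DC_\cR$ together with the routine tensor product identity $(\DC_{G_\cK}\boxtimes\CC_\cR) \otimes (\CC_{G_\cK}\boxtimes\DC_\cR) \cong \DC_{G_\cK\times\cR}$. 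Thus the only real content of the lemma is the second formula.

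The key idea for that formula is to factor $p_\cT$ through the tautological quotient map defining $\cT$. Concretely, let $\mu\colon G_\cK\times\bN_\cO\to\cT$ be the quotient map $(g,v)\mapsto [g,v]$ realizing $\cT = G_\cK\times_{G_\cO}\bN_\cO$, and let $\tilde\nu\colon G_\cK\times\cR\to G_\cK\times\bN_\cO$ be $(g_1,[g_2,s])\mapsto (g_1,g_2 s)$. The map $\tilde\nu$ is well-defined because for $b\in G_\cO$ the product $g_2 s$ is invariant under $(g_2,s)\sim (g_2 b^{-1}, bs)$, and it indeed lands in $G_\cK\times\bN_\cO$ precisely by the defining condition $g_2 s\in\bN_\cO$ of $\cR\subset\cT$. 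A direct check gives $\mu\circ\tilde\nu = p_\cT$, and moreover $\tilde\nu = \id_{G_\cK}\times \Pi|_\cR$, where $\Pi|_\cR\colon\cR\to\bN_\cO$ is the restriction of the map $\Pi$ of \subsecref{triples}.

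Each factor is now easy to analyze. The $G_\cO$-action $b\cdot(g,v)=(gb^{-1},bv)$ on $G_\cK\times\bN_\cO$ is free with quotient $\mu$, so $\mu$ is a principal $G_\cO$-bundle, in particular smooth of relative dimension $\dim G_\cO$, whence $\mu^*\DC_\cT \cong \DC_{G_\cK\times\bN_\cO}[-2\dim G_\cO]$. For $\tilde\nu$, since $\bN_\cO$ is an (ind-)affine space we have $\DC_{\bN_\cO}=\CC_{\bN_\cO}[2\dim\bN_\cO]$, so $\tilde\nu^*\DC_{G_\cK\times\bN_\cO} \cong \DC_{G_\cK}\boxtimes\CC_\cR[2\dim\bN_\cO]$; this step uses only the tautological pullback of a constant sheaf and requires no smoothness of $\Pi|_\cR$ itself (which in fact fails in general). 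Composing yields the second formula. The $G_\cO\times G_\cO$-equivariance with the actions in \eqref{eq:41} is automatic: each map in the factorization is equivariant for its natural action, and the dualizing and constant sheaves carry canonical equivariant structures.

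The main subtlety, rather than a serious obstacle, is the proper interpretation of the shifts $2\dim\bN_\cO$ and $2\dim G_\cO$ in the infinite-dimensional setting: these are formal and are to be read via the finite-dimensional approximations of \subsecref{subsec:equiv-borel-moore}; all operations used above (free quotients by finite-dimensional approximations of $G_\cO$, pullback of constant sheaves along morphisms of ind-schemes, and box products) are manifestly compatible with this convention at each finite-dimensional level, so the cited shifts make sense and the three isomorphisms hold as stated.
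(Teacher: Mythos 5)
Your proof is correct and follows essentially the same route as the paper: the first and third isomorphisms are reduced to the second, and the second is obtained by factoring $p_\cT$ as $(\id_{G_\cK}\times\Pi)$ followed by the quotient map $G_\cK\times\bN_\cO\to\cT$, using that the quotient is a $G_\cO$-fibration and that $\DC_{\bN_\cO}=\CC_{\bN_\cO}[2\dim\bN_\cO]$. The only cosmetic difference is your detour through $\DC_{G_\cK}$ for the first formula, where the paper simply observes that pullback along the projection to the second factor is the exterior product with the constant sheaf.
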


Here the degree shift $[2\dim\bN_\cO-2\dim G_\cO]$ is understood by
taking finite dimensional approximation as in
\subsecref{subsec:equiv-borel-moore}. The same applies to degree shifts
appearing in the proof below.

\begin{proof}
    The first isomorphism is obvious as $p_\cR$ is just the projection
    to the second factor.

Note that $p_\cT$ factorizes as
\begin{equation*}
    G_\cK\times\cR \xrightarrow{\id_{G_\cK}\times\Pi} G_\cK\times\bN_\cO
    \xrightarrow{p'_\cT} \cT,
\end{equation*}
where $\Pi\colon \cR\to\bN_\cO$ is the natural projection, and
the second map $p'_{\cT}$ is the quotient by $G_\cO$. Since
$p'_{\cT}$ is a fiber bundle with smooth fibers,
\(
    p_\cT^{\prime*} \DC_{\cT} \cong 
    \DC_{G_\cK}\boxtimes\DC_{\bN_\cO}[-2\dim G_\cO].
\)
Since $\bN_\cO$ is smooth, we have $\DC_{\bN_\cO} =
\CC_{\bN_\cO}[2\dim\bN_\cO]$.
We pull back further by $\id_{G_\cK}\times\Pi$. Since the pull-back of the constant sheaf is again constant sheaf,
\begin{equation*}
    p_\cT^* \DC_{\cT} 
    \cong \DC_{G_\cK}\boxtimes\CC_{\cR}[2\dim\bN_\cO-2\dim G_\cO].
\end{equation*}

Finally a tensor product of any complex $A$ with the constant sheaf
is $A$ itself.
\begin{NB}
    Recall $\Delta^*(A\boxtimes B) = A\otimes B$, while
$\Delta^!(A\boxtimes B) = A\overset{!}\otimes B$.
\end{NB}%
Hence we obtain \eqref{eq:13}. It is an isomorphism of $G_\cO\times
G_\cO$-equivariant sheaves.
\begin{NB2}
    Why ? $\Delta$, $p'$, $p''$ are all $G_\cO\times G_\cO$
    equivariant, so all the sheaves appeared here are $G_\cO\times
    G_\cO$-equivariant sheaves. Then isomorphisms we have used are
    $G_\cO\times G_\cO$-equivariant.
\end{NB2}%
\end{proof}

\begin{NB}
    Here is the original argument:

Note that $p$ factorizes as
\begin{equation*}
    G_\cK\times\cR\xrightarrow{\Delta}
    G_\cK\times\cR\times G_\cK\times\cR
    \xrightarrow{p'}G_\cK\times\bN_{\cO}\times\cR
    \xrightarrow{p''} \cT\times\cR,
\end{equation*}
where $\Delta$ is the diagonal embedding, $p'$ is
$(g_1,[g_2,s],g_3,[g_4,s])\mapsto (g_3, g_2s, [g_4,s])$ and the final
map $p''$ is the quotient by $G_\cO$.
Since $p''$ is a fiber bundle with smooth fibers,
\(
   p^{\prime\prime*}\DC_{\cT\times\cR}
   = \DC_{G_\cK\times\bN_\cO\times\cR}[-2\dim G_\cO].
\)
\begin{NB2}
    Since $\dim G_\cO = \infty$, we need to justify what this mean.
\end{NB2}%
Since $\bN_\cO$ is smooth, we have $\DC_{\bN_\cO} =
\CC_{\bN_\cO}[2\dim\bN_\cO]$.
\begin{NB2}
    Since $\dim\bN_\cO = \infty$, we need to justify what this mean.
\end{NB2}%
Therefore
\begin{equation*}
    p^* \DC_{\cT\times\cR}
    \cong \Delta^* p^{\prime*}(\DC_{G_\cK\times\cR}\boxtimes
    \CC_{\bN_\cO}[2\dim\bN_\cO-2\dim G_\cO]).
\end{equation*}
Since $p'$ is the product of the identity on $G_\cK\times\cR$ and
the map $G_\cK\times\cR\to\bN_\cO$, we have
\(
   p^{\prime*}(\DC_{G_\cK\times\cR}\boxtimes
    \CC_{\bN_\cO})
    \cong \DC_{G_\cK\times\cR}\boxtimes \CC_{G_\cK\times\cR}.
\)
Finally a tensor product of any sheaf $A$ with the constant sheaf
is $A$ itself.
\begin{NB2}
    Recall $\Delta^*(A\boxtimes B) = A\otimes B$, while
$\Delta^!(A\boxtimes B) = A\overset{!}\otimes B$.
\end{NB2}%
Hence we obtain \eqref{eq:13}. It is an isomorphism of $G_\cO\times
G_\cO$-equivariant sheaves.
\begin{NB2}
    Why ? $\Delta$, $p'$, $p''$ are all $G_\cO\times G_\cO$
    equivariant, so all the sheaves appeared here are $G_\cO\times
    G_\cO$-equivariant sheaves. Then isomorphisms we have used are
    $G_\cO\times G_\cO$-equivariant.
\end{NB2}%
\end{NB}

\begin{NB}
    The factorization of $p'$ and $\cT\times\cR$ are not compatible:
    That is $p' = (G_\cK\times\cR \to \bN_\cO)\times (G_\cK\times
    \cR\xrightarrow{\id} G_\cK\times\cR)$, and
    $p''=(G_\cK\times\bN_\cO\to\cT)\times (\cR\to\cR)$, but we cannot
    compose $p'$ and $p''$ so that factorization is preserved, as the
    location of $G_\cK$ are different.
\end{NB}

Using \eqref{eq:13} as $\varphi$ in
\subsecref{subsec:AN}\ref{subsubsec:b}, we get the restriction with
support homomorphism for sheaves and their hypercohomology groups:
\begin{equation}\label{eq:7}
    \begin{gathered}
    p^* \colon \DC_{\cR\times\cR}\to \tilde p_* \DC_{p^{-1}(\cR\times\cR)}
    [2\dim\bN_\cO-2\dim G_\cO],
\\
    p^* \colon H^{G_\cO}_*(\cR)\otimes H^{G_\cO}_*(\cR)
    \to H^{G_\cO\times G_\cO}_{*}
    (p^{-1}(\cR\times\cR)).
    \end{gathered}
\end{equation}
Note that $G_\cK\times\cR$ is not smooth, hence it is different from
the usual restriction with support (e.g., \cite[8.3.21]{CG}). Our
definition uses a special form of $p$.
\begin{NB}
The following no longer applies: Apr. 27

This is a formal argument, but
becomes rigorous when we replace spaces by their finite dimensional
approximations.
\end{NB}

Let us check the degree for the second equation in
\eqref{eq:7}. Recall the degree of $H^{G_\cO}_*(\cR)$ is given
relative to $2\dim\bN_\cO$. Similarly the degree of $H^{G_\cO\times
  G_\cO}_{*}(p^{-1}(\cR\times\cR))$ is given relative to
$2\dim\bN_\cO+2\dim G_\cO$, as $p^{-1}(\cR\times\cR)$ is a closed
subvariety in $G_\cK\times \cR$, whose homology group is shifted by
that. The difference of the degrees is $
\begin{NB}
    2\times 2\dim\bN_\cO - (2\dim\bN_\cO+2\dim G_\cO) =
\end{NB}%
2\dim\bN_\cO-2\dim G_\cO$
appeared above. Therefore $p^*$ preserves the degree.

\begin{Remark}
    \lemref{lem:pull} gives 
    homomorphisms
    \begin{equation*}
        \begin{split}
            &p_\cT^*\colon H^{G_\cO}_*(\cR)\to
            H_{G_\cO\times G_\cO}^{*
              \begin{NB}
                  -2\dim G_\cO+2\dim\bN_\cO
              \end{NB}%
              }
            (i^{\prime!}A),
            \\
            & p_\cR^*\colon H^{G_\cO}_*(\cR)\to
            H_{G_\cO\times G_\cO}^{*}(\DD A),
        \end{split}
    \end{equation*}
    where $A$ denotes $\DC_{G_\cK}\boxtimes\CC_\cR$ for short. Then
    $\CC_{G_\cK}\boxtimes\DC_\cR = \DD A$.

    We compose $i^{\prime*} \colon H_{G_\cO\times G_\cO}^{*}(\DD A)\to
    H_{G_\cO\times G_\cO}^*(i^{\prime*}\DD A)$ with the intersection pairing
    in \subsecref{subsec:AN}\ref{subsubsec:c}, we get
    \begin{equation*}
        H_{G_\cO\times G_\cO}^{*}(i^{\prime!}A)
        \otimes H_{G_\cO\times G_\cO}^{*}(\DD A)
        \to
        H_{G_\cO\times G_\cO}^{*}(\DC_{p^{-1}(\cR\times\cR)}).
    \end{equation*}
    The intersection pairing is defined as $i^{\prime*}\DD A = \DD
    i^{\prime!} A$. Thus $p^*$ is the composition of $p_\cT^*\otimes
    i^{\prime*} p_\cR^*$ and the intersection pairing.
    \begin{NB}
        This is Sasha's original approach.
    \end{NB}%
\end{Remark}

We further have
\begin{equation*}
    H^{G_\cO\times G_\cO}_*(p^{-1}(\cR\times\cR))
    \xleftarrow[\cong]{\tilde q^*}
    H^{G_\cO}_{*}(q(p^{-1}(\cR\times\cR)))
\end{equation*}
as in \eqref{eq:15}.
\begin{NB}
    The degree of the RHS is relative to $2\dim\bN_\cO$. The dimension
    of the fiber is $\dim G_\cO$, so the degree is preserved.
\end{NB}%
Since $\tilde m$ is ind-proper,
\begin{NB}
    why ? OK.
\end{NB}%
we have the push-forward homomorphism for the Borel-Moore homology. We
define a convolution product
\begin{equation*}
    c_1\ast c_2 = \tilde m_*(\tilde q^*)^{-1} p^* (c_1\otimes c_2)
    \quad\text{for $c_1, c_2\in H^{G_\cO}_*(\cR)$}.
\end{equation*}
The degree is preserved, so the product preserves the grading.
\begin{NB}
Original formulation : Keep it as a record.

This is of degree $-2\dim\bN_\cO$. Therefore if we shift the degree as
\begin{equation*}
    H^{G_\cO}_{[*]}(\cR)
    \cong
    H^{G_\cO}_{2\dim\bN_\cO-*}(\cR),
\end{equation*}
it becomes a graded algebra.
\end{NB}

\begin{Remarks}\label{rem:Steinberg}
(1)    We have an isomorphism
\begin{equation*}
    G_\cK\times_{G_\cO}\cR \xrightarrow{\cong} \cT \times_{\bN_\cK}\cT ;
\qquad
    \bigl[g_1,[g_2,s]\bigr] \mapsto ([g_1, g_2 s], [g_1 g_2, s]).
\end{equation*}
Since $g_1(g_2 s) = (g_1 g_2)s$, it is a fiber product over $\bN_\cK$
as required.
\begin{NB}
    The inverse is given by $([g_1,s_1],[g_2',s_2])\mapsto \bigl[g_1,
    [g_1^{-1}g_2', s_2]\bigr]$. Since it is a point in the fiber
    product, we have $g_1s_1 = g_2' s_2$. Therefore $g_1^{-1} g_2' s_2
    = s_1$ is in $\bN_\cO$. Therefore $[g_1^{-1}g_2', s_2]$ is in $\cR$.
\end{NB}%
\begin{NB}
    Added on May 4.
\end{NB}%
The map $m$ is interpreted as the second projection $p_2$ from
$\cT\times_{\bN_\cK}\cT$, while the first projection $p_1$ is the
descent of $p_\cT$. (The remaining $p_\cR$ does not seem to have a
simple interpretation in terms of $\cT\times_{\bN_\cK}\cT$.)

According to \remref{rem:St}, $\cT\times_{\bN_\cK}\cT$ is analog of
$\St$. It is tempting to define a convolution product on
$H^{G_\cK}_*(\cT\times_{\bN_\cK}\cT)$ as usual, namely using three
projections from $\cT\times_{\bN_\cK}\cT\times_{\bN_\cK}\cT$ to
$\cT\times_{\bN_\cK}\cT$. But we are not sure whether even this
homology group is well-defined or has a well-defined convolution
product unless we identify it with $H^{G_\cO}_*(\cR)$. For example, we
do not know how to define a convolution on {\it nonequivariant\/}
homology group $H_*(\cT\times_{\bN_\cK}\cT)$.

(2) There is alternative approach to a definition of the convolution
product in \cite{MR3013034}. The Kashiwara affine flag manifold played
an essential role there. We do not check whether this approach can be
applicable to our setting, nor gives the same convolution product.

(3) Our definition of the convolution product can be modified for the equivariant $K$-theory as follows.

First of all, the equivariant $K$-group of $\cR$ is defined as the
limit of $K^{G_i}(\cR^d_{\le\la})$ as for the equivariant Borel-Moore
homology group. We use the pullback with respect to
$\tilde{p}^{d}_{e}\colon \cR^{d}_{\le\la}\to \cR^{e}_{\le\la}$ and
pushforward with respect to the embedding $\cR^d_{\le\mu}\to \cR^d_{\le\la}$. These are well-defined on equivariant $K$-theory, as $\tilde{p}^{d}_{e}$ is flat and the embedding is proper.
We now omit these $d$, $\le\la$ from the notation and treat $\cR$ (and
other spaces $\cT$, $G_\cO$) as if it would be a finite dimensional
scheme.

Looking back the definition of the product for homology groups, we
only need to replace \lemref{lem:pull} and the pull-back homomorphism
with support by appropriate arguments which make sense for $K$-theory.

Take $E\in K^{G_O}(\cR)$. We consider it as a class of an object in
$D^b_{G_\cO}(\mathrm{Coh}(\cT))$ whose cohomology groups are supported
in $\cR$. We replace $p_{\cT}^{\prime *}(E)$ by its resolution,
consisting of sheaves which are flat over $\bN_\cO$. This is possible
since $\bN_\cO$ is smooth and \emph{finite dimensional} as it is
actually a truncation of the infinite dimensional $\bN_\cO$.
Now we further pull back $p_{\cT}^{\prime*}(E)$ by $\id\times \Pi$. It is
a complex defined over $G_\cK\times\cR$ consisting of sheaves which
are flat over $\cR$.

Taking another $F\in D^b_{G_\cO}(\mathrm{Coh}(\cR))$, we consider
$p_{\cR}^*(F)$, which is flat over $G_{\cK}$. Then thanks to the
flatness, $(\id\times \Pi)^* (p_{\cT}^{\prime*}(E)) \otimes^L
p_{\cR}^*(F)$ has only finitely many higher $\mathrm{Tor}$, hence a
well-defined object in $D^b_{G_O\times G_O\times
  G_O}(\mathrm{Coh}(G_{\cK}\times\cR))$.
Moreover its cohomology groups are supported on
$p^{-1}(\cR\times\cR)$.

This operation sends a distinguished triangle (either for $E$ or $F$) to a distinguished triangle. Therefore descends to the equivariant $K$-theory.

When $\bN = \g$, the adjoint representation of $G$, one can easily
check that this definition coincides with one in
\cite[\S7]{MR2135527}.
\begin{NB}
    The ambient spaces used in here and \cite{MR2135527} are
    different. We use $G_\cK\times\bN_\cO\times\cR$, while
    \cite{MR2135527} uses $\cT\times\cT$. The difference between the
    last factor $\cR$ vs $\cT$ is not essential:

We view $D^b(\mathrm{Coh}(\mathcal R))$ as objects on $\mathcal T$ whose cohomology groups are supported in $\mathcal R$, as analog of relative cohomology groups. Let us denote the latter by $D^b_{\mathcal R}(\mathrm{Coh}(\mathcal T))$.

Because $p_{\mathcal R}$ is just the projection to a factor, is flat. Therefore two pullbacks by $G_{\mathcal K}\times\mathbf N_O\times \mathcal R\to \mathcal R$ for $D^b(\mathrm{Coh}(\mathcal R))$ and $G_{\mathcal K}\times\mathbf N_O\times \mathcal T\to \mathcal T$ for $D^b_{\mathcal R}(\mathrm{Coh}(\mathcal T))$ are the same, under the equivalence $D^b(\mathrm{Coh}(\mathcal R))\cong D^b_{\mathcal R}(\mathrm{Coh}(\mathcal T))$.

Now let us compare $G_\cK\times\bN_\cO\times\cT$ and $\cT\times\cT$. We have
a commutative diagram
\begin{equation*}
    \begin{CD}
        p^{-1}({\mathcal R}\times{\mathcal R}) @>{q\circ i'}>>
        G_\cK\times_{G_\cO}\cR = \widetilde{\Lambda}
        \\
        @V{(\id_{G_\cK}\times\Pi\times (i\circ p_\cR))\circ i'}VV
        @VV{p_\cT\times m}V
        \\
        G_K\times{\mathbf N}_\cO\times{\mathcal T}
        @>>>
        \cT\times\cT,
    \end{CD}
\end{equation*}
where $p_\cT$ is $[g_1,[g_2,s]]\mapsto [g_1, g_2 s]$, descent of the
original $p_\cT$ through $q$. The bottom arrow is
$(g,s',[h,s''])\mapsto ([g,s'], [gh, s''])$.  If we replace
$\cT\times\cT$ by $G_\cK\times\bN_\cO\times\cT$, it is an isomorphism,
as the inverse is $(g,s', [g^{-1}g'',s'']) \leftmapsto (g,s',[g'',s''])$.
Therefore derived tensor products over $G_\cK\times\bN_\cO\times\cT$ vs
$\cT\times\cT$ yield the same result.
\end{NB}%

(4) Note that $\bN_\cK$ is a representation of $G_\cK$, and $\bN_\cO$
is a $G_\cO$-subrepresentation. Then $H^{G_\cO}_*(\cR)$ is an infinite
dimensional example of \emph{Springer theory} formulated in
\cite{2013arXiv1307.0973S}, as a uniform way to look at various
examples of geometric constructions of noncommutative algebras via
convolution. Namely for a reductive group $\mathbf G$, a parabolic
subgroup $\mathbf P$, a representation $V$ of $\mathbf G$, and a
$\mathbf P$-subrepresentation $V'$ of $V$, we introduce a variety
$\mathbf R \defeq \{ [g,s]\in \mathbf G\times_P V'\mid gs\in V' \}$
with $\mathbf P$-action given by $p[g,s] = [pg,s]$. Then
$H^{\mathbf P}_*(\mathbf R)$ is an associative algebra. We take
$(\mathbf G,\mathbf P, V,V') = (G_\cK, G_\cO, \bN_\cK, \bN_\cO)$ for
our $\cR$. Another infinite dimensional examples of Springer theory,
which are cousins of our $\cR$ for quiver gauge theories, are given
recently in \cite[\S4]{2016arXiv161110216B,2016arXiv160905494W}.
\end{Remarks}

\begin{NB}
The following is the original approach.

Recall that $\cT$ is the quotient $G_\cK\times_{G_\cO}\bN_\cO$, and we have a morphism $\pi\colon \cT\to\bN_\cK$ such that $\cR = \pi^{-1}(\bN_\cO)$.
We consider the induced space
\(
     G_\cK\times_{G_\cO}\cR.
\)
It consists of $\bigl[g_1, [g_2,s]\bigr]$ with $g_1\in G_\cK$,
$[g_2,s]\in \cR\subset \cT = G_\cK\times_{G_\cO}\bN_\cO$. We have
$\bigl[g_1, [g_2,s]\bigr] = \bigl[g_1 b, [b^{-1}g_2,s]\bigr]$ for $b\in
G_\cO$. We have an isomorphism
\begin{equation*}
    G_\cK\times_{G_\cO}\cR \xrightarrow{\cong} \cT \times_{\bN_\cK}\cT ;
\qquad
    \bigl[g_1,[g_2,s]\bigr] \mapsto ([g_1, g_2 s], [g_1 g_2, s]).
\end{equation*}
Since $g_1(g_2 s) = (g_1 g_2)s$, it is a fiber product over $\bN_\cK$
as required.
\begin{NB2}
    The inverse is given by $([g_1,s_1],[g_2',s_2])\mapsto \bigl[g_1,
    [g_1^{-1}g_2', s_2]\bigr]$. Since it is a point in the fiber
    product, we have $g_1s_1 = g_2' s_2$. Therefore $g_1^{-1} g_2' s_2
    = s_1$ is in $\bN_\cO$. Therefore $[g_1^{-1}g_2', s_2]$ is in $\cR$.
\end{NB2}%
As a moduli space, $G_\cK\times_{G_\cO}\cR\cong\cT\times_{\bN_\cK}\cT$
parametrizes pairs $(\scP_1,\varphi_1,s_1), (\scP_2,\varphi_2,s_2)$ of
framed $G$-bundles with sections of their associated bundles such that
$(\varphi_1)_\bN(s_1) = (\varphi_2)_\bN(s_2)$.

We have two projections
\begin{equation}\label{eq:4}
    \cT \xleftarrow{p_1}\cT\times_{\bN_\cK}\cT \xrightarrow{p_2} \cT,
\end{equation}
and $\cT\times_{\bN_\cK}\cT$ is a closed subscheme of a nonsingular
scheme $\cT\times\cT$.
\begin{NB2}
    Since the base $G_\cK/G_\cO$ is not smooth, this is not true. It
    is here probably we need the Kashiwara flag manifold.
\end{NB2}

Let $c_1,c_2\in H^{G_\cO}_*(\cR)$. We consider $c_2$ as a class in
$H^{G_\cO}_*(\cT\times_{\bN_\cK}\cT)$ via
\begin{equation*}
    1 \otimes c_2\in H^{G_\cO\times G_\cO}_*(G_\cK\times \cR)
    \cong H^{G_\cO}_*(G_{\cK}\times_{G_{\cO}}\cR).
\end{equation*}
We define the convolution product of $c_1$, $c_2$ by
\begin{equation*}
    c_1\ast c_2 = p_{2*}(p_1^*(c_1)\cap c_2).
\end{equation*}
Here the intersection product with support is defined by using the
smooth scheme $\cT\times\cT$ as an ambient space.
\begin{NB2}
    Here the smoothness seems crucial. I am not sure whether I can
    avoid the intersection product as in the case of $\Gr_G$.
\end{NB2}%
As $p_2(p_1^{-1}(\cR)\cap (\cT\times_{\bN_\cK}\cT))\subset\cR$, $c_1\ast
c_2$ is a cycle in $\cR$.
\begin{NB2}
    We also need $p_2$ to be proper.
\end{NB2}%
\end{NB}

\begin{Theorem}\label{thm:convolution}
    The convolution product $\ast$ defines an associative graded
    algebra structure on $H^{G_\cO}_*(\cR)$. The unit is given by the
    fundamental class of the fiber of $\cR\to\Gr_G$ at the base point
    $[1]\in \Gr_G$. The multiplication is
    $H_{G_\cO}^*(\mathrm{pt})$-linear in the first variable. The same
    assertions are true for $H^{G_\cO\rtimes\CC^\times}_*(\cR)$.
\end{Theorem}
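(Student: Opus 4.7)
The plan is to establish the four claims (grading, associativity, unit, $H^*_{G_\cO}(\mathrm{pt})$-linearity) by the standard convolution-algebra recipe, with the main work being a careful triple convolution diagram. The preservation of grading has already been verified in the degree computation following \eqref{eq:7}.

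For associativity, I would introduce a three-fold analog of \eqref{eq:12}. Consider the space $G_\cK \times_{G_\cO} G_\cK \times_{G_\cO} \cR$ together with its natural map to $\cT$ given by multiplication, and the variety $\mathbf{Y} \subset G_\cK\times G_\cK \times \cR$ of triples $(g_1,g_2,[g_3,s])$ such that $g_1g_2g_3 s \in \bN_\cO$ and $g_2 g_3 s \in \bN_\cO$. There are natural maps from $\mathbf{Y}$ to $\cR\times\cR\times\cR$ (by recording the three individual triples) and to $\cR$ (by multiplying). I would then compute both $(c_1\ast c_2)\ast c_3$ and $c_1\ast (c_2\ast c_3)$ by factoring through this common space. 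The key step is an iterated version of \lemref{lem:pull}: since the pull-back $p^*$ is built from tensoring with constant sheaves (plus a shift), iterated application produces the same sheaf on the triple space regardless of the order of grouping, which forces the two iterated pull-backs with support to agree. Combined with base change compatibility for $\tilde q^*$ and proper push-forward for $\tilde m_*$, this yields associativity.

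For the unit, note that the fiber $\pi^{-1}([1]) \subset \cR$ is exactly $\bN_\cO$ (with $g = 1$), and its $G_\cO$-equivariant fundamental class $[\bN_\cO] \in H^{G_\cO}_*(\cR)$ is an element of the right degree (relative to $2\dim\bN_\cO$). To show $[\bN_\cO]\ast c = c$, I would restrict the convolution diagram to $\{[1,s_1]\} \in \cT$ in the first factor; then $p^{-1}(\cR\times\cR) \cap (\{1\}\times\cR) \simeq \cR$, the map $\tilde q$ becomes an isomorphism, and $\tilde m$ becomes the identity, so the whole operation reduces to the identity on $H^{G_\cO}_*(\cR)$. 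The right unit property $c\ast [\bN_\cO] = c$ is analogous, essentially tautological once one unravels \eqref{eq:42} with $g_2 = 1$. For $H^*_{G_\cO}(\mathrm{pt})$-linearity in the first variable, this comes from the fact that the $H^*_{G_\cO}(\mathrm{pt})$-module structure on the first copy of $\cR$ is induced by the left $G_\cO$-action, which on the convolution diagram is precisely the action preserved by $p_\cT$, $\tilde q$, and $\tilde m$ (the action labelled by the first $G_\cO$ factor in \eqref{eq:41}); naturality of pull-back, pull-back with support, and proper push-forward then gives the linearity.

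The main obstacle is bookkeeping of finite-dimensional approximations: all of the above spaces are ind-schemes of ind-infinite type, so each step requires choosing compatible data $(d, i, \overline\la)$ and checking that the pull-back with support, base change isomorphism, and proper push-forward are compatible under passage to the limit as $d\to\infty$ and $\overline\la\to\infty$. In particular, the triple convolution diagram must admit a compatible family of approximations by finite-type schemes with the relevant maps being locally trivial affine fibrations with fibers of controlled dimension, so that \lemref{lem:pull} and the degree shift by $[2\dim\bN_\cO - 2\dim G_\cO]$ behave consistently. Once this is set up, the equivariant Borel-Moore homology computations are formally identical to the finite-dimensional case treated in \cite[\S7]{MR2135527}.

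Finally, the loop-rotation $\CC^\times$-action commutes with the $G_\cO$-action and preserves every morphism in \eqref{eq:12} and its triple analog. Therefore the entire proof applies verbatim with $G_\cO$ replaced by $G_\cO\rtimes\CC^\times$, yielding the associative, unital, $H^*_{G\times\CC^\times}(\mathrm{pt})$-linear structure on $H^{G_\cO\rtimes\CC^\times}_*(\cR)$.
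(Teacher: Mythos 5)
Your proposal follows essentially the same route as the paper: your space $\mathbf{Y}$ is exactly the paper's space $\boxed{1}$ in the diagram \eqref{eq:70}, the "iterated version of \lemref{lem:pull}" plus base change is precisely how the paper verifies commutativity of each square (by extending the squares to cubes over the ambient spaces), and the unit and linearity arguments coincide with the paper's. The approach is correct.
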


\begin{NB}
    Two diagrams \eqref{eq:1} and \eqref{eq:12} are related as
    \begin{equation*}
        \begin{CD}
            \cR \times\cR
            @<\tilde p<<
            p^{-1}(\cR\times\cR)
            @>\tilde q>>
            q(p^{-1}(\cR\times\cR))
            @>\tilde m>> \cR
\\
    @V{i}VV @V{i'}VV @VVV @VVV
\\
            \cT\times \cR @<p<< G_\cK\times\cR
            @>q>> G_\cK\times_{G_\cO}\cR @>m>> \cT
\\
    @VVV @VVV @VVV @VVV
\\
            \Gr_G\times\Gr_G @<<\bar p< G_\cK\times\Gr_G 
            @>>\bar q> \Gr_G\tilde\times\Gr_G @>>\bar m> \Gr_G.
        \end{CD}
    \end{equation*}
\end{NB}

Since we will prove that $H^{G_\cO}_*(\cR)$ is commutative, the
multiplication turns out to be linear in both first and second
variables. However it is not true for
$H^{G_\cO\rtimes\CC^\times}_*(\cR)$. See the computation in
\subsecref{sec:triv-properties}\ref{item:reduction} below.

\begin{proof}[Proof of \thmref{thm:convolution}]
    We prove the assertions for $G_\cO$ for notational simplicity. All
    the arguments work also for $G_\cO\rtimes\CC^\times$.

    The last assertion is clear from the definition.

    Let $e$ denote the fundamental class of the fiber of $\cR\to\Gr_G$
    at $[1]\in \Gr_G$. We prove
    \begin{equation*}
        e\ast \bullet = \id = \bullet\ast e
        \quad\text{on $H^{G_\cO}_*(\cR)$}.
    \end{equation*}
    Consider $e\ast \bullet$. The class $(\tilde q^*)^{-1}p^*(e\otimes
    \bullet)$ is given by the pushforward homomorphism
    $H^{G_\cO}_*(\cR)\to H^{G_\cO}_*(q(p^{-1}(\cR\times\cR)))$ with
    respect to the embedding $\cR\ni [g,s]\mapsto
    \left[\id,[g,s]\right]\in q(p^{-1}(\cR\times\cR))$. If we compose
    $\tilde m$, the embedding becomes just $\id_\cR$. Therefore $e\ast
    \bullet = \id$. Similarly $(\tilde q^*)^{-1}p^*(\bullet\otimes e)$ is
    given by the pushforward homomorphism of the embedding $\cR\ni
    [g,s]\mapsto \left[g,[\id,s]\right]\in
    q(p^{-1}(\cR\times\cR))$. If we compose $\tilde m$, it becomes
    $\id_\cR$ again. Hence $\bullet\ast e = \id$.

    It remains to prove the associativity.
    \begin{NB}
        For the affine Grassmannian, we consider
        \begin{equation*}
            \Gr_G\times \Gr_G\times\Gr_G
            \xleftarrow{p} G_\cK\times G_\cK\times\Gr_G
            \xrightarrow{q} G_\cK\times_{G_\cO}G_\cK\times_{G_\cO}\Gr_G
            \xrightarrow{m} \Gr_G,
        \end{equation*}
        given by
        \begin{equation*}
            ([g_1],[g_2],[g_3])\leftmapsto
            (g_1,g_2,[g_3])\to \left[ g_1, g_2, [g_3]\right]
            \to [g_1 g_2 g_3].
        \end{equation*}
        The last map $m$ factors in two ways
        \begin{equation*}
            \begin{CD}
                G_\cK\times_{G_\cO}G_\cK\times_{G_\cO}\Gr_G @>>>
                G_\cK\times_{G_\cO}\Gr_G \\
                @VVV @VVV \\
                G_\cK\times_{G_\cO}\Gr_G @>>> \Gr_G,
            \end{CD}
        \end{equation*}
        where the upper horizontal arrow is $\left[ g_1, g_2,
          [g_3]\right] \to \left[g_1 g_2, [g_3]\right]$, the left
        vertical arrow is $\left[ g_1, g_2, [g_3]\right] \to
        \left[g_1, [g_2 g_3]\right]$, and lower horizontal and right
        vertical arrows are multiplication maps in the original
        diagram \eqref{eq:1}. Then it is clear that $(A_1\ast A_2)\ast
        A_3 = m_* (q^*)^{-1} p^*(A_1\boxtimes A_2\boxtimes A_3) =
        A_1\ast(A_2\ast A_3)$.
    \end{NB}%

    \begin{NB}
        Note that $\cT\times_{\bN_\cK}\cT\times_{\bN_\cK}\cT \cong
        G_\cK\times_{G_\cO}(\cR\times_{\bN_\cO}\cR)$ with $
        \cR\times_{\bN_\cO}\cR = \{ ([g_1,s_1],
        [g_2,s_2])\in\cR\times\cR \mid g_1 s_1 = g_2 s_2 \}$.
        The isomorphism is given by
        \(
            G_\cK\times_{G_\cO}(\cR\times_{\bN_\cO}\cR)
            \ni [g_1, [g_2,s_2],[g_3,s_3]]
            \mapsto
            ([g_1,g_2 s_2], [g_1g_2, s_2], [g_1 g_3, s_3])
            \in \cT\times_{\bN_\cK}\cT\times_{\bN_\cK}\cT
        \)
        and its inverse
        \(
           ([g_1,s_1],[g_2,s_2],[g_3,s_3]) \mapsto
           [g_1, [g_1^{-1} g_2, s_2], [g_1^{-1}g_3, s_3]].
        \)
        $p_{13}\colon \cT\times_{\bN_\cK}\cT\times_{\bN_\cK}\cT\to
        \cT\times_{\bN_\cK}\cT$ corresponds to the second projection
        $p_2 \colon\cR\times_{\bN_\cO}\cR\to \cR$, while $p_{12}$
        corresponds to $p_1$.
    \end{NB}%

    We consider the following commutative diagram, which is a
    `product' of two copies of the upper row of \eqref{eq:12}:
    \begin{equation}\label{eq:70}
    \xymatrix@C=20pt{
      \cR\times\cR& 
      p^{-1}(\cR\times\cR) \ar[l]_-{\tilde p} \ar[r]^-{\tilde q}
      & q(p^{-1}(\cR\times\cR)) \ar[r]^-{\tilde m} & \cR
      \\
      q(p^{-1}(\cR\times\cR))\times\cR \ar[u]^-{\tilde m\times\id_\cR}&
      \boxed{3} \ar[l] \ar[u] \ar[r] & \boxed{4} \ar[u] \ar[r] &
      q(p^{-1}(\cR\times\id_\cR)) \ar[u]_{\tilde m}
      \\
      p^{-1}(\cR\times\cR)\times\cR \ar[u]^{\tilde q\times\id_\cR}
      \ar[d]_{\tilde p\times\id_\cR}
      & \boxed{1} \ar[l] \ar[r] \ar[u] \ar[d]
      & \boxed{2} \ar[u] \ar[r] \ar[d] 
      & p^{-1}(\cR\times\cR) \ar[u]_{\tilde q} \ar[d]^{\tilde p}
      \\
      \cR\times\cR\times\cR 
      & \cR\times p^{-1}(\cR\times\cR) 
      \ar[l]^-{\id_\cR\times \tilde p}
      \ar[r]_-{\id_\cR\times \tilde q}
      & \cR\times q(p^{-1}(\cR\times\cR)) 
      \ar[r]_--{\id_\cR\times \tilde m}
      & \cR\times\cR,
  }
    \end{equation}
    where
    \begin{equation*}
        \boxed{1} = \{ (g_1,g_2,[g_3,s])\in G_\cK\times G_\cK\times \cR \mid
        g_2 g_3 s, g_1 g_2 g_3 s\in \bN_\cO \},
    \end{equation*}
    and $\boxed{2}$, $\boxed{3}$, $\boxed{4}$ are quotients of
    $\boxed{1}$ by $1\times G_\cO$, $G_\cO\times 1$, $G_\cO\times
    G_\cO$ respectively. Here $G_\cO\times G_\cO$ acts on $\boxed{1}$ by
    \begin{equation*}
        (h_1,h_2)\cdot (g_1,g_2, [g_3,s]) = (g_1 h_1^{-1}, h_1 g_2 h_2^{-1},
        [h_2 g_3, s])\quad\text{for $(h_1,h_2)\in G_\cO\times G_\cO$}.
    \end{equation*}
    Horizontal and vertical arrows from $\boxed{1}$, $\boxed{4}$ are
    given by
    \begin{equation}\label{eq:14}
        \xymatrix@C=10pt{(g_1, [g_2,g_3 s], [g_3,s]) &
          (g_1,g_2, [g_3,s])\in\boxed{1} \ar@{|->}[l]
        \ar@{|->}[d] \\
        & ([g_1,g_2g_3 s],(g_2,[g_3,s])),
      } \quad
      \xymatrix@C=10pt{
        [g_1g_2, [g_3,s]] &
\\
        \boxed{4}\ni [g_1, [g_2,[g_3, s]]] \ar@{|->}[u] \ar@{|->}[r] &
        [g_1, [g_2g_3, s]].
      }
    \end{equation}
    Arrows from $\boxed{2}$, $\boxed{3}$ are given by trivial
    modification of above ones, as $\boxed{1}\to\boxed{3}$, etc.\ are
    fiber bundles.

    The convolution product $c_1\ast (c_2\ast c_3)$ is given by
    applying induced homomorphisms in the bottom row from left to
    right, and then going up in the rightmost column. Similarly
    $(c_1\ast c_2)\ast c_3$ is given by going the leftmost column and
    the top row. Therefore the associativity follows if we show that
    arrows induce appropriate pull-back or push-forward homomorphisms,
    and they form a commutative diagram for each square.

    Let us first look at the bottom left square. We can extend the
    square to a cube as
\begin{equation*}
    \xymatrix@!C=100pt{
& G_\cK\times\cR\times\cR \ar@{->}'[d]^{p\times\id_\cR}[dd]
& & G_\cK\times p^{-1}(\cR\times\cR) \ar@{->}[dd]^P\ar@{->}[ll]_{\id_{G_\cK}\times\tilde p}
\\
p^{-1}(\cR\times\cR)\times\cR \ar@{->}[ur]\ar@{->}[dd]
& & \boxed{1} \ar@{->}[ur]\ar@{->}[ll]\ar@{->}[dd]
\\
& \cT\times\cR\times\cR
& & \cT\times p^{-1}(\cR\times\cR) \ar@{->}'[l]_{\id_\cT\times\tilde p}[ll]
\\
\cR\times\cR\times\cR\ar@{->}[ur]
& & \cR\times p^{-1}(\cR\times\cR) \ar@{->}[ll]\ar@{->}[ur]
}
\end{equation*}
Arrows from spaces in the front square to those in the rear square are
closed embeddings. Arrows in the rear square are as indicated, and one
remaining $P\colon G_\cK\times p^{-1}(\cR\times\cR)\to \cT\times
p^{-1}(\cR\times\cR)$ is given by the formula for the corresponding map in the front square in \eqref{eq:14}.

Homomorphisms between dualizing complexes $\DC$ and their pull-backs
have been already constructed for $p$ and $\tilde p$ in \eqref{eq:13}
and \eqref{eq:7} respectively. For $P$, we decompose $P =
(P_\cT,P_{p^{-1}(\cR\times\cR)})$ and see that
\begin{equation*}
    \begin{split}
    P_{p^{-1}(\cR\times\cR)}^*\DC_{p^{-1}(\cR\times\cR)} & \cong
    \CC_{G_\cK}\boxtimes \DC_{p^{-1}(\cR\times\cR)},
\\
    P_\cT^*\DC_\cT & \cong \DC_{G_\cK}\boxtimes \CC_{p^{-1}(\cR\times\cR)}
    [2\dim\bN_\cO-2\dim G_\cO],
    \end{split}
\end{equation*}
as in \lemref{lem:pull}. We then construct pull-back homomorphisms
with support (\subsecref{subsec:AN}\ref{subsubsec:b}) for the front
square, as the top and bottom squares are cartesian.

In order to show commutativity of pull-back homomorphisms, it is
enough to consider the rear square by the construction of pull-back homomorphisms with support.
Let us factor $\DC_{\cT\times\cR\times\cR} =
\DC_\cT\boxtimes\DC_{\cR\times\cR}$ as before, and consider the
pull-backs of $\DC_\cT$ and $\DC_{\cR\times\cR}$ separately.

Let us first consider $\DC_{\cR\times\cR}$. We have two homomorphisms
\begin{equation*}
    \begin{CD}
     P^* (\id_\cT\times\tilde p)^* (\CC_\cT\boxtimes\DC_{\cR\times\cR})
     @>>> \CC_{G_\cK}\boxtimes\DC_{p^{-1}(\cR\times\cR)}[2\dim\bN_\cO-2\dim G_\cO]
\\
   @| @|
\\
     (p\times\id_\cR)^*(\id_{G_\cK}\times\tilde p)^*
     (\CC_\cT\boxtimes\DC_{\cR\times\cR})
     @>>> \CC_{G_\cK}\boxtimes\DC_{p^{-1}(\cR\times\cR)}[2\dim\bN_\cO-2\dim G_\cO]
    \end{CD}
\end{equation*}
by following left, top arrows and bottom, right arrows. They are the
same, as both are essentially given by $p^*\colon \tilde p^*
\DC_{\cR\times\cR}\to \DC_{p^{-1}(\cR\times\cR)}[2\dim\bN_\cO-2\dim
G_\cO]$ constructed in \eqref{eq:7}.

Next consider $\DC_{\cT}$. The $\cT$-component of
\(
(\id_\cT\times\tilde p)\circ P =
(\id_{G_\cK}\times\tilde p)\circ(p\times\id_\cR)
\)
\begin{NB}
    which is $(g_1,g_2, [g_3,s])\mapsto [g_1, g_2g_3 s]$,
\end{NB}%
factors as
\[
   G_\cK\times p^{-1}(\cR\times\cR)
   \xrightarrow{\id_{G_\cK}\times\Pi'}
   G_\cK\times\bN_\cO\xrightarrow{p'_\cT}\cT,
\]
where $\Pi'\colon p^{-1}(\cR\times\cR)\to\bN_\cO$ is $(g_2, [g_3,
s])\mapsto g_2g_3 s$. As in the proof of \lemref{lem:pull}, we have
\begin{equation*}
    ((\id_\cT\times\tilde p)\circ P)^*(\DC_\cT\boxtimes\CC_{\cR\times\cR})
    \cong \DC_{G_\cK}\boxtimes\CC_{p^{-1}(\cR\times\cR)}
    [2\dim\bN_\cO-2\dim G_\cO].
\end{equation*}
Two homomorphisms which are constructed by going along left, top arrows and
bottom, right arrows are the same, as they are constructed in the same
way. This completes the proof of the commutativity at the bottom left
square.

Since $\tilde q\colon p^{-1}(\cR\times\cR)\to q(p^{-1}(\cR\times\cR))$
is a fiber bundle with fibers $G_\cO$, commutativity for squares
involving $\tilde q$ is obvious. Let us consider the right bottom
square. We extend it to a cube:
\begin{equation*}
    \xymatrix@!C=100pt{
& G_\cK\times q(p^{-1}(\cR\times\cR)) \ar@{->}'[d]^{P'}[dd]
\ar@{->}[rr]_{\id_{G_\cK}\times\tilde m}
&& G_\cK\times\cR \ar@{->}[dd]^p
\\
\boxed{2} \ar@{->}[ur]\ar@{->}[rr]\ar@{->}[dd] && p^{-1}(\cR\times\cR)
\ar@{->}[dd] \ar@{->}[ur]
&
\\
& \cT\times q(p^{-1}(\cR\times\cR)) \ar@{->}'[r][rr]_{\id_\cT\times\tilde m}
&& \cT\times\cR
\\
\cR\times q(p^{-1}(\cR\times\cR)) \ar@{->}[rr]\ar@{->}[ur]
&& \cR\times\cR\ar@{->}[ur]
}
\end{equation*}
Arrows from the front to rear are closed embeddings. The map $P'\colon  G_\cK\times q(p^{-1}(\cR\times\cR)) \to \cT\times q(p^{-1}(\cR\times\cR))$ is given by the formula in \eqref{eq:14}.
\begin{NB}
    $(g_1, [g_2, [g_3, s])\mapsto ([g_1, g_2g_3 s], [g_2, [g_3, s]])$.
\end{NB}%
Recall that the pull-back with support homomorphism $p^*$ from
$\cR\times \cR$ to $p^{-1}(\cR\times\cR)$ is defined via
$p$. Similarly the pull-back from $\cR\times q(p^{-1}(\cR\times\cR)))$
to $\fbox{2}$ is defined via $P'$. Therefore it is enough to check the commutativity in the rear square, i.e.,
\[
   p^* (\id_{\cT}\times\tilde m)_!
   = (\id_{G_\cK}\times \tilde m)_! P^{\prime*}
   \colon H^{G_\cO\times G_\cO}_*(\cT\times q(p^{-1}(\cR\times\cR)))
   \to H^{G_\cO\times G_\cO}_*(G_\cK\times\cR)
\]
(with appropriate degree shift). But this follows from the base
change, as the rear square is cartesian.

The commutativity of the right top square is clear, as it involves
only pushforward homomorphisms.
\end{proof}

\subsection{Definition of the Coulomb branch}\label{sec:commutativity}

Let us denote $(H_*^{G_\cO}(\cR),\ast)$ by $\cA$ or $\cA[G,\bN]$ when
we want to emphasize $(G,\bN)$.
We will prove that $\cA$ is commutative later. Therefore we can
consider its spectrum. Here is our main proposal:

\begin{Definition}\label{def:main}
    We define the {\it Coulomb branch\/} $\mathcal M_C$ (as an affine
    scheme) by
    \begin{equation*}
        \mathcal M_C \equiv \mathcal M_C(G,\bN) \defeq
        \Spec \cA.
    \end{equation*}
\end{Definition}

If we add the loop rotation, the equivariant homology group
$(H^{G_\cO\rtimes\CC^\times}_*(\cR),\ast)$ is a noncommutative
deformation of the Coulomb branch $\mathcal M_C$.
Let us denote it by $\cAh$ or $\cAh[G,\bN]$. We call it the {\it
  quantized Coulomb branch}.
In particular,
$\mathcal M_C$ has a natural Poisson structure.

We will show that $\cA$ is finitely generated and integral later (see
\propref{prop:fg}, \corref{cor:integral}). We also prove that $\cA$ is
normal (\propref{prop:normality}). It could be compatible with the
following example in \cite[Table~10]{Cremonesi:2014uva}: Take a
nilpotent orbit $\mathcal O$ whose Lusztig-Spaltenstein dual (in the
Langlands dual Lie algebra) is not normal. Take a gauge theory whose
Higgs branch is the intersection of the nilpotent cone and the Slodowy
slice to $\mathcal O$. Such a gauge theory exists for classical groups
(see \cite[Appendix A]{2015arXiv150303676N}). A naive guess gives us
Lusztig-Spaltenstein dual of $\mathcal O$ as the Coulomb branch, hence
is not normal. But \cite[Table~10]{Cremonesi:2014uva} suggests us the
normalization of the Lusztig-Spaltenstein dual instead.
Note however that they are \emph{not} of cotangent type, hence our
construction does not apply. Therefore it is a little early to make
any conclusion, but it seems natural to conjecture that $\cA$ is
normal even if not necessarily of cotangent type.

In many examples, $\cA$ is Cohen-Macaulay. We will use these
properties crucially in \ref{QGT} and we do not have counter-examples
at this moment. Moreover we show that the Poisson structure is
symplectic on the regular locus of $\mathcal M_C$
(\propref{prop:symplectic}) and believe that there is a hyper-K\"ahler
structure there. This is what physicists have expected. We
optimistically conjecture that $\mathcal M_C$ has only
\emph{symplectic singularities} \cite{MR1738060}, i.e., the symplectic
form on the smooth locus extends to a holomorphic $2$-form on a
resolution $\tilde{\mathcal M}_C\to \mathcal M_C$.

We will give two proofs of the commutativity of $\cA$. The first proof
is a reduction to the abelian case, and is given in \thmref{abel} and \propref{prop:commutative}.

The second proof is a well-known argument: Using the
Beilinson-Drinfeld Grassmannian to deform a situation where the
product $c_1\ast c_2$ is manifestly symmetric under
$c_1\leftrightarrow c_2$. Then we use nearby cycle functors and (dual)
{\it specialization\/} homomorphism. In fact, we will prove a
commutativity at the level of an object in the $G_\cO$-equivariant
derived category of constructible sheaves on $\Gr_G$. See
\cite[\cref{affine_pre-sec:sheav-affine-grassm,affine_pre-sec:commute}]{affine}
\begin{NB}
    Check an external reference !
\end{NB}%
for more detail.

\begin{Remark}
    In the same way we define the \emph{$K$-theoretic Coulomb branch}
    as $\mathcal M_C^K \defeq \Spec K^{G_\cO}(\cR)$ thanks to
    \remref{rem:Steinberg}(3). The first
    proof of the commutativity is a reduction to abelian cases, and
    hence works for $K^{G_\cO}(\cR)$. (The second proof seems
    only applicable for homology groups.) The proofs that $\scA$ is finitely
    generated and integral work for $K^{G_\cO}(\cR)$. 
    The proof of the normality
    does not work, as we will use the grading in \lemref{lem:SL2}(1).
\end{Remark}

\begin{NB}
    Let us describe the analog of $G_\cK\times_{G_\cO}\cR$. Let $X$ be
    a smooth curve. Then analog is the moduli space of
    \begin{itemize}
          \item $(x_1,x_2)\in X^2$,
          \item $\scP_1$, $\scP$ : $G$-bundles on $X$,
          \item $s$ : a section of $\scP_{\bN}$,
          \item $\varphi_1$ : a trivialization of $\scP_1$ on
        $X\setminus \{x_1\}$, and
          \item $\eta\colon \scP_1|_{X\setminus\{x_2\}}\xrightarrow{\cong}
    \scP|_{X\setminus\{x_2\}}$ such that
    $\eta^{-1}_\bN(s)\in H^0(\scP_{1,\bN})$.
    \end{itemize}
    For $G_\cK\times \cR$ :
    \begin{itemize}
          \item $(x_1,x_2)\in X^2$,
          \item $\scP_1$, $\scP_2$ : $G$-bundles on $X$,
          \item $s_2$ : a section of $\scP_{2,\bN}$,
          \item $\varphi_i$ : a trivialization of $\scP_i$ on
        $X\setminus \{x_i\}$ ($i=1,2$) such that
        $\varphi_{2,\bN}(s_2)\in \bN(X)$, and
          \item $\kappa$ : a trivialization of $\scP_1$ on
        $\widehat{X}_{x_2}$, the formal neighborhood of $x_2$ in $X$.
    \end{itemize}
    We define $q$ by setting $\scP$ as the $G$-bundle gotten by gluing
    $\scP_1$ on $X\setminus\{x_2\}$ and $\scP_2$ on
    $\widehat{X}_{x_2}$ using $\varphi_2^{-1}\circ
    \kappa\colon\scP_1\to\scP_2$ over $(X\setminus\{x_2\})\cap
    \widehat{X}_{x_2}$.
    \begin{NB2}
        In the formal disk case, we take $\scP = \scP_2$ as $\kappa$
        is a `global' trivialization.
    \end{NB2}
\end{NB}%

\subsection{Grading and group action}\label{sec:grading}
(See \cite[\S4(iv), Properties (a,c)]{2015arXiv150303676N} for
original sources in physics.)

\begin{NB}
    The following is an original consideration of the grading, but it
    is now already built in the construction.

We have homological grading on $H^{G_\cO}_*(\cR)$, but it
is not compatible with the convolution product.
It is natural to set the grading on
$H^{G_\cK}_{*}(\cT\times_{\bN_\cK}\cT)$ as $2\dim_\CC\cT-*$
so that the convolution algebra becomes a graded ring. For example,
the diagonal is unit, and has degree $0$. Hence we set
\begin{equation*}
    H^{G_\cK}_{[*]}(\cT\times_{\bN_\cK}\cT)
    = H^{G_\cK}_{2\dim_\CC\dim\cT-*}(\cT\times_{\bN_\cK}\cT).
\end{equation*}
For the space $\cR$, we set
\begin{equation*}
    H^{G_\cO}_{[*]}(\cR)
    \cong
    H^{G_\cO}_{2\dim_\CC\dim\cT-2\dim_\CC G_\cK/G_\cO-*}(\cR).
\end{equation*}
We formally calculate
\begin{equation*}
    \dim_\CC\dim\cT - \dim_\CC G_\cK/G_\cO
    = \dim \bN_\cO.
\end{equation*}
This is infinite, but we consider cycles whose degrees differ from
$\dim\bN_\cO$ finitely.
\end{NB}%

Recall that connected components of $\Gr_G$ are parametrized by
$\pi_1(G)$. Since $\cR$ is homotopy equivalent to $\Gr_G$, we also
have $\pi_0(\cR)\cong \pi_1(G)$. Thus we have a decomposition
\begin{equation*}
    H^{G_\cO}_*(\cR) \cong \bigoplus_{\gamma}
    H^{G_\cO}_*(\cR^\gamma),
\end{equation*}
where $\cR^\gamma$ is the connected component corresponding to
$\gamma\in\pi_1(G)$. This decomposition is compatible with the convolution product:
\(
    H^{G_\cO}_*(\cR^{\gamma_1}) \ast
    H^{G_\cO}_*(\cR^{\gamma_2})
    \subset H^{G_\cO}_*(\cR^{\gamma_1+\gamma_2}),
\)
where $\gamma_1+\gamma_2$ is the sum of $\gamma_1$ and $\gamma_2$ in
the abelian group $\pi_1(G)$.
Therefore $\cA$ is a $\ZZ\times\pi_1(G)$-graded algebra, where the
first $\ZZ$ is the half of the cohomological grading. (The odd degree
part vanishes by \propref{prop:monopole_formula}.) This gives an action
of $\CC^\times\times \pi_1(G)^\wedge$ on the spectrum $\mathcal M_C$,
where $\pi_1(G)^\wedge$ is the Pontryagin dual of $\pi_1(G)$.

\begin{NB}
    This naive grading is not correct, and does not match with the
    grading expected from the monopole formula, and also natural
    grading on various examples.
\end{NB}%

\subsection{Cartan subalgebra -- a commutative subalgebra in the
  quantized Coulomb branch}\label{subsec:Cartan}

Recall that $\cAh = H_*^{G_\cO\rtimes\CC^\times}(\cR)$ is a module
over $H^*_{G\times\CC^\times}(\mathrm{pt})$.  Let $T$ be a maximal
torus of $G$ with the Lie algebra $\ft$. Let $W$ be the Weyl group. We
have $H^*_G(\mathrm{pt})\cong \CC[\ft]^W$. We add a variable $\hbar$
for the $\CC^\times$-part, so
$H^*_{G\times\CC^\times}(\mathrm{pt})\cong \CC[\hbar,\ft]^W$.
It is also known that $\ft/W \cong\CC^\ell$, where $\ell$ is the rank
of $G$.

\begin{Proposition}\label{prop:comm}
    Let $1$ be the unit of the algebra $\cAh$. Then
    $H^*_{G\times\CC^\times}(\mathrm{pt}) 1 = \CC[\hbar,\ft]^W1$
    forms a commutative subalgebra of $\cAh$.
\end{Proposition}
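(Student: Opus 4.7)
The plan is to deduce the proposition directly from the two properties of the convolution product already established in \thmref{thm:convolution}: namely that $1$ (the fundamental class of the fiber of $\cR\to\Gr_G$ over the base point $[1]\in\Gr_G$) is a two-sided unit, and that $\ast$ is $H^*_{G\rtimes\CC^\times}(\mathrm{pt})$-linear in the first variable. So the hard work has already been done, and what remains is a formal manipulation.

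Concretely, given $\alpha_1,\alpha_2\in H^*_{G\times\CC^\times}(\mathrm{pt})=\CC[\hbar,\ft]^W$, I would compute
\begin{equation*}
   (\alpha_1\cdot 1)\ast(\alpha_2\cdot 1)
   \;=\;\alpha_1\cdot\bigl(1\ast(\alpha_2\cdot 1)\bigr)
   \;=\;\alpha_1\cdot(\alpha_2\cdot 1)
   \;=\;(\alpha_1\alpha_2)\cdot 1,
\end{equation*}
where the first equality uses $H^*_{G\times\CC^\times}(\mathrm{pt})$-linearity in the first variable, the second uses that $1$ is a left unit, and the third uses that the $H^*_{G\times\CC^\times}(\mathrm{pt})$-action on $\cAh$ is a module action. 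The symmetric computation gives $(\alpha_2\cdot 1)\ast(\alpha_1\cdot 1)=(\alpha_2\alpha_1)\cdot 1$. Since $\CC[\hbar,\ft]^W$ is commutative, the two sides agree.

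This computation simultaneously shows closure under $\ast$ (so $H^*_{G\times\CC^\times}(\mathrm{pt})\cdot 1$ is a subalgebra) and commutativity, and also exhibits the map $\alpha\mapsto\alpha\cdot 1$ as an algebra homomorphism $\CC[\hbar,\ft]^W\to\cAh$. The main conceptual point — if any — is simply to notice that left-linearity over $H^*_{G\times\CC^\times}(\mathrm{pt})$, together with the unit property, \emph{automatically} forces the image of $\alpha\mapsto\alpha\cdot 1$ to behave like a module-action copy of $H^*_{G\times\CC^\times}(\mathrm{pt})$ inside $\cAh$, and there is no obstacle to overcome. (Injectivity of this map, and hence the identification $H^*_{G\times\CC^\times}(\mathrm{pt})\cdot 1\cong\CC[\hbar,\ft]^W$, is a separate matter presumably addressed elsewhere via the localization theorem or the degeneration discussed in a later section.)
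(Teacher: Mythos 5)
Your argument is exactly the paper's proof: the same three-step computation $(\alpha_1\cdot 1)\ast(\alpha_2\cdot 1)=\alpha_1\cdot(1\ast(\alpha_2\cdot 1))=\alpha_1\cdot(\alpha_2\cdot 1)=(\alpha_1\alpha_2)\cdot 1$, justified by first-variable linearity, the unit property, and the module structure, followed by commutativity of $\CC[\hbar,\ft]^W$. Nothing is missing.
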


\begin{Definition}
    We call the commutative subalgebra
    $\CC[\hbar,\ft]^W 1$ the {\it Cartan
      subalgebra\/}\footnote{This commutative subalgebra is called
      {\it Gelfand-Tsetlin algebra\/} in related contexts. But our
      subalgebra is hardly worth this name, as the proof of the
      commutativity is just a tautology. This alternative name is
      proposed to us by Boris Feigin.} of the quantized Coulomb
    branch.
\end{Definition}

\begin{proof}[Proof of \propref{prop:comm}]
    \begin{NB}
    Let $c_1$, $c_2\in H^*_{G\times\CC^\times}(\mathrm{pt})$. In
    the definition of the product $\ast$, $c_2$ is considered as a
    class in $H^{G_\cO\rtimes\CC^\times}(\cT\times_{\bN_\cK}\cT)$. It
    is nothing but $c_2\cap[\Delta\cT]$, where $\Delta\cT$ is the
    diagonal in $\cT\times\cT$. Therefore the assertion is clear.
    \end{NB}%
    It can be checked directly, but this is a formal consequence of
    properties that have been already established in
    \thmref{thm:convolution}. Let $c_1$, $c_2\in
    H^*_{G\times\CC^\times}(\mathrm{pt})$. Then
    \begin{equation*}
        (c_1 1) \ast (c_2 1) = c_1 (1 \ast (c_2 1)) = c_1 (c_2 1)
        = (c_1 c_2) 1.
    \end{equation*}
    The first equality is the linearity of the multiplication $\ast$
    with respect to the first variable. The second equality holds as
    $1$ is unit. The third is true, as
    $H_*^{G_\cO\rtimes\CC^\times}(\cR)$ is a module over
    $H^*_{G\times\CC^\times}(\mathrm{pt})$.

    Since $H^*_{G\times\CC^\times}(\mathrm{pt})$ is commutative,
    the above implies the assertion.
\end{proof}

By specializing \propref{prop:comm} at $\hbar=0$, we have a Poisson
commuting subalgebra $H^*_{G}(\mathrm{pt})\cong \CC[\ft]^W$ of
$\cA$. Therefore we have a morphism
\begin{equation}\label{eq:17}
    \intsys\colon \mathcal M_C = \Spec \cA \to
    \Spec(H^*_G(\mathrm{pt})) \cong \ft/W \cong \CC^\ell,
\end{equation}
and all the functions factoring through $\intsys$ are Poisson commuting. 
It will be proved in
\propref{prop:integrable} that the generic fiber is $T^\vee$, the dual
torus of $T$. Hence $\intsys$ is a complete integrable system.

The Poisson bracket $\{\ ,\ \}$ is the commutator $[\ ,\ ]$ divided by
$\hbar$.
\begin{NB}
    $\{ H_k^{G_\cO}(\cR), H_l^{G_\cO}(\cR)\} \subset
    H_{k+l+2}^{G_\cO}(\cR)$.
\end{NB}%
Therefore it is of degree $-\deg\hbar = -1$.

\begin{NB}
    The following proof depends on the localization theorem, which is
    not yet established (so that it preserves the
    multiplication). Anyway the proof should be postponed until the
    localization theorem is given, i.e., in \secref{sec:abel}.

    \begin{NB2}
    Added on Apr. 21 : The following proof cannot be made rigorous, as
    the localization theorem is given in a different form. Therefore
    the proof will be really postponed.
    \end{NB2}

\begin{Proposition}
    A generic fiber of $\intsys$ is $T^\vee$, the dual torus of
    $T$.
\end{Proposition}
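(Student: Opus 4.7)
The plan is to combine $T$-equivariant localization on $\cR$ with the abelian computation of \thmref{abel}, following the strategy of \secref{sec:abel}. The morphism $\intsys$ factors through the finite quotient $\ft\to\ft/W$, so by faithful flatness it suffices to show that after pulling back to $\ft$ a generic fiber becomes $T^\vee$; equivalently, that $\cA\otimes_{\CC[\ft]^W}K$ is isomorphic to $K\otimes\CC[Y]$, where $K$ is the fraction field of $\CC[\ft]$ and $Y$ is the coweight lattice (so that $\CC[Y]=\CC[T^\vee]$).

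The first step is to identify the relevant fixed points. Inside $\Gr_G$ the $T$-fixed points are the coweights $\{z^\la:\la\in Y\}$, and by \lemref{lem:Rla} the fiber of $\cR\to\Gr_G$ over $z^\la$ is $\bN_\cO\cap z^\la\bN_\cO$. Passing to the quantized setting $\cAh$, where the loop rotation $\CC^\times$ acts on $\bN$ with nonzero weight $1/2$, the only $T\times\CC^\times$-invariant section is $s=0$, so $\cR^{T\times\CC^\times}=\{[z^\la,0]\}_{\la\in Y}$ is discrete and indexed by $Y$. The equivariant localization theorem then yields a $\widetilde K$-linear isomorphism $\cAh\otimes_{\CC[\ft,\hbar]}\widetilde K\simeq\bigoplus_{\la\in Y}\widetilde K$ with $\widetilde K=\operatorname{Frac}\CC[\ft,\hbar]$.

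The second step is to upgrade this to a comparison of algebras. Writing $r_\la$ for the fundamental class of $[z^\la,0]$, a direct convolution computation--which over the generic point of $\ft$ reduces to the abelian calculation, since all root Euler classes are invertible there--yields $r_\la\ast r_\mu=c_{\la,\mu}\,r_{\la+\mu}$ with $c_{\la,\mu}\in\widetilde K^\times$ expressible as a product of Euler classes of the weight spaces of $\bN$, in agreement with \thmref{abel} for $(T,\bN_T)$. Hence $\cAh\otimes\widetilde K\simeq\widetilde K\otimes\CC[Y]$ as algebras, and specializing $\hbar=0$ and taking $W$-invariants identifies $\Spec(\cA\otimes_{\CC[\ft]^W}K^W)$ birationally with $(\ft\times T^\vee)/W$ over $\ft/W$. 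Since $W$ acts freely on the generic fiber of $\ft\times T^\vee\to\ft/W$, the generic fiber of $\intsys$ is $T^\vee$.

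The main obstacle is controlling the localization at the level of the convolution product rather than just the module structure: ordinary equivariant Borel-Moore localization only concerns the $H^*_{T\times\CC^\times}(\mathrm{pt})$-module structure, and one must show that restriction to the $T\times\CC^\times$-fixed locus intertwines $\ast$ with the abelian product on $\bigoplus_\la\widetilde K$, up to the explicit Euler-class scalars $c_{\la,\mu}$. This in turn amounts to understanding how the convolution diagram of \secref{sec:definition} behaves under the fixed-point stratification of $\cR$, which is carried out in \secref{sec:abel}.
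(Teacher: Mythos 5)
Your overall strategy (localize to a torus-fixed locus, then quote the abelian computation) is the same as the paper's, but the specific localization you invoke is not available here, and the step you flag as the ``main obstacle'' is the actual heart of the argument and is left unproved. First, the $T\times\CC^\times$-fixed locus $\{[z^\la,0]\}_{\la\in Y}$ has \emph{infinite} codimension in $\cR$: the fiber of $\cR\to\Gr_G$ over $z^\la$ is the infinite-dimensional space $\bN_\cO\cap z^\la\bN_\cO$. With the paper's degree convention (degrees taken relative to $2\dim\bN_\cO$ via finite-dimensional approximations), the fundamental class of a point is not an element of $H^{T_\cO\rtimes\CC^\times}_*(\cR)$, and the would-be structure constants $c_{\la,\mu}$ are infinite products of weights; so the asserted isomorphism $\cAh\otimes\widetilde K\simeq\bigoplus_{\la}\widetilde K$ on point classes does not exist in this framework. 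Second, making the fixed locus discrete forces you to invert $\hbar$ (the zero-weight part $\bN^T_\cO$ is $T$-fixed and is only contracted by the loop rotation), and one cannot subsequently ``specialize $\hbar=0$'' inside $\operatorname{Frac}\CC[\ft,\hbar]$ to recover a statement about the commutative ring $\cA$.

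The paper circumvents both problems by localizing only in the $\Gr_G$-direction: the pushforward $\iota_*$ from $\cR_{T,\bNT}=\pi^{-1}(\Gr_T)$, which keeps the infinite-dimensional fibers intact and has finite codimension over each stratum $\cR_{\le\la}$, becomes an isomorphism over $\loc$ (\lemref{lem:iota_injective}); the fiber directions are then stripped off by the vector-bundle pullback $\bz^*\colon H^{T_\cO}_*(\cR_{T,\bNT})\to H^{T_\cO}_*(\Gr_T)\cong\CC[\ft\times T^\vee]$, again an isomorphism over $\loc$ (\lemref{lem:injective}), with no inversion of $\hbar$. Finally, the compatibility of these maps with the convolution product --- which you correctly identify as the crux but defer --- is established not by an Euler-class computation but by a unit-transport argument: writing $\iota_*(c)=c\ast e$ for the unit $e$ and using associativity of the bimodule structures shows $\iota_*$ and $\bz^*$ are algebra homomorphisms (Lemmas~\ref{lem:bimodule-1} and~\ref{lem:convolution}). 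Your final reduction, that the fiber of $(\ft\times T^\vee)/W\to\ft/W$ over a generic point is $T^\vee$, is correct as stated.
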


\begin{proof}
    It is enough to study a generic fiber corresponding to
    $H^*_T(\mathrm{pt})\to H^T_*(\cR)$, as $H^G_*$ is the Weyl group
    invariant part of $H^T_*$. For $\xi\in\mathfrak h$, the
    specialization of $H^T_*(\cR)$ at $\xi$ is the ordinary cohomology
    of the vanishing locus $\cR^\xi$ of the vector field generated by
    $\xi$. If $\xi$ is generic, $\cR^\xi$ is nothing but the $T$-fixed
    point locus $\cR^T$. Therefore a generic fiber is $\Spec
    H_*(\cR^T)$.

    \begin{NB2}
        I need to clarify how multiplication is specialized.
    \end{NB2}

    The fixed point locus $\Gr_G^T$ is $\Gr_T$ (see e.g., ***),
    consisting of framed $T$-bundles. It is because the $T$-action on
    the trivialization $\varphi$ can be absorbed into an automorphism
    of a $T$-bundle $\scP$. If $(\scP,\varphi,s)$ is fixed by $T$, $s$
    must be fixed by the automorphism of $\scP$. It happens if and
    only if $s$ takes values in $\scP\times_T\bN_0$, where $\bN_0$ is
    the $0$-weight subspace of $\bN$. Then $\varphi_\bN(s)$ is
    automatically regular, hence $\cR^T = \Gr_T\times (\bN_0)_\cO$.
    Hence $H_*(\cR^T)\cong H_*(\Gr_T) = \CC[Y]$, where $Y$ is the
    coweight lattice of $T$. This isomorphism is compatible with the
    multiplication.
    \begin{NB2}
        why ?
    \end{NB2}%
    Therefore a generic fiber is $T^\vee$.
\end{proof}
\end{NB}

\subsection{Trivial properties of Coulomb branches}\label{sec:triv-properties}

Let us list a few trivial properties of $\mathcal M_C$ and $\cAh$,
which follow directly from the definition.

\subsubsection{}\label{item:product}
Suppose $(G,\bN) = (G_1\times G_2, \bN_1\oplus \bN_2)$, where $\bN_i$
is a representation of $G_i$ ($i=1,2$). Then
\begin{equation*}
    \begin{split}
    \mathcal M_C(G,\bN) &= \mathcal M_C(G_1,\bN_1)\times
    \mathcal M_C(G_2,\bN_2),
\\
    \cAh[G,\bN] &= \cAh[G_1,\bN_1]\otimes_{\CC[\hbar]}
    \cAh[G_2,\bN_2].
    \end{split}
\end{equation*}

These follow from $\cR_{G,\bN} = \cR_{G_1,\bN_1}\times
\cR_{G_2,\bN_2}$, and the K\"unneth formula $H^{G_\cO}_*(\cR_{G,\bN})
= H^{(G_1)_{\cO}}_*(\cR_{G_1,\bN_1})\otimes_\CC
H^{(G_2)_{\cO}}_*(\cR_{G_2,\bN_2})$ and its $\CC^\times$-equivariant
version.

\subsubsection{}\label{item:trivial}

Suppose $(G,\bN) = (G,\bN_1\oplus \bN')$, where $\bN'$ is a trivial
representation of $G$. Then
\begin{equation*}
    \mathcal M_C(G,\bN) = \mathcal M_C(G,\bN_1), \qquad
    \cAh[G,\bN] = \cAh[G,\bN_1].
\end{equation*}


Take $G_1 = G$, $G_2 = \{e\}$, $\bN_2 = \bN'$ in
\ref{item:product}. We have $\Gr_{\{1\}}$ is just a single point and
$\cR_{\{e\}, \bN'} \cong \bN'_\cO$. Its homology $H_*(\bN'_\cO)$ is
spanned by the fundamental class of $\bN'_\cO$. We have
$H_*(\bN'_\cO)\cong \CC$, as an algebra. Therefore $\mathcal
M_C(\{1\},\bN')$ is a single point. For the quantized version, we have
$H^{\CC^\times}_*(\bN'_\cO)\cong\CC[\hbar]$.

\subsubsection{}\label{item:finite_quotient}
Let $G'\to G$ be a finite covering, and let $\pi_1(G')\subset
\pi_1(G)$ be the corresponding cofinite subgroup of $\pi_1(G)$. Let
$\Gamma$ be the Pontryagin dual of $\pi_1(G)/\pi_1(G')$, considered as
a subgroup of $\pi_1(G)^\wedge$. It acts on $\mathcal M_C(G,\bN)$ by
the construction in \subsecref{sec:grading}.
Let us consider $\bN$ as a representation of $G'$ through the
projection $G'\to G$. Then
\begin{equation*}
    \mathcal M_C(G',\bN) = \mathcal M_C(G,\bN)/\Gamma,
\qquad
    \cAh[G',\bN] = \cAh[G,\bN]^\Gamma.
\end{equation*}

It is known that $\Gr_{G'}$ is the union of components of $\Gr_G$
corresponding to $\pi_1(G')\subset \pi_1(G)\cong\pi_0(\Gr_G)$. (See
e.g., \cite[4.5.6]{Beilinson-Drinfeld}.) The same is true for
$\cR_{G',\bN}$. Note also that there is no difference between
equivariant homology groups for ${G_\cO}$ and $G'_\cO$ as we consider
over complex coefficients. 
\begin{NB}
    Consider $EG'\to BG'$. We consider $EG'/\varXi\to BG'$, where
    $\varXi$ is the kernel of $G'\to G$. Then it can be used as the
    classifying space for $G$: the equivariant homology is defined as
    $H_*(EG'/\varXi \times_G \cR_{G',\bN})$ (a finite dimensional
    approximation, more precisely). This is basically because
    $H^*(EG'/\varXi)$ vanishes for complex coefficients. (Remember the
    double fibration argument to show the well-definedness of the
    equivariant homology.) Now observe $EG'/\varXi \times_G
    \cR_{G',\bN} = EG'\times_{G'} \cR_{G',\bN}$.
\end{NB}%
Therefore $H^{G'_\cO}_*(\cR_{G',\bN})$ is just the $\Gamma$-invariant
part of $H^{G_\cO}_*(\cR_{G,\bN})$. It means the assertion.

\subsubsection{}\label{item:reduction}
Next one is similar to the above, but the case when the Pontryagin dual of
$\pi_1(\tilde G)/\pi_1(G)$ is a torus.
Let $1\to G\to \tilde G\to T_F\to 1$ be a short exact sequence of
connected reductive groups where $T_F$ is a torus. The subscript `F'
stands for \emph{flavor} symmetry that will be discussed more
generally in \S\S\ref{subsec:flavor}, \ref{subsec:flav-symm-group2}.
\begin{NB}
    This remark is probably meaningless: We can replace $G_F = \tilde
    G/G$ by its maximal torus $T_F$, and replace $\tilde G$ by the
    inverse image $\tilde G'$ of $T_F$ in $\tilde G$. Then we have an
    action of $T_F^\vee$ on $\mathcal M_C(\tilde G',\bN)$.

    It would be nice if we can generalize the following when the
    quotient (flavor symmetry group) $\tilde G/G$ is not necessarily
    abelian. As $\dim \mathcal M_C(G,\mathbf N) = \dim \mathcal
    M_C(\tilde G,\mathbf N) - 2\operatorname{rank} (\tilde G/G)$, it
    may be still a reduction by a maximal torus of $\tilde G/G$, but it is
    not even clear how to construct an action of $\tilde G/G$.
\end{NB}%
For any representation $\bN$ of $\tilde G$ we can consider the corresponding
Coulomb branch $\mathcal M_C({\tilde G,\bN})$. It acquires an action of the
dual torus $T_F^{\vee} = \pi_1(T_F)^\wedge$ by
\subsecref{sec:grading}. Then

\begin{Proposition}\label{prop:reduction}
\begin{equation*}
    \mathcal M_C({G,\bN}) \cong
    \text{Hamiltonian reduction of $\mathcal M_C(\tilde G,\bN)$ by 
      $T_F^\vee$}.
\end{equation*}
 \end{Proposition}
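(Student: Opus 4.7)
The plan is to identify the Hamiltonian reduction by $T_F^\vee$ with $\cA[G,\bN]$ in three steps: (i) take $T_F^\vee$-invariants in $\cA[\tilde G,\bN]$ using the grading of \subsecref{sec:grading}; (ii) identify the moment-map ideal with $(\mathfrak{t}_F)$ inside those invariants; (iii) reduce equivariance from $\tilde G_\cO$ to $G_\cO$ by specialization at $0\in\mathfrak{t}_F$.

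First I would analyze how $\cR_{\tilde G,\bN}$ decomposes under the quotient $\tilde G\to T_F$. After replacing $\tilde G$ by the finite cover $G\times T_F$ if necessary (handled by \subsecref{sec:triv-properties}\ref{item:finite_quotient}), the short exact sequence of groups yields $G_\cO\hookrightarrow\tilde G_\cO\twoheadrightarrow T_{F,\cO}$, so that $\Gr_{\tilde G}\to\Gr_{T_F}\cong Y_F$ has fiber $\Gr_G$ over $0\in Y_F$. Since the regularity condition defining $\cR$ is insensitive to enlarging the structure group, the union $\cR_{\tilde G,\bN}^0$ of connected components of $\cR_{\tilde G,\bN}$ lying over $0\in Y_F$ coincides with $\cR_{G,\bN}$. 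By \subsecref{sec:grading}, $T_F^\vee$ acts on the $\tilde\gamma$-piece of $\cA[\tilde G,\bN]$ through the character $\bar\gamma\in Y_F$, hence
\begin{equation*}
    \cA[\tilde G,\bN]^{T_F^\vee} = H^{\tilde G_\cO}_*(\cR_{G,\bN}).
\end{equation*}

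Next I would pin down the moment map. Infinitesimally, the $T_F^\vee$-action is generated by the Cartan-subalgebra classes of $\mathfrak{t}_F\subset\tilde\ft$ (via \propref{prop:comm}), so $\mu$ is dual to the composition $\CC[\mathfrak{t}_F]=H^*_{T_F}(\mathrm{pt})\hookrightarrow H^*_{\tilde G}(\mathrm{pt})\cdot 1\subset\cA[\tilde G,\bN]$, whose image lies in the zero-weight component. Since this ideal is $T_F^\vee$-stable and $T_F^\vee$ is reductive, Hamiltonian reduction reduces to computing $H^{\tilde G_\cO}_*(\cR_{G,\bN})/(\mathfrak{t}_F)$. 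For the third step, since the pro-unipotent kernel of $T_{F,\cO}\to T_F$ contributes trivially to equivariant cohomology with $\CC$-coefficients (a principle already used in \subsecref{subsec:equiv-borel-moore}), one has $H^{\tilde G_\cO}_*(\cR_{G,\bN})\cong H^{G_\cO\times T_F}_*(\cR_{G,\bN})$, where $T_F$ acts on $\cR_{G,\bN}$ through the residual flavor action on $\bN$. Specializing the $T_F$-equivariant parameters to zero then gives $H^{G_\cO}_*(\cR_{G,\bN})=\cA[G,\bN]$ as graded vector spaces.

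The main obstacle will be promoting this identification to an isomorphism of algebras, compatible with the convolution product. Concretely I must verify that the convolution product on $H^{\tilde G_\cO}_*(\cR_{G,\bN})$ inherited from the diagram \eqref{eq:12} for $\tilde G$ (restricted to the zero component in $Y_F$) agrees, modulo $(\mathfrak{t}_F)$, with the product on $\cA[G,\bN]$. This requires unwinding how the change-of-groups from $\tilde G_\cO$ to $G_\cO$ interacts with the pull-back-with-support and push-forward constructions of \subsecref{subsec:convolution-product}, and checking that specialization at $0\in\mathfrak{t}_F$ commutes with each of those geometric operations. The structural input making this plausible is that $\pi_1(G)\subset\pi_1(\tilde G)$ is a subgroup under the addition grading the convolution, so that the zero-component sub-ind-variety is closed under $\tilde p,\tilde q,\tilde m$ and the convolution diagram for $\tilde G$ restricts coherently to the one for $G$.
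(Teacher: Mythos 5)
Your overall strategy coincides with the paper's: identify $\cA[\tilde G,\bN]^{T_F^\vee}$ with $H^{\tilde G_\cO}_*(\cR_{G,\bN})$ using the component decomposition over $\Gr_{T_F}\cong Y_F$ and the $\pi_1$-grading of \subsecref{sec:grading}, identify the comoment map with $\CC[\ft_F]=H^*_{T_F}(\mathrm{pt})\to H^*_{\tilde G}(\mathrm{pt})\cdot 1$, and then pass from $\tilde G_\cO$- to $G_\cO$-equivariance by base change $\otimes_{H^*_{T_F}(\mathrm{pt})}\CC$. (The detour through a finite cover $G\times T_F$ of $\tilde G$ is unnecessary and best avoided: it changes both Coulomb branches by a finite group per \subsecref{sec:triv-properties}\ref{item:finite_quotient}, and nothing in the rest of your argument uses it. The fibration $\Gr_{\tilde G}\to\Gr_{T_F}$ exists without any splitting.)

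The step you defer — that the identification respects the convolution product — is the actual content of the proof, and your proposal does not supply the mechanism for it. The paper resolves it by Proposition~\ref{prop:deformation}: one defines a convolution product on $H^{\tilde G_\cO}_*(\cR_{G,\bN})$ directly via the modified diagram \eqref{eq:43}, in which $G_\cK$ is replaced by $\Iw$, the preimage of $(G_F)_\cO$ in $\tilde G_\cK$ (so that $\Gr_G\cong\Iw/\tilde G_\cO$ and the analogue of \eqref{eq:15} holds for $\tilde G_\cO\times\tilde G_\cO$). One then checks two separate compatibilities: (i) \eqref{eq:43} is the degree-$0$ part (over $0\in Y_F$) of the diagram \eqref{eq:12} for $(\tilde G,\bN)$, so the left arrow $H^{\tilde G_\cO}_*(\cR_{G,\bN})\hookrightarrow H^{\tilde G_\cO}_*(\cR_{\tilde G,\bN})$ is an algebra embedding; and (ii) the restriction $H^{\tilde G_\cO}_*(\cR_{G,\bN})\to H^{G_\cO}_*(\cR_{G,\bN})$ is an algebra map because the same geometric diagram computes both products, only the equivariance group changes. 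Without introducing $\Iw$ (or an equivalent device) there is no well-defined intermediate algebra through which to factor, and "specializing the $T_F$-equivariant parameters to zero commutes with the geometric operations" does not by itself produce a ring map. A second point you elide: for the quotient $\cA[\tilde G,\bN]^{T_F^\vee}/(\ft_F)$ to be a ring one needs $H^*_{T_F}(\mathrm{pt})$ to be central (a priori the convolution is only $H^*_{\tilde G}(\mathrm{pt})$-linear in the first variable); this is Lemma~\ref{lem:former_claim}, which gives $[\bullet,c_1(\chi)1]=\hbar\,\pi_1(\chi)\id$ and hence vanishes on the components over $0\in Y_F$. That lemma is also what justifies your assertion that the classes $c_1(\chi)1$ generate the Hamiltonian vector fields of the $T_F^\vee$-action, i.e.\ that $\mu$ really is a moment map for this action and not merely some $\ft_F$-valued function.
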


 Since $\mathcal M_C(\tilde G,\bN)$ has singularities in general, this
 statement means an algebraic counterpart, i.e., $\cA[{G,\bN}]$ is the
 $T_F^\vee$-invariant part of $\cA[\tilde
 G,\bN]/\{\mu_{T_F^\vee}=0\}$, where $\mu_{T_F^\vee}$ is the moment
 map for the $T_F^\vee$-action, which is described as follows.

Let $\ft_F = \operatorname{Lie}T_F \cong (\operatorname{Lie}T_F^\vee)^*$.
Recall that a map $\mu_{T_F^\vee}\colon \mathcal M_C(\tilde
G,\bN)\to \ft_F$ is a moment map if $\xi\circ\mu_{T_F^\vee}$ is a
hamiltonian for a vector field $\xi^*$ generated by $\xi\in
\operatorname{Lie}T_F^\vee= \ft_F^*$. It is equivalent to say that
the Poisson bracket satisfies $\{ f, \xi\circ\mu_{T_F^\vee}\} =
\xi^*(f)$ for any function $f\in\cA[\tilde G,\bN]$. This notion makes
sense for Poisson algebras. A moment map is not unique in general, but
we have the canonical one given by the composite of $\mathcal
M_C(\tilde G,\bN)\xrightarrow{\intsys} \operatorname{Spec} H^*_{\tilde
  G}(\mathrm{pt})\to \operatorname{Spec} H^*_{T_F}(\mathrm{pt})$,
or in other words:
\begin{Lemma}\label{lem:comoment}
    The composite of
\begin{equation*}
    \CC[\ft_F] = H^*_{T_F}(\mathrm{pt}) \to H^*_{\tilde G}(\mathrm{pt})
    \xrightarrow{\intsys^*}
    H^{\tilde G_\cO}_*(\cR_{\tilde G,\bN}) = \cA[\tilde G,\bN]
\end{equation*}
is the comoment map, i.e., the pull-back by the moment map
$\mu_{T_F^\vee}\colon \mathcal M_C(\tilde G,\bN)\to\ft_F$. Here the first
homomorphism is induced by $\tilde G\to T_F$, and the second one is
multiplication to the unit $1\in H^{\tilde G_\cO}_*(\cR_{\tilde G,\bN})$ as in
\propref{prop:comm}, which is the pull-back by $\intsys$ in \eqref{eq:17}.
\end{Lemma}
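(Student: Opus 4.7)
The lemma asserts that $\xi\cdot 1$ is the Hamiltonian generating the infinitesimal action of $\xi\in\ft_F^* = \Lie T_F^\vee$ on $\mathcal M_C(\tilde G,\bN)$. By \secref{sec:grading} the $T_F^\vee$-action on $\mathcal M_C(\tilde G,\bN)$ is the one induced from the $\pi_1(\tilde G)$-grading of $\cA[\tilde G,\bN]$ composed with the surjection $\pi_1(\tilde G)\twoheadrightarrow\pi_1(T_F)$; infinitesimally, $\xi$ acts on $c\in\cA[\tilde G,\bN]^\gamma$ by multiplication by $\langle\xi,\bar\gamma\rangle$, where $\bar\gamma\in\pi_1(T_F)$ is the image of $\gamma$. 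The claim is therefore the Poisson identity $\{c,\xi\cdot 1\} = \langle\xi,\bar\gamma\rangle\,c$, and using the flat quantization $\cAh\to\cA$ this lifts to the commutator identity
\[
    [c,\xi\cdot 1] \equiv \hbar\langle\xi,\bar\gamma\rangle\,c \pmod{\hbar^2}
\]
in $\cAh[\tilde G,\bN]$ for every homogeneous $c\in\cAh[\tilde G,\bN]^\gamma$.

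Because $\ast$ is $H^*_{\tilde G\times\CC^\times}(\mathrm{pt})$-linear in the first slot (\thmref{thm:convolution}), $(\xi\cdot 1)\ast c = \xi\cdot c$, so the identity is equivalent to
\[
    c\ast(\xi\cdot 1) = (\xi + \hbar\langle\xi,\bar\gamma\rangle)\cdot c \pmod{\hbar^2},\qquad c\in\cAh[\tilde G,\bN]^\gamma.
\]
The content of the lemma is thus a comparison of the tautological (left) and the convolution-defined (right) $H^*_{\tilde G\times\CC^\times}(\mathrm{pt})$-module structures on $\cAh[\tilde G,\bN]$: they coincide modulo $\hbar$ by commutativity of $\cA$, and their leading $\hbar$-discrepancy on the component $\cAh^\gamma$ must be computed and identified with the scalar $\hbar\langle\xi,\bar\gamma\rangle$.

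To establish this scalar formula the cleanest route is equivariant localization. Since $\xi\in\ft_F^*$ annihilates the coroots of $\tilde G$ (these are the coroots of $G$, as $T_F$ is a torus), $\xi$ descends to a character $\tilde G\twoheadrightarrow T_F\xrightarrow{\xi}\CC^\times$ and hence extends to a $\tilde G_\cO\rtimes\CC^\times$-equivariant line bundle $\cL_\xi$ on $\Gr_{\tilde G}$. Tracing $\xi\cdot 1$ through the convolution diagram \eqref{eq:12}, the two pullbacks of $\xi$ to the intermediate space via the two $\tilde G_\cO$-factors of \eqref{eq:41} differ by the $\tilde T\times\CC^\times$-equivariant first Chern class of $\cL_\xi$. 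Localizing to a $\tilde T\times\CC^\times$-fixed point $z^\lambda\in\Gr_{\tilde G}^\gamma$ with $\lambda\in Y$ lifting $\gamma$, a direct computation of the loop-rotation weight at $z^\lambda$ evaluates this Chern class to $\xi+\hbar\langle\xi,\lambda\rangle$; the hypothesis $\xi\in\ft_F^*$ ensures $\langle\xi,\lambda\rangle$ depends only on $\bar\gamma=[\lambda]\in\pi_1(T_F)$, giving a scalar constant on the whole component $\cR^\gamma$. The main obstacle is to lift this fixed-point identity back to an equality in $\cAh$ itself; this is handled by the equivariant localization theorem that will be developed in \secref{sec:abel}, together with the flatness of $\cAh$ over $\CC[\hbar]$, which guarantees that an identity valid after inverting $\hbar$ and the positive roots and checked at every fixed point already holds in $\cAh$.
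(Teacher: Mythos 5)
Your reduction is the same as the paper's: both arguments pass to the quantization, use $H^*_{\tilde G\times\CC^\times}(\mathrm{pt})$-linearity in the first slot to write $(\xi\cdot 1)\ast c=\xi\cdot c$, and identify the commutator $[c,\xi\cdot 1]$ with cup product by the equivariant first Chern class of the line bundle on $\cR_{\tilde G,\bN}$ associated with the character $\tilde G_\cO\to\tilde G\to\CC^\times$ (this is exactly \lemref{lem:former_claim} in the text). Where you diverge is in evaluating that Chern class: you localize to $\tilde T\times\CC^\times$-fixed points $z^\lambda$ and then appeal to the localization theorem plus flatness (\lemref{lem:flat}) to descend the identity back to $\cAh$. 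The paper instead observes that the line bundle is pulled back along $\Gr_{\tilde G}\to\Gr_{\CC^\times}\cong\ZZ$, so it is globally trivial on each connected component $\cR^\gamma$ with constant loop-rotation weight $\pi_1(\chi)(\gamma)$; this yields the exact identity $[\bullet,c_1(\chi)1]=\hbar\,\pi_1(\chi)\id$ (not merely mod $\hbar^2$) with no localization and no forward reference to \secref{sec:abel}. Two small points to tighten in your write-up: (i) an arbitrary $\xi\in\ft_F^*$ does not ``descend to a character'' --- you should first reduce, by linearity, to integral $\xi$, i.e.\ to genuine characters $\chi$ of $T_F$, as the paper does; (ii) the sentence ``the two pullbacks of $\xi$ differ by the first Chern class of $\cL_\xi$'' is inconsistent with your subsequent evaluation of that class as $\xi+\hbar\langle\xi,\lambda\rangle$ --- what you mean is that the second-slot pullback \emph{is} $c_1(\cL_\xi)$ while the first-slot pullback is the tautological $\xi$, so their difference at $z^\lambda$ is $\hbar\langle\xi,\lambda\rangle$; as written the bookkeeping would double-count $\xi$.
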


\begin{proof}
    We take a character $\chi\colon T_F\to\CC^\times$ and consider
    its first Chern class $c_1(\chi)$ as an element in $H^2_{
     T_F}(\mathrm{pt})$. We also consider it as $\tilde
    G\to\CC^\times$, and hence as an element in $H^2_{\tilde
      G}(\mathrm{pt})$. We have the induced $\CC^\times$-action
    through $\CC^\times = (\CC^\times)^\vee \to T_F^\vee$. We need
    to show that the comoment map for $\CC^\times$ is
    $c_1(\chi)\mapsto c_1(\chi)1\in H^{\tilde G_\cO}_*(\cR_{\tilde G,\bN})$.

    We consider the quantized version of the homomorphism
    \begin{equation*}
        H^*_{\tilde G\times\CC^\times}(\mathrm{pt}) \to
        H^{\tilde G_\cO\rtimes\CC^\times}_*(\cR_{\tilde G,\bN})
    \end{equation*}
    and the commutator $[\bullet, c_1(\chi)1]$. The assertion follows
    from the following lemma below.

    Indeed, it implies that the Poisson bracket $\{\bullet,
    c_1(\chi)1\}$ is $\pi_1(\chi)\id$, which is the action of the Lie
    algebra. This is nothing but the definition of the comoment map.
\end{proof}

\begin{Lemma}\label{lem:former_claim}
    Let $\chi\colon \tilde G\to \CC^\times$ be a character and
    consider its first Chern class $c_1(\chi)$ as an element in
    $H^2_{\tilde G}(\mathrm{pt})$. Then
    $[\bullet, c_1(\chi)1] = \hbar \pi_1(\chi)\id$, where
    $\pi_1(\chi)$ is the $\ZZ$-valued function given by
    $\pi_0(\cR_{\tilde G,\bN})=\pi_1(\tilde
    G)\xrightarrow{\pi_1(\chi)}\pi_1(\CC^\times) = \ZZ$.
\end{Lemma}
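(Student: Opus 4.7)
Since the multiplication is $H^*_{\tilde G\times\CC^\times}(\mathrm{pt})$-linear in the first variable (\thmref{thm:convolution}) and $1$ is the multiplicative unit, one immediately has $(c_1(\chi)\cdot 1)\ast x = c_1(\chi)\cdot x$. The plan therefore reduces to computing $x\ast (c_1(\chi)\cdot 1)$ and establishing
\begin{equation*}
    x\ast (c_1(\chi)\cdot 1) \;=\; \bigl(c_1(\chi) + \hbar\,\pi_1(\chi)(\gamma)\bigr)\cdot x
    \qquad\text{for }x\in\cAh[\tilde G,\bN]^{\gamma},
\end{equation*}
so that subtraction yields $[x,c_1(\chi)\cdot 1]=\hbar\,\pi_1(\chi)(\gamma)\,x$, as required.

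The first step is to localize the convolution diagram \eqref{eq:12} to the support of $c_1(\chi)\cdot 1 = c_1(\chi)\cap[\bN_{\cO}]$, namely the identity fiber $\bN_\cO\subset\cR^{0}$. Then $p^{-1}(\cR\times\bN_\cO)$ consists of pairs $(g_1,[1,s])$ with $g_1 s\in\bN_\cO$, its image $q(p^{-1}(\cR\times\bN_\cO))$ is identified with $\cR$ via $[g_1,[1,s]]\mapsto[g_1,s]$, and $\tilde m$ restricts to the identity under this identification. Thus, the only non-trivial content of the product is the $(\tilde G_\cO\rtimes\CC^\times)$-equivariant cohomology class on $\cR$ obtained by transporting $c_1(\chi)$ from the second factor through $(\tilde q^*)^{-1}p_\cR^*$ and then through $\tilde m$.

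The main geometric input is to identify this transported class with the equivariant first Chern class of the pullback to $\cR$ of the line bundle $L_\chi := \tilde G_\cK\times_{\tilde G_\cO}\CC_\chi$ on $\Gr_{\tilde G}$. For this, observe that $q\colon G_\cK\times\cR\to G_\cK\times_{\tilde G_\cO}\cR$ is a principal $\tilde G_\cO$-bundle for the second-factor action $b\cdot(g_1,[g_2,s])=(g_1 b^{-1},[bg_2,s])$, and the function $[g_1,[g_2,s]]\mapsto[g_1 g_2]$ is $\tilde G_\cO$-invariant, so that the classifying map of $q$ factors as $G_\cK\times_{\tilde G_\cO}\cR\xrightarrow{\pi\circ\tilde m}\Gr_{\tilde G}\to B\tilde G_\cO$. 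By functoriality of equivariant Chern classes under principal bundle quotients, $(\tilde q^*)^{-1}p_\cR^* c_1(\chi) = \tilde m^*\pi^*c_1^{\tilde G_\cO}(L_\chi)$, and the projection formula combined with the identity behaviour of $\tilde m$ on the restricted locus gives $x\ast(c_1(\chi)\cdot 1) = c_1^{\tilde G_\cO\rtimes\CC^\times}(\pi^* L_\chi)|_{\cR^\gamma}\cdot x$.

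The last step is to evaluate $c_1^{\tilde G_\cO\rtimes\CC^\times}(L_\chi)|_{\Gr_{\tilde G}^\gamma}$. Since $\chi$ factors through the abelianization $\tilde G\to\tilde G^{\mathrm{ab}}$ (a torus), $L_\chi$ is pulled back from $\Gr_{\tilde G^{\mathrm{ab}}}$ via the natural map $\Gr_{\tilde G}\to\Gr_{\tilde G^{\mathrm{ab}}}$; since the latter is a discrete set, $L_\chi|_{\Gr_{\tilde G}^\gamma}$ is non-equivariantly trivial. Its equivariant first Chern class is thus determined by the weight of $\tilde G_\cO\rtimes\CC^\times$ on the fiber over any $T$-fixed point $[z^\la]$ with $[\la]=\gamma$: the $\tilde G_\cO$-action on $\CC_\chi$ contributes $c_1(\chi)$, while the loop rotation $a\mapsto(z\mapsto az)$ sends $[z^\la,v]$ to $[z^\la,\chi(a^\la)v]$ and hence acts with weight $v(\chi(z^\la))=\pi_1(\chi)(\gamma)$, contributing $\hbar\,\pi_1(\chi)(\gamma)$. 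Combining yields the displayed formula and hence the lemma. The main obstacle to turning this sketch into a rigorous proof is the careful handling of equivariant Chern classes on the ind-scheme $G_\cK\times_{\tilde G_\cO}\cR$, which has to be interpreted through the finite-dimensional approximations of \subsecref{subsec:equiv-borel-moore}, and the verification that the functoriality identity used in the third paragraph descends correctly through these approximations.
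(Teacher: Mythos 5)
Your proof is correct and follows essentially the same route as the paper's: left multiplication by $c_1(\chi)1$ is plain module multiplication, while right multiplication is cup product with the equivariant first Chern class of the pullback of $\tilde G_\cK\times_{\tilde G_\cO}\CC_\chi$, which on the component $\gamma$ equals $c_1(\chi)+\hbar\,\pi_1(\chi)(\gamma)$ because the bundle is pulled back from $\Gr_{\CC^\times}$ and loop rotation acts on the fiber over $[z^\la]$ with weight $\langle\chi,\la\rangle$. You merely make explicit the cancellation of the $c_1(\chi)$ term and the descent through $q$ that the paper leaves implicit.
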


\begin{proof}
    By the definition of the convolution product, $[\bullet,
    c_1(\chi)1]$ is given by the cup product with respect to the first
    Chern class of the line bundle over $\cR_{\tilde G,\bN}$ induced
    from the composite of $\tilde G_\cO\to \tilde G
    \xrightarrow{\chi}\CC^\times$, where the first homomorphism is
    given by taking the constant term.
    \begin{NB}
        The $\tilde G$-action on the fiber becomes trivial, as we are taking
        the commutator.
    \end{NB}%
    The line bundle is the pull-back from $\Gr_{\tilde G}$
    \begin{NB}
        that is $\tilde G_\cK\times_{\tilde G_\cO}\CC\to \tilde
        G_\cK/\tilde G_\cO = \Gr_{\tilde G}$,
    \end{NB}%
    by the projection $\cR_{\tilde G,\bN}\to\Gr_{\tilde G}$. It is
    further the pull-back from $\Gr_{\CC^\times}$
    \begin{NB}
        that is $\CC^\times_\cK\times_{\CC^\times_\cO}\CC\to
        \CC^\times_\cK/\CC^\times_\cO = \Gr_{\CC^\times}$,
    \end{NB}%
    by the morphism $\Gr_{\tilde G}\to \Gr_{\CC^\times}$ given by
    $\chi$. Let us note that the identification
    $\Gr_{\CC^\times}\simeq \ZZ$ is given by
    $\CC^\times_\cK/\CC^\times_\cO\ni[z^n]\mapsto n$. Then the line
    bundle is trivial, equipped with the
    $\CC^\times_\cO\rtimes\CC^\times$-equivariant structure by the
    $n^{\mathrm{th}}$ power map $\CC^\times\to\CC^\times$ on the
    component for $n\in\ZZ$.
    \begin{NB}
        Consider the (trivial) line bundle $(z^n
        \CC^\times_\cO)\times_{\CC^\times_\cO}\CC \to
        (z^n\CC^\times_\cO)/{\CC^\times_\cO}$. The rotation action
        $z\mapsto \lambda z$ gives the multiplication by $\lambda^n$
        on the fiber.
    \end{NB}

    Now $\pi_1(\chi)$ is given by $\cR_{\tilde
      G,\bN}\to\Gr_{\CC^\times}$,
    \begin{NB}
        and $\hbar$ is the first Chern class of the line bundle
        associated with the identity $\CC^\times\to\CC^\times$,
    \end{NB}%
    and $[\bullet, c_1(\chi)1] = \hbar \pi_1(\chi)\id$ follows.
\end{proof}

\begin{proof}[Proof of \propref{prop:reduction}]
Since $T_F$ is torus, its affine Grassmannian is a discrete
lattice. Hence $\cR_{G,\bN}$ is a union of components of $\cR_{\tilde
  G,\bN}$ as in \ref{item:finite_quotient}. Hence $H^{\tilde G_\cO}_*(\cR_{G,\bN})$ is the
   $T_F^\vee$-invariant part of $H^{\tilde G_\cO}_*(\cR_{\tilde G,\bN})$.
Next the equivariant homology for $G_\cO$ is given by 
\(
   H^{\tilde G_\cO}_*(\cR_{G,\bN})\otimes_{H^*_{\tilde G}(\mathrm{pt})}
   H^*_{G}(\mathrm{pt})
   =
   H^{\tilde G_\cO}_*(\cR_{G,\bN})\otimes_{H^*_{T_F}(\mathrm{pt})}
  \CC.
\)
This is given by cutting out the ideal generated by $c_1(\chi)1$ for
various $\chi\in H^2_{T_F}(\mathrm{pt})$.

We need to check that the induced product and the original product on
$H^{G_\cO}_*(\cR_{G,\bN})$ are equal. Namely both maps
\begin{equation*}
\begin{CD}
     H^{\tilde G_\cO}_*(\cR_{\tilde G,\bN})@<<< H^{\tilde G_\cO}_*(\cR_{G,\bN}) @>>>
     H^{G_\cO}_*(\cR_{G,\bN})
\end{CD}
\end{equation*}
are algebra homomorphisms. The left arrow is the embedding of the degree $0$ part with respect to the grading given by $\pi_1(T_F)$ in \subsecref{sec:grading}, and hence it is an algebra embedding.
The right arrow will be studied in more general setting in \propref{prop:deformation}. It will be shown
that it is an algebra homomorphism.
Moreover the multiplication on $H^{\tilde G_\cO}_*(\cR_{G,\bN})$ in \propref{prop:deformation} is the one given as a subalgebra: The diagram \eqref{eq:43} is the degree $0$ part of the diagram \eqref{eq:12} for $\cR_{\tilde G,\bN}$. 
\begin{NB}
\begin{equation*}
    \begin{CD}
    \cT\times \cR @<{p}<< G_\cK\times\cR
    @>{q}>> G_\cK\times_{G_\cO}\cR
    @>{m}>> \cT
\\
    @| @VVV @VV{\cong}V @|
\\        
    \cT\times \cR @<{p}<< \Iw\times\cR
    @>{q}>> \Iw\times_{\tilde G_\cO}\cR
    @>{m}>> \cT
\\
    @VVV @VVV @VVV @VVV
\\        
    \cT_{\tilde G,\bN}\times \cR_{\tilde G,\bN} @<{p}<< 
    \tilde G_\cK\times\cR_{\tilde G,\bN}
    @>{q}>> \tilde G_\cK\times_{\tilde G_\cO}\cR_{\tilde G,\bN}
    @>{m}>> \cT_{\tilde G,\bN}
    \end{CD}
\end{equation*}
\end{NB}%
Therefore $\mathcal M_C(G,\bN)$ is the Hamiltonian reduction of
$\mathcal M_C(\tilde G,\bN)$ by $T_F^\vee$.
 \end{proof}

Now the corresponding statement for quantized Coulomb
branches is clear.
\begin{equation*}
    \text{$\cAh[{G,\bN}]$ is the quantum
      Hamiltonian reduction of $\cAh[\tilde G,\bN]$ by $T_F^\vee$}.
\end{equation*}
Recall a homomorphism $\mu^*\colon
U(\operatorname{Lie}T_F^\vee)[\hbar] \to \cAh[\tilde G,\bN]$ is a
\emph{quantum comoment map} if $[f, \mu^*(\xi)] = \hbar \xi^*(f)$ for
$\xi\in\operatorname{Lie}T_F^\vee$, $f\in\cAh[\tilde G,\bN]$. Since
$T_F^\vee$ is torus, we have $U(\operatorname{Lie}T_F^\vee) \cong
S(\operatorname{Lie}T_F^\vee) \cong \CC[\ft_F]$. The above proof 
of~\lemref{lem:comoment}, in fact, shows that the composite of $\CC[\ft,\hbar] =
H^*_{T_F\times\CC^\times}(\mathrm{pt})\to H^*_{\tilde
  G\times\CC^\times}(\mathrm{pt}) \to\cAh[\tilde G,\bN]$ is a quantum
comoment map.
\begin{NB}
    The subalgebra $\cAh[\tilde G,\bN]^{T_F^\vee}$ consists of
    $f\in\cAh[\tilde G,\bN]$ with $[f,\mu^*(\xi)] = 0$. This is true for a
    general quantum comoment map.

    It is also clear from \lemref{lem:former_claim} above, as
    $\pi_1(\chi) = 0$ on the (union of the) component
    $\cR=\cR_{G,\bN}$ as it factors as $\pi_1(\tilde
    G)\to\pi_1(T_F)\to\pi_1(\CC^\times) = \ZZ$.
\end{NB}%
Furthermore $H^{\tilde G_\cO\rtimes\CC^\times}_*(\cR_{G,\bN}) =
\cAh[\tilde G,\bN]^{T_F^\vee} $ and
$H^{G_\cO\rtimes\CC^\times}_*(\cR_{G,\bN}) = H^{\tilde
  G_\cO\rtimes\CC^\times}_*(\cR_{G,\bN}) \otimes_{H^*_{\tilde
    G\times\CC^\times}(\mathrm{pt})}H^*_{G\times\CC^\times}(\mathrm{pt})$.
Hence $\cAh[G,\bN]$ is the quotient of $\cAh[\tilde G,\bN]^{T_F^\vee}$ by
the intersection of $\cAh[\tilde G,\bN]^{T_F^\vee}$ and the right ideal
generated by the image of the quantum comoment map. (The intersection
is a two-sided ideal, and the quotient is actually a ring.)
\begin{NB}
    Let $c\in \cAh[\tilde G,\bN]^{T_F^\vee}$, $b=\sum \mu^*(\xi_i) a_i
    \in \mu^*(\operatorname{Lie}T_F^\vee)\cdot\cAh[\tilde
    G,\bN]$. Then $c \sum \mu^*(\xi_i) a_i = \sum \mu^*(\xi_i) c a_i$,
    as $\xi_i^*(f) = 0$.
\end{NB}%
This is nothing but the definition of the quantum Hamiltonian reduction.
See \cite[\S4]{Etingof-CM}.

\begin{Example}\label{ex:typeD}
Let us give an example of this construction. Let $(\tilde G,\bN) = (\GL(2),
(\CC^2)^{\oplus N_f})$, and $G = \SL(2)$. Here $\CC^2$ is the vector
representation of $\GL(2)$ and $(\CC^2)^{\oplus N_f}$ is the direct
sum of its $N_f$ copies.
Then $(\tilde G,\bN)$ is a quiver gauge theory of type $A_1$ with
$\dim V = 2$, $\dim W = N_f$. Assume $N_f\ge 4$ so that the
corresponding vector is dominant. As we will prove in \ref{quivar},
the Coulomb branch for $(\tilde G,\bN)$ is a quiver variety of type
$A_{N_f-1}$ with dimension vectors $\dim V = (1,2,\dots,2,1)$, $\dim W
= (0,1,0,\dots,0,1,0)\in \ZZ^{N_f-1}$. Moreover, the torus
$\pi_1(\tilde G)^\vee$ action is identified with the action of
$\GL(W_2) \cong \CC^\times$. (See \ref{Cartan_grading}.)
Therefore the Coulomb branch of $(G,(\CC^2)^{N_f})$ is the Hamiltonian
reduction of $\prod \GL(V_i)\times \GL(W_2)$, which is a quiver
variety of type $D_{N_f}$ of dimension vectors $\dim V =
(\genfrac{}{}{0pt}{0}{1}{1} 2\dots 21)$, $\dim W =
(\genfrac{}{}{0pt}{0}{0}{0} 0\dots 010)\in\ZZ^{N_f}$. This quiver
variety is Kronheimer's original construction of a simple singularity
of type $D_{N_f}$. This coincides with the expectation in
\cite{MR1490862}.

By \cite[(2.18)]{MR1490862}, $\mathcal M_C(G,\bN)$ is in general
expected to be $y^2 = x^2 v - v^{N_f-1}$ even for $N_f=1,2,3$. In this
case, $\mathcal M_C(\tilde G,\bN)$ is \emph{not} a quiver variety,
hence the above argument does not work. It should be possible to check
this by using the Coulomb branch of $(\tilde G,\bN)$ in
\ref{nondom}. But we give an alternative argument in \lemref{lem:SL2}.

\begin{NB}
    Misha : On Aug.22:

    I do not know how to write down equations for the nondominant
    slices, but Joel does (for $PGL(2)$, see page 5 of his letter of
    June 22). If you perform the hamiltonian reduction with respect to
    Cartan of his 4-dimensional slices with $N_f=1,2,3$, you do get
    the answer of \cite[(2.18)]{MR1490862}. Thus everything checks, the only
    problem being my inability to identify our slices with Joel's.
\end{NB}
\end{Example}

Further examples are given as toric hyper-K\"ahler manifolds. See
\subsecref{sec:toric-hyper-kahler}.

\begin{NB}
    From the claim, $c_1(\chi)$ is central if $\pi_1(\chi) = 0$. In
    particular, if $\g = \g_{\mathrm{ss}} \oplus Z(\g)$ be the
    decomposition of $\g$ into a direct sum of the semisimple part
    $\g_{\mathrm{ss}} = [\g,\g]$ and the center $Z(\g)$,
    $\CC[\g_{\mathrm{ss}}]$ is central. Therefore $\cAh$ is an algebra
    over $\CC[\g_{\mathrm{ss}}]$.
\end{NB}%

Let us also remark that this proposition is naturally predicted from the monopole formula \eqref{eq:19}, as was observed in \cite[\S5.1]{Cremonesi:2013lqa}.

\subsection{Flavor symmetry group -- deformation}
\label{subsec:flavor}

Suppose that we have a larger group $\tilde G$ containing $G$ as a
normal subgroup. Let $G_F = \tilde G/G$. This is called the {\it
  flavor symmetry group\/} in physics literature. We suppose our
$G$-module $\bN$ extends to a $\tilde G$-module. We denote it by the
same notation $\bN$.
The Hamiltonian reduction in \ref{item:reduction} above is an example
when $G_F$ is a torus. Note that $\mathcal M_C(G,\bN)$ in
\ref{item:reduction} has a natural deformation and a family of
quasi-projective varieties which are projective over $\mathcal
M_C(G,\bN)$. The former is given by changing the level of the moment
map, and the latter is given by considering GIT quotients for
characters of $T_F^\vee$.
We will give both constructions for arbitrary $G_F$. This property
was expected in \cite[\S5]{2015arXiv150303676N}. See original physics literature given there.

The deformation is easy, and is given here. Quasi-projective varieties
will be given later in \subsecref{subsec:flav-symm-group2}.

Since $G$ is a normal subgroup of $\tilde G$, the $G_\cO$-action on
$\Gr_G$ extends to $\tilde G_\cO$.
\begin{NB}
    By the formula $\tilde g[g] = [\tilde g g \tilde g^{-1}]$.
\end{NB}%
Moreover, as $\bN$ is a representation of $\tilde G$, we have $\tilde
G_\cO$-actions on $\cT$, $\cR$, etc.
\begin{NB}
    By the formula $\tilde g[g,s] = [\tilde g g \tilde g^{-1},\tilde gs]$.
\end{NB}%
Therefore we can consider the $\tilde G$-equivariant homology group
$H^{\tilde G_\cO}_*(\cR)$. It is a module over $H^*_{\tilde
  G}(\mathrm{pt})$ and has extra directions parametrized by $\Spec
(H^*_{G_F}(\mathrm{pt}))$. We have the restriction homomorphism
$H_*^{\tilde G_\cO}(\cR)\to H_*^{G_\cO}(\cR) = H^{\tilde
  G_\cO}_*(\cR)\otimes_{H^*_{G_F}(\mathrm{pt})}\CC$.

\begin{Proposition}\label{prop:deformation}
    A convolution product $\ast$ defines an associative graded algebra
    structure on $H^{\tilde G_\cO}_*(\cR)$. The restriction
    homomorphism $H^{\tilde G_\cO}_*(\cR)\to H^{G_\cO}_*(\cR)$
    is an algebra homomorphism. The same is true for $\tilde
    G_\cO\rtimes\CC^\times$, $G_\cO\rtimes\CC^\times$ equivariant
    homology groups.
\end{Proposition}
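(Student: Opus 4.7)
The plan is to reproduce the construction of \secref{sec:definition} with $G_\cO$ replaced by $\tilde G_\cO$ throughout, and then to deduce that the restriction map is a ring homomorphism by naturality. The key observation is that, since $G$ is normal in $\tilde G$, the sub-ind-scheme $G_\cK\subset\tilde G_\cK$ is normal, so $\Iw\defeq \tilde G_\cO\cdot G_\cK$ is a sub-group-ind-scheme of $\tilde G_\cK$. Writing any element of $\Iw$ as a product of elements of $G_\cK$ and $\tilde G_\cO$, one obtains canonical identifications
\[
\Iw/\tilde G_\cO \cong G_\cK/G_\cO=\Gr_G,\qquad \Iw\times_{\tilde G_\cO}\cR \cong G_\cK\times_{G_\cO}\cR,
\]
and similarly $\Iw\times_{\tilde G_\cO}\bN_\cO\cong\cT$. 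I would reformulate the convolution diagram \eqref{eq:12} by replacing the middle column $G_\cK\times\cR$ and $G_\cK\times_{G_\cO}\cR$ by $\Iw\times\cR$ and $\Iw\times_{\tilde G_\cO}\cR$, while keeping $\cR$, $\cT$ and $\cT\times\cR$ unchanged. On this reformulated diagram, $\tilde G_\cO\times\tilde G_\cO$ acts on $\Iw\times\cR$ by the same formulas as in \eqref{eq:41}, which are well-defined because $\Iw$ contains $\tilde G_\cO$ on both sides and is a group. Under the identification $\cT\cong\Iw\times_{\tilde G_\cO}\bN_\cO$, the induced $\tilde G_\cO$-action on $\cT$ corresponds to the natural one $\tilde g\cdot[g,s]=[\tilde g g\tilde g^{-1},\tilde g s]$, and the maps $p$, $q$, $m$ become equivariant with respect to the appropriate projections.

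With this equivariant set-up, I would define $H^{\tilde G_\cO}_*(\cR)$ by finite-dimensional approximations exactly as in \subsecref{subsec:equiv-borel-moore}, using $\tilde G_i\defeq\tilde G(\cO/z^i\cO)$ in place of $G_i$. The analogue of \lemref{lem:pull} is then immediate, since $\Iw\to\Iw/\tilde G_\cO=\Gr_G$ plays exactly the role of $G_\cK\to\Gr_G$, with identical pullback formulas for the dualizing complex. The pull-back with support of \subsecref{subsec:AN}\ref{subsubsec:b}, the base-change isomorphism $(\tilde q^*)^{-1}$, and the push-forward $\tilde m_*$ along the ind-proper map $\tilde m$ all make sense $\tilde G_\cO$-equivariantly. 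The associativity argument in the proof of \thmref{thm:convolution}, organized around the diagram \eqref{eq:70}, carries over verbatim because every square there remains commutative and cartesian in the $\tilde G_\cO$-equivariant setting, and the unit is still the fundamental class of the fiber at $[1]\in\Gr_G$. The loop rotation $\CC^\times$ commutes with $\tilde G$, so the same argument yields the $\tilde G_\cO\rtimes\CC^\times$-equivariant statement.

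For the restriction homomorphism, at the level of finite-dimensional approximations it arises from the inclusion $G_i\hookrightarrow\tilde G_i$ as the functor restricting the equivariant structure on a complex. This is a natural transformation of cohomological functors, hence commutes with ordinary pullback, with pushforward along proper morphisms, and with base-change isomorphisms. Since the pull-back with support of \subsecref{subsec:AN}\ref{subsubsec:b} is built from these operations via adjunction, restriction commutes with it as well; therefore it commutes with every geometric operation entering the construction of the product. Consequently $H^{\tilde G_\cO}_*(\cR)\to H^{G_\cO}_*(\cR)$ is a ring homomorphism, and the identical argument handles the $\tilde G_\cO\rtimes\CC^\times$ case.

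The main technical obstacle will be the bookkeeping in the first two steps: verifying that the finite-dimensional $G_i$-approximations of \secref{sec:definition} extend to compatible $\tilde G_i$-approximations, and that the pull-back with support on the singular ind-scheme $\Iw\times\cR$ interacts correctly with the enlarged equivariance. These checks are straightforward but tedious; the remainder of the argument is formal.
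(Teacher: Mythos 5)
Your proposal is correct and follows essentially the same route as the paper: both introduce the group $\Iw$ (your $\tilde G_\cO\cdot G_\cK$ coincides with the paper's preimage of $(G_F)_\cO$ under $\tilde G_\cK\to(G_F)_\cK$), replace the middle terms of \eqref{eq:12} by $\Iw\times\cR$ and $\Iw\times_{\tilde G_\cO}\cR$, and rerun the construction of \thmref{thm:convolution}. The one point to make explicit in your final step is that comparing the two products is not purely "restriction of equivariant structure," since they are computed on different convolution diagrams; you must also invoke the identifications $G_\cK\times_{G_\cO}\cR\cong\Iw\times_{\tilde G_\cO}\cR$ (which you state at the outset) to intertwine the two diagrams — this is exactly the commutative diagram the paper appeals to, and whose details it likewise omits.
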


Applying \lemref{lem:former_claim}
to the setting in the proof below, we see that
$H^*_{G_F}(\mathrm{pt})$ is central in $H^{\tilde
  G_\cO\rtimes\CC^\times}_*(\cR)$. (Note that $\pi_1(\chi)$ is zero on
$\cR$.)
\begin{NB}
    As it factors through $\pi_1(G_F)$.
\end{NB}%
Therefore $H^{\tilde
  G_\cO\rtimes\CC^\times}_*(\cR)\otimes_{H^*_{G_F}(\mathrm{pt})} \CC$
has an induced multiplication. Then the second assertion means that
$H^{\tilde
  G_\cO\rtimes\CC^\times}_*(\cR)\otimes_{H^*_{G_F}(\mathrm{pt})} \CC
\cong H^{G_\cO\rtimes\CC^\times}_*(\cR)$ is an algebra isomorphism.

\begin{NB}
    Let $a$, $b\in H^{\tilde G_\cO\rtimes\CC^\times}_*(\cR)$, $f$,
    $g\in H^*_{G_F}(\mathrm{pt})$. We have $fa\otimes 1 = a\otimes
    f(0)$, etc. Then $(a\otimes 1)\ast^{\tilde G} (b\otimes 1) =
    (a\ast^{\tilde G} b)\otimes 1$ is well-defined. Now the second
    assertion means $(a\otimes 1)\ast^G(b\otimes 1) = (a\ast^{\tilde G}
    b)\otimes 1$. Therefore the two products are the same.

    The second assertion implies
    \begin{equation*}
        (fa\ast^{\tilde G} gb)\otimes 1
        = (a\otimes f(0))\ast (b\otimes g(0))
        = (a\otimes 1\ast b\otimes 1) f(0)g(0)
        = (a\ast^{\tilde G}b)\otimes f(0)g(0).
    \end{equation*}
    Therefore it turns out that a multiplication is well-defined on
    $H^{\tilde
      G_\cO\rtimes\CC^\times}(\cR)\otimes_{H^*_{G_F}(\mathrm{pt})}
    \CC$. But unless we put it a priori, the algebra isomorphism
    assertion does not make sense.
\end{NB}%

A proof of commutativity of the product on $H^{\tilde G_\cO}_*(\cR)$
is the same as one for $H^{G_\cO}_*(\cR)$, hence is postponed until
\secref{sec:abel}.

\begin{proof}
Let us regard $\Gr_G$ as the moduli space of pairs $(\scP,\varphi)$ as
before. We have the induced $\tilde G$-bundle $\tilde\scP\defeq
\scP\times_G\tilde G$ and its trivialization $\tilde\varphi
\defeq\varphi\times_G\tilde G\colon \scP\times_G\tilde G \to \tilde
G\times D^*$. Moreover for the further induced $G_F$-bundle
$\tilde\scP\times_{\tilde G}G_F$, the trivialization
$\tilde\varphi\times_{\tilde G} G_F$ extends across the origin $0\in D$.
\begin{NB}
    Let $[[p,\tilde g],g_F] \in \scP\times_G\tilde G\times_{\tilde G}
    G_F$. We have a section $x\mapsto [[p,1],1]$, which is
    well-defined as $[[pg,1],1] = [[p,g],1] = [[p,1], \xi(g)] =
    [[p,1],1]$ where $\xi\colon \tilde G\to G_F$. Then 
    \(
    \tilde\varphi\times_{\tilde G} G_F([[p,1],1]) =
    \varphi(p)\times_G\times\tilde G\times_{\tilde G} G_F.  
    \)
    In effect, $\varphi(p)\times_G\tilde G\times_{\tilde G} G_F$ is
    $\varphi(p)\times_G\tilde G$ means the image of $\varphi(p)$ under
    the inclusion $G\to\tilde G$. And $\varphi(p)\times_G\tilde
    G\times_{\tilde G} G_F$ is $\xi(\varphi(p))$. Therefore it is
    $1$. Thus the trivialization extends across the origin.
\end{NB}%
Conversely a pair $(\tilde\scP,\tilde\varphi)$ such that the
trivialization $\tilde\varphi\times_{\tilde G}G_F$ extends is coming
from a pair $(\scP,\varphi)$. Thus $\Gr_G$ can be regarded as the
moduli space of such pairs $(\tilde\scP,\tilde\varphi)$.

Let $\Iw$ be the inverse image of $(G_F)_\cO$ under $\tilde G_\cK\to
(G_F)_\cK$.
\begin{NB}
    It contains both $G_\cK$ and $\tilde G_\cO$. Since $1\to G_\cK\to
    \tilde G_\cK\to (G_F)_\cK\to 1$ is exact, $\Iw/G_\cK \cong
    (G_F)_\cO$. Also 
\[
     \Gr_G = G_\cK/G_\cO \xrightarrow{\cong}
     \Iw/\tilde G_\cO
\]

The simplest case $\tilde G = G\times G_F$, we have $\tilde G_\cO =
G_\cK\times (G_F)_\cO$, $\Iw = G_\cK\times (G_F)_\cO$. The above identification is trivial in this case.
\end{NB}%
The homomorphism $G_\cK\to \Iw$ induces a bijection $\Gr_G =
G_\cK/G_\cO \cong \Iw/\tilde G_\cO$, where the latter quotient
$\Iw/\tilde G_\cO$ is compatible with the above scheme structure of
$\Gr_G$. As an analog of \eqref{eq:15}, we have
\begin{equation*}
    H^{\tilde G_\cO\times\tilde G_\cO}_*(\Iw) \cong H^{\tilde G_\cO}_*(\Gr_G).
\end{equation*}

Let us modify (the lower row of) the diagram \eqref{eq:12} as
\begin{equation}\label{eq:43}
    \begin{CD}
    \cT\times \cR @<{p}<< \Iw\times\cR
    @>{q}>> \Iw\times_{\tilde G_\cO}\cR
    @>{m}>> \cT,
    \end{CD}
\end{equation}
where maps are given by
\begin{equation*}
    \left([g_1,g_2s], [g_2,s]\right) \leftmapsto \left(g_1,[g_2,s]\right)
    \mapsto
    \bigl[g_1, [g_2,s]\bigr] \mapsto [g_1g_2, s].
\end{equation*}
It is exactly the same formula as \eqref{eq:42} above, but we have
used the description $\cR = \{ [g,s]\in \Iw\times_{\tilde
  G_\cO}\bN_\cO \mid gs\in\bN_\cO\}$, etc. The upper row of
\eqref{eq:12} is defined in the same way.

The same formula as in \eqref{eq:41} gives actions
\begin{equation*}
        \tilde G_\cO \times\tilde G_\cO\curvearrowright
        \cT\times\cR,
        \qquad
        \tilde G_\cO \times \tilde G_\cO \curvearrowright
        \Iw\times\cR,
        \qquad
        \tilde G_\cO \curvearrowright \Iw\times_{\tilde G_\cO}\cR,
        \qquad
        \tilde G_\cO \curvearrowright \cT.
\end{equation*}
The above diagram \eqref{eq:43} is equivariant.
\begin{NB}
\begin{equation*}
    \begin{split}
        \tilde G_\cO \times\tilde G_\cO\curvearrowright
        \cT\times\cR ;\ &
        (g,h)\cdot \left([g_1,s_1],[g_2,s_2]\right)
        = \left([gg_1, s_1], [hg_2,s_2] \right),
        \\
        \tilde G_\cO \times \tilde G_\cO \curvearrowright
        \Iw\times\cR ;\ &
        (g,h)\cdot \left(g_1, [g_2,s]\right)
        = \left(gg_1h^{-1}, [hg_2, s]\right),
        \\
        \tilde G_\cO \curvearrowright \Iw\times_{\tilde G_\cO}\cR ;\ &
        g\cdot \left[g_1, [g_2,s]\right] = \left[gg_1,
          [g_2, s]\right],
        \\
        \tilde G_\cO \curvearrowright \cT ;\ &
        g\cdot [g_1,s] = [gg_1, s].
    \end{split}
\end{equation*}
\end{NB}%
Also the diagram is given by morphisms of schemes as before.

We then define the convolution product $\ast$ on $H^{\tilde
  G_\cO}_*(\cR)$ using \eqref{eq:43} instead of \eqref{eq:12}.

The compatibility of two products on $H^{\tilde G_\cO}_*(\cR)$ and
$H^{G_\cO}_*(\cR)$ follows from a commutative diagram connecting
\eqref{eq:12} to \eqref{eq:43}.
\begin{NB}
    \begin{equation*}
    \begin{CD}
    \cT\times \cR @<{p}<< G_\cK\times\cR
    @>{q}>> G_\cK\times_{G_\cO}\cR
    @>{m}>> \cT
\\
    @| @VVV @VV{\cong}V @|
\\        
    \cT\times \cR @<{p}<< \Iw\times\cR
    @>{q}>> \Iw\times_{\tilde G_\cO}\cR
    @>{m}>> \cT.
    \end{CD}
\end{equation*}
\end{NB}%
The detail is omitted.
%
\end{proof}

\begin{Remarks}\label{rem:W-cover}
    (1) When $G_F$ is a torus, we are in the setting
    \subsecref{sec:triv-properties}\ref{item:reduction}. As we have
    remarked in the beginning of this subsection, the quotient
    $\mu_{T_F^\vee}\colon \mathcal M_C(\tilde G,\bN)\dslash T_F^\vee
    \to \ft_F$ gives a deformation of $\mathcal M_C(G,\bN)$
    parametrized by $\ft_F$. Taking the quotient by $T_F^\vee$, but
    not imposing the moment map equation $\mu_{T_F}=0$ means that we
    change the space from $\cR_{\tilde G,\bN}$ to $\cR_{G,\bN}$, but
    keep the group as $\tilde G_\cO$. Therefore we have $\mathcal
    M_C(\tilde G,\bN)\dslash T_F^\vee = \Spec H^{\tilde G_\cO}_*(\cR)$
    in this case.

    In general, we take a maximal torus $T_F$ of $G_F$, and set
    $\tilde G'$ as its inverse image in $\tilde G$. Then $\mathcal
    M_C(\tilde G',\bN)\dslash T_F^\vee = \Spec H^{\tilde
      G'_\cO}_*(\cR)$ is a $W_F$-covering of $\Spec H^{\tilde
      G_\cO}_*(\cR)$, where $W_F$ is the Weyl group of $G_F$.
    (Checking that $H^{\tilde G'_\cO}_*(\cR) \cong H^{\tilde
      G_\cO}_*(\cR)\otimes_{H^*_{\tilde G}(\mathrm{pt})} H^*_{\tilde
      G'}(\mathrm{pt})$ respects the multiplication is left as an
    exercise for the reader.)

(2) If $\tilde G$ acts on $\bN$ through $G$, i.e., we have a group
    homomorphism $\rho\colon \tilde G\to G$ such that the $\tilde
    G$-action on $\bN$ factors through $\rho$, we have $H^{\tilde
      G_\cO}_*(\cR) \cong
    H^{G_\cO}_*(\cR)\otimes_{H^*_{G}(\mathrm{pt})} H^*_{\tilde
      G}(\mathrm{pt})$, where $H^*_{G}(\mathrm{pt})\to H^*_{\tilde
      G}(\mathrm{pt})$ is given by $\rho$. Our deformation is
    trivial in this case. An example is the dilatation action of
    $\CC^\times$ on $\bN$. (See \subsecref{sec:another}.) Although
    $\tilde G = G\times\CC^\times$ acts on $\bN$, it factors through
    $G$ in many occasions, say quiver gauge theories whose underlying
    graphs have no cycles.
\end{Remarks}

\subsection{Flavor symmetry group -- resolution} \label{subsec:flav-symm-group2}

Let us continue the setting in the previous subsection. 
We have just constructed a deformation of $\mathcal M_C$
parametrized by $H^*_{G_F}(\mathrm{pt})$.
Since $\mathcal M_C$ is supposed to be a hyper-K\"ahler manifold, a
deformation and a (partial) resolution should come together. We shall
construct the latter in this subsection. In fact, in the
hyper-K\"ahler setting, one can construct simultaneous resolution of
the deformation, after the base change to a $W_F$-cover.
Therefore in view of \remref{rem:W-cover} we take a maximal torus
$T_F$ of $G_F$, and set $\tilde G'$ as its inverse image in $\tilde
G$.
Therefore we are in the setting of
\subsecref{sec:triv-properties}\ref{item:reduction}. We consider the
GIT quotient of $\mathcal M_C(\tilde G',\bN)$ by $T_F^\vee$ with respect to a
character $\laF$ of $T_F^\vee$. Let us denote it by $\mathcal M_C(\tilde G',\bN)\dslash_\laF T_F^\vee$.
We have a natural projective morphism 
\(
   \mathcal M_C(\tilde G',\bN)\dslash_\laF T_F^\vee
   \to \mathcal M_C(\tilde G',\bN)\dslash T_F^\vee,
\)
which fits in the diagram
\begin{equation*}
    \begin{CD}
        @. \Spec H^{\tilde G_\cO}_*(\cR) @>>> \ft_F/W_F
        \\
        @. @AA{/W_F}A @AA{/W_F}A
        \\
        \mathcal M_C(\tilde G',\bN)\dslash_\laF T_F^\vee @>>> \mathcal
        M_C(\tilde G',\bN)\dslash T_F^\vee @>>> \ft_F
        \\
        @AAA @AAA @AAA
        \\
        \mu_{T_F^\vee}^{-1}(0)\dslash_\laF T_F^\vee
        @>>>
        \mu_{T_F^\vee}^{-1}(0)\dslash T_F^\vee
        \overset{\rm\subsecref{sec:triv-properties}\ref{item:reduction}}=
        \mathcal M_C(G,\bN)
        @>>> \{0\}.
    \end{CD}
\end{equation*}
This is reminiscent of the Grothendieck-Springer resolution of $\g^*$
and the corresponding diagram for quiver varieties.

Let us give a description of $\mathcal M_C(\tilde G',\bN)\dslash_\laF
T_F^\vee$ in terms of a homology group of a modification of $\cR$.

In order to simplify the notation, let us replace $\tilde G$ (resp.\
$G_F$) by $\tilde G'$ (resp.\ $T_F$), and assume $\tilde G=\tilde G'$,
$G_F=T_F$. We also denote $\cT_{\tilde G,\bN}$, $\cR_{\tilde G,\bN}$
by $\tilde\cT$, $\tilde\cR$ respectively for short.

A $\tilde G$-bundle $\scP$ induces a $T_F$-bundle by
$\scP\times_{\tilde G}T_F$. This gives a morphism $\Gr_{\tilde G}\to
\Gr_{T_F}$.  Composing it with $\tilde\cT\to\Gr_{\tilde G}$ or
$\tilde\cR\to\Gr_{\tilde G}$, we have
\begin{equation*}
    \tilde\pi\colon \text{$\tilde\cT$ or $\tilde\cR$}\to \Gr_{T_F}.
\end{equation*}

Let $\tilde\cT^{\laF}$ or $\tilde\cR^{\laF} = \tilde\pi^{-1}(\laF)$
be a fiber of this projection at a coweight $\laF$ of $G_F$.
It is preserved under the action of $\tilde G_\cO$.
We have
\(
   H^{\tilde G_\cO}_*(\tilde\cR) = \bigoplus_{\laF} H^{\tilde G_\cO}_*(\tilde\cR^\laF)
\)
by \subsecref{sec:grading}. It corresponds to $T_F^\vee$-action on $\mathcal M_C(\tilde G,\bN)$.
\begin{Proposition}\label{prop:GITquotient}
    Consider the $\ZZ_{\ge 0}$-graded algebra $\bigoplus_{n\ge 0}
    	H^{\tilde G_\cO}_*(\tilde \cR^{n\laF})$.
	We have
\begin{equation*}
    \mathcal M_C(\tilde G,\bN)\dslash_\laF T_F^\vee \cong \Proj(\bigoplus_{n\ge 0}
    	H^{\tilde G_\cO}_*(\tilde\cR^{n\laF})).
\end{equation*}
Similarly we have
\begin{equation*}
    \mu_{T_F^\vee}^{-1}(0)\dslash_\laF T_F^\vee 
    \cong \Proj(\bigoplus_{n\ge 0}
    	H^{G_\cO}_*(\tilde\cR^{n\laF})).
\end{equation*}
\end{Proposition}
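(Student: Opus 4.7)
The plan is to unwind the definitions of GIT quotient and $\Proj$, and identify the $n\laF$-weight subspace of $\CC[\mathcal M_C(\tilde G,\bN)]$ with $H^{\tilde G_\cO}_*(\tilde\cR^{n\laF})$. By \subsecref{sec:grading}, the ring $H^{\tilde G_\cO}_*(\tilde\cR)$ is graded by $\pi_1(\tilde G) = \pi_0(\tilde\cR)$, and this grading corresponds to the action of $\pi_1(\tilde G)^\wedge$ on $\Spec H^{\tilde G_\cO}_*(\tilde\cR) = \mathcal M_C(\tilde G,\bN)$. The subgroup $T_F^\vee = \pi_1(T_F)^\wedge \hookrightarrow \pi_1(\tilde G)^\wedge$ is dual to the surjection $\pi_1(\tilde G)\twoheadrightarrow\pi_1(T_F)$ induced by $\tilde G\to T_F$, so the $T_F^\vee$-weight decomposition is the coarsening of the $\pi_1(\tilde G)$-grading along this surjection.

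Next I would identify $\tilde\cR^{n\laF}$ with the union of components of $\tilde\cR$ whose $\pi_1(\tilde G)$-class maps to $n\laF\in\pi_1(T_F)$. Since $T_F$ is a torus, $\Gr_{T_F}$ is discrete and canonically identified with the coweight lattice $\pi_1(T_F)$, so the map $\tilde\pi\colon\tilde\cR\to\Gr_{T_F}$ on components is exactly $\pi_1(\tilde G)\to\pi_1(T_F)$; hence $\tilde\pi^{-1}(n\laF)$ is the union of the required components, and $H^{\tilde G_\cO}_*(\tilde\cR^{n\laF})$ is precisely the $n\laF$-weight subspace. The first statement is now immediate from the standard GIT formula
\[
\mathcal M_C(\tilde G,\bN)\dslash_\laF T_F^\vee
= \Proj\bigoplus_{n\ge 0}\CC[\mathcal M_C(\tilde G,\bN)]_{n\laF}.
\]

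For the second statement I would invoke \propref{prop:reduction} and \lemref{lem:comoment}: the moment-map zero fibre is $\Spec(H^{\tilde G_\cO}_*(\tilde\cR)/I_\mu)$, where $I_\mu$ is the ideal generated by the image of $H^*_{T_F}(\mathrm{pt})_+ \to H^{\tilde G_\cO}_*(\tilde\cR)$. By \lemref{lem:former_claim} these generators are supported in the identity component $\tilde\cR^0$ (they commute with everything and have $\pi_1$-degree zero), so $I_\mu$ is homogeneous for the $\pi_1(T_F)$-grading. Consequently
\[
(H^{\tilde G_\cO}_*(\tilde\cR)/I_\mu)_{n\laF}
\;=\; H^{\tilde G_\cO}_*(\tilde\cR^{n\laF})\otimes_{H^*_{T_F}(\mathrm{pt})}\CC
\;\cong\; H^{G_\cO}_*(\tilde\cR^{n\laF}),
\]
the last identification being the standard base change for equivariant Borel--Moore homology along the fibration $BG\to B\tilde G\to BT_F$ (valid because odd cohomology vanishes, cf.\ \lemref{lem:MayerV}). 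Applying $\Proj$ yields the second formula.

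The step I expect to require the most care is the first one: verifying that the abstract $T_F^\vee$-action on $\mathcal M_C(\tilde G,\bN)$ coming from \subsecref{sec:grading} coincides with the grading read off geometrically by $\tilde\pi$, i.e.\ matching sign and duality conventions between $\pi_1(\tilde G)\to\pi_1(T_F)$, its Pontryagin dual, and the coweight/character pairing on $T_F$. Everything else is bookkeeping around homogeneity of $I_\mu$ and the standard $BG\to B\tilde G\to BT_F$ base change.
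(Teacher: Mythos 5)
Your proposal is correct and follows essentially the same route as the paper: the GIT quotient is by definition $\Proj$ of the $\laF$-semi-invariant ring, whose $n$-th graded piece is $H^{\tilde G_\cO}_*(\tilde\cR^{n\laF})$ via the $\pi_1(T_F)$-grading, and the second isomorphism reduces to noting that $\CC[\mu_{T_F^\vee}^{-1}(0)] = H^{\tilde G_\cO}_*(\tilde\cR)\otimes_{H^*_{T_F}(\mathrm{pt})}\CC \cong H^{G_\cO}_*(\tilde\cR)$ carries an induced (graded) multiplication because the comoment-map image is central and of degree zero. Your version merely spells out the homogeneity of the moment-map ideal and the base-change isomorphism, both of which the paper supplies in the proof of \propref{prop:reduction} and the remark after \propref{prop:deformation}.
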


This is clear from the definition of the left hand side. It is Proj of the ring of $T_F^\vee$-semi-invariants with respect to the character $\laF\colon T_F^\vee\to\CC^\times$. The semi-invariants ring is the graded algebra in question.

For the second isomorphism, note that $H^{G_\cO}_*(\tilde\cR) =
H^{\tilde G_\cO}_*(\tilde\cR)\otimes_{H^*_{T_F}(\mathrm{pt})}\CC$ has
the induced multiplication. This is obvious if we use the
commutativity, which will be proved later. But it is true even without
commutativity as we have already know $H^*_{G_F}(\mathrm{pt})$ is central, as remarked after \propref{prop:deformation}.

Let us denote
$\Proj(\bigoplus_{n\ge 0} H^{G_\cO}_*(\tilde\cR^{n\laF}))$ by
$\mathcal M_C^{\laF}(G,\bN)$. We have a projective morphism
$\boldsymbol\pi\colon \mathcal M^{\laF}_C(G,\bN)\to \mathcal M_C(G,\bN)$ as
$H^{G_\cO}_*(\tilde\cR^{0}) = H^{G_\cO}_*(\cR) = \CC[\mathcal M_C(G,\bN)]$.
Since $\bigoplus_{n\ge 0} H^{G_\cO}_*(\tilde\cR^{n\laF})$ is finitely
generated, we can take sufficiently large $n_0$ so that
$\bigoplus_{n\ge 0} H^{G_\cO}_*(\tilde\cR^{n n_0\laF})$ is generated by
$H^{G_\cO}_*(\tilde\cR^{n_0\laF})$ as a $\CC[\mathcal M_C(G,\bN)]$-algebra.
Therefore we have the corresponding line bundle over
$\mathcal M_C^{\la_F}(G,\bN)$. Let $\cL_{\laF}$ denote its $1/n_0$-th
power, considered as a $\mathbb Q$-line bundle.

\begin{Remark}\label{rem:HLmonopole}
    The dimension of $H^{G_\cO}_*(\tilde\cR^{n\laF}))$ is given by a
    formula similar to \propref{prop:monopole_formula} and
    \eqref{eq:19}. In fact, we take the stratification $\tilde\cR =
    \bigsqcup\tilde\cR_{\tilde\la}$ parametrized by dominant coweights
    $\tilde\la$ of $\tilde G$. Then $\tilde\cR^\laF$ is the union of
    strata $\tilde\cR_{\tilde\la}$ such that $\tilde\la$ is sent to
    $\laF$ under $\tilde G\to T_F$. Such an extension of the monopole
    formula was given in \cite{Cremonesi:2014kwa}. See
    \cite[\S5(i)]{2015arXiv150303676N} for a review.
\end{Remark}

\subsection{Previously known examples}\label{subsec:prev}

Let us identify previous constructions in the literature as special
cases of our Coulomb branches.

\subsubsection{Pure gauge theories}\label{subsub:pure}

Consider $\bN = 0$. Then $\cR = \Gr_G$. The convolution algebra
$H^{G_\cO}_*(\Gr_G)$ was calculated in \cite[Th.~2.12]{MR2135527}, and was
attributed to an earlier work by Peterson \cite{Peterson,MR1403352}.
It is the algebraic variety ${\mathfrak Z}^{G^\vee}_{{\mathfrak
    g}^\vee}$ formed by the pairs $(g,x)$ such that $x$ lies in a
(fixed) Kostant slice in ${\mathfrak g}^\vee$, and $g\in G^\vee$
satisfies $\operatorname{Ad}_g(x)=x$.
\begin{NB}
It is a symplectic reduction of the cotangent bundle $T^* G^\vee$ of the Langlands dual group $G^\vee$ with respect to a nondegenerate character of 
$N^\vee\times N^\vee$, where $N^\vee\subset G^\vee$ is a unipotent subgroup.
\end{NB}%
Combining with \cite{MR1438643}, we will prove that the Coulomb branch
$\mathcal M_C$ is the moduli space of solutions of Nahm's equations
for the Langlands dual group $G_c^\vee$ in \thmref{thm:pure}. For $G =
\SL(k)$, it recovers results proved by physical arguments
(\cite{MR1490862} for $k=2$, \cite{MR1443803} for arbitrary $k$). See \secref{sec:monop-moduli-spac} for detail.


\subsubsection{Adjoint matters}\label{subsub:adjoint}
Consider $\bN = \g$, the adjoint representation of $G$. We consider the
dilatation action on $\bN$ as a flavor symmetry group ($\tilde G =
G\times\CC^\times$, $G_F=\CC^\times$). The space $\cR$ in this case is
\begin{equation*}
    \cR = \{ (\xi,[g])\in \g_\cO\times \Gr_G \mid
    \operatorname{Ad}_{g^{-1}}(\xi) \in \g_\cO\},
\end{equation*}
which is a variant of the affine Grassmannian Steinberg variety,
denoted by $\Lambda$ in \cite[\S7]{MR2135527}. (In fact, $\Lambda$ is
a closed subvariety of $\cR$, and the inclusion induces
$H^{G_O}_*(\Lambda) \cong H^{G_O}_*(\cR)$.)
\begin{NB}
    $\mathfrak u$ in \cite[\S7]{MR2135527} is $z\g[z] = z\g_\cO$.
\end{NB}%
In \cite[\S7]{MR2135527}, it was shown that $K^{G_\cO}(\Lambda)$ is
isomorphic to $\CC[T^\vee\times T]^W$. 
\begin{NB}
The argument for $K$-theory in \cite{MR2135527} uses its specific
features (certain coherent sheaves obtained as associated graded of
certain $D$-modules).
\end{NB}%
The argument for $K$-theory in \cite{MR2135527} uses its specific
features (certain coherent sheaves obtained as associated graded of
certain $D$-modules). The homology case can be deduced from the
$K$-theory case via Chern character homomorphism as we will see in
\ref{prop:ad}. But we also give another proof in \propref{prop:ad2}.
\begin{NB}
In \propref{prop:ad}, we prove $H^{G_\cO}_*(\cR)\cong \CC[\mathfrak
t\times T^\vee]^W$ by deducing required computation from one done in
\cite{MR2135527} via the Chern character homomorphism.
\end{NB}%
Recall that it was shown that the equivariant $K$-group of the affine
flag variety analog of $\Lambda$ is isomorphic to the Cherednik double
affine Hecke algebra in \cite{MR3013034}. (See also an earlier work \cite{MR2115259}.)
Therefore, when we include the loop rotation $\CC^\times$ and the
flavor symmetry group $G_F=\CC^\times$, the equivariant homology group
$H^{\tilde G_\cO\rtimes\CC^\times}_*(\cR)$ is expected to be the
spherical subalgebra of the graded Cherednik algebra (alias the
trigonometric degeneration of the double affine Hecke algebra).
\footnote{After this paper was written, it is proved in \cite{2016arXiv160800875K} in type $A$.}
In fact, representations of the whole graded Cherednik algebra were
constructed on equivariant homology groups of affine Springer fibers
\cite{oy}. Therefore we believe that the only remaining task is a
matter of checking.

\begin{NB}
    Misha and Sasha, is this fine ? Any suggestion on the line bundle
    over a (partial) resolution corresponding to $G_F$ ?
\end{NB}%



\newcommand\grt{\mathfrak t}
\newcommand\Sym{\operatorname{Sym}}
\newcommand\tilV{\widetilde V}
\newcommand\calA{\mathcal A}
\newcommand\bfV{\mathbf V}
\newcommand\tcalA{\widetilde{\calA}}
\newcommand\tilA{\widetilde{A}}
\newcommand\tilT{\widetilde{T}}
\newcommand\lam{\lambda}
\newcommand\slt{\textsf{t}}

\section{The abelian case}\label{sec:abelian}

\begin{NB}
    This is the original Sasha's notation. It is recorded in case the
    old notation would remain.

Notation: I will denote by $\calA_{G,\bfN}$ the $G(\calO)$-equivariant Borel-Moore homology of $\calR_{G,\bfN}$.
Similarly, I will denote by $\tcalA_{G,\bfN}$ the $T$-equivariant BM homology of $\calR_{G,\bfN}$, where $T$ is the maximal
torus of $G$. We have $\calA_{\bfN}=\tcalA_{G,\bfN}^W$ where $W$ is the Weyl group of $G$.

In addition we let $\calA_{G,\bfN,\hbar}$ denote the $G\times \CC^*$-equivariant BM homology of $\calR_{G,\bfN}$. Also we denote
by $\calA_{G,\bfN,\hbar,\slt}$ the $G\times \CC^*\times \CC^*$-equivariant BM homology of $\calR_{G,\bfN}$ (here $\hbar$ and $\slt$
stand for the corresponding equivariant parameters).
\end{NB}

In this section we determine the Coulomb branch and its quantization
when the group is a torus.
We obtain an explicit presentation of the ring
$H^{T_\cO}_*(\cR)$. This presentation is the same as one proposed in
\cite[\S3]{2015arXiv150304817B} by a physical intuition.

\subsection{The main result - the non-quantized case}
Let $T$ be a torus and let $Y$ denote its coweight lattice. We denote by $\grt$ the Lie algebra of $T$ and by
$\grt^*$ its dual space.
Let $\bN$ be a representation of $T$ given by a bunch of characters $\xi_1,\cdots,\xi_n$. Note that each $\xi_i$ can be viewed as an element of $\grt^*$.

For two integers $k,l$ let us set
$$
d(k,l)=
\begin{cases}
0 & \text{if $k$ and $l$ have the same sign,}\\
\min(|k|,|l|)&  \text{if $k$ and $l$ have different signs.}
\end{cases}
$$

\begin{Theorem}
\label{abel}
The $T$-equivariant Borel-Moore homology of $\cR\equiv\cR_{T,\bN}$ is
generated by the algebra $\Sym(\grt^*)$ together with symbols
$r^{\lambda}$, $\lambda\in Y$ subject to the following relation:
\begin{equation}\label{relation}
  r^{\lambda}r^{\mu}=\prod\limits_{i=1}^n \xi_i^{d(\xi_i(\lambda),\xi_i(\mu))}
  \,r^{\lambda+\mu}.
\end{equation}
\end{Theorem}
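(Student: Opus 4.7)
The plan is as follows.

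\emph{Paragraph 1 (structure of $\cR$ and its homology).} First I would reduce everything to an explicit finite-dimensional picture. Since $T$ is abelian, $\Gr_T$ is discrete and in bijection with $Y$, so $\cR=\bigsqcup_{\lambda\in Y}\cR^\lambda$. Via the map $\Pi\colon\cT\to\bN_\cK$ the fiber $\cR^\lambda$ identifies with the affine subspace $\bN_\cO\cap z^\lambda\bN_\cO\subset\bN_\cO$, and decomposing $\bN=\bigoplus_i\CC_{\xi_i}$ into weight lines this is the product $\prod_i z^{\max(0,-\xi_i(\lambda))}\CC[[z]]$, of codimension $d_\lambda=\sum_i\max(0,-\xi_i(\lambda))$ in $\bN_\cO$. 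Hence each $\cR^\lambda$ is a $T_\cO$-contractible affine space and by \lemref{lem:MayerV}(1) (or a direct Gysin argument) one obtains $H^{T_\cO}_*(\cR^\lambda)\cong\Sym(\grt^*)\cdot r^\lambda$, where $r^\lambda$ is the fundamental class in the relative degree $-2d_\lambda$. Summing over $\lambda$ gives $H^{T_\cO}_*(\cR)=\bigoplus_{\lambda\in Y}\Sym(\grt^*)\cdot r^\lambda$ as a $\Sym(\grt^*)$-module, which accounts for every relation in the theorem except the product law.

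\emph{Paragraph 2 (convolution formula).} By the $\pi_1(T)=Y$-grading of \subsecref{sec:grading}, $r^\lambda\ast r^\mu$ lies in $H^{T_\cO}_*(\cR^{\lambda+\mu})$, so $r^\lambda\ast r^\mu=c_{\lambda,\mu}\,r^{\lambda+\mu}$ for a unique $c_{\lambda,\mu}\in\Sym(\grt^*)$. A short case-analysis on $(k,l)=(\xi_i(\lambda),\xi_i(\mu))$ shows
\[
d_\lambda+d_\mu-d_{\lambda+\mu}=\sum_i d(\xi_i(\lambda),\xi_i(\mu)),
\]
so $c_{\lambda,\mu}$ has cohomological degree $2\sum_i d(\xi_i(\lambda),\xi_i(\mu))$, which matches the right-hand side of \eqref{relation}. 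To identify $c_{\lambda,\mu}$ on the nose, I would note that every space in the convolution diagram \eqref{eq:12} factors along the weight decomposition of $\bN$: the conditions $g_2s\in\bN_\cO$ and $g_1g_2s\in\bN_\cO$ are imposed component by component on $s=(s_i)$, and the $T_\cO$-action respects this splitting. This will yield $c_{\lambda,\mu}=\prod_i c^{(i)}_{\lambda,\mu}$, where $c^{(i)}_{\lambda,\mu}$ is the coefficient one obtains by pretending $\bN=\bN_i$ is one-dimensional (formally justified by \subsecref{sec:triv-properties}\ref{item:trivial} applied separately to the other weight lines).

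\emph{Paragraph 3 (one-dimensional computation and obstacle).} For a single character $\xi=\xi_i$, $q(p^{-1}(\cR^\lambda\times\cR^\mu))$ identifies explicitly with the affine subspace $V=\bN_\cO\cap z^{-\mu}\bN_\cO\cap z^{-(\lambda+\mu)}\bN_\cO\subset\cR^{\lambda+\mu}=\bN_\cO\cap z^{-(\lambda+\mu)}\bN_\cO$, and $\tilde m$ is simply this inclusion. A quick dimension count gives that the codimension of $V$ in $\cR^{\lambda+\mu}$ equals $d(\xi(\lambda),\xi(\mu))$, with normal $T$-representation of pure weight $\xi$. The convolution product $r^\lambda\ast r^\mu$ is then the pushforward of the fundamental class of $V$, which equals the equivariant Euler class of the normal bundle times $r^{\lambda+\mu}$, i.e.\ $\xi^{d(\xi(\lambda),\xi(\mu))}r^{\lambda+\mu}$. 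Multiplying over $i$ produces the claimed formula. The main technical obstacle is bookkeeping the pull-back with support $p^*$ through the non-smooth ambient $G_\cK\times\cR$: one must verify that, after identifying everything via $\Pi$, the class $p^*(r^\lambda\otimes r^\mu)$ is indeed the fundamental class of $V$ (with no extra Euler-class contribution coming from $p$ itself). In the abelian case this should reduce to a clean finite-dimensional excess-intersection computation, since $p$ restricted to the $(\lambda,\mu)$-component is a trivial $T_\cO$-bundle onto its image and the only nontrivial excess comes from $\tilde m$.
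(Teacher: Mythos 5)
Your paragraphs 1 and 2 are correct and coincide with the paper's setup: the decomposition $\cR=\bigsqcup_{\lambda}\bigl(\bN_\cO\cap z^\lambda\bN_\cO\bigr)$, the module structure $\bigoplus_\lambda\Sym(\grt^*)r^\lambda$, the reduction to one weight line at a time, and the degree count $d_\lambda+d_\mu-d_{\lambda+\mu}=\sum_i d(\xi_i(\lambda),\xi_i(\mu))$ are all as in the paper. (One small omission: linearity of $\ast$ over $\Sym(\grt^*)$ in the \emph{second} variable is not "by definition"; the paper derives it from \lemref{lem:former_claim}.)

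The genuine gap is in paragraph 3, and it sits exactly at the point you flagged as the "main technical obstacle": your hoped-for resolution is false. Write $a=\xi(\lambda)$, $b=\xi(\mu)$. The triple intersection you call $V$ is $z^{\max(0,-b,-a-b)}\cO$ inside $\cR^{\lambda+\mu}=z^{\max(0,-a-b)}\cO$, so its codimension is $\max(0,-b,-a-b)-\max(0,-a-b)$. This equals $d(a,b)$ in three of the four sign cases, but when $a\le 0\le b$ it is \emph{zero} — the triple intersection is all of $\cR^{\lambda+\mu}$ — while the product must still be $\xi^{d(a,b)}r^{\lambda+\mu}$ with $d(a,b)=\min(|a|,|b|)>0$ in general. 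So the answer cannot be obtained as $\tilde m_*$ of the fundamental class of the convolution space alone: in this case the whole factor $\xi^{d(a,b)}$ comes from an excess-intersection Euler class produced by the pullback with support $p^*$, because $\cR^\lambda\times\cR^\mu$ meets the correspondence non-transversally inside the ambient $W=z^a\cO\oplus z^{a+b}\cO$. The paper's proof makes this explicit: the coefficient is a product of \emph{two} Euler classes, $e\bigl(\Coker(W/V_{12}\cap V_{23}\to W/V_{12}\oplus W/V_{23})\bigr)\cdot e(V/V_{12}\cap V_{23})$ as in \eqref{product-torus}, where the second factor is your normal-bundle term and the first is the excess term you dropped; in the two mixed-sign cases exactly one of the two is nontrivial (the excess term carries everything when $\xi(\lambda)\le 0\le\xi(\mu)$, the pushforward term when $\xi(\lambda)\ge 0\ge\xi(\mu)$). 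To repair the argument you must compute $p^*(r^\lambda\otimes r^\mu)$ via the linear-algebra excess formula $x_{L_1}\cap x_{L_2}=x_{L_1\cap L_2}\cdot e\bigl(\Coker(L/L_1\cap L_2\to L/L_1\oplus L/L_2)\bigr)$ rather than assume it is the fundamental class of the intersection.
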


\begin{NB}
In the future we shall denote that algebra by $\calA_{T,\bfN}$.
\end{NB}

\begin{Remark}
It is easy to see that the above algebra is graded if the degree of any $y\in \grt^*$ is 2 and we also set
$$
\deg r^{\lambda}=\sum\limits_{i=1}^n |\xi_i(\lambda)|.
$$
This is the grading given by $\Delta$ in \eqref{eq:18}.
\end{Remark}

\begin{proof}
First, the algebra $H^{T}_*(\cR)$ clearly contains $H^*_T(\mathrm{pt})=\Sym(\grt^*)$. 
The multiplication is $H^*_T(\mathrm{pt})$-linear in the first variable by definition.
It is also in the second variable by \lemref{lem:former_claim}.
\begin{NB}
    (Note that \lemref{lem:former_claim} is true for any $\chi\in
    H^2_{\tilde G}(\mathrm{pt})$.)
\end{NB}%
On the other hand,
the affine Grassmannian of $T$ has connected components parametrized by $Y$. Each such components consists of one point (we ignore nilpotents here). The space $\cT$ is identified with the disjoint union $\bigsqcup_{\lambda\in Y}\{\lambda\}\times z^\lambda \bN_\cO$ via the embedding $\cT\subset\Gr_T\times\bN_\cK$. We also have
$\cR = \bigsqcup_{\lambda\in Y}\{\lambda\}\times (z^\lambda \bN_\cO\cap\bN_\cO)$.
\begin{NB}
    Misha and Sasha, please check. Added on Aug. 27.
\end{NB}%
We denote by $r^{\lambda}$ the fundamental class of the component of $\cR$, corresponding to $\lambda$.
Clearly, as a vector space we have
$$
H^{T_\cO}_{*}(\cR)=\bigoplus\limits_{\lambda\in Y} \Sym(\grt^*)r^{\lambda}
$$
It remains to show that \eqref{relation} holds. For this let us describe the part of the convolution diagram, corresponding to the convolution of the components, corresponding to $\lambda$ and $\mu$.
\begin{NB}
    Following \remref{rem:abelian}, we try to define
    \begin{equation*}
        p'\colon G_\cK\times_{G_\cO}\cR \ni \left[g_1,[g_2,s]\right]
        \mapsto ([g_1,g_2s], [g_2,s])\in\cT\times\cR.
    \end{equation*}
    But the equivalence relation is $[g_1, [g_2,s]] = [g_1b,
    [b^{-1}g_2,s]]$, hence this is \emph{not} well-defined.

    Instead
\end{NB}%

We use the identification
\begin{equation*}
    T_\cK\times_{T_\cO}\cR \cong
    \bigsqcup_{\la,\nu\in Y} \{\la\} \times \{\nu\}
    \times (z^{\nu} \bN_\cO\cap z^\la\bN_\cO)
\end{equation*}
given by $[g_1, [g_2,s]]\mapsto ([g_1], [g_1g_2], g_1g_2s)$. We define
$p'\colon T_\cK\times_{T_\cO}\cR\to \cT\times\cR$ by
\begin{multline}\label{eq:51}
    \{\la\} \times \{\nu\} \times (z^{\nu}
    \bN_\cO\cap z^\la\bN_\cO) \ni (\la, \nu, s)
    \\
    \longmapsto (\la, s, {\nu-\la}, z^{-\la} s) \in \{
    \la\} \times z^\la\bN_\cO \times \{ {\nu-\la}\} \times
    (z^{\nu-\la}\bN_\cO\cap\bN_\cO).
\end{multline}
Let us decompose $p' = (p'_\cT,p'_\cR)$ as before. We have $p'_\cT
\circ q = p_\cT$. For $\cR$, we define $a\colon T_\cK\times\cR\to
T_\cK\times\cR$ by $(g_1, [g_2,s]) \mapsto (g_1, [z^\la g_1^{-1}
g_2,s])$ when the class of $g_1$ is $\la$. (Note $z^\la g_1^{-1}\in
T_\cO$.) Then $p'_\cR \circ q \circ a = p_\cR$. Therefore
\begin{equation}
    \label{eq:50'}
        (q^*)^{-1} p_\cR^* \DC_\cR = p^{\prime*}_\cR\DC_\cR,
        \qquad
        (q^*)^{-1} p_\cR^* \DC_\cR = (q^*)^{-1} a^* q^* p^{\prime*}_\cT \DC_\cT.
\end{equation}

\begin{NB}
    This was wrong, as pointed out by Justin Hilburn.

We have $p' \circ q = p$. Therefore $(q^*)^{-1}p^* = p^{\prime*}$. We have
an isomorphism
\begin{equation}\label{eq:50}
    p^{\prime*}\DC_{\cT\times\cR} \cong (q^*)^{-1}p^*\DC_{\cT\times\cR}
    \cong \DC_{G_\cK\times_{G_\cO}\cR}[2\dim\bN_\cO]
\end{equation}
by \eqref{eq:13}. 
\end{NB}%
In the abelian case, $\cT\times\cR$, $T_\cK\times_{T_\cO}\cR$ (resp.\
$T_\cK\times \cR$) are unions of vector spaces (resp.\ products of
groups and vector spaces), in particular they are smooth and hence
their dualizing sheaves are isomorphic to constant sheaves up to
shifts by the Poincar\'e duality.
\begin{NB}
    $\DC_{\cT\times\cR}\cong \CC_{\cT\times\cR}[2(\dim\cT+\dim\cR)]$,
    $\DC_{T_\cK\times_{T_\cO}\cR}\cong
    \CC_{T_\cK\times_{T_\cO}\cR}[2(\dim\Gr_T+\dim\cR)]$.
\end{NB}%
Therefore $p^{\prime*}_\cR\DC_\cR \cong \CC_{T_\cK\times_{T_\cO}\cR}$,
$p^{\prime*}_\cT\DC_\cT \cong \CC_{T_\cK\times_{T_\cO}\cR}$ up to shifts.
Looking at the construction of isomorphisms in \eqref{eq:13}, we find
that this Poincar\'e duality and \eqref{eq:13} are the same under
\eqref{eq:50'}.
We further replace $p'$ by multiplying $z^\la$, composing the
inclusion $z^\nu\bN_\cO\cap z^\la\bN_\cO\subset z^\nu\bN_\cO$ in the
second factor, and dropping unnecessary entries $\la$, $\nu$:
\begin{equation*}
    p'\colon z^\la\bN_\cO\cap z^\nu\bN_\cO \ni
    s\mapsto (s, s)
    \in 
    z^\la\bN_\cO\times 
    z^\nu\bN_\cO.
\end{equation*}
Then the morphism $m$ is given by the projection to the second factor
$z^\nu\bN_\cO$. As $(q^*)^{-1}a^*q^*$ in \eqref{eq:50'} does not
affect the computation, we have $r^\la r^\mu = m_* p^{\prime*}
[
(z^\la\bN_\cO\cap\bN_\cO)\times
z^\nu\bN_\cO]$.
\begin{NB}
    This is because the class $[(z^\la\bN_\cO\cap\bN_\cO)\times
    z^\nu\bN_\cO]$ is pull-backed to $[(z^\la\bN_\cO\cap\bN_\cO)\times
    (z^\nu\bN_\cO\cap z^\la\bN_\cO)]$ under the inclusion
    $z^\la\bN_\cO\times (z^\nu\bN_\cO\cap z^\la\bN_\cO)\to
    z^\la\bN_\cO\times z^\nu\bN_\cO$.
\end{NB}%

\begin{NB}
    The proof is \emph{not} yet rigorous, as we do not know the
    multiplication is linear in the second variable. Therefore we need
    to compute $r^\la \ast f(\xi) r^\mu$.

    \begin{NB2}
        It is not clear when this \verb+NB+ is added, but the
        linearity is established at the third sentence in the
        beginning of this proof.
    \end{NB2}%
\end{NB}%

To simplify the notation, let us give a proof in the case $n=1$ (the general case is essentially a word-by-word repetition). We set $\xi=\xi_1$. Also, set $a=\xi(\lambda)$, $b=\xi(\mu)$. In the above notation we use $\nu = \la + \mu$, so $a+b=\xi(\nu)$.

From the above argument, the convolution product $r^{\lambda}r^{\mu}$
is given as follows.
First, consider the vector space $W=z^a\calO\oplus z^{a+b}\calO$. 
It contains subspaces $V_{12}=\calO\cap z^a\calO\oplus z^{a+b}\calO$,
$V_{23}=z^a\calO\cap z^{a+b}\calO$ (where $V_{23}$ is embedded into $W$ by means of the diagonal embedding). Let also $m$ denote the projection from $W$ to $z^{a+b}\calO$. We denote by $V$ the subspace of $z^{a+b}\calO$ equal to $\calO\cap z^{a+b}\calO$. Clearly, $m$
sends $V_{12}\cap V_{23}=\calO\cap z^a\calO\cap z^{a+b}\calO$ to $V$ (and when restricted to $V_{12}\cap V_{23}$ the map $p$ is injective).

Let $x_{12}$ and $x_{23}$ be the fundamental
classes of $V_{12}$ and $V_{23}$ viewed as elements of the $T$-equivariant Borel-Moore homology of $W$.
There exists a class $x$ in $H_{*}^T(V)$ such that $m_*(x_{12}\cap x_{23})$ is equal to the direct
image of $x$ from $V$ to $z^{a+b}\calO$. Then $x$ is the product of $r^{\lambda}r^{\mu}$.

On the other hand, let $L$ be any vector space with a $T$-action and let $L_1,L_2$ be two $T$-invariant subspaces.
Let $x_{L_i}$ (resp.\ $x_{L_1\cap L_2}$) be the fundamental class of $L_i$ (resp.\ $L_1\cap L_2$) viewed as an element of $H_{*}^T(L)$. Then
$x_{L_1}\cap x_{L_2}=x_{L_1\cap L_2}\cdot e(\Coker(L/L_1\cap L_2\to L/L_1\oplus L/L_2))$, where $e(\ )$ denotes the equivariant Euler class.
\begin{NB}
$x_{L_1}\cap x_{L_2}=x_{L_1\cap L_2}\cdot \det (\Coker(L/L_1\cap L_2\to L/L_1\oplus L/L_2))^*$.
\end{NB}%
Hence in our case we see that $r^{\lambda}r^{\mu}$ equals $r^{\lambda+\mu}$ multiplied by
\begin{equation}\label{product-torus}
e(\Coker(W/V_{12}\cap V_{23}\to W/V_{12}\oplus W/V_{23}))\cdot
e(V/V_{12}\cap V_{23}).
\end{equation}
\begin{NB}
    \begin{equation*}
        \det(\Coker(W/V_{12}\cap V_{23}\to W/V_{12}\oplus W/V_{23}))^*\otimes \det(V/V_{12}\cap V_{23})^*
    \end{equation*}
\end{NB}
Let us now look at the different options for signs of $a$ and $b$.

1) $a\geq 0, b\geq 0$.

In this case $V_{12}=z^a\calO\oplus z^{a+b}\calO, V_{23}=z^{a+b}\calO=V_{12}\cap V_{23}=V$.
Hence the 2nd equivariant Euler class
\begin{NB}
determinant     
\end{NB}%
in \eqref{product-torus} is equal to 1. Also $W/V_{12}\cap V_{23}=z^a\calO,
W/V_{12}=0, W/V_{23}=z^a\calO$, hence the 1st equivariant Euler class
\begin{NB}
determinant     
\end{NB}%
is equal to 1 as well.

2) $a\leq 0, b\leq 0$.

In this case $V_{12}=\calO\oplus z^{a+b}\calO, V_{23}=z^a\calO, V=V_{12}\cap V_{23}=\calO$.
Hence the 2nd equivariant Euler class
\begin{NB}
determinant     
\end{NB}%
is equal to 1 again. Also $W/V_{12}=z^a\calO/\calO, W/V_{23}=z^{a+b}\calO$ and
$W/V_{12}\cap V_{23}=z^a\calO/\calO\oplus z^{a+b}\calO$. Hence the 1st equivariant Euler class 
\begin{NB}
determinant 
\end{NB}%
is again 1.

3) $a\geq 0, b\leq 0$.

In this case we have $V_{12}=z^a\calO\oplus z^{a+b}\calO$, $V_{23}=z^a\calO$, $V_{12}\cap V_{23}=z^a\calO$, $V=\calO\cap z^{a+b}\calO$.
So $W/V_{12}=0, W/V_{23}=z^{a+b}\calO, W/V_{12}\cap V_{23}=z^{a+b}\calO$ and hence the first equivariant Euler class \begin{NB} determinant \end{NB}%
 in \eqref{product-torus} is equal to 1. On the other hand the 2nd equivariant Euler class \begin{NB} determinant \end{NB}%
 is equal to $e(\calO\cap z^{a+b}\calO/z^a\calO)$
\begin{NB}
$\det(\calO\cap z^{a+b}\calO/z^a\calO)^*$      
\end{NB}%
which is equal to $\xi^{-b}$
\begin{NB}
$(-\xi)^{-b}$    
\end{NB}%
if $a+b\geq 0$ and $\xi^a$
\begin{NB}
$(-\xi)^a$    
\end{NB}%
if $a+b\leq 0$. This is exactly $\xi^{d(a,b)}$.
\begin{NB}
$(-\xi)^{d(a,b)}$.
\end{NB}%

4) $a\leq 0, b\geq 0$.

In this case we have $V_{12}=\calO\oplus z^{a+b}\calO$, $V_{23}=z^{a+b}\calO$, $V_{12}\cap V_{23}=\calO\cap z^{a+b}\calO=V$.
Hence in this case the 2nd equivariant Euler class \begin{NB} determinant \end{NB}%
\eqref{product-torus} is equal to 1. On the other hand, assume that $a+b\geq 0$.
Then $V_{12}\cap V_{23}=z^{a+b}\calO$ and $W/V_{12}\cap V_{23}=z^a\calO$. On the other hand,
$W/V_{12}=z^a\calO/\calO$, $W/V_{23}=z^a\calO$ and hence the first equivariant Euler class \begin{NB} determinant \end{NB}%
in \eqref{product-torus} is equal to
$e(z^a\calO/\calO) = \xi^{|a|}
\begin{NB}
\det(z^a\calO/\calO)= (-\xi)^{|a|}    
\end{NB}%
$. But since $a+b\geq 0$ we have $|a|=d(a,b)$.
Similarly, if $a+b\leq 0$, then $V_{12}\cap V_{23}=\calO$. Hence $W/V_{12}=z^a\calO/\calO, W/V_{23}=z^a\calO$ and
$W/V_{12}\cap V_{23}=z^a\calO\oplus z^{a+b}\calO/\calO$. Hence the equivariant Euler class \begin{NB} determinant \end{NB}%
in question is equal
to $e(z^a\calO/z^{a+b}\calO)=\xi^b=\xi^{d(a,b)}
\begin{NB}
\det (z^a\calO/z^{a+b}\calO)^*=(-\xi)^b=(-\xi)^{d(a,b)}
\end{NB}%
$.
\end{proof}

\subsection{The quantized version}

We now want to do everything $\CC^\times$-equivariantly (where $\CC^\times$ acts 
by loop rotation). Let $\hbar$ denote the generator of 
$H^*_{\CC^\times}(\mathrm{pt})$.
Then the description of $H^{T_\cO\rtimes \CC^\times}_{*}(\calR)$ is similar, except 
the relation \eqref{relation} now reads
\begin{equation}\label{q-relation}
    r^{\lambda}r^{\mu}=\prod\limits_{i=1}^n A_i(\lambda,\mu)\,r^{\lambda+\mu},
\end{equation}
where
\begin{NB}
$$
A_i(\lambda,\mu)=
\begin{cases}
\prod\limits_{j=1}^{d(\xi_i(\lambda),\xi_i(\mu))}(-\xi_i+(j-\xi_i(\lambda))\hbar)\
\text{if $\xi_i(\lambda)\geq 0\geq \xi_i(\mu)$}\\
\prod\limits_{j=1}^{d(\xi_i(\lambda),\xi_i(\mu))}(-\xi_i+(1-j-\xi_i(\lambda))\hbar)\
\text{if $\xi_i(\lambda)\leq 0\leq \xi_i(\mu)$}
\end{cases}
$$
and $A_i(\lambda,\mu)=1$ if $\xi_i(\lambda)$ and $\xi_i(\mu)$ have the same sign.    
\end{NB}%
$$
A_i(\lambda,\mu)=
\begin{cases}
\prod\limits_{j=1}^{d(\xi_i(\lambda),\xi_i(\mu))}(\xi_i+(\xi_i(\lambda)-j + \frac12)\hbar) & 
\text{if $\xi_i(\lambda)\geq 0\geq \xi_i(\mu)$,}\\
\prod\limits_{j=1}^{d(\xi_i(\lambda),\xi_i(\mu))}(\xi_i+(\xi_i(\lambda)+j-\frac12)\hbar)& 
\text{if $\xi_i(\lambda)\leq 0\leq \xi_i(\mu)$,}\\
1 & \text{otherwise.}
\end{cases}
$$
The proof easily follows from the same calculations as above.
Note that $1/2$ is coming from the action of $\CC^\times$ on $\bN$
with weight $1/2$.

\begin{NB}
    Corrected on Aug. 31:

    The cases 1),2) are trivial. In case 3), we compute the weights of
    $\shfO\cap z^{a+b}\shfO/z^a\shfO$. We have $z^{a-1}$, $z^{a-2}$,
    \dots ($d(a,b)$ times). Therefore $\xi_i + (a-1)\hbar$, $\xi_i +
    (a-2)\hbar$, \dots. In case 4), we compute weights of
    $z^a\shfO/\shfO$ or $z^a\shfO/z^{a+b}\shfO$ according to $a+b\ge
    0$ or $a+b\le 0$. We have $z^a$, $z^{a+1}$, \dots ($d(a,b)$
    times). Therefore $\xi_i + a\hbar$, $\xi_i + (a+1)\hbar$, \dots.
\end{NB}%

In addition, $\Sym_{\CC[\hbar]}(\grt^*)$ now does not commute with $r^{\lambda}$'s. But it is easy to see that for any $\alpha\in \grt^*$ we have
\begin{equation}
    \label{eq:27}
[r^{\lambda},\alpha]=\hbar \alpha(\lambda) r^{\lambda}.
\begin{NB}
[r^{\lambda},\alpha]=-\hbar \alpha(\lambda) r^{\lambda}.
\end{NB}%
\end{equation}
See \lemref{lem:former_claim}.

\begin{NB}
    Misha and Sasha : This is incompatible with \lemref{lem:comoment}.

    Aug. 31, it is corrected.
\end{NB}%

\subsection{Flavor symmetries}Let $\widetilde{T}=T\times (\CC^\times)^n$. 
Then $\widetilde{T}$ acts naturally on $\bfN$ and thus also on $\cR$ and we can consider the algebra $H^{\tilT_\cO\rtimes \CC^\times}_*(\cR)$.
This algebra has a presentation similar to the above, except that the relation \eqref{q-relation} now reads
\begin{equation}\label{qflavor-relation}
r^{\lambda}r^{\mu}=\prod\limits_{i=1}^n \tilA_i(\lambda,\mu)\,r^{\lambda+\mu},
\end{equation}
where 
\begin{NB}
$$
\tilA_i(\lambda,\mu)=
\begin{cases}
\prod\limits_{j=1}^{d(\xi_i(\lambda),\xi_i(\mu))}(-b_i-\xi_i+(j-\xi_i(\lambda))\hbar)\
\text{if $\xi_i(\lambda)\geq 0\geq \xi_i(\mu)$}\\
\prod\limits_{j=1}^{d(\xi_i(\lambda),\xi_i(\mu))}(-b_i-\xi_i+(1-j-\xi_i(\lambda))\hbar)\
\text{if $\xi_i(\lambda)\leq 0\leq \xi_i(\mu)$}
\end{cases}
$$
\end{NB}%
$$
\tilA_i(\lambda,\mu)=
\begin{cases}
\prod\limits_{j=1}^{d(\xi_i(\lambda),\xi_i(\mu))}(b_i+\xi_i+(\xi_i(\lambda)-j+\frac12)\hbar) &
\text{if $\xi_i(\lambda)\geq 0\geq \xi_i(\mu)$,}\\
\prod\limits_{j=1}^{d(\xi_i(\lambda),\xi_i(\mu))}(b_i+\xi_i+(\xi_i(\lambda)+j-\frac12)\hbar)&
\text{if $\xi_i(\lambda)\leq 0\leq \xi_i(\mu)$,}\\
1 & \text{otherwise},
\end{cases}
$$
where $b_1,\dots,b_n$ denote the equivariant parameters for the torus $(\CC^\times)^n$.

\subsection{Examples}\label{subsec:abel_examples}
\begin{NB}
Let $T=\CC^\times, n=1$ and take $\xi$ to be the $r$-th power the standard character.
In this case $\Lambda=\ZZ$. Set $x=z^{-1}, y=z^1$ (note that $z^{-1}$ is not the inverse of $z$!). Let also $a$ denote the generator of $\grt^*$. Then $x$, $y$ and $a$ generate $H_{*}^{T\rtimes \CC^\times}(\calR)$ and
we have the following relations:
$$
xy=\prod\limits_{j=1}^n (a+j\hbar);\quad yx=\prod\limits_{j=1}^n (a-(j-1)\hbar), [x,a]=\hbar x, [y,a]=-\hbar y
$$
When $\hbar=0$ we get a commutative ring, generated by $x,y,a$ with relation $xy=a^r$.
\end{NB}%

Let $T=\CC^\times, n=1$ and take $\xi$ to be the $N$-th power the standard character.
In this case $\Lambda=\ZZ$. Set $x=r^1, y=r^{-1}$ (note that $r^{-1}$ is not the inverse of $r^1$!). Let also $a$ denote the generator of $\grt^*$. Then $x$, $y$ and $a$ generate $H_{*}^{T_\cO\rtimes \CC^\times}(\cR)$ and
we have the following relations:
$$
xy=\prod\limits_{j=1}^N (a+(j-\frac12)\hbar),\quad
yx=\prod\limits_{j=1}^N (a-(j-\frac12)\hbar), \quad [x,a]=\hbar x,\quad [y,a]=-\hbar y
$$
When $\hbar=0$ we get a commutative ring, generated by $x,y,a$ with relation $xy=a^N$.

On the other hand, let $\bfN$ be the direct sum of $N$ standard characters. We define $x,y$ and $a$ in the same way as above.
Then we have
$$
xy=(a+\frac12\hbar)^N,\quad yx=(a-\frac12\hbar)^N, \quad [x,a]=\hbar x, \quad [y,a]=-\hbar y.
$$
Note that for $\hbar=0$ we get the same algebra as before.

Let $\deg_h$ denote the half of the homological degree. We have
$\deg_h x = 0$, $\deg_h y = N$, $\deg_h a = 1$. On the other hand, the
degree $\deg$ given by $\Delta(\la)$ by \eqref{eq:18} is $\deg x =
N/2$, $\deg y = N/2$, $\deg a = 1$. If we multiply it by $2$, it is
the weight of the $\CC^\times$-action, which is the restriction of the
$\SU(2)$-action on $\{xy=a^N\} = \CC^2/(\ZZ/N\ZZ)$, rotating the
hyper-K\"ahler structure. Namely we identify $\CC^2$ with the
quaternions $\HH = \{ 
  z_1 + z_2 j\mid z_1,z_2\in\CC\}.
\)
The hyper-K\"ahler structure is given by multiplication of $i$, $j$,
$k$ from the left, with the standard inner product. The action of
$\ZZ/N\ZZ$ is given by multiplication of $N$th roots of unity from the
right. Then $x = z_1^N$, $y=z_2^N$, $a=z_1 z_2$.
The action of $\SU(2)=\grpSp(1)$ is the multiplication of unit
quaternions from the left.
If we restrict to $\U(1)$, unit complex numbers, $z_1$, $z_2$ both
have weight $1$.

\subsection{Changing \texorpdfstring{$\xi$}{\textxi} to 
\texorpdfstring{$-\xi$}{-\textxi}}\label{subsec:change_dual}
Let us assume that we change one of the $\xi$ to $-\xi$; let us denote
by $\bfN_i$ the corresponding new representation of $T$. We claim that
the resulting algebra $\cA[{T,\bfN_i}]$ is isomorphic to
$\cA[{T,\bfN}]$.
\begin{NB}
(it is not clear to me whether this is true if we also
add the loop rotation).

This problem is fixed by introducing $1/2$.
\end{NB}%
\begin{NB}
    Misha and Sasha, I do not see the reason why $k$ is necessary.

Indeed, let $k$ be the minimal positive value of $\xi_i(\lambda)$.
\end{NB}%
 Then define a map
$\sigma\colon \cA[{T,\bfN_i}]\to \cA[{T,\bfN}]$ which is equal to identity on $\grt^*$ and such that
\begin{NB}
$$
\sigma(r^{\lambda})=
\begin{cases}
r^{\lambda}\quad \text{if $\xi_i(\lambda)\leq 0$}\\
(-1)^{k\xi(\lambda)} r^{\lambda} \quad \text{if $\xi_i(\lambda)> 0$}
\end{cases}
$$
\end{NB}%
$$
\sigma(r^{\lambda})=
\begin{cases}
r^{\lambda}\quad \text{if $\xi_i(\lambda)\leq 0$}\\
(-1)^{\xi_i(\lambda)} r^{\lambda} \quad \text{if $\xi_i(\lambda)> 0$}
\end{cases}
$$
In particular, we see that the algebra $\calA({T,\bfN})$ depends only on $\bM=\bfN\oplus \bfN^*$ and not on $\bfN$.

\begin{NB}
    Observe that
    \begin{equation*}
        (-\xi_i)^{d(-\xi_i(\la),-\xi_i(\mu))}
        = (-1)^{d(\xi_i(\la),\xi_i(\mu))}\xi_i^{d(\xi_i(\la),\xi_i(\mu))}.
    \end{equation*}
    Since
    \begin{equation*}
        \frac{\sigma(r^\la)\sigma(r^\mu)}{\sigma(r^{\la+\mu})}
        = (-1)^{\max(\xi_i(\la),0)+\max(\xi_i(\mu),0)-\max(\xi_i(\la+\mu),0)} 
        \frac{r^\la r^\mu}{r^{\la+\mu}},
    \end{equation*}
    it is enough to note that
    \begin{equation}\label{eq:26}
        \begin{split}
            & \max(\xi_i(\la),0)+\max(\xi_i(\mu),0)-\max(\xi_i(\la+\mu),0)
\\
        = \; &
        \begin{cases}
            \min (|\xi_i(\la)|,|\xi_i(\mu)|) & 
            \text{if $\xi_i(\la)$ and $\xi_i(\mu)$ have different signs}
          \\
          0 & \text{otherwise}
        \end{cases}
        = d(|\xi_i(\la)|,|\xi_i(\mu)|).
        \end{split}
    \end{equation}
\end{NB}%

The same is true for the quantized case. (It is not true if we do not
introduce the weight $1/2$ action of $\CC^\times$ on $\bN$.)

\begin{NB}
    In the quantized case, we observe
    \begin{equation*}
        \prod_{j=1}^{d(-\xi_i(\la),-\xi_i(\mu))}
        (-\xi_i + (-\xi_i(\la) \pm (j - \frac12) \hbar)
        = (-1)^{d(\xi_i(\la),\xi_i(\mu))}
        \prod_{j=1}^{d(\xi_i(\la),\xi_i(\mu))}
        (\xi_i + (\xi_i(\la) \mp (j - \frac12) \hbar).
    \end{equation*}
    Since two cases of $A_i(\la,\mu)$ are exchanged when we replace
    $\xi_i$ by $-\xi_i$, the argument remains the same as the
    non-quantized case.
\end{NB}%

\subsection{Changing the representation}\label{sec:chang-repr}
Let $\bfV$ be another representation. Then we claim that there is a natural embedding $\cA[T,\bfN\oplus \bfV]\hookrightarrow \cA[{T,\bfN}]$.
To construct it, it is enough to assume that $\bfV=\xi$ for some $\xi$.
Then the $\eta$ which is equal to identity of $\Sym(\grt^*)$ and
such that
\begin{NB}
\begin{equation*}
\eta(r^{\lambda})=
\begin{cases}
(-\xi)^{\xi(\lambda)}r^{\lambda}\quad \text{if $\xi(\lambda)\geq 0$}\\
r^{\lambda}\quad\text{otherwise}
\end{cases}
\end{equation*}
\end{NB}%
\begin{equation}\label{eqV}
\eta(r^{\lambda})=
\begin{cases}
\xi^{-\xi(\lambda)}r^{\lambda}& \text{if $\xi(\lambda) < 0$}\\
r^{\lambda} & \text{otherwise}
\end{cases}
\end{equation}
defines the embedding.

\begin{NB}
    Since $\eta(r^\la) = \xi^{\max(-\xi(\la),0)}r^\la$, we have
    \begin{equation*}
        \begin{split}
        \eta(r^\la)\eta(r^\mu)
        & = \xi^{\max(-\xi(\la),0)}\xi^{\max(-\xi(\mu),0)} 
        r^{\la+\mu} \prod_{\bN} \xi_i^{d(\xi_i(\la),\xi_i(\mu))}
\\
        & = \xi^{\max(-\xi(\la),0) + \max(-\xi(\mu),0) - \max(-\xi(\la+\mu),0)}
         \prod_{\bN} \xi_i^{d(\xi_i(\la),\xi_i(\mu))}
        \eta(r^{\la+\mu}),
        \end{split}
    \end{equation*}
    where $\xi_i$ runs over weights of $\bN$. Now by \eqref{eq:26},
    \begin{equation*}
        \eta(r^\la) \eta(r^\mu) = \eta(r^{\la+\mu})
        \prod_{\bN\oplus\mathbf V}\xi_i^{d(\xi_i(\la),\xi_i(\mu))},
    \end{equation*}
    where $\xi_i$ runs over the weights of $\bN\oplus\mathbf V$ this time.
\end{NB}%

This embedding is given by the pull-back homomorphism
$H^{T_\cO}_*(\cR_{T,\bN\oplus\mathbf V})\to H^{T_\cO}_*(\cR_{T,\bN})$
with respect to the embedding $\cT_{T,\bN}\subset
\cT_{T,\bN\oplus\mathbf V}$ of a subbundle. In \subsecref{sec:z}, we
consider a similar pull-back with respect to
$\bz\colon\Gr_G\hookrightarrow\cT_{G,\bN}$ for general $(G,\bN)$.
\begin{NB}
    Misha and Sasha, I correct Sasha's formula on Sep.\ 8.

    The embedding $\cR\subset\cT$ at the coweight $\la$ is
    $z^\la\bN_\cO\cap \bN_\cO\subset z^\la\bN_\cO$. We also have
    $z^\la(\bN\oplus\mathbf V)_\cO\cap (\bN\oplus\mathbf V)_\cO\subset
    z^\la(\bN\oplus\mathbf V)_\cO$. Therefore for $\mathbf
    V$-component, we have $z^\la\mathbf V_\cO\cap \mathbf V_\cO =
    z^{\max(\xi(\lambda),0)}\cO \subset z^\la\mathbf V_\cO =
    z^{\xi(\la)}\cO$. Therefore we should have
    \begin{equation*}
        \eta(r^{\lambda})= \xi^{\max(-\xi(\la),0)} r^\la.
    \end{equation*}
    Since
    \begin{equation*}
        \max(-\xi(\la),0) + {\max(-\xi(\mu),0)}{-\max(-\xi(\la+\mu),0)}
      = d(\xi(\la),\xi(\mu)),
    \end{equation*}
    the proof that $\eta$ is an algebra homomorphism remains to work.
\end{NB}%
The formula \eqref{eqV} is understood as the multiplication of
$e(z^\la \mathbf V_\cO/z^\la \mathbf V_\cO\cap \mathbf V_\cO) = e(z^{\xi(\la)}\cO/z^{\max(\xi(\la),0)}\cO)$.

In the quantized case, we define
\begin{equation*}
    \eta(r^\la) = 
    \begin{cases}
        \displaystyle \prod_{j=0}^{-\xi(\la)-1} (\xi + (\xi(\la) +
        j + \frac12)\hbar) r^\la & \text{if $\xi(\la)< 0$},
        \\
        r^\lambda & \text{otherwise}.
    \end{cases}
\end{equation*}
Using \eqref{eq:27}, we can check that this is an algebra
homomorphism.

\begin{NB}
    Let us give a proof.

    (a) Suppose $\xi(\la)$, $\xi(\eta) \ge 0$. Then $\eta(r^\la)\eta(r^\mu)
    = r^\la r^\mu = r^{\la+\mu} = \eta(r^{\la+\mu})$.

    (b) Suppose
    $\xi(\la)$, $\xi(\eta) < 0$. We have
    \begin{equation*}
        \begin{split}
            & \eta(r^\la) \eta(r^\mu)
        =  \prod_{j=0}^{-\xi(\la)-1} (\xi + (\xi(\la) + j + \frac12)\hbar) r^\la
        \prod_{k=0}^{-\xi(\mu)-1}(\xi + (\xi(\mu) + k + \frac12)\hbar) r^\mu
\\
=\; & \prod_{j=0}^{-\xi(\la)-1} (\xi + (\xi(\la) + j + \frac12)\hbar) 
        \prod_{k=0}^{-\xi(\mu)-1}(\xi + (\xi(\la) + \xi(\mu) + k + \frac12)\hbar) r^\la r^\mu
\\
=\; & \prod_{j=-\xi(\mu)}^{-\xi(\la)-\xi(\mu)-1}
(\xi + (\xi(\la) + \xi(\mu) + j + \frac12)\hbar) 
        \prod_{k=0}^{-\xi(\mu)-1}(\xi + (\xi(\la) + \xi(\mu) + k + \frac12)\hbar) r^\la r^\mu
\\
=\; & \prod_{j=0}^{-\xi(\la)-\xi(\mu)-1}
(\xi + (\xi(\la) + \xi(\mu) + j + \frac12)\hbar) r^\la r^\mu
= \eta(r^{\la+\mu}),
        \end{split}
    \end{equation*}
    where we have used \eqref{eq:27} in the second equality.

    (c) Suppose $\xi(\la)\ge 0 \ge \xi(\mu)$. We have
\allowdisplaybreaks{
\begin{equation*}
        \begin{split}
            & \eta(r^\la) \eta(r^\mu)
        =  r^\la
        \prod_{k=0}^{-\xi(\mu)-1}(\xi + (\xi(\mu) + k + \frac12)\hbar) r^\mu
\\
=\; & \prod_{k=0}^{-\xi(\mu)-1}(\xi + (\xi(\la) + \xi(\mu) + k + \frac12)\hbar)
r^\la r^\mu
\\
=\; & \prod_{k=0}^{-\xi(\mu)-1}(\xi + (\xi(\la) + \xi(\mu) + k + \frac12)\hbar) 
r^{\la+\mu}
\\
=\;
& 
\begin{cases}
    \displaystyle\prod_{k=0}^{-\xi(\mu)-1}(\xi + (\xi(\la) + \xi(\mu) + k + \frac12)\hbar) 
\eta(r^{\la+\mu})
    & \text{if $\xi(\la)+\xi(\mu)\ge 0
      \Leftrightarrow d(\xi(\la),\xi(\mu)) = -\xi(\mu)$}
\\
    \frac{\displaystyle\prod_{k=0}^{-\xi(\mu)-1}(\xi + (\xi(\la) + \xi(\mu) + k + \frac12)\hbar)}
    {\displaystyle\prod_{j=0}^{-\xi(\la)-\xi(\mu)-1}
    (\xi + (\xi(\la)+\xi(\mu)+j + \frac12)\hbar)}
    \eta(r^{\la+\mu})
    & \text{if $\xi(\la)+\xi(\mu)\le 0
      \Leftrightarrow d(\xi(\la),\xi(\mu)) = \xi(\la)$}
\end{cases}
\\
=\; &
\prod_{j=1}^{d(\xi(\la),\xi(\mu))} (\xi + (\xi(\la)-j + \frac12)\hbar) \eta(r^{\la+\mu}).
        \end{split}
    \end{equation*}

    (d) Suppose $\xi(\la)\le 0 \le \xi(\mu)$. We have
}
    \begin{equation*}
        \begin{split}
            & \eta(r^\la) \eta(r^\mu)
            =  \prod_{j=0}^{-\xi(\la)-1} (\xi + (\xi(\la) + j + \frac12)\hbar) r^\la
        r^\mu
\\
=\; & \prod_{j=0}^{-\xi(\la)-1}(\xi + (\xi(\la) + j + \frac12)\hbar) r^{\la+\mu}
\\
=\;
& 
\begin{cases}
    \displaystyle\prod_{j=0}^{-\xi(\la)-1}(\xi + (\xi(\la) + j + \frac12)\hbar)
    \eta(r^{\la+\mu})
    & \text{if $\xi(\la)+\xi(\mu)\ge 0
      \Leftrightarrow d(\xi(\la),\xi(\mu)) = -\xi(\la)$}
\\
    \frac{\displaystyle\prod_{j=0}^{-\xi(\la)-1}(\xi + (\xi(\la) + j + \frac12)\hbar)}
    {\displaystyle\prod_{k=0}^{-\xi(\la)-\xi(\mu)-1}
    (\xi + (\xi(\la)+\xi(\mu)+k + \frac12)\hbar)}
    \eta(r^{\la+\mu})
    & \text{if $\xi(\la)+\xi(\mu)\le 0
      \Leftrightarrow d(\xi(\la),\xi(\mu)) = \xi(\mu)$}
\end{cases}
\\
=\; &
\prod_{j=1}^{d(\xi(\la),\xi(\mu))} (\xi + (\xi(\la)+j- \frac12)\hbar) \eta(r^{\la+\mu}).
        \end{split}
    \end{equation*}
    In the second case, replace $k$ in the denominator by $k-\xi(\mu)$
    and absorb it into the numerator. Then the product is from $j=0$ to $\xi(\mu)-1$. Then replace $j$ by $j+1$.
\end{NB}

The following observation might be important: although for general
$\bfV$ the above map does not respect the gradings on
$\cA[{T,\bfN\oplus \bfV}]$ and $\cA[{T,\bfN}]$, the gradings
\emph{are} preserved if $\bfV$ is self-dual.

\begin{Remark}
  We can recover \thmref{abel} and its quantized
  version~\eqref{q-relation} from the above formulas together with
  $r^\lambda r^\mu = r^{\lambda+\mu}$ when $\bN=0$.
\end{Remark}

\begin{NB}
    Note that the degree depends on the representation. So let us
    denote the degree as $\deg_\bN$. Suppose $\mathbf V = \xi$ as
    above. We have
    \begin{equation*}
        \deg_\bN \eta(r^\la) = 
        \deg_\bN r^\la + 2\max(\xi(\lambda),0)
    \end{equation*}
    by the definition of $\eta$. On the other hand, by the definition
    of the degree, we have
    \begin{equation*}
        \deg_{\bN\oplus\mathbf V} r^\la = \deg_\bN r^\la
        + |\xi(\la)|.
    \end{equation*}
    Hence if we add $\xi\oplus(-\xi)$, the degree is preserved as
    \begin{equation*}
        2\max(\xi(\lambda),0) + 2\max(-\xi(\lambda),0)
        = 2|\xi(\lambda)|
        = |\xi(\lambda)| + |(-\xi)(\lambda)|.
    \end{equation*}
\end{NB}%

\begin{NB}
Also note that the above map does not respect the Poisson brackets on
$\calA({T,\bfN\oplus \bfV})$ and $\calA({T,\bfN})$, hence it doesn't
extend to quantization.

\begin{NB2}
    Misha and Sasha, this is not compatible with
    \lemref{lem:convolution}(3). Where the contradiction comes from ?

    It is confirmed that this is not true.
\end{NB2}
\end{NB}%

\begin{NB}
\begin{NB2}
    Sasha says the following two subsections are not correct, so is
    moved to NB.
\end{NB2}

\subsection{The algebra \texorpdfstring{$\tcalA_{G,\bfN}$}{\textasciitilde A[G,N]} via cell decomposition}

Let now $\bfN$ be a representation of a reductive group $G$. Recall that we have set
$$
\calA_{G,\bfN}=H^G_{*,BM}(\calR_{G,\bfN}), \quad\tcalA_{G,\bfN}=H^T_{*,BM}(\calR_{G,\bfN}).
$$
Note that $\calA_{G,\bfN}=\tcalA_{G,\bfN}^W$ where $W$ is the Weyl group.

Also, as we have previously discussed there is a natural embedding $\iota_{\bfN}:\calA_{T,\bfN}\hookrightarrow\tcalA_{G,\bfN}$, which becomes an isomorphism after localization. In fact, it is easy to describe $\tcalA_{G,\bfN}$ as a subspace in the field of fractions of $\calA_{T,\bfN}$. Namely, the space $\calR_{G,\bfN}$ has a $T$-invariant cell decomposition - the cells are preimages of
the Iwahori orbits on $\Gr_G$. These orbits are indexed by coweights $\lambda\in \Lambda$ and we denote the corresponding orbit by $I_{\lambda}$. Also, we set $\calR_{\lambda}$ to be its preimage in $R_{G,\bfN}$. We let $x^{\lambda}$ denote the
corresponding element (fundamental class) in $\tcalA_{G,\bfN}$. Note that if $G=T$ then $x^{\lambda}=z^{\lambda}$.

In the general case we can only claim the following two (obvious) things:

1) The algebra $\tcalA_{T,\bfN}$ is freely generated over $\Sym(\grt^*)$ by the $x^{\lambda}$'s.

2) For any $\lambda$ let $\lambda_+$ be the unique element of $\Lambda_+$ (dominant coweights) on the $W$-orbit of $\lambda$.
Then we have
$$
\iota_{\bfN}(z^{\lambda})=\sum\limits_{\mu\ \text{such that }\mu_+<\lambda_+\ \text{or}\ \lambda_+=\mu_+,\mu\leq \lambda}
a_{\lambda,\mu}x^{\mu},
$$
where $a_{\lambda,\mu}\in \Sym(\grt^*)$. Moreover, we have
$$
a_{\lambda,\lambda}=\pm \prod\limits_{\alpha\in \Delta_+}\alpha^{|\alpha(\lambda)|-i(\lambda,\alpha)}
$$
where $i(\lambda,\alpha)=0$ if $\alpha(\lambda)\geq 0$ and 1 otherwise.
\begin{NB2}
    Sasha, I do not see where this formula comes from....
\end{NB2}

Let us set $\Delta(\lambda)=\prod\limits_{\alpha\in \Delta_+}\alpha^{|\alpha(\lambda)|}$. 
\begin{NB2}
    Sasha, where $-i(\la,\alpha)$ gone ?
\end{NB2}%
Note that $\Delta(\lambda)$ is $W$-invariant up to sign. Then we have
\begin{Corollary}
$\calA_{G,\bfN}$ is contained in $W$-invariants in the $\Sym(\grt^*)$-
span of all the $\frac{z^{\lam}}{\Delta(\lambda)}$.
\end{Corollary}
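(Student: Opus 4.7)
The plan is to use the identification $\calA_{G,\bfN}=\tcalA_{G,\bfN}^W$ and to show first that every element of $\tcalA_{G,\bfN}$ lies in the $\Sym(\grt^*)$-span of the $z^\lambda/\Delta(\lambda)$ inside the localization of $\calA_{T,\bfN}$ obtained by inverting all roots; the corollary then follows by taking $W$-invariants on both sides. By property~(1), $\tcalA_{G,\bfN}$ is freely generated over $\Sym(\grt^*)$ by the cell classes $x^\lambda$, so it suffices to prove that each $x^\lambda$ is already of the asserted form.

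For this I would invert the triangular system of property~(2). Writing $\iota_{\bfN}(z^{\lambda})=a_{\lambda,\lambda}x^{\lambda}+\sum_{\mu<\lambda}a_{\lambda,\mu}x^{\mu}$ and using the explicit formula $a_{\lambda,\lambda}=\pm\Delta(\lambda)\cdot\prod_{\alpha}\alpha^{-i(\lambda,\alpha)}$, an induction on $\lambda$ in the order specified in property~(2) yields $x^{\lambda}=\pm\bigl(\prod_{\alpha}\alpha^{i(\lambda,\alpha)}\bigr)\,z^{\lambda}/\Delta(\lambda)-\sum_{\mu<\lambda}(a_{\lambda,\mu}/a_{\lambda,\lambda})\,x^{\mu}$. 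The leading term is manifestly in the desired $\Sym(\grt^*)$-span, and by the inductive hypothesis each $x^\mu$ appearing in the correction already lies in the $\Sym(\grt^*)$-span of $z^\nu/\Delta(\nu)$ for $\nu\leq\mu$.

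The hard part will be controlling the denominators contributed by the factor $1/a_{\lambda,\lambda}\sim 1/\Delta(\lambda)$ in front of the correction terms: it is not immediate that after multiplying by $a_{\lambda,\mu}$, the combination $\Delta(\nu)/\Delta(\lambda)$ clears to a polynomial. One approach is to prove a sharper version of property~(2), giving for each root $\alpha$ a divisibility bound on the pole order of $a_{\lambda,\mu}$ along the hyperplane $\alpha=0$ that matches the gap between $\Delta(\lambda)$ and $\Delta(\nu)$; this would amount to a Goresky--Kottwitz--MacPherson style congruence characterizing the image of $\tcalA_{G,\bfN}$ in $\calA_{T,\bfN}^{\mathrm{loc}}$, and would probably be derived geometrically from the fact that $T$-equivariant classes on $\calR_{G,\bfN}$ extend through each codimension one root stratum of the $T$-fixed locus in $\Gr_G$.

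A cleaner route, and the one I would try first, is to take the $W$-average at the end: since the statement is only about $\calA_{G,\bfN}=\tcalA_{G,\bfN}^W$, the extraneous root denominators produced along a single Weyl orbit collect into the $W$-invariant (up to sign) expression $\Delta(\lambda)$ after multiplying numerators and denominators by suitable products of roots, and the lower order terms are absorbed by an inductive hypothesis formulated directly for the $W$-invariants. This matches the fact that the statement is stated for $\calA_{G,\bfN}$ and not for $\tcalA_{G,\bfN}$, and suggests that the role of $W$-invariance is exactly to make the denominator bound $\Delta(\lambda)$ tight; verifying this absorption cleanly across the induction is the step I expect to be the main obstacle.
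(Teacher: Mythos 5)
You have put your finger on the right difficulty, but the gap you defer is not a technicality --- it carries the entire weight of the statement, and neither of your two proposed repairs is carried out. Inverting the triangular system of property (2) expresses $x^{\mu}$ through $z^{\lam}$'s with coefficients that are sums over chains $\lambda>\nu_1>\dots>\nu_k=\mu$ of products $\pm\prod a_{\nu_{i-1},\nu_i}\big/\prod a_{\nu_i,\nu_i}$; the denominators therefore accumulate $a_{\nu,\nu}$ over \emph{all intermediate} $\nu$, which is far worse than the single $\Delta(\lambda)$ you need, and the required cancellations (your ``GKM-style congruence'') are precisely what would have to be proved. The $W$-averaging fallback is equally unproven: while averaging can cancel first-order poles along root hyperplanes, here the poles along $\alpha=0$ coming from the chain denominators have unbounded order, and no mechanism is exhibited that reduces them to the order prescribed by $\Delta(\lambda)$. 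Worse, the input you build on is itself unsupported: in the source, the formula $a_{\lambda,\lambda}=\pm\prod_{\alpha\in\Delta_+}\alpha^{|\alpha(\lambda)|-i(\lambda,\alpha)}$ and this Corollary sit inside an excluded note block carrying the editorial comments that the two subsections ``are not correct'' and ``I do not see where this formula comes from''; the Corollary is asserted there with no proof, so there is no argument from properties (1)--(2) in the paper for you to reconstruct.

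The version of this assertion that the paper actually establishes, and the route you should take, avoids the Iwahori-cell basis entirely: filter $\cA$ by the preimages of the closures of $G_\cO$-orbits $\overline{\Gr}{}_G^{\la}$. By \lemref{lem:MayerV} the associated graded is $\bigoplus_{\la}\CC[\ft]^{W_\la}[\cR_\la]$, and \eqref{eq:76} (see \propref{prop:degen} and \propref{prop:useful}) gives
\begin{equation*}
(\gr\iota_*)^{-1}\bigl(f[\cR_\lambda]\bigr)=\sum_{\la'\in W\la}
\frac{wf\cdot r^{\la'}}{e(T_{\la'}\Gr_{G}^{\la})},
\qquad
e(T_{\la'}\Gr_{G}^{\la})=\pm\prod_{\alpha\in\Delta^+}\alpha^{|\langle\alpha,\la'\rangle|}
=\pm\Delta(\la'),
\end{equation*}
because each orbit $\Gr_G^{\la}$ is \emph{smooth}, so the localization theorem applies stratum by stratum with an honest Euler class; the right-hand side is a manifestly $W$-invariant element of $\sum_{\la'}\Sym(\grt^*)\,r^{\la'}/\Delta(\la')$. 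The containment for $\cA$ itself then follows by induction along this filtration, since $\iota_*$ is filtration-preserving and block-triangular with respect to it. The denominator bookkeeping that defeats the cell-by-cell inversion simply never arises on this route.
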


\subsection{Example}Let now $G=\GL(N)$, $\bfN=(\CC^N)^r\oplus Ad$ where $Ad$ denotes the adjoint representation.
Note that $Ad$ is self-dual.
Then from \eqref{eqV} it follows that
$\calA_{T,\bfN}$ embeds into $\calA_{T,(\CC^N)^r)}$ which is equal to functions on $(\CC^2/\ZZ_r)^N$.
So, $\calA_{T,\bfN}^{S_N}$ sits inside both $\calO(\Sym^N(\CC^2/\ZZ_r))$ and $\calA_{G,\bfN}$ (and all the maps are isomorphisms
after localization).
But \eqref{eqV} together with the above corollary actually implies that $\calA_{G,\bfN}$ is contained in $\calO(\Sym^N(\CC^2/\ZZ_r))$ and since $Ad$ is self-adjoint
this embedding preserves grading. Hence it is an isomorphism by Hiraku's calculation of characters.

\begin{NB2}
    Sasha, I do not see the argument. Do you mean
    $\eta(z^\la/\Delta(\la)) = z^\la$, for $\eta$ removing
    $\operatorname{Ad}$ ? In this case, $\eta$ multiply
    \begin{equation*}
        \prod_{\alpha\in\Delta}(-\alpha)^{\max(\alpha(\la),0)}   
        = \pm \prod_{\alpha\in\Delta_+} \alpha^{|\alpha(\la)|}
    \end{equation*}
    right ?
\end{NB2}

The above calculation can be generalized to the algebra $\calA_{G,\bfN,\hbar,\slt}$. We claim that in the above situation this
algebra is naturally isomorphic to the spherical subalgebra in the cyclotomic Cherednik algebra of type A (I will try to write it down later).

\begin{NB2}
    Sasha, Are you still planning to do this ?
\end{NB2}

\end{NB}

\subsection{Toric hyper-K\"ahler manifolds}\label{sec:toric-hyper-kahler}

Consider a short exact sequence
\begin{equation*}
  0 \to \ZZ^{d-n}\xrightarrow{\alpha} \ZZ^d \xrightarrow{\beta} \ZZ^n \to 0,
\end{equation*}
and the associated sequence
\begin{equation}\label{eq:toric}
    1 \to G = T^{d-n} \xrightarrow{\alpha} T^d
    \xrightarrow{\beta} G_F = T^n \to 1.
\end{equation}
Let $\bN = \CC^d$, considered as a representation of $G$ through
$\alpha$.  By \subsecref{sec:triv-properties}\ref{item:reduction}, the
Coulomb branch $\mathcal M_C(G,\bN)$ is the Hamiltonian reduction of
$\mathcal M_C(T^d,\CC^d)$ by $G_F^\vee$. We have
$\mathcal M_C(T^d,\CC^d)\cong \CC^{2d}$ by \thmref{abel}. Hence
$\mathcal M_C(G,\bN)$ is, by definition, a toric hyper-K\"ahler
manifold associated with the dual exact sequence of \eqref{eq:toric},
introduced by Bielawski and Dancer in \cite{MR1792372}.\footnote{The
  name `toric hyper-K\"ahler manifold' and a special class of examples
  were introduced earlier in \cite{MR1187554}. Many people use a new
  name `hypertoric manifold', but we use the original name to respect
  the contribution of Goto, Bielawski-Dancer.}

The coordinate ring of the Hamiltonian reduction of $\CC^{2d}$ by
$G_F^\vee$ has the presentation given by \thmref{abel}. In fact, this
can be checked directly. See \cite[\S5(ii)]{2015arXiv150303676N}.

\begin{NB}
\renewenvironment{NB}{
\color{blue}{\bf NB2}. \footnotesize
}{}
\renewenvironment{NB2}{
\color{purple}{\bf NB3}. \footnotesize
}{}
\input{hikita_temp}
\end{NB}

\section{Codimension \texorpdfstring{$1$}{1} reduction}\label{sec:abel}

Recall that $T$ is a maximal torus of $G$ with Lie algebra $\ft$. Let $W$
be the Weyl group of $G$.

In this section, we develop a tool to analyze $\mathcal M_C$ based on
the idea in \cite{MR2135527}. It says that it is enough to find an
affine scheme $\mathcal M$, flat over $\ft/W$ such that it is equal to
$\mathcal M_C$ over an open subset $\ft^\bullet/W$ of $\ft/W =
\Spec(H^*_G(\mathrm{pt}))$ whose complement has codimension at least
$2$. Then automatically $\mathcal M_C = \mathcal M$. See \thmref{prop:flat} for more detail.

It is easy to determine $\mathcal M_C$ on a smaller open subset
$\ft^\circ/W$ of $\ft/W$, the complement of certain hyperplanes. We
prove $H^{G_\cO}_*(\cR)|_{\loc/W} \cong \CC[\ft\times
T^\vee]|_\loc^W$, where $T^\vee$ is the dual torus of $T$. This will
be done in \subsecref{sec:classical} after preparation.
At a point $t$ in $(\ft^\bullet\setminus \ft^\circ)/W$, we will show
that $\mathcal M_C$ is the Coulomb branch of another pair $(G',\bN')$
such that $G'$ has semisimple rank at most $1$. (See
\lemref{lem:fixedpts}.)
We can determine $\mathcal M_C(G',\bN')$: \secref{sec:abelian} for
abelian, \lemref{lem:SL2} for $G'=\PGL(2)$ or $\SL(2)$, and the proof
of \propref{prop:normality} in general.
Therefore we just need to look for $\mathcal M$ such that it is
isomorphic to $\mathcal M_C(G',\bN')$ for each $t\in
(\ft^\bullet\setminus \ft^\circ)/W$.
Using these, we can determine, for example, $(G,\bN)$ associated with
a quiver gauge theory of type ADE. See \ref{QGT}.

\subsection{Fixed points and generalized roots}\label{sec:fixed-points}

\begin{Lemma}\label{lem:fixedpts}
    Let $t\in\Lie T$. Let $\cR^t_{G,\bN}$ be the fixed point set of
    $\exp(\RR t)$ in $\cR_{G,\bN}$, i.e., the zero locus of the vector
    field generated by $t$. Then
    \begin{equation*}
        \cR^t_{G,\bN} \cong \cR_{Z_G(t), \bN^t},
    \end{equation*}
    where $Z_G(t)$ is the centralizer of $t$ in $G$, and $\bN^t$ is
    the subspace of $t$ invariants, considered as a representation of
    $Z_G(t)$.
    Similarly $\cT^t_{G,\bN}\cong \cT_{Z_G(t),\bN^t}$.
\end{Lemma}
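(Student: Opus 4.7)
The plan is to analyze the fixed-point equations directly on the uniformization $\cT = G_\cK\times_{G_\cO}\bN_\cO$, with $\cR\subset\cT$ cut out by the condition $gs\in\bN_\cO$. Since $\exp(\RR t)$ is Zariski-dense in its closure, a subtorus $T'\subset T$, the fixed locus of $\exp(\RR t)$ coincides with the $T'$-fixed locus; and $Z_G(T')=Z_G(t)$ because $t$ generates $T'$ as an algebraic group. Likewise $\bN^t=\bN^{T'}$. Thus I reduce to computing fixed points of the action of a subtorus $T'\subset G$, which has the advantage that closed fixed-point subschemes are automatically available.

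The first main step would be the Borel-localization statement $(\Gr_G)^{T'}=\Gr_{Z_G(t)}$, embedded via the closed subgroup inclusion $Z_G(t)\hookrightarrow G$. One direction is immediate. For the other, on $[g]\in\Gr_G$ the condition $[\exp(rt)g]=[g]$ for all $r$ translates, after writing $g^{-1}\exp(rt)g=\exp(r\operatorname{Ad}_{g^{-1}}(t))$, to $\operatorname{Ad}_{g^{-1}}(t)\in\g_\cO$. One then combines the discrete computation $(\Gr_G)^T=\{z^\lambda\}_{\lambda\in Y}$ with the explicit root-space description of the tangent space $T_{z^\mu}\Gr_G=\bigoplus_{\alpha\in\Delta}\bigoplus_{n=0}^{\max(0,\langle\alpha,\mu\rangle)-1}\g_\alpha z^n$ (as in the proof of \lemref{lem:orbit}) to identify the zero locus of the vector field generated by $t$ on each finite-dimensional approximation $\overline{\Gr}_{G}^{\bar\lambda}$ with the corresponding closed subscheme in $\Gr_{Z_G(t)}$.

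Next, over a fixed point $[g']\in\Gr_{Z_G(t)}$ I choose the representative $g'\in Z_G(t)_\cK$, so that $\operatorname{Ad}_{g'^{-1}}(t)=t$. The fiber of $\pi\colon\cT\to\Gr_G$ at $[g']$ is identified with $z^{\mu}\bN_\cO$ for a suitable $\mu$, and the $T'$-action on this fiber is given by the restriction of the $t$-action on $\bN_\cK$. Its fixed locus is therefore $z^\mu\bN_\cO\cap\bN^t_\cK=z^\mu\bN^t_\cO$, which is precisely the fiber of $\cT_{Z_G(t),\bN^t}\to\Gr_{Z_G(t)}$. Intersecting with the closed condition $gs\in\bN_\cO$ uses that $g'\in Z_G(t)_\cK$ preserves $\bN^t$, so $g's\in\bN_\cO$ is equivalent to $g's\in\bN^t_\cO$ for $s\in\bN^t_\cO$; this recovers the fiber of $\cR_{Z_G(t),\bN^t}\to\Gr_{Z_G(t)}$.

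Finally, I would assemble the fiberwise identification into a global isomorphism by observing that the natural map $Z_G(t)_\cK\times_{Z_G(t)_\cO}\bN^t_\cO\to G_\cK\times_{G_\cO}\bN_\cO$, induced by the closed subgroup inclusion $Z_G(t)\hookrightarrow G$ (well-defined since $Z_G(t)_\cO=G_\cO\cap Z_G(t)_\cK$), is a closed embedding of ind-schemes whose image we have just shown coincides with the fixed locus $\cT^t$; the restriction to $\cR$ then gives $\cR_{Z_G(t),\bN^t}\cong\cR^t$. I expect the main obstacle to be the scheme-theoretic (as opposed to set-theoretic) verification of the Borel-localization statement in Step 1 at the ind-scheme level, i.e.\ checking the identification on each finite-dimensional approximation $\cR^d_{\le\lambda}$; the subsequent fiber analysis reduces to linear algebra with the semisimple operator $t$.
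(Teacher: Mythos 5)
Your proposal is correct in outline and splits the problem the same way the paper does (Grassmannian part, then fibers), but it takes a genuinely different route through the key sublemma $\Gr_G^t\cong\Gr_{Z_G(t)}$. The paper does not reprove this: it invokes the decomposition of $\Gr_G$ into semiinfinite orbits, i.e.\ the connected components of $\Gr_P=L(\cK)\cdot U(\cK)/L(\cO)\cdot U(\cO)$ for the parabolic $P$ with Levi $L=Z_G(t)$ and unipotent radical $U$; since $t$ acts trivially on $L$ and contracts $U$ to the identity, the fixed points of each semiinfinite orbit are visibly $L(\cK)/L(\cO)=\Gr_L$. This is shorter and sidesteps exactly the point you flag as the main obstacle: your reverse inclusion, via the criterion $\operatorname{Ad}_{g^{-1}}(t)\in\g_\cO$ combined with a tangent-space/Bia{\l}ynicki-Birula analysis at the discrete $T$-fixed points, is the delicate step. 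To close it along your lines you would still need (i) that the fixed-point subscheme of each $\overline{\Gr}{}_G^{\lambda}$ is smooth with tangent spaces $(T_{z^\mu}\Gr_G)^t$ at the $T$-fixed points, (ii) that every connected component of the fixed locus contains a $T$-fixed point (Borel fixed point theorem on the projective closures), and (iii) that a closed immersion of smooth schemes inducing isomorphisms on tangent spaces and meeting every component is an isomorphism. None of this is false, but it is considerably more work than the Iwasawa-decomposition argument, so you should either carry it out or cite the standard statement. Your fiber analysis — the $T'$-action on $\pi^{-1}([g'])$ is the linear action on $\bN_\cK$ via $\Pi$, its fixed locus is $g'\bN^t_\cO$, and for $g'\in Z_G(t)_\cK$ the condition $g's\in\bN_\cO$ is equivalent to $g's\in\bN^t_\cO$ — is exactly what the paper compresses into the remark that the representation part is clear from the embedding $\cR_{G,\bN}\subset\Gr_G\times\bN_\cO$; spelling it out as you do is fine. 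One small slip: the fiber of $\cT\to\Gr_G$ over a general point $[g']$ of $\Gr_{Z_G(t)}$ is $g'\bN_\cO$, not $z^\mu\bN_\cO$; the latter only applies at $T$-fixed points.
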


\begin{proof}
    For the Grassmann part, it is known that $\Gr_G^t \cong
    \Gr_{Z_G(t)}$. (We are unable to find an exact reference, but it
    is implicit in \cite[\S6]{bfgm}: $Z_G(t)$ is a Levi subgroup $L$
    in a parabolic $P$ with radical $U$. The Grassmannian $\Gr_G$ is a disjoint union
    of ``semiinfinite orbits": the connected components of
    $\Gr_P=L(\cK)\cdot U(\cK)/L(\cO)\cdot U(\cO)$. Since $t$ acts
    trivially on $L$ and contracts $U$ to the origin, its fixed points
    on $\Gr_P$ are $L(\cK)/L(\cO)=\Gr_L$.)
    \begin{NB}
        Misha, this is a copy of your message on Aug.23. Please check.

        Yes, checked.
    \end{NB}%

    For the representation part, it
    is clear from the embedding $\cR_{G,\bN}\subset\Gr_G\times\bN_\cO$.
\end{proof}

From this description, it is natural to introduce the following
definition, which is given directly in terms of $(G,\bN)$ without
reference to $\cR_{G,\bN}$.

\begin{Definition}
    Fix a maximal torus $T$ of $G$ as before.
    A {\it generalized root\/} $\alpha$ for a pair $(G,\bN)$ is either
    (I) a nonzero weight of $\bN$ or (II) a root of $\Lie G$.

    Generalized roots define hyperplanes in $\ft \defeq \Lie T$. Let
    $\ft^\circ$ denote the complement of the union of all generalized
    root hyperplanes.
\end{Definition}

From the above lemma, the fixed point subset $\cR^t_{G,\bN}$ of $t$ is
strictly larger than the fixed point $\cR^T_{G,\bN}$ of $T$ if and
only if $\langle\alpha, t\rangle = 0$ for some generalized root
$\alpha$.
Hence $\ft^\circ$ consists of $t$ such that $\cR^t_{G,\bN} =
\cR^T_{G,\bN}$.

We call a nonzero weight $\alpha$ of $\bN$ as a generalized root of type
(I). More precisely, we say $\alpha$ is of type (I) further if there is
no roots of $G$ in $\QQ\alpha$.
We may assume $\alpha$ is primitive, i.e., it is not a positive integer
multiple of another integral weight.
Suppose $\langle t, \alpha\rangle = 0$ and $t$ is not contained in any
other generalized root hyperplanes.
Then $Z_G(t) = T$, $\bN^t = \bN^T\oplus\bigoplus_{m\in\ZZ} \bN(m
\alpha) $, where $\bN(m \alpha)$ (resp.\ $\bN^T$) is the weight $m
\alpha$ (resp.\ $0$) subspace. We understand that $\bN(m\alpha) = 0$
if $m\alpha$ is not a weight of $\bN$.
The first factor $\bN^T$ plays no role by
\subsecref{sec:triv-properties}\ref{item:trivial}. Since $Z_G(t)$ is
abelian, we already know $H^{Z_G(t)}_*(\cR_{Z_G(t),\bN^t})$
thanks to \secref{sec:abelian}.

Other generalized roots are of type (II). They are just roots of $G$.
%
We further suppose $t$ is not contained in any other generalized root
hyperplanes. In particular, $\langle\mu,t\rangle\neq 0$ for any
nonzero weight $\mu\notin\QQ\alpha$. It could happen that a multiple
of $\alpha$ is also a weight of $\bN$. Let $\bN(m\alpha)$ be the weight
$m\alpha$ subspace, understanding it is $0$ if $m\alpha$ is not a
weight. Then $Z_G(t)$ is of semisimple rank $1$ and $\bN^t =
\bN^T\oplus \bigoplus_{m\in\QQ} \bN(m \alpha)$.
For example, if $(G,\bN) = (\GL_r,\gl_r)$ and
$t=\operatorname{diag}(t_1,t_1,t_3,\cdots,t_r)$ with $t_1$,
$t_3$,\dots, $t_r$ distinct, we have $Z_G(t) = \GL_2\times T^{r-2}$,
$\bN^t = \gl_2\oplus\CC^{r-2}$. Another example is $(G,\bN) =
(\GL_r,\CC^r)$ with the same $t$. We have $\bN^t = \{ 0\}$.

\begin{NB}
    A reductive group $G'$ of semisimple rank $1$ is $G' \cong
    Z(G')^\circ\cdot[G',G']$ with $Z(G')^\circ\cap [G',G']$ being
    finite.  See e.g., \cite[Cor.~25.3]{MR0396773}. Since the Coulomb
    branch for $Z(G')^\circ\cap [G',G']$ is larger than that for
    $Z(G')^\circ\times [G',G']$ (see
    \subsecref{sec:triv-properties}\ref{item:finite_quotient}), we
    cannot make a reduction to $Z(G')^\circ\times [G',G']$. 
\end{NB}%

\subsection{Torus equivariant homology}

The $T_\cO$-equivariant Borel-Moore homology group
$H^{T_\cO}_*(\cR)$ is defined in the same way as in
\subsecref{subsec:equiv-borel-moore}. The same applies also for
$T_\cO\rtimes\CC^\times$-equivariant homology.
Recall $H^{G_\cO}_*(\cR)$ (resp.\ $H^{T_\cO}_*(\cR)$) is a module over
$H^*_{G}(\mathrm{pt}) = \CC[\ft/W]$ (resp.\ $H^*_{T}(\mathrm{pt}) =
\CC[\ft]$).
\begin{NB}
    Moved from \subsecref{sec:bimodule}. Apr. 30.
\end{NB}%
Also $W$ acts on $H^{T_\cO}_*(\cR)$ induced by the $N(T)$-action on
$\cR$, where $N(T)$ is the normalizer of $T$ in $G$ as usual
(and $W=N(T)/T$).
\begin{NB}
previous version, changed on Apr. 27 : $W$-action on $T$.
\end{NB}%
\begin{NB}
    More precise definition: We have an $N(T)$-action $\cR$ as the
    restriction from the $G$-action. (Hence it extends the $T$-action
    on $\cR$.) Let us write it as a right action. We have the induced
    $N(T)$-action on $\cR\times_T EG$ by $[x,e]\mapsto [x n^{-1}, ne]$
    for $n\in N(T)$. This is well-defined as $[x,e] = [xt^{-1}, t e]
    \mapsto [xt^{-1} n^{-1}, n t e] = [xn^{-1} nt^{-1} n^{-1},
    ntn^{-1} n e] = [x n^{-1}, ne]$. It factorizes $W =
    N(T)/T$. Moreover $\cR\times_T EG\to \cR\times_{N(T)} EG$ is the
    quotient by $W$. Therefore we have a $W$-action on
    $H^{T_\cO}_*(\cR)$ and $H^{T_\cO}_*(\cR)^W = H^{N(T)}_*(\cR) =
    H^{G_\cO}_*(\cR)$, as $G/N(T)$ is $\QQ$-acyclic.
\end{NB}%

\begin{Lemma}\label{lem:flat}
    The $H^*_{T}(\mathrm{pt})$-module $H_*^{T_\cO}(\cR)$ is flat, and
    the $H^*_{G}(\mathrm{pt})$-module $H_*^{G_\cO}(\cR)$ is
    flat. Moreover, the natural map
    $H^*_{T}(\mathrm{pt})\otimes_{H^*_{G}(\mathrm{pt})}
    H_*^{G_\cO}(\cR)\to H_*^{T_\cO}(\cR)$ is an isomorphism, and
    $H_*^{G_\cO}(\cR)=(H_*^{T_\cO}(\cR))^{W}$. The same applies for
    $T_\cO\rtimes\CC^\times$ and $G_\cO\rtimes\CC^\times$ equivariant
    homology groups.
\end{Lemma}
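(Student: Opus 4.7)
My proof plan is to prove all four assertions simultaneously by induction on the stratification $\cR = \bigsqcup_{\lambda\in Y^+}\cR_\lambda$ by preimages of $G_\cO$-orbits, exploiting the odd-degree vanishing of Lemma~\ref{lem:MayerV}. First I would compute the four objects on each individual stratum $\cR_\lambda$. By Lemma~\ref{lem:Rla}, $\cR_\lambda\to \Gr_G^\lambda$ is a $G_\cO$-equivariant vector bundle, so after the Gysin shift $H^{G_\cO}_*(\cR_\lambda)\cong H^{G_\cO}_*(\Gr_G^\lambda)$ and similarly for $T_\cO$. Since $\Gr_G^\lambda\cong G/P_\lambda$ up to a contractible vector bundle and the $G_\cO$-action factors through $G$, the well-known description of the equivariant (co)homology of partial flag varieties gives
\begin{equation*}
    H^{G_\cO}_*(\cR_\lambda)\cong \CC[\ft]^{W_\lambda},\qquad
    H^{T_\cO}_*(\cR_\lambda)\cong \CC[\ft]\otimes_{\CC[\ft]^W}\CC[\ft]^{W_\lambda},
\end{equation*}
up to a common degree shift, where $W_\lambda$ is the Weyl group of the Levi $\operatorname{Stab}_G(\lambda)$. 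By the Chevalley--Shephard--Todd theorem, $\CC[\ft]^{W_\lambda}$ is a free $\CC[\ft]^W$-module and $\CC[\ft]$ is a free $\CC[\ft]^W$-module; hence each $H^{G_\cO}_*(\cR_\lambda)$ is free over $\CC[\ft]^W$ and each $H^{T_\cO}_*(\cR_\lambda)$ is free over $\CC[\ft]$. Moreover the natural comparison map $\CC[\ft]\otimes_{\CC[\ft]^W}H^{G_\cO}_*(\cR_\lambda)\to H^{T_\cO}_*(\cR_\lambda)$ is manifestly an isomorphism on each stratum.

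Next I would propagate these statements from $\cR_\lambda$ to $\cR_{\le\lambda}$ by induction on $\lambda$, using the short exact sequence
\begin{equation*}
    0\to H^{G_\cO}_*(\cR_{<\lambda})\to H^{G_\cO}_*(\cR_{\le\lambda})\to H^{G_\cO}_*(\cR_\lambda)\to 0
\end{equation*}
from Lemma~\ref{lem:MayerV}, and the analogous sequence for $T_\cO$. An extension of flat modules being flat gives flatness of $H^{G_\cO}_*(\cR_{\le\lambda})$ over $\CC[\ft]^W$ and of $H^{T_\cO}_*(\cR_{\le\lambda})$ over $\CC[\ft]$. Because $\CC[\ft]$ is flat over $\CC[\ft]^W$, the functor $\CC[\ft]\otimes_{\CC[\ft]^W}-$ preserves the short exact sequence for $G_\cO$, so by the five lemma the comparison map remains an isomorphism at stage $\cR_{\le\lambda}$. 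Taking the direct limit over $\lambda$ (which preserves flatness and exactness) yields the flatness of $H^{G_\cO}_*(\cR)$ and $H^{T_\cO}_*(\cR)$ together with the base-change isomorphism $\CC[\ft]\otimes_{\CC[\ft]^W}H^{G_\cO}_*(\cR)\xrightarrow{\cong}H^{T_\cO}_*(\cR)$.

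Finally, the identification $H^{G_\cO}_*(\cR)=H^{T_\cO}_*(\cR)^W$ follows formally from the base-change statement: since $W$ acts trivially on $H^{G_\cO}_*(\cR)$ and $\CC[\ft]^W\subset\CC[\ft]$ is exactly the $W$-invariants, tensoring gives
\begin{equation*}
    \bigl(H^{T_\cO}_*(\cR)\bigr)^W \cong \bigl(\CC[\ft]\otimes_{\CC[\ft]^W}H^{G_\cO}_*(\cR)\bigr)^W \cong \CC[\ft]^W\otimes_{\CC[\ft]^W}H^{G_\cO}_*(\cR) \cong H^{G_\cO}_*(\cR),
\end{equation*}
where in the middle step one uses that $\CC[\ft]$ is a free $\CC[\ft]^W$-module to commute $W$-invariants past the tensor product. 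The same argument applies verbatim to the $T_\cO\rtimes\CC^\times$ and $G_\cO\rtimes\CC^\times$-equivariant versions, after replacing $\CC[\ft]$ by $\CC[\hbar,\ft]$ and $\CC[\ft]^W$ by $\CC[\hbar,\ft]^W$; the freeness statement still holds since $\CC^\times$ acts trivially on $\ft$.

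The main technical nuisance will be checking that all finite-dimensional approximations and the inverse limits used in the definition of $H^{G_\cO}_*(\cR_{\le\lambda})$ in \subsecref{subsec:equiv-borel-moore} behave coherently with the stratification, so that the Mayer--Vietoris sequences really admit the stratum-wise description above; but since each $\cR_{\le\lambda}^d$ is a genuine finite-dimensional scheme on which a finite-dimensional quotient of $G_\cO$ acts, and the stratification of $\Gr_G^\lambda$ is by $G_\cO$-orbits factoring through such quotients, this is a routine verification.
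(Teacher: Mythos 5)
Your proof is correct, and it follows essentially the same route as the paper, whose entire proof is the citation ``Same as the one of \cite[Lemma~6.2]{MR2135527}'': namely equivariant formality of $\cR$ via the stratification by $\cR_\la$, parity vanishing (\lemref{lem:MayerV}), freeness of $\CC[\ft]^{W_\la}$ over $\CC[\ft]^W$, and passage to the limit. Your writeup merely supplies the details the paper delegates to the reference; the only point worth flagging explicitly is that the split Mayer--Vietoris sequence of \lemref{lem:MayerV}(2) is stated for $G_\cO$ and you need its (equally routine) $T_\cO$-analogue, which holds since $H^*_T(G/P_\la)$ is concentrated in even degrees.
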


\begin{proof}
Same as the one of~\cite[Lemma~6.2]{MR2135527}
\end{proof}

\subsection{Bimodule}\label{sec:bimodule}

Let us consider the diagram \eqref{eq:12}, and we restrict the
$G_\cO\times G_\cO$ (resp.\ $G_\cO$) action on the first and second
(resp.\ third and fourth) columns to $T_\cO\times G_\cO$ (resp.\
$T_\cO$). Then we have a right $H_*^{G_\cO}(\cR)$-module structure on
$H^{T_\cO}_*(\cR)$,
\begin{NB}
    The following is added. Apr. 17, HN.
\end{NB}%
i.e., we have $c_1\ast (c_2\ast c_3) = (c_1\ast
c_2)\ast c_3$ for $c_1\in H^{T_\cO}_*(\cR)$, $c_2$, $c_3\in
H^{G_\cO}_*(\cR)$. This is obvious from the proof of the associativity
in \thmref{thm:convolution}, where we restrict $G_\cO$-action to
$T_\cO$ at appropriate places.

Let $\bNT$ be the restriction of the $G$-module $\bN$ to $T\subset
G$. Then we can introduce the space $\cR_{T,\bNT}$ of triples for
$T$. It is nothing but the preimage in $\cR \equiv \cR_{G,\bN}$ of
$\Gr_T\subset\Gr_G$ under the natural projection $\cR\to \Gr_G$. We
modify the diagram \eqref{eq:12} to
\begin{equation}\label{eq:24}
    \begin{CD}
    \cT_{T,\bNT}\times \cR_{G,\bN} @<{p}<<
    T_\cK\times \cR_{G,\bN} @>{q}>> T_\cK\times_{T_\cO}\cR_{G,\bN}
    @>{m}>> \cT_{G,\bN}.
    \end{CD}
\end{equation}
(We only write the bottom row, as the top row is given by closed
embeddings.) We have $T_\cO\times T_\cO$ (resp.\ $T_\cO$) action on
the first and second (resp. third and fourth) spaces. Then the diagram
gives a left $H^{T_\cO}_*(\cR_{T,\bNT})$-module structure on
$H^{T_\cO}_*(\cR)$,
\begin{NB}
    The following is added. Apr. 17, HN.
\end{NB}%
i.e., we have $c_1\ast (c_2\ast c_3) = (c_1\ast c_2)\ast c_3$ for
$c_1$, $c_2\in H^{T_\cO}_*(\cR_{T,\bNT})$, $c_3\in
H^{T_\cO}_*(\cR)$. This again follows from the proof of the
associativity in \thmref{thm:convolution} with appropriate changes of
spaces.

\begin{NB}
    The following is added. Apr. 20, HN.
\end{NB}%
Two actions are commuting, i.e., $c_1\ast (c_2\ast c_3) = (c_1\ast
c_2)\ast c_3$ for $c_1\in H^{T_\cO}_*(\cR_{T,\bNT})$, $c_2\in
H^{T_\cO}_*(\cR)$, $c_3\in H^{G_\cO}_*(\cR)$.
This also follows from the proof of the associativity in
\thmref{thm:convolution} with appropriate changes of spaces.
Therefore

\begin{Lemma}\label{lem:bimodule}
    The convolution product
    \begin{equation*}
        H^{T_\cO}_*(\cR_{T,\bNT})\otimes H^{T_\cO}_*(\cR)\otimes
        H^{G_\cO}_*(\cR) \to H^{T_\cO}_*(\cR)
    \end{equation*}
    gives an $(H^{T_\cO}_*(\cR_{T,\bNT}),
H^{G_\cO}_*(\cR))$-bimodule structure on $H^{T_\cO}_*(\cR)$. The same applies for
    $T_\cO\rtimes\CC^\times$ and $G_\cO\rtimes\CC^\times$ equivariant
    homology groups.
\end{Lemma}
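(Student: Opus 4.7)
The plan is to adapt the proof of associativity given in \thmref{thm:convolution} in three parallel steps, varying only which equivariance groups act on which copies of the triples appearing in the diagram \eqref{eq:70}. The three statements that need verification, beyond what is already established in the paragraphs preceding the lemma, are: (a) the right action of $H^{G_\cO}_*(\cR)$ on $H^{T_\cO}_*(\cR)$ is associative; (b) the left action of $H^{T_\cO}_*(\cR_{T,\bNT})$ on $H^{T_\cO}_*(\cR)$ is associative; and (c) these two actions commute. Items (a) and (b) were already asserted in the preamble to the lemma; the real content is (c), which is stated in the last displayed commutativity relation $c_1\ast(c_2\ast c_3)=(c_1\ast c_2)\ast c_3$.

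For (a), I would rerun the nine-square diagram \eqref{eq:70} while replacing the first $G_\cO$-factor on the leftmost copy of $\cR$ by its subgroup $T_\cO$. Every morphism in that diagram is already $G_\cO\times G_\cO\times G_\cO$-equivariant, so it remains equivariant after restricting the first factor to $T_\cO$; the pull-back with support from \subsecref{subsec:AN}\ref{subsubsec:b}, the base-change used in each cube, and the ind-properness of $\tilde m$ are all intrinsic to the maps and are unaffected by restricting the equivariance. For (b), the relevant triple diagram is obtained from \eqref{eq:24} by iteration: one replaces $G_\cK\times\cR$ and $G_\cK\times_{G_\cO}\cR$ in both the top and middle rows of \eqref{eq:70} by $T_\cK\times\cR$ and $T_\cK\times_{T_\cO}\cR$ respectively, using $\cR_{T,\bNT}$ in place of the leftmost $\cR$'s; the same arguments transcribe verbatim.

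The core of the lemma is (c), the mixed compatibility. Here I would construct the asymmetric triple-convolution diagram whose top-left corner is the space parametrizing $\bigl[g_1,[g_2,[g_3,s]]\bigr]$ with $g_1\in T_\cK$, the middle quotient by $T_\cO$, $g_2\in T_\cK$, the next quotient by $G_\cO$, $g_3\in G_\cK$, $s\in\bN_\cO$, and the appropriate regularity conditions forcing the successive products to land in $\cR$. This space admits two ways of being collapsed down to $\cR$: either contract the two leftmost $T_\cK$-factors first and then act on $\cR$ via the right $G$-convolution, or contract the $G_\cK$-factor first and then act by the left $T$-convolution. Arranging this as a $3\times 3$ grid in the style of \eqref{eq:70}, all four cubes extend the individual \eqref{eq:12} diagrams (some in the $T$-setting, some in the $G$-setting, and the remaining ones mixed), and on each the relevant pull-back with support, base change, and proper push-forward identities apply exactly as in \thmref{thm:convolution}.

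The main obstacle is bookkeeping rather than any new idea: one must check that the dualizing-complex isomorphisms \eqref{eq:13} and degree shifts match simultaneously in the $T$- and $G$-equivariant pieces, and that base change is compatible with mixing the two equivariance groups (which is automatic since all morphisms are equivariant for the larger groups). Once (a), (b), (c) are in place, the bimodule axioms hold. The extension to the $T_\cO\rtimes\CC^\times$ and $G_\cO\rtimes\CC^\times$ cases is immediate because the loop rotation $\CC^\times$ commutes with every group and preserves every morphism in the diagrams.
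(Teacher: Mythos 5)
Your overall strategy coincides with the paper's: the paper establishes the lemma by noting that the right action (restrict the first equivariance factor in \eqref{eq:12} from $G_\cO$ to $T_\cO$), the left action (the modified diagram \eqref{eq:24}), and the compatibility of the two are all obtained by rerunning the proof of associativity in \thmref{thm:convolution} with the appropriate changes of spaces and groups — which is exactly the decomposition into (a), (b), (c) that you propose.

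There is, however, one concrete error in your setup of the mixed diagram for (c), which you yourself single out as the real content. You describe the triple space as parametrizing $\bigl[g_1,[g_2,[g_3,s]]\bigr]$ with $g_1\in T_\cK$ \emph{and} $g_2\in T_\cK$, the second quotient being by $G_\cO$. With $g_2\in T_\cK$ this space is not well defined — the $G_\cO$-equivalence $[g_2h,[h^{-1}g_3,s]]$ takes $g_2$ out of $T_\cK$ — and, more to the point, the middle slot would then carry classes on $\cR_{T,\bNT}$ rather than on $\cR$, whereas $c_2$ lives in $H^{T_\cO}_*(\cR)$. The correct mixed space has $g_1\in T_\cK$ with the first quotient by $T_\cO$, and $g_2,g_3\in G_\cK$ with the second quotient by $G_\cO$, subject to $g_3s,\ g_2g_3s,\ g_1g_2g_3s\in\bN_\cO$; the middle copy of $\cR$ is seen through $[g_2,g_3s]$, and $c_1$ is pulled back from $\cR_{T,\bNT}\subset\cT_{T,\bNT}$ via $[g_1,g_2g_3s]$. (Consequently one does not "contract two $T_\cK$-factors": performing $c_1\ast c_2$ first contracts the single $T_\cK$-factor against the middle $\cR$ via \eqref{eq:24}, while performing $c_2\ast c_3$ first contracts $g_2$ against $[g_3,s]$ via the restricted \eqref{eq:12}.) With this correction the rest of your argument — equivariance of all maps for the larger groups, matching of the isomorphisms \eqref{eq:13} and degree shifts, base change in each cube, ind-properness of the multiplication — goes through exactly as in \thmref{thm:convolution}, which is all the paper itself claims.
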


Let us remark that the convolution product $c\ast c'$ may not be
linear in the second variable $c'$, and is {\it not\/} so if we
include the rotation $\CC^\times$-action. See the computation in
\subsecref{sec:triv-properties}\ref{item:reduction}.

Also the $W$-action on $H^{T_\cO}_*(\cR)$ commutes with the right
action of $H^{G_\cO}_*(\cR)$ and normalizes the left action of
$H^{T_\cO}_*(\cR_{T,\bNT})$. Indeed, we have

\begin{Lemma}\label{lem:Winv}
    We have a $W$-action on $H^{T_\cO}_*(\cR_{T,\bNT})$ so that its
    algebra structure and its left module structure on
    $H^{T_\cO}_*(\cR)$ in \lemref{lem:bimodule} are
    $W$-equivariant. The same applies to $T_\cO\rtimes\CC^\times$
    equivariant homology groups.
\end{Lemma}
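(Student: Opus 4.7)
The plan is to extract the $W$-action on $H^{T_\cO}_*(\cR_{T,\bNT})$ from an $N(T)$-action on the variety of triples $\cR_{T,\bNT}$, and then to verify that every diagram used to define the algebra structure and the left module structure is $N(T)$-equivariant; $W$-equivariance of the induced operations will then be automatic. The starting point is the observation (already used to define the $W$-action on $H^{T_\cO}_*(\cR)$ before \lemref{lem:flat}) that $N(T)$ acts on $T$ by conjugation and on $\bN$ through its $G$-module structure, and these two actions are compatible in the sense that $w\cdot(t\cdot v)=(wtw^{-1})\cdot(w\cdot v)$. Hence $N(T)$ acts on $T_\cK,T_\cO,\Gr_T,\bN_\cO,\bN_\cK$, and by the formulas analogous to \eqref{eq:41} it acts on $\cT_{T,\bNT}$ and $\cR_{T,\bNT}$, making the embedding $\cR_{T,\bNT}\subset\Gr_T\times\bN_\cO$ equivariant. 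Since $T\subset N(T)$ acts trivially on $H^{T_\cO}_*(\cR_{T,\bNT})$ (conjugation on $T_\cO$ acts trivially on equivariant parameters), the induced action descends to $W=N(T)/T$.

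Next I would check $W$-equivariance of the algebra structure on $H^{T_\cO}_*(\cR_{T,\bNT})$. Apply the convolution diagram \eqref{eq:12} to the pair $(T,\bNT)$ and endow every term with an $N(T)$-action: on $T_\cK$ by conjugation, on $\cT_{T,\bNT},\cR_{T,\bNT}$ as above, and on the mixed product $T_\cK\times_{T_\cO}\cR_{T,\bNT}$ by $n\cdot[g,[h,s]]=[ngn^{-1},[nhn^{-1},n\cdot s]]$, which is well-defined precisely by the conjugation identity. All four morphisms $p,q,m$ (and the closed embeddings in the upper row) are $N(T)$-equivariant. Moreover the $T_\cO\times T_\cO$ and $T_\cO$ actions used to define the convolution are carried into themselves under $N(T)$-conjugation, so the pull-back with support \eqref{eq:7}, the descent isomorphism $\tilde q^*$ and the proper push-forward $\tilde m_*$ all commute with the $N(T)$-action. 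Taking $W$-invariants yields $W$-equivariance of the convolution product.

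For the module structure I would repeat the same argument for the diagram \eqref{eq:24}. Equip each of $\cT_{T,\bNT}\times\cR_{G,\bN},\ T_\cK\times\cR_{G,\bN},\ T_\cK\times_{T_\cO}\cR_{G,\bN},\ \cT_{G,\bN}$ with an $N(T)$-action, using the restriction to $N(T)\subset G$ on the $(G,\bN)$-factors and the actions of the previous paragraph on the $(T,\bNT)$-factors. The formulas $\left([g_1,g_2s],[g_2,s]\right)\leftmapsto(g_1,[g_2,s])\mapsto\bigl[g_1,[g_2,s]\bigr]\mapsto[g_1g_2,s]$ are manifestly intertwined with these actions, again using the conjugation identity to handle the mixed quotient. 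Hence the pull-back with support, the descent and the proper push-forward entering the definition of the left action of $H^{T_\cO}_*(\cR_{T,\bNT})$ on $H^{T_\cO}_*(\cR)$ are all $N(T)$-equivariant; passing to $W$-quotients gives the desired equivariance. The loop-rotation version is identical because the $\CC^\times$-action on $D=\Spec\CC[[z]]$ commutes with the $N(T)$-action, so no change is required.

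The only real bookkeeping point, rather than an obstacle, is verifying that the conjugation-twisted $T_\cO$-action used in the quotients coincides with the original $T_\cO$-action modulo $N(T)$-conjugation, which is exactly the content of the identity $w\cdot(t\cdot v)=(wtw^{-1})\cdot(w\cdot v)$; once this is unwound the statement reduces to functoriality of equivariant Borel-Moore homology for the $N(T)$-equivariant morphisms above.
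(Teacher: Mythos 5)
Your proposal is correct and follows essentially the same route as the paper: the $W$-action comes from the $N(T)$-action on $\cR_{T,\bNT}$ (the restriction of the $G$-action, rewritten via $[nt,s]=[ntn^{-1},ns]$ as conjugation on $T_\cK$ together with the $\bN$-module action), and equivariance of the product and module structures is reduced to checking that the convolution diagrams \eqref{eq:12} for $(T,\bNT)$ and \eqref{eq:24} are $N(T)$-equivariant, which is exactly the paper's argument.
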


\begin{proof}
    Let $N(T)$ be the normalizer of $T$. The $N(T)$-action on $\cR$
    preserves $\cR_{T,\bNT}$.
\begin{NB}
    Added on Apr. 27:

    Note that the $N(T)$-action on $\cR$ preserves $\cR_{T,\bNT}$ as
    \begin{equation*}
        \cR_{T,\bNT}\ni [t,s]\mapsto [nt, s] = [ntn^{-1}, ns],
        \qquad \text{for $t\in T_\cK$, $s\in\bN_\cO$,
        $n\in N(T)$},
    \end{equation*}
    and $ntn^{-1}\in T_\cK$.
\end{NB}%
Hence we have the induced $W$-action on $H^{T_\cO}_*(\cR_{T,\bNT})$.
Diagrams \eqref{eq:12} for $(T,\bNT)$ and \eqref{eq:24}, used to
define convolution products, are $N(T)$-equivariant. 
\begin{NB}
    From the commutativity of the diagram
    \begin{equation*}
        \begin{CD}
            ([t_1, t_2 s], [t_2, s]) @<<< (t_1, [t_2, s]) @>>>
            \left[t_1, [t_2, s]\right] @>>> [t_1t_2, s]
            \\
            @VVV @VVV @VVV @VVV
            \\
            ([nt_1 n^{-1}, n t_2 s], [nt_2 n^{-1}, ns])
            @<<<
            (nt_1 n^{-1}, [nt_2 n^{-1}, ns])
            @>>>
            \left[nt_1 n^{-1}, [nt_2 n^{-1}, ns]\right]
            @>>>
            [nt_1 t_2 n^{-1}, ns],
        \end{CD}
    \end{equation*}
    we see that diagrams are $N(T)$-equivariant.
\end{NB}%
Therefore the convolution products are $W$-equivariant.
\end{proof}

Let us consider $e$, the fundamental class of of the fiber of
$\cR\to\Gr_G$ at the base point $[1]\in \Gr_G$ as in
\thmref{thm:convolution}. It was the unit in $H^{G_\cO}_*(\cR)$, but
we consider it as an element in the bimodule $H^{T_\cO}_*(\cR)$ instead.

\begin{Lemma}\label{lem:convolution_by_e}
    \textup{(1)} The left multiplication $H^{T_\cO}_*(\cR_{T,\bNT})\ni
    c\mapsto c\ast e\in H^{T_\cO}_*(\cR)$ is the pushforward
    homomorphism $\iota_*$ for the embedding $\iota\colon
    \cR_{T,\bNT}\to \cR$.

    \textup{(2)} The right multiplication $H^{G_\cO}_*(\cR)\ni
    c'\mapsto e\ast c'\in H^{T_\cO}_*(\cR)$ is the homomorphism given by
    the restriction from $G_\cO$ to $T_\cO$. In particular, it is
    $H_G^*(\mathrm{pt})$-linear under the forgetting homomorphism
    $H_G^*(\mathrm{pt})\to H_T^*(\mathrm{pt})$.
    \begin{NB}
        It seems that the original is useless:

        $H^{G_\cO}_*(\cR)$-linear, where $H^{G_\cO}_*(\cR)$ is a right
        $H^{G_\cO}_*(\cR)$-module by the multiplication from the
        right.
    \end{NB}%

    \textup{(3)} Both \textup{(1)}, \textup{(2)} are true for
    $T_\cO\rtimes\CC^\times$, $G_\cO\rtimes\CC^\times$ equivariant
    homology groups.
\end{Lemma}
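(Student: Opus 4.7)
The plan is to trace through the convolution diagrams \eqref{eq:24} (for (1)) and \eqref{eq:12} (for (2)) and observe that when one of the factors is the fundamental class $e$ of the fiber $\cR_0 \defeq \pi^{-1}([1]) \subset \cR_{G,\bN}$, the relevant portion of the diagram collapses to a very simple correspondence.

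For (1), I would start by noting that a point $(t_1,[g_2,s_2]) \in T_\cK \times \cR_{G,\bN}$ lies in $p^{-1}(\cR_{T,\bNT}\times\cR_0)$ exactly when $g_2 \in G_\cO$ (so that $[g_2,s_2]$ sits in the fiber $\cR_0$) and $t_1 g_2 s_2 \in \bN_\cO$ (so that $[t_1,g_2s_2]\in\cR_{T,\bNT}$). Using the $G_\cO$-equivalence in the second factor to normalize $g_2=1$, the preimage identifies with the total space $\{(t,s)\in T_\cK\times\bN_\cO: ts\in\bN_\cO\}$, which is the $T_\cO$-torsor over $\cR_{T,\bNT}$ pulled back from the map $\Gr_T\to T_\cK/T_\cO$. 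The pullback with support homomorphism $p^*$ from \eqref{eq:7}, applied to $c\otimes e$, sends the fundamental class piece $e$ to the fundamental class of this torsor (the degree shift $[2\dim\bN_\cO-2\dim G_\cO]$ in \lemref{lem:pull} matches exactly the difference between the ambient shift of $\cR_0$ in $\cR_{G,\bN}$ and the shift of $T_\cK$ in itself), so $(\tilde q^*)^{-1} p^*(c\otimes e)$ is $c$ viewed on the torsor descended to $\cR_{T,\bNT}$. The final step $\tilde m$ becomes the inclusion $\iota\colon\cR_{T,\bNT}\hookrightarrow\cR_{G,\bN}$, so $c\ast e=\iota_*(c)$.

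For (2), I would analogously observe that in diagram \eqref{eq:12}, the preimage $p^{-1}(\cR_0\times\cR_{G,\bN})$ equals $G_\cO\times\cR_{G,\bN}$: the condition $g_1 g_2 s\in\bN_\cO$ is automatic from $g_2 s\in\bN_\cO$ and $g_1\in G_\cO$. Under $q$ this descends to $G_\cO\times_{G_\cO}\cR_{G,\bN}\cong\cR_{G,\bN}$, and $\tilde m$ is the identity. Thus $e\ast c'$ coincides with pulling $c'$ up to the trivial $G_\cO$-bundle and descending with the equivariance reduced from $G_\cO$ to $T_\cO$, which is exactly the forgetting homomorphism $H^{G_\cO}_*(\cR)\to H^{T_\cO}_*(\cR)$. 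The $H^*_G(\mathrm{pt})$-linearity is then automatic since this forgetting map is linear over $H^*_G(\mathrm{pt})\to H^*_T(\mathrm{pt})$.

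The hard part, though routine, will be verifying the bookkeeping: that the pullback with support of \subsecref{subsec:AN}\ref{subsubsec:b}, together with the finite-dimensional approximations $\cR^d_{\le\la}$ of \subsecref{subsec:equiv-borel-moore}, produces exactly the plain pushforward/restriction map with no spurious factors. The crucial input is that the isomorphisms in \lemref{lem:pull} — in particular the identification $p_\cT^*\DC_\cT\cong\DC_{G_\cK}\boxtimes\CC_\cR[2\dim\bN_\cO-2\dim G_\cO]$ — are set up precisely so that tensoring with the fundamental class $[\cR_0]=e$ (which cuts out a codimension-$\dim G_\cO$ subscheme whose transversal direction is $\dim\bN_\cO$-dimensional) yields the identity map after degree shift cancellation. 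Part (3) follows by the same argument verbatim with $T_\cO\rtimes\CC^\times$ and $G_\cO\rtimes\CC^\times$ replacing the groups, since $\cR_0$ and $e$ are stable under loop rotation and all morphisms in \eqref{eq:12} and \eqref{eq:24} are $\CC^\times$-equivariant.
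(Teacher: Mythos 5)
Your argument is correct and is essentially the paper's own proof, which simply notes that the computation is identical to the verification in \thmref{thm:convolution} that $e$ is a unit: with $e$ inserted, $(\tilde q^*)^{-1}p^*$ collapses to the pushforward along $\cR_{T,\bNT}\ni[t,s]\mapsto[t,[\id,s]]$ (resp.\ to $[g,s]\mapsto[\id,[g,s]]$ with equivariance restricted from $G_\cO$ to $T_\cO$), and composing with $\tilde m$ yields $\iota$ (resp.\ $\id_\cR$). The only (cosmetic) slip is in the degree bookkeeping for part (1): the analogue of \lemref{lem:pull} for the diagram \eqref{eq:24} carries the shift $2\dim\bN_\cO-2\dim T_\cO$, since there the quotient is by $T_\cO$ rather than $G_\cO$.
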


The proof is exactly the same as the proof that $e$ is unit in \thmref{thm:convolution}.

By (2), we have a well-defined homomorphism
\begin{equation}\label{eq:20}
    e\ast\bullet\colon
    H^{G_\cO}_*(\cR)\otimes_{H^*_G(\mathrm{pt})} H^*_T(\mathrm{pt})
    \xrightarrow{\cong} H^{T_\cO}_*(\cR).
\end{equation}
It is an isomorphism thanks to \lemref{lem:flat}.

\begin{Lemma}\label{lem:iota_injective}
    The homomorphism $\iota_*$ becomes an isomorphism over $\loc$, the
    complement of the union of all generalized root hyperplanes. In
    particular, it is injective.

    For $T_\cO\rtimes\CC^\times$-equivariant homology groups,
    $\iota_*$ is an isomorphism over
    $\loc\times\operatorname{Lie}(\CC^\times)$.
\end{Lemma}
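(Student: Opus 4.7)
The plan is to invoke the equivariant localization theorem for $T$-actions applied separately to the ambient ind-schemes $\cR_{G,\bN}$ and $\cR_{T,\bNT}$. By \lemref{lem:fixedpts} the $T$-fixed locus of $\cR_{G,\bN}$ equals $\cR_{T,\bN^T}$, which (since $T$ acts trivially on $\bN^T$) is just $\Gr_T\times\bN^T_\cO$. The same ind-scheme appears as the $T$-fixed locus of $\cR_{T,\bNT}$: indeed $\cR_{T,\bNT}$ lies over $\Gr_T$, on which $T$ acts trivially, and on the fibers $\bN_\cO\cap z^\la\bN_\cO$ the $T$-fixed part is $\bN^T_\cO$. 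Thus $\cR^T \defeq \cR^T_{G,\bN} = \cR^T_{T,\bNT}$, and $\iota$ factors as $\cR^T \hookrightarrow \cR_{T,\bNT} \xrightarrow{\iota} \cR_{G,\bN}$.

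Next I would identify the $T$-weights of the normal directions to $\cR^T$ in each ambient space. Inside $\cR_{G,\bN}$, at a fixed point $z^\la$, the Grassmann directions contribute weights equal to the roots of $G$ (as in the computation of $T_{z^\mu}\Gr_{G}^\la$ used in \lemref{lem:orbit}), and the fiber directions $\bN_\cO\cap z^\la\bN_\cO$ contribute the nonzero $T$-weights of $\bN$. Inside $\cR_{T,\bNT}$, only the fiber directions appear, contributing the nonzero $T$-weights of $\bN$. In both cases the weights are precisely (restrictions of) generalized roots, hence become invertible once we restrict to $\loc$. Applying equivariant localization (realized concretely through the finite-dimensional approximations $\cR_{\le\la}^d$ of \subsecref{subsec:equiv-borel-moore}), both pushforwards $H^{T_\cO}_*(\cR^T)\to H^{T_\cO}_*(\cR_{T,\bNT})$ and $H^{T_\cO}_*(\cR^T)\to H^{T_\cO}_*(\cR_{G,\bN})$ become isomorphisms over $\loc$, and since the second factors through the first, $\iota_*$ itself is an isomorphism over $\loc$.

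For the injectivity of $\iota_*$ as a map of $H^*_T(\mathrm{pt})$-modules, I would appeal to \thmref{abel}, which exhibits $H^{T_\cO}_*(\cR_{T,\bNT})$ as a \emph{free} module over $\Sym(\ft^*)=H^*_T(\mathrm{pt})$ with basis $\{r^\la\}_{\la\in Y}$. Being torsion free, it embeds into its localization over $\loc$, so injectivity of $\iota_*$ reduces to injectivity of its localization, which was established above. The $\CC^\times$-equivariant variant proceeds identically: the same $T$-fixed locus appears, and at each truncation $\cR_{\le\la}^d$ only finitely many $\hbar$-shifted normal weights enter the Euler class, each of the form $\xi + m\hbar/2$ with $\xi$ a generalized root. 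The main obstacle is technical rather than conceptual: one must justify that localization commutes with the direct/inverse limits built into the definition of $H^{T_\cO\rtimes\CC^\times}_*(\cR)$, ensuring that the Euler classes of normal bundles stabilize compatibly under $\tilde p^d_e$ and that the resulting iso descends to the stated open subset $\loc\times\operatorname{Lie}(\CC^\times)$.
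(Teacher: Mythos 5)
Your treatment of the first assertion is correct and is essentially the paper's argument: the whole content is that $\cR_{T,\bNT}^T=\cR^T$ (Lemma~\ref{lem:fixedpts}), after which the localization theorem applied to both inclusions $\cR^T\hookrightarrow\cR_{T,\bNT}$ and $\cR^T\hookrightarrow\cR$ gives the isomorphism over $\ft^\circ$ by two-out-of-three; your identification of the normal weights as generalized roots and your appeal to the freeness statement of Theorem~\ref{abel} (to upgrade ``isomorphism after localization'' to ``injective'') are correct supplements that the paper leaves implicit.

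For the $T_\cO\rtimes\CC^\times$-equivariant statement, however, your argument has a genuine gap, and it is not the one you flagged. Running $T\times\CC^\times$-localization directly, the Euler classes involve the weights $\xi+m\hbar$ (resp.\ $\chi+(m+\tfrac12)\hbar$) with $m$ ranging over nonzero integers as soon as one leaves the smallest strata: e.g.\ for $G=\PGL(2)$ the tangent space to $\Gr_G^{2\la_0}$ at $z^{2\la_0}$ is $\g_\alpha z^0\oplus\g_\alpha z^1$, with weights $\alpha$ and $\alpha+\hbar$. The hyperplane $\{\xi+m\hbar=0\}$ with $m\neq0$ is \emph{not} contained in $(\ft\setminus\ft^\circ)\times\operatorname{Lie}(\CC^\times)$, so these classes are not invertible over $\ft^\circ\times\operatorname{Lie}(\CC^\times)$, and your argument only yields an isomorphism over the strictly smaller open set obtained by further removing all the shifted hyperplanes (compare Remark~\ref{rem:BDG1}, where exactly the elements $\hbar$, $\alpha+m\hbar$ are inverted). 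The difficulty is therefore not ``commuting localization with the limits'': it is that direct equivariant localization cannot reach the open set named in the statement. The paper instead deduces the quantized assertion from the $\hbar=0$ assertion together with the flatness/base-change statement of Lemma~\ref{lem:flat}; if you want to keep a localization-flavoured proof you must supply a separate argument (e.g.\ a flatness or graded-support argument in the $\hbar$-direction) to pass from the complement of the shifted hyperplanes to $\ft^\circ\times\operatorname{Lie}(\CC^\times)$.
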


\begin{proof}
    The last assertion is a consequence of the first and
    \lemref{lem:flat}.

By
    \lemref{lem:fixedpts} (or a direct consideration), we have
    $\cR_{T,\bNT}^T = \cR^T$. The localization theorem for equivariant
    homology groups implies the assertion.
\end{proof}

Hereafter we often use the notation `$|_\loc$' meaning the
localization at the ideal given by generalized root hyperplanes, i.e.,
tensor product with $\CC[\loc]$ over ${H^*_T(\mathrm{pt})}$.

\begin{NB}
    We have
    \begin{equation*}
        T_\cK\times_{T_\cO} \cR \cong \cT_{T,\bNT}\times_{\bN_\cK} \cT,
    \end{equation*}
    as an analog of \remref{rem:Steinberg}(1). Therefore
    $H^{T_\cO}_*(\cR)$ can be regarded as
    $H^{T_\cK}_*(\cT_{T,\bNT}\times_{\bN_\cK}\cT)$. Although we cannot
    define the convolution on this, it is at least philosophically
    apparent that this is a  $(H^{T_\cK}_*(\cT_{T,\bNT}\times_{\bN_\cK}\cT_{T,\bNT}),
    H^{G_\cK}_*(\cT\times_{\bN_\cK}\cT))
    = (H^{T_\cO}_*(\cR_{T,\bNT}), H^{G_\cO}_*(\cR))$
    bimodule.
\end{NB}

\begin{NB}
    The following is added on Aug. 31.
\end{NB}%

\begin{Lemma}\label{lem:bimodule-1}
    \textup{(1)}
    $\iota_*\colon H^{T_\cO}_*(\cR_{T,\bNT})^W\to H^{G_\cO}_*(\cR)$
    is an algebra homomorphism. 

    \textup{(2)} The same is true for $T_\cO\rtimes\CC^\times$ and
    $G_\cO\rtimes\CC^\times$ equivariant homology groups. In
    particular, $\iota_*$ in \textup{(1)} respects the Poisson
    structures.\footnote{The statement will make sense after we prove
the commutativity in~\propref{prop:commutative} below. The same applies 
to~\lemref{lem:convolution}(3).}
\end{Lemma}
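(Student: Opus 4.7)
The plan is to deduce both parts from the bimodule associativity established in \lemref{lem:bimodule}, combined with the two descriptions of convolution with the class $e$ provided in \lemref{lem:convolution_by_e}. No geometric computation is required beyond what is already in place; the statement is a formal consequence of the bimodule formalism.

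First I would verify that $\iota_*$ is $W$-equivariant. Since the normalizer $N(T)$ acts on $\cR$ preserving the subvariety $\cR_{T,\bNT}$, and $\iota$ is by construction $N(T)$-equivariant, the induced pushforward is $W$-equivariant. Hence $\iota_*$ sends $H^{T_\cO}_*(\cR_{T,\bNT})^W$ into $H^{T_\cO}_*(\cR)^W = H^{G_\cO}_*(\cR)$, where the last identification uses \lemref{lem:flat}.

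Next, for any $c_1,c_2\in H^{T_\cO}_*(\cR_{T,\bNT})^W$, I would compute $\iota_*(c_1\ast c_2)$ using \lemref{lem:convolution_by_e}(1) and the left-module associativity from \lemref{lem:bimodule}:
\begin{equation*}
\iota_*(c_1\ast c_2)\;=\;(c_1\ast c_2)\ast e\;=\;c_1\ast(c_2\ast e)\;=\;c_1\ast \iota_*(c_2).
\end{equation*}
Now $\iota_*(c_2)$ lies in $H^{G_\cO}_*(\cR)$ by the previous paragraph, and under the identification with $H^{T_\cO}_*(\cR)^W\subset H^{T_\cO}_*(\cR)$ it equals $e\ast\iota_*(c_2)$: indeed, by \lemref{lem:convolution_by_e}(2) the operation $e\ast\bullet$ is the restriction homomorphism $H^{G_\cO}_*(\cR)\to H^{T_\cO}_*(\cR)$, which is the identity when we think of an element of $H^{G_\cO}_*(\cR)$ as a $W$-invariant element of $H^{T_\cO}_*(\cR)$ (and identity on the left factor of the isomorphism \eqref{eq:20}). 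Applying the commutativity of the left $H^{T_\cO}_*(\cR_{T,\bNT})$-action with the right $H^{G_\cO}_*(\cR)$-action on the bimodule $H^{T_\cO}_*(\cR)$,
\begin{equation*}
c_1\ast\iota_*(c_2)\;=\;c_1\ast\bigl(e\ast\iota_*(c_2)\bigr)\;=\;(c_1\ast e)\ast\iota_*(c_2)\;=\;\iota_*(c_1)\ast\iota_*(c_2).
\end{equation*}
The last expression is precisely the product of $\iota_*(c_1)$ and $\iota_*(c_2)$ computed inside $H^{G_\cO}_*(\cR)$: because the restriction $H^{G_\cO}_*(\cR)\hookrightarrow H^{T_\cO}_*(\cR)$ is a (right-)module map in which the source algebra embeds, the right bimodule action of $\iota_*(c_2)\in H^{G_\cO}_*(\cR)$ on $\iota_*(c_1)\in H^{G_\cO}_*(\cR)\subset H^{T_\cO}_*(\cR)$ coincides with the $H^{G_\cO}_*(\cR)$-convolution. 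This establishes (1).

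For (2), exactly the same chain of equalities goes through after enlarging $G_\cO$ and $T_\cO$ by the loop rotation $\CC^\times$, since the bimodule assertion in \lemref{lem:bimodule} is stated in the $\CC^\times$-equivariant setting. Consequently $\iota_*$ is a map of noncommutative deformations over $\CC[\hbar]$, and the Poisson bracket on each side is the classical limit $\{a,b\}=[a,b]/\hbar \bmod \hbar$; an algebra homomorphism between such deformations automatically intertwines the Poisson brackets. The only subtle point in the whole argument is the identification used in the middle step, namely that $e\ast\iota_*(c_2)=\iota_*(c_2)$ inside $H^{T_\cO}_*(\cR)$; this is the main thing to articulate carefully, but it follows tautologically from the fact that \eqref{eq:20} realizes $H^{T_\cO}_*(\cR)^W=H^{G_\cO}_*(\cR)$ via $c'\mapsto e\ast c'$.
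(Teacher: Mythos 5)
Your proof is correct and follows essentially the same route as the paper's: both arguments write $\iota_*(c)=c\ast e$, use $W$-invariance to re-express this as $e\ast c'$ via the identification \eqref{eq:20}, and then run the bimodule associativity chain from \lemref{lem:bimodule} and \lemref{lem:convolution_by_e} to conclude. The only cosmetic difference is that the paper names the element $c'_a$ with $c_a\ast e=e\ast c'_a$ explicitly, whereas you carry $\iota_*(c_a)$ through the identification directly; the underlying computation is identical.
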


\begin{proof}
    Our idea of the proof is based on \cite[\S5.3]{MR3118615}.

    Let $c_1$, $c_2\in H^{T_\cO}_*(\cR_{T,\bNT})$. By
    \lemref{lem:convolution_by_e}(1) $\iota_*(c_a) = c_a\ast e$
    ($a=1,2$). If $c_a$ is $W$-invariant, $c_a\ast e$ is also by
    \lemref{lem:Winv}, hence we have $c_a \ast e = e\ast c'_a$ for some
    $c'_a\in H^{G_\cO}_*(\cR)$ by
    \lemref{lem:convolution_by_e}(2). The map $\iota_*$ in the
    statement is nothing but $c_a\mapsto c'_a$.

    Now the associativity implies
    \begin{equation*}
        (c_1\ast c_2)\ast e = c_1 \ast (c_2\ast e) = c_1\ast (e\ast c_2')
        = (c_1\ast e)\ast c_2' = (e\ast c_1')\ast c_2' = e\ast (c_1'\ast c_2').
    \end{equation*}
    It means that $\iota_*$ is an algebra homomorphism. The argument
    works also when the loop rotation $\CC^\times$ is included.
\end{proof}

\subsection{From the variety of triples to the affine Grassmannian}
\label{sec:z}

In the same (and simpler) way as in \subsecref{sec:bimodule}, we have
a natural $(H^{T_\cO}_*(\Gr_T), H^{G_\cO}_*(\Gr_G))$-module structure
on $H^{T_\cO}_*(\Gr_G)$. Alternatively it is a special case of the
above construction with $\bN=0$.

Let $\bz\colon \Gr_G\to\cT$ be the closed embedding given by considering
$\Gr_G$ as the $0$ section of the vector bundle $\cT$. We have
$\bz^{-1}(\cR) = \Gr_G$. Since $\cT\to\Gr_G$ is a vector bundle, we have
the pull-back homomorphism $\bz^*\DC_{\cT}\to \DC_{\Gr_G}[2\dim\bN_\cO]$,
where $\dim\bN_\cO$ is the rank of the vector bundle.
\begin{NB}
    This is the Gysin homomorphism, defined in \cite{MR1644323} for
    finite dimensional schemes. It is compatible with the
    inverse/direct systems in \subsecref{triples}. Therefore it is
    well-defined.
\end{NB}%
Let $\tilde\bz$ denote the inclusion $\Gr_G\to\cR$ (so that $\bz =
i\circ\tilde\bz$). We have the pull-back with support homomorphisms
$\bz^*\colon \DC_{\cR}[-2\dim\bN_\cO]\to \tilde \bz_*\DC_{\Gr_G}$ and
$\bz^*\colon H^{G_\cO}_*(\cR)\to H^{G_\cO}_*(\Gr_G)$, where the degree
is given relative to $2\dim\bN_\cO$ for the former so that the degree
is preserved.
In fact, it is also easy to check that $\bz^*$ is the same as the
composite of $H^{G_\cO}_*(\cR)\xrightarrow{i_*}
H^{G_\cO}_*(\cT)\xrightarrow[\cong]{\bz^*} H^{G_\cO}_*(\Gr_G)$.
The second $\bz^*$ is an isomorphism and its inverse is $\pi^*$ where
$\pi\colon\cT\to\Gr_G$ is the projection.
\begin{NB}
    Claim : The pull-back with support $\bz^*\colon
H^{G_\cO}_*(\cR)\to H^{G_\cO}_*(\Gr_G)$ is the same as the composite of
$H^{G_\cO}_*(\cR)\xrightarrow{i_*} H^{G_\cO}_*(\cT)\xrightarrow{\bz^*}
H^{G_\cO}_*(\Gr_G)$.

\begin{proof}
    The pull-back with support is the composite
    \begin{equation*}
        i_*\DC_\cR = i_! i^! \DC_\cT \to i_! i^! \bz_* \bz^* \DC_\cT
        \xrightarrow{\cong} i_! \tilde \bz_* \bz^*\DC_\cT
        = i_*\tilde \bz_* \bz^*\DC_\cT
        \to \bz_* \DC_{\Gr_G}[2\dim\bN_\cO],
    \end{equation*}
    where the first arrow is the adjunction, the second is the base
    change, and the third is constructed as above.
    On the other hand the second homomorphism is the composite of
    \begin{equation*}
        i_*\DC_\cR = i_! i^!\DC_\cT \to \DC_\cT \to
        \bz_* \DC_{\Gr_G}[2\dim\bN_\cO],
    \end{equation*}
    where the first is adjunction and the second is constructed as
    above. The second map can be re-written as the composite of
    $\DC_\cT\to \bz_*\bz^*\DC_\cT\to \bz_*\DC_{\Gr_G}[2\dim\bN_\cO]$. So we
    need to check that two homomorphisms $i_! i^! \bz_* A \to \bz_* A$,
    one given by adjunction $i_!i^!\to\id$, another by the base change
    $i^! \bz_*\xrightarrow{\cong} \tilde \bz_*\id^!$ composed with
    $i_!\tilde \bz_* = i_* \tilde \bz_* = \bz_*$ are the same. (Here $A =
    \bz^*\DC_\cT$.)
    It means that the base change homomorphism in $\Hom(\tilde
    \bz_*\id^!  A, i^! \bz_* A)$ is the same as the identification
    $i_!\tilde \bz_*=\bz_*$ in $\Hom(i_!\tilde \bz_* \id^! A, \bz_* A)$ under
    the adjunction.
    The base change homomorphism is constructed (see
    \cite[Prop.~2.5.11]{KaSha}) so that its adjunction is
    \begin{equation*}
        i_! \tilde \bz_* \id^! A\xrightarrow{\cong} \bz_* \id_! \id^!A
        \to \bz_* A,
    \end{equation*}
    where the first is given by considering a section of $i_!\tilde
    \bz_* \id^! A$ over $V\subset\cT$ as a section $s$ of $\id^!A$ over
    $(i\circ\tilde \bz)^{-1}(V) = V\cap\Gr_G$ with appropriate
    conditions on the support, and the second is the
    adjunction. Looking at the proof of \cite[Prop.~2.5.11]{KaSha}, we
    see that the support condition is $\operatorname{supp}(s)\subset
    \tilde \bz^{-1}(Z)$ for a subset $Z$ of $i^{-1}(V) = V\cap \cR$
    proper over $V$. This is unnecessary in our case. Therefore it is
    clear that two homomorphisms are the same.
\end{proof}
\end{NB}%

We also have $\bz^*\colon H^{T_\cO}_*(\cR)\to H^{T_\cO}_*(\Gr_G)$, and
$\bz^*\colon H^{T_\cO}_*(\cR_{T,\bNT})\to H^{T_\cO}_*(\Gr_T)$. The
second $\bz$ should be understood as $\Gr_T\to\cT_{T,\bNT}$, but is
denoted by the same letter for brevity.

\begin{Lemma}\label{lem:convolution}
    \textup{(1)}
    $\bz^*\colon H^{G_\cO}_*(\cR)\to H^{G_\cO}_*(\Gr_G)$ is an algebra
    homomorphism. The same is true for
    $\bz^*\colon H^{T_\cO}_*(\cR_{T,\bNT})\to H^{T_\cO}_*(\Gr_T)$.

    \textup{(2)} $\bz^*\colon H^{T_\cO}_*(\cR)\to H^{T_\cO}_*(\Gr_G)$ is
    a homomorphism of bimodules.

    \textup{(3)} Both \textup{(1)}, \textup{(2)} are true for
    $T_\cO\rtimes\CC^\times$, $G_\cO\rtimes\CC^\times$ equivariant
    homology groups. In particular, $\bz^*$ in \textup{(1)} respects the
    Poisson structures.
\end{Lemma}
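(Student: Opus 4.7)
The plan is to realize $\bz^{*}$ as the restriction of a morphism of convolution diagrams, and then to conclude by base change. Recall that the convolution on $H^{G_\cO}_{*}(\cR)$ is built from diagram \eqref{eq:12}, with the upper row sitting inside the lower row as closed subvarieties cut out by the regularity condition $\varphi_{\bN}(s)\in \bN_{\cO}$. The convolution on $H^{G_\cO}_{*}(\Gr_{G})$ is built from the analogous diagram \eqref{eq:1}. I would first observe that the zero section $s=0$ is $G_{\cK}$-invariant (since $g\cdot 0 = 0$ for any $g$), so the embeddings $\bz\colon \Gr_{G}\to\cT$ and $\tilde\bz\colon \Gr_{G}\to\cR$ fit into a commutative diagram
\[
\begin{CD}
\Gr_{G}\times\Gr_{G} @<<< G_{\cK}\times\Gr_{G} @>>> \Gr_{G}\tilde{\times}\Gr_{G} @>>> \Gr_{G} \\
@V{\tilde\bz\times\tilde\bz}VV @V{\id\times\tilde\bz}VV @VVV @VV{\tilde\bz}V \\
\cR\times\cR @<\tilde p<< p^{-1}(\cR\times\cR) @>\tilde q>> q(p^{-1}(\cR\times\cR)) @>\tilde m>> \cR,
\end{CD}
\]
and similarly with $\bz$ into the lower row of \eqref{eq:12}. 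The key claim I would verify is that each of these squares is cartesian: indeed, on each ambient space the regular section is identically zero precisely on the subset coming from the $\Gr_G$ row, and the factorizations in \eqref{eq:42} preserve this condition.

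Given the cartesian property, part (1) follows in three pieces. For the leftmost square, the pull-back-with-support map $p^{*}$ of \eqref{eq:7} was constructed from the canonical map $p^{*}\DC_{\cT\times\cR}\to \DC_{G_{\cK}\times\cR}[2\dim\bN_{\cO}-2\dim G_{\cO}]$ of \lemref{lem:pull}; this construction is natural in the base, so applying $\tilde\bz^{*}$ reduces it to the analogous map for $\Gr_{G}$. For the $\tilde q$ step, the base change is immediate since $\tilde q$ is a principal $G_{\cO}$-bundle and the pullback diagram is cartesian. For the final step, $\tilde m$ (and $\bar m$) are ind-proper, so proper base change applies. Composing the three, $\tilde\bz^{*}$ intertwines the two convolution products, i.e.\ $\bz^{*}(c_{1}\ast c_{2}) = \bz^{*}(c_{1})\ast\bz^{*}(c_{2})$. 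The same argument, with $(G,\bN)$ replaced by $(T,\bNT)$, gives the second assertion of (1).

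For (2), I would use the diagram \eqref{eq:24} defining the bimodule structure on $H^{T_\cO}_*(\cR)$ and the analogous diagram with $\cR$ and $\cT_{T,\bNT}$ replaced by $\Gr_{G}$ and $\Gr_{T}$ respectively. The same zero-section maps fit into a commutative diagram of these rows, the squares are cartesian for the same reason as above, and then base change (cartesian for the first map, principal bundle for the second, ind-proper for the third) shows that $\bz^{*}$ respects both the left $H^{T_\cO}_*(\cR_{T,\bNT})$-action and the right $H^{G_\cO}_*(\cR)$-action.

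For (3), the entire argument is manifestly equivariant under the loop rotation $\CC^{\times}$ acting on $D=\Spec\CC[[z]]$: the zero section is $\CC^{\times}$-invariant, all maps in the convolution diagrams commute with loop rotation, and the cartesian squares remain cartesian. Hence the same proof goes through for $T_\cO\rtimes\CC^{\times}$ and $G_\cO\rtimes\CC^{\times}$ equivariant homology. The Poisson-bracket compatibility of $\bz^{*}$ in (1) is then automatic: since $\bz^{*}$ is an algebra homomorphism in the quantized setting and commutes with reduction mod $\hbar$, the induced map on the classical limit intertwines the commutator-divided-by-$\hbar$, i.e.\ the Poisson bracket. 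The main obstacle I anticipate is verifying carefully that the leftmost square is truly cartesian and that \lemref{lem:pull} is natural with respect to the zero-section embedding; this requires tracing through the factorization of $p_{\cT}$ used in the proof of that lemma and checking that $\bz$ is compatible with the projection $\Pi\colon\cR\to\bN_{\cO}$, which lands in $0$.
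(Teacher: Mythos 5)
Your proof is correct in outline, but it takes a genuinely different route from the one the paper actually prints. The paper's published proof is purely formal: it replaces the leftmost term of \eqref{eq:1} by $\cT\times\Gr_G$ via $\bz\times\id_{\Gr_G}$ to make $H^{G_\cO}_*(\Gr_G)$ a left $H^{G_\cO}_*(\cR)$-module with action $\bar\ast$ satisfying $c\,\bar\ast\, c'=\bz^*(c)\ast c'$, notes $c\,\bar\ast\,1=\bz^*(c)$, and then gets $\bz^*(c_1\ast c_2)=\bz^*(c_1)\ast\bz^*(c_2)$ from one application of associativity of the mixed convolution (the same device used for \lemref{lem:bimodule-1} and \propref{prop:commutative}). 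Your argument is instead the direct one: stack the convolution diagram for $\Gr_G$ on top of \eqref{eq:12} via the zero sections and check base change square by square. This works — the squares are indeed Cartesian because $gs\in\bN_\cO$ together with $s=0$ forces every intermediate section to vanish — and it is essentially the argument the authors drafted and then discarded in favor of the formal one. The trade-off: your route makes the geometry transparent but forces you to verify by hand that the map $p^*\DC_{\cT\times\cR}\to\DC_{G_\cK\times\cR}[2\dim\bN_\cO-2\dim G_\cO]$ of \lemref{lem:pull} restricts correctly along the zero section; this is \emph{not} automatic from ``naturality in the base'' but follows by tracing the factorization $p_\cT=p'_\cT\circ(\id_{G_\cK}\times\Pi)$ and using $\Pi\circ\tilde\bz=0$ together with the splitting $\DC_{G_\cK\times\bN_\cO}\cong\DC_{G_\cK}\boxtimes\CC_{\bN_\cO}[2\dim\bN_\cO]$, plus proper base change for the Cartesian square containing the ind-proper map $m'$. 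You correctly flag this as the delicate point; carrying it out completes the proof. The paper's version buys modularity (all the sheaf-theoretic checking is quarantined inside the associativity proof of \thmref{thm:convolution}, which is cited rather than redone), while yours buys a self-contained geometric picture. Parts (2) and (3) are handled the same way in both approaches and your treatment of them, including the Poisson compatibility via reduction mod $\hbar$, is fine.
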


\begin{proof}
    We give the proof of the first statement of (1). The second
    statement is the special case $G=T$. The proof of (2) is
    straightforward modification of the proof of (1), and is
    omitted. The proof of (3) is the same as (1), just check that
    everything is $\CC^\times$-equivariant.

    The idea of the proof is similar to one of \lemref{lem:bimodule-1}.

    We consider the diagram
    \begin{equation}\label{eq:23}
    \begin{CD}
                \cT\times\Gr_G @<<<
    G_\cK\times \Gr_{G} @>>> G_\cK\times_{G_\cO}\Gr_{G}
    @>>> \Gr_{G},
    \end{CD}
    \end{equation}
    which is nothing but the diagram \eqref{eq:1} with the leftmost
    term replaced by $\cT\times\Gr_G$ via the inclusion
    $\Gr_G\times\Gr_G\xrightarrow{\bz\times\id_{\Gr_G}}\cT\times\Gr_G$.
    As in \subsecref{sec:bimodule} we can view $H^{G_\cO}_*(\Gr_G)$
    as a left $H^{G_\cO}_*(\cR)$-module. Then we have $c\bar\ast c' =
    \bz^*(c)\ast c'$, where the left $\bar\ast$ is the module action, and
    the right $\ast$ is the multiplication in
    $H^{G_\cO}_*(\Gr_G)$. Now we take $c' = 1$, the unit of
    $H^{G_\cO}_*(\Gr_G)$. Then the associativity implies
    \begin{equation*}
        \bz^*(c_1\ast c_2) =
        (c_1 \ast c_2)\bar\ast 1 = c_1 \bar\ast (c_2 \bar\ast 1)
        = c_1\bar\ast \bz^*(c_2)  = \bz^*(c_1)\ast \bz^*(c_2).
    \end{equation*}
    This means that $\bz^*$ is an algebra homomorphism. The proof of the
    associativity is the same as one in \thmref{thm:convolution},
    hence is omitted.
    \begin{NB}
        The associativity means the commutativity of the diagram:
        \begin{equation*}
            \begin{CD}
            H^{G_\cO}_*(\cR)\otimes H^{G_\cO}_*(\cR) \otimes H^{G_\cO}_*(\Gr_G)
            @>{\ast\otimes\id}>> H^{G_\cO}_*(\cR) \otimes H^{G_\cO}_*(\Gr_G)
            \\
            @V{\id\otimes\bar\ast}VV @VV{\bar\ast}V
            \\
            H^{G_\cO}_*(\cR)\otimes H^{G_\cO}_*(\Gr_G) @>>\bar\ast>
            H^{T_\cO}_*(\Gr_G).
            \end{CD}
        \end{equation*}
        The proof is basically written in the {\bf NB} two below, as
        the `original proof'.
    \end{NB}%
    \begin{NB}
        Here is the proof of (2):

        The diagram \eqref{eq:23} together with $T_\cO\times
        G_\cO$-action gives $H^{T_\cO}_*(\cR) \otimes
        H^{G_\cO}_*(\Gr_G) \to H^{T_\cO}_*(\Gr_G)$ such that
        \begin{equation*}
            c \ast c' = \bz^*(c)\ast c'\quad\text{for
              $c\in H^{T_\cO}_*(\Gr_T)$, $c'\in H^{T_\cO}_*(\cR)$}.
        \end{equation*}
        In particular, taking $c' = 1_{H^{G_\cO}_*(\Gr_G)}$, we get
        $c\ast 1_{H^{G_\cO}_*(\Gr_G)} = \bz^*(c)$.

        For $c_1\in H^{T_\cO}_*(\cR)$, $c_2\in H^{G_\cO}_*(\cR)$, we have
        \begin{multline*}
            \bz^*(c_1\ast c_2) = (c_1\ast c_2)\ast 1_{H^{G_\cO}_*(\Gr_G)}
            = c_1 \ast (c_2\ast 1_{H^{G_\cO}_*(\Gr_G)}) = c_1 \ast \bz^*(c_2)
\\
            = c_1 \ast (1_{H^{G_\cO}_*(\Gr_G)} \ast \bz^*(c_2))
            = (c_1\ast 1_{H^{G_\cO}_*(\Gr_G)}) \ast \bz^*(c_2))
            = \bz^*(c_1)\ast \bz^*(c_2).
        \end{multline*}
        We have used the associativity twice: the first is at the
        second equality, and is the commutativity of
        \begin{equation*}
            \begin{CD}
            H^{T_\cO}_*(\cR)\otimes H^{G_\cO}_*(\cR) \otimes H^{G_\cO}_*(\Gr_G)
            @>>> H^{T_\cO}_*(\cR) \otimes H^{G_\cO}_*(\Gr_G)
            \\
            @VVV @VVV
            \\
            H^{T_\cO}_*(\cR)\otimes H^{G_\cO}_*(\Gr_G) @>>> H^{T_\cO}_*(\Gr_G).
            \end{CD}
        \end{equation*}
        The second is at the fifth equality, and is the commutativity of
        \begin{equation*}
            \begin{CD}
            H^{T_\cO}_*(\cR)\otimes H^{G_\cO}_*(\Gr_G) \otimes H^{G_\cO}_*(\Gr_G)
            @>>> H^{T_\cO}_*(\cR) \otimes H^{G_\cO}_*(\Gr_G)
            \\
            @VVV @VVV
            \\
            H^{T_\cO}_*(\cR)\otimes H^{G_\cO}_*(\Gr_G) @>>> H^{T_\cO}_*(\Gr_G).
            \end{CD}
        \end{equation*}

        Next take $c_1\in H^{T_\cO}_*(\cR_{N,\bNT})$, $c_2\in
        H^{T_\cO}_*(\cR)$. We have
        \begin{equation*}
            \bz^*(c_1\ast c_2) = (c_1\ast c_2) \ast 1_{H^{G_\cO}_*(\Gr_G)}
            = c_1\ast (c_2\ast 1_{H^{G_\cO}_*(\Gr_G)}) = c_1 \ast \bz^*(c_2).
        \end{equation*}
        The associativity at the second equality is the commutativity of
        \begin{equation*}
            \begin{CD}
            H^{T_\cO}_*(\cR_{T,\bNT})\otimes H^{T_\cO}_*(\cR) \otimes
            H^{G_\cO}_*(\Gr_G)
            @>>> H^{T_\cO}_*(\cR) \otimes H^{G_\cO}_*(\Gr_G)
            \\
            @VVV @VVV
            \\
            H^{T_\cO}_*(\cR_{T,\bNT})\otimes H^{T_\cO}_*(\Gr_G) @>>>
            H^{T_\cO}_*(\Gr_G).
            \end{CD}
        \end{equation*}
        The bottom arrow is given by the diagram
        \begin{equation*}
            \begin{CD}
            \cT_{T,\bNT} \times \Gr_G@<{p}<<
            T_\cK\times \Gr_{G} @>{q}>> T_\cK\times_{T_\cO}\Gr_{G}
            @>{m}>> \Gr_{G},
            \end{CD}
        \end{equation*}
        which is just
        \begin{equation*}
            \begin{CD}
            \Gr_T \times \Gr_G@<{p}<<
            T_\cK\times \Gr_{G} @>{q}>> T_\cK\times_{T_\cO}\Gr_{G}
            @>{m}>> \Gr_{G},
            \end{CD}
        \end{equation*}
        composed with $\Gr_T\times \Gr_G\to \cT_{T,\bNT}\times \Gr_G$.
        Now it is clear $c_1\ast \bz^*(c_2) = \bz^*(c_1)\ast \bz^*(c_2)$. Hence
        $\bz^*(c_1\ast c_2) = \bz^*(c_1)\ast \bz^*(c_2)$.
    \end{NB}%
\end{proof}

\begin{NB}
Here is the original proof:

Let us combine the diagrams \eqref{eq:1} and \eqref{eq:12}:
\begin{equation}\label{eq:11}
    \begin{CD}
        \Gr_G\times\Gr_G @<\overline{p}<< G_\cK\times \Gr_G
        @>\overline{q}>> G_K\times_{G_\cO}\Gr_G @>\overline{m}>> \Gr_G
        \\
        @V{\tilde z\times\tilde z}VV @VVV @VVV @VV{\tilde z}V
        \\
        \cR\times\cR @<\tilde p<< p^{-1}(\cR\times\cR) @>\tilde q>>
        q(p^{-1}(\cR\times\cR)) @>\tilde m>> \cR
        \\
        @V{i\times\id_\cR}VV @V{i'}VV @VVV @VV{i}V
        \\
        \cT\times \cR @<<p< G_\cK\times\cR @>>q>
        G_\cK\times_{G_\cO}\cR @>>m> \cT,
    \end{CD}
\end{equation}
where $\tilde z$ is the inclusion $\Gr_G\to\cR$ (so that $z =
i\circ\tilde z$) so that the first row consists of closed subschemes
of the second row, and we put `bar' on arrow in the top row to
distinguish from arrows in the bottom. We need to show the
commutativity of pushforward/pull-back with support homomorphisms for
squares in the first and second rows. And the third row is necessary to define pull-back with support.

Let us just ignore the degree shifts in the following discussion, as
it is clear that $\bz^*$ preserves the degree.

We first consider the left most square. According to factors $\cT$,
$\cR$, and $\Gr_G$, we decomposed into two parts. First consider
\begin{equation*}
    \begin{CD}
     \Gr_G
     \begin{NB2}
         \text{second $\Gr_G$}
     \end{NB2}%
@<{p_2}<< G_\cK\times \Gr_G
\\
     @V{\tilde z}VV @VV{\id_{G_\cK}\times\tilde z}V
\\
        \cR @<<{p_\cR}< G_\cK\times \cR,
    \end{CD}
\end{equation*}
where the middle row is omitted. Recall $p^*_\cR\DC_\cR \cong
\CC_{G_\cK}\boxtimes \DC_\cR$ as $p_\cR$ is the projection to the
second factor. (See \eqref{eq:13}.) The upper row is also the
projection to the second factor, hence $p_2^*\DC_{\Gr_G}\cong
\CC_{G_\cK}\boxtimes\DC_{\Gr_G}$. These two homomorphisms are
obviously commuting with $\DC_\cR\to \tilde z_*\DC_{\Gr_G}$,
constructed above.
\begin{NB2}
    It means that
    \begin{equation*}
        \begin{CD}
            p_2^*\DC_{\Gr_G} @>>> \CC_{G_\cK}\boxtimes\DC_{\Gr_G}
\\
@AAA @AAA
\\
            (\id_{G_\cK}\times\tilde z)^* p_\cR^* \DC_\cR
            = p_2^* \tilde z^* \DC_\cR
            @>>> (\id_{G_\cK}\times\tilde z)^* (\CC_{G_\cK}\boxtimes\DC_{\cR})
        \end{CD}
    \end{equation*}
    is commutative. But this is obvious.
\end{NB2}

Next consider the diagram for the first factor. Both horizontal arrows
factor as
\begin{equation*}
    \begin{CD}
     \Gr_G
     \begin{NB2}
         ^1
     \end{NB2}%
@<p_1'<< G_\cK @<{p_1}<< G_\cK\times \Gr_G
     \begin{NB2}
         ^2
     \end{NB2}%
\\
     @V{z}VV @VV{z'}V @VV{\id_{G_\cK}\times\tilde z}V
\\
     \cT @<<{p'_\cT}< G_\cK\times\bN_\cO @<<{\id_{G_\cK}\times\Pi}<
     G_\cK\times \cR.
    \end{CD}
\end{equation*}
The middle vertical arrow $z'$ is the embedding of $G_\cK$ regarded as
$G_\cK\times\{0\}$. For the left square, we need to check that two
homomorphisms $z^{\prime*}p_T^{\prime*}\DC_\cT=
p_1^{\prime*}z^*\DC_\cT \to \DC_{G_\cK}
\begin{NB2}
[2\dim G_\cO]
\end{NB2}%
$ are the same. This is clear, as both $p'_1$ and $p'_\cT$ are
$G_\cO$-principal bundles.
Note also that we have an isomorphism $\DC_{\bN_\cO}\cong
\CC_{\bN_\cO}
\begin{NB2}
[2\dim\bN_\cO]
\end{NB2}%
$ as $\bN_\cO$ is smooth. Then
$z^{\prime*}\colon\DC_{G_\cK\times\bN_\cO}\to
\DC_{G_\cK}
\begin{NB2}
[2\dim\bN_\cO]
\end{NB2}%
$ can be understood also as a composite of
$\DC_{G_\cK\times\bN_\cO}\xrightarrow{\cong} \DC_{G_\cK}\boxtimes \CC_{\bN_\cO}
\begin{NB2}
[2\dim\bN_\cO]
\end{NB2}%
\to \DC_{G_\cK}
\begin{NB2}
[2\dim\bN_\cO]
\end{NB2}%
$.

Let us move the right square. We need to check that two homomorphisms
$\DC_{G_\cK}\boxtimes\CC_{\bN_\cO}\to \DC_{G_\cK}\boxtimes\CC_{\Gr_G}$
are the same. But this is obvious. Both maps decompose as the product
of identity $G_\cK\to G_\cK$ and `$\Gr_G\to \{0\} \to \bN_\cO$ or
$\Gr_G\to\cR\to\bN_\cO$'. The sheaf on the second factor $\bN_\cO$ is
the constant sheaf, and we just consider the natural homomorphisms
among pull-backs of constant sheaves.

We now take the tensor product $\DC_\cT\boxtimes\DC_\cR$. The two
homomorphisms $\overline{p}^*(z\times\tilde
z)^*(\DC_\cT\boxtimes\DC_\cR) = (\id_{G_\cK}\times \tilde z)^* p^*
(\DC_\cT\boxtimes\DC_\cR) \to \DC_{G_\cK}\boxtimes \DC_{\Gr_G}
\begin{NB2}
[2\dim G_\cO+2\dim\bN_\cO]
\end{NB2}%
$ are the same. Moreover it is also clear from the
above computation that $(\id_{G_\cK}\times\tilde
z)^*(\DC_{G_\cK}\boxtimes \DC_{\cR}) \to \DC_{G_\cK}\boxtimes
\DC_{\Gr_G}
\begin{NB2}
[2\dim\bN_\cO]
\end{NB2}%
$ is the tensor product of the identity on
$\DC_{G_\cK}$ and $z^*\colon \tilde z^*\DC_{\cR}\to
\DC_{\Gr_G}
\begin{NB2}
[2\dim\bN_\cO]
\end{NB2}
$.

The commutativity for the middle square in \eqref{eq:11} is clear, as $\overline{q}$, $\tilde q$ are $G_\cO$-bundles.

Let us consider the right most square in \eqref{eq:11}. Recall that
$m$ factors as $m'\circ (\id_{G_\cK}\times_{G_\cO}i)$. Since the
pull-back is defined via $z$, we modify the right most square as
\begin{equation*}
    \begin{CD}
        G_\cK\times_{G_\cO}\Gr_G @>>{\overline{m}}> \Gr_G
\\
@V{\id_{G_\cK}\times_{G_\cO} z}VV @VV{z}V
\\
        G_\cK\times_{G_\cO}\cT @>>{m'}>\cT.
    \end{CD}
\end{equation*}
It is enough to show that we have an isomorphism
between $z^* m'_!\DC_{G_\cK\times_{G_\cO}\cT}$ and
$\overline{m}_! (\id_{G_\cK\times_{G_\cO}z})^* \DC_{G_\cK\times_{G_\cO}\cT}$
so that two natural homomorphisms to $\DC_{\Gr_G}$ are the same.
The isomorphism is provided by the base change, as the above diagram
is cartesian.
\end{NB}

\begin{Lemma}\label{lem:injective}
    The homomorphisms $\bz^*$ in three cases in \lemref{lem:convolution}
    become isomorphisms over $\ft^\circ$, the complement of the union
    of all generalized root hyperplanes. In particular, they are
    injective.
\end{Lemma}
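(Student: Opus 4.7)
The plan is to deduce the result from the localization theorem in $T_\cO$-equivariant Borel--Moore homology. By \lemref{lem:flat}, $H^{G_\cO}_*(\cR)=H^{T_\cO}_*(\cR)^{W}$ (and similarly for $\Gr_G$), and the construction of $\bz^*$ is compatible with the forgetful passage from $G_\cO$- to $T_\cO$-equivariance. Since $\ft^\circ$ is $W$-stable and $\CC[\ft^\circ/W]=\CC[\ft^\circ]^{W}$, the $G_\cO$-statement follows from the $T_\cO$-statement by taking $W$-invariants. Likewise, as the generalized roots lie in $\ft$ and do not involve $\hbar$, the $T_\cO\rtimes\CC^\times$-version will follow by running the same argument while carrying the loop rotation along passively. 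It thus suffices to treat the bare $T_\cO$-equivariant map $\bz^*\colon H^{T_\cO}_*(\cR)\to H^{T_\cO}_*(\Gr_G)$, together with its abelian analogue for $(T,\bNT)$.

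As observed in \subsecref{sec:z}, $\bz^*$ factors as
\[
H^{T_\cO}_*(\cR)\xrightarrow{\;i_*\;} H^{T_\cO}_*(\cT)\xrightarrow[\;\cong\;]{\;\bz^*\;} H^{T_\cO}_*(\Gr_G),
\]
in which the second arrow is the Thom/Gysin isomorphism for the vector bundle $\pi\colon\cT\to\Gr_G$ (with inverse $\pi^*$, up to the shift by $2\dim\bN_\cO$). Hence the entire problem reduces to showing that $i_*$ becomes an isomorphism after tensoring with $\CC[\loc]$ over $\CC[\ft]$. For this I will invoke the standard localization theorem: working on each finite-dimensional approximation $X^d_{\le\la}$ (cf.\ \subsecref{subsec:equiv-borel-moore}, for $X=\cR$ or $\cT$) and then passing to the inverse limit in $d$ and the direct limit in $\la$, one obtains an isomorphism $H^{T_\cO}_*(X^T)|_\loc\xrightarrow{\cong} H^{T_\cO}_*(X)|_\loc$ provided every nonzero $T$-weight appearing in the normal directions to $X^T$ is invertible in $\CC[\loc]$. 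These normal weights are the roots of $G$ (from the $\Gr_G$-direction) together with the nonzero weights of $\bN/\bN^T$ (from the section direction); both families consist of generalized roots, and both are inverted upon restricting to $\ft^\circ$. By \lemref{lem:fixedpts} we have $\cR^T=\cR_{T,\bN^T}$ and $\cT^T=\cT_{T,\bN^T}$, and because $\bN^T$ is a trivial $T$-module, every section of the associated (already trivial) bundle is automatically regular, so that $\cR_{T,\bN^T}=\cT_{T,\bN^T}$. Consequently $i$ restricts to the identity on $T$-fixed loci, and the localized $i_*$ is the identity, in particular an isomorphism. The case $(T,\bNT)$ is completely analogous, and in fact easier, since already $\Gr_T=\Gr_T^T$.

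The only mildly delicate ingredient is the passage of the localization theorem from the finite-dimensional approximations to our ind-pro setup, but this will be routine: each transition map $\cT^d_{\le\la}\to \cT^e_{\le\la}$ ($d>e$) is a $T$-equivariant affine bundle, so its $T$-fixed loci are compatible under the inverse system and the localization isomorphism propagates to the limit; the direct limit over $\la$ is handled identically. I do not anticipate any other serious obstruction.
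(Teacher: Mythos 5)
Your proof is correct and takes essentially the same route as the paper's: reduce to the $T_\cO$-equivariant statement via $W$-invariants, factor $\bz^*$ as the Gysin isomorphism for $\cT\to\Gr_G$ composed with $i_*$, and conclude by localization using that $\cR^T=\cT^T$ (both being $\cR_{T,\bN^T}=\cT_{T,\bN^T}$ since $\bN^T$ is a trivial $T$-module). Your extra remarks on the loop rotation and on propagating localization through the ind-pro limit are correct but not needed beyond what the paper records.
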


\begin{proof}
    Since $H^{G_\cO}_*(\bullet) = (H^{T_\cO}_*(\bullet))^W$ for both
    $\bullet = \cR$ and $\Gr_G$, it is enough to check the assertion
    for $\bz^*$ in (2). (The second $\bz^*$ in (1) is the special case
    $G=T$.)

    Recall that $\bz^*$ is the composite of
    $H^{T_\cO}_*(\cR)\xrightarrow{i_*}
    H^{T_\cO}_*(\cT)\xrightarrow[\cong]{\bz^*} H^{T_\cO}_*(\Gr_G)$, as
    we remarked above. Therefore it is enough to show that $i_*$
    becomes an isomorphism over $\loc$.
    \begin{NB}
        Let `$|_\loc$' denote the localization at the ideal given by
        generalized root hyperplanes, i.e., tensor product with
        $\CC[\loc]$ over ${H^*_T(\mathrm{pt})}$.
    \end{NB}%
    The pushforward homomorphisms of inclusions $\cR^T\subset \cR$,
    $\cT^T\subset \cT$ induce isomorphisms
    $H^{T_\cO}_*(\cR^T)|_\loc\xrightarrow{\cong}H^{T_\cO}_*(\cR)|_\loc$,
    $H^{T_\cO}_*(\cT^T)|_\loc\xrightarrow{\cong}H^{T_\cO}_*(\cT)|_\loc$
    respectively. Therefore it is enough to show that $i_*$ for
    $H^{T_\cO}_*(\cR^T)|_\loc\to H^{T_\cO}_*(\cT^T)|_\loc$ is an
    isomorphism. But this is clear from \lemref{lem:fixedpts} : $\cR^T
    \cong \cR_{T,\bN^T} \cong \Gr_G\times\bN^T_\cO \cong \cT^T$.
    \begin{NB}
        Alternative argument:

        It is clear that we may assume $\bN^T = 0$.
        Let $\Gr_T\to \cT_{T,\bNT}$ be the embedding as the
        $0$-section. It factors through $\Gr_T\to\cR_{T,\bNT}$. Let us
        denote the latter by $\iota$. Then $\bz^*\iota_*$ is the
        multiplication by the equivariant Euler class of
        $\cT_{T,\bNT}$. The assertion $\cT_{T,\bNT}^T = \Gr_T$ means
        no zero weight subbundle. Therefore its equivariant Euler
        class is invertible over $\ft^\circ$.
    \end{NB}%
\end{proof}

\begin{Remark}\label{rem:further}
    Let $\bfV$ be another representation. Then there is a natural
    embedding $H^{G_\cO}_*(\cR_{G,\bfN\oplus \bfV})\hookrightarrow
    H^{G_\cO}_*(\cR_{G,\bfN})$ given by the pull-back homomorphism
    with respect to the embedding $\cT_{T,\bN}\subset
    \cT_{T,\bN\oplus\mathbf V}$ of a subbundle. (cf.\
    \subsecref{sec:chang-repr}) This can be proved by the same
    argument, or by observing $H^{G_\cO}_*(\cR_{G,\bfN\oplus \bfV})
    \to H^{G_\cO}_*(\Gr_G)$ factors as $H^{G_\cO}_*(\cR_{G,\bfN\oplus
      \bfV})\to H^{G_\cO}_*(\cR_{G,\bfN})\to H^{G_\cO}_*(\Gr_G)$.
\end{Remark}

\subsection{Classical description of Coulomb branches}\label{sec:classical}

We study $\mathcal M_C$ over $\ft^\circ$ in this subsection. This is
called a classical description of the Coulomb branch $\mathcal M_C$ in
the physics literature. This description first leads to a proof of the
commutativity of $H^{G_\cO}_*(\cR)$, as we promised in
\subsecref{sec:commutativity}.

\begin{NB}
    The proof of the commutativity of $H^{G_\cO}_*(\Gr)$.

We consider $H^{T_\cO}_*(\Gr)$ with the commuting left action by
$H^{T_\cO}_*(\Gr_T)$ and right action by $H^{G_\cO}_*(\Gr)$. By Localization
theorem, $H^{T_\cO}_*(\Gr)$ embeds into the free
$H^{T_\cO}_*(\Gr_T)_{loc}$-module of rank 1 $H^{T_\cO}_*(\Gr)_{loc}$. Now
since $H^{G_\cO}_*(\Gr)=(H^{T_\cO}_*(\Gr))^W$, the algebra $H^{G_\cO}_*(\Gr)$
embeds into the ring of endomorphisms of $H^{T_\cO}_*(\Gr)$ (already the
action on the skyscraper sheaf at the base point is effective). Hence
it further embeds into the localized algebra of endomorphisms of the
free $H^{T_\cO}_*(\Gr_T)_{loc}$-module of rank 1
$H^{T_\cO}_*(\Gr)_{loc}$. But the latter one is nothing but
$H^{T_\cO}_*(\Gr_T)_{loc}$ which is finally commutative.
\end{NB}

\begin{Proposition}\label{prop:commutative}
    $H^{G_\cO}_*(\cR)$ is commutative.
\end{Proposition}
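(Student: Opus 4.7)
The strategy is to exploit the bimodule structure built up in the previous subsections and reduce commutativity of $H^{G_\cO}_*(\cR)$ to the abelian case, which is already known by \thmref{abel}. Concretely, I would embed $H^{G_\cO}_*(\cR)$ into a commutative ring via its right action on the bimodule $H^{T_\cO}_*(\cR)$ after localisation along the generalised root hyperplanes.

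First, recall from \lemref{lem:bimodule} that $H^{T_\cO}_*(\cR)$ carries commuting left and right actions of $H^{T_\cO}_*(\cR_{T,\bNT})$ and $H^{G_\cO}_*(\cR)$ respectively, and from \lemref{lem:convolution_by_e}(1) that the left multiplication by the unit $e$ coincides with the pushforward $\iota_*$. By \lemref{lem:iota_injective}, after base change to $\loc$ the map $\iota_*$ becomes an isomorphism, so $H^{T_\cO}_*(\cR)|_\loc$ is free of rank one over $H^{T_\cO}_*(\cR_{T,\bNT})|_\loc$, with generator $e$. Since the right $H^{G_\cO}_*(\cR)$-action commutes with the left action, it factors through a ring homomorphism
\[
\rho \colon H^{G_\cO}_*(\cR) \longrightarrow \operatorname{End}_{H^{T_\cO}_*(\cR_{T,\bNT})|_\loc}\bigl(H^{T_\cO}_*(\cR)|_\loc\bigr) \cong H^{T_\cO}_*(\cR_{T,\bNT})|_\loc,
\]
the last identification being the standard fact that endomorphisms of a free rank-one module over a commutative ring are that ring itself.

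Next I would check that $\rho$ is injective. By construction $\rho(c')$ acts on $e$ by $e\mapsto e\ast c'$, which by \lemref{lem:convolution_by_e}(2) is nothing but the restriction homomorphism $H^{G_\cO}_*(\cR)\to H^{T_\cO}_*(\cR)$ followed by the localisation $H^{T_\cO}_*(\cR)\to H^{T_\cO}_*(\cR)|_\loc$. The first map is injective because $H^{G_\cO}_*(\cR) = H^{T_\cO}_*(\cR)^W$ by \lemref{lem:flat}, and the second is injective since $H^{T_\cO}_*(\cR)$ is a flat (hence torsion-free) $H^*_T(\mathrm{pt})$-module, again by \lemref{lem:flat}. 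Hence $\rho$ is an embedding of rings.

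Finally, by \thmref{abel} the target $H^{T_\cO}_*(\cR_{T,\bNT})$ is commutative, so its localisation is also commutative, and thus $H^{G_\cO}_*(\cR)$ embeds into a commutative ring and is itself commutative. The main point to verify carefully is step (3) above, namely the identification of the endomorphism ring with $H^{T_\cO}_*(\cR_{T,\bNT})|_\loc$ and the compatibility of all the left/right actions with localisation; everything else is a formal consequence of results already established in \secref{sec:abelian} and \secref{sec:abel}. The same argument, with $\cR$ and all actions replaced by their $\CC^\times$-equivariant analogues, will establish the commutativity needed to define the Poisson structure on $\cAh$.
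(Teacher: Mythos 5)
Your proof is correct and takes essentially the same route as the paper's: both reduce commutativity to the abelian case (\thmref{abel}) by using the bimodule structure and the localized isomorphism $\iota_* = \bullet\ast e$ over $\ft^\circ$ to embed $H^{G_\cO}_*(\cR)$ into the commutative ring $H^{T_\cO}_*(\cR_{T,\bNT})|_{\ft^\circ}$. The paper presents this as a chain of injections whose multiplicativity follows from the associativity argument of \lemref{lem:bimodule-1}, while you package the identical argument as the endomorphism ring of a free rank-one module; the content is the same.
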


\begin{NB}
    Added on Apr. 27
\end{NB}%

One way to prove this is to use \lemref{lem:injective} and the
commutativity of $H^{G_\cO}_*(\Gr_G)$, proved in \cite{MR2135527}.
The proof of the commutativity of $H^{G_\cO}_*(\Gr_G)$, as was
explained in \subsecref{sec:commutativity}, uses the Beilinson-Drinfeld
Grassmannian. This proof will be given at a categorical level in
\ref{sec:commute}.
We present another argument, completely avoiding Beilinson-Drinfeld
Grassmannian now.

\begin{proof}[Proof of \propref{prop:commutative}]
    Since we have proved that $H^{T_\cO}_*(\cR_{T,\bNT})$ is
    commutative (see \secref{sec:abelian}), the multiplication is
    $H_{T_\cO}^*(\mathrm{pt})$-linear in both first and second
    variables. In particular, its localization
    $H^{T_\cO}_*(\cR_{T,\bNT})|_\loc$ inherits the algebra structure.

    Recall the embedding $\iota\colon \cR_{T,\bNT}\to\cR$.  By
    \lemref{lem:iota_injective} it induces an isomorphism between the
    localized equivariant homology groups: ${\iota_*}\colon
    H^{T_\cO}_*(\cR_{T,\bNT})|_\loc\xrightarrow{\cong}
    H^{T_O}_*(\cR)|_\loc$. We have a chain of injective maps
    \begin{equation*}
        H^{G_\cO}_*(\cR) \to H^{T_\cO}_*(\cR)
            \to H^{T_\cO}_*(\cR)|_{\ft^\circ}
            \xrightarrow[\sim]{\iota_*^{-1}=(\bullet\ast e)^{-1}}
            H^{T_\cO}_*(\cR_{T,\bNT})|_{\ft^\circ},
    \end{equation*}
    where the injectivity of the first two maps follows from
    \lemref{lem:flat}. By the proof of \lemref{lem:bimodule-1}, the
    composite respects the multiplication. Therefore
    $H^{G_\cO}_*(\cR)$ is commutative.
    \begin{NB}
        Here is the record of the original manuscript. Since I add
        \lemref{lem:bimodule-1}, the most of argument below are moved
        there.

    We consider $H^{T_\cO}_*(\cR)$ with the commuting left action by
    $H^{T_\cO}_*(\cR_{T,\bNT})$ and right action by
    $H^{G_\cO}_*(\cR)$.

    Let $e\in H^{T_\cO}_*(\cR)$ be as in \lemref{lem:convolution_by_e}.
    \begin{NB2}
        the fundamental class of the fiber over the base point in
        $\Gr_G$
    \end{NB2}%
    Let us consider $H^{T_\cO}_*(\cR_{T,\bNT})\ni c\mapsto c\ast e\in
    H^{T_\cO}_*(\cR)$. By \lemref{lem:convolution_by_e}(1), it is
    given by the pushforward homomorphism with respect to the
    embedding $\iota\colon \cR_{T,\bNT}\to \cR$. By
    \lemref{lem:iota_injective}, it becomes an isomorphism
    ${\iota_*}\colon
    H^{T_\cO}_*(\cR_{T,\bNT})|_\loc\xrightarrow{\cong}
    H^{T_O}_*(\cR)|_\loc$.
    \begin{NB2}
        where `$|_\loc$' means the localization at the ideal given by
        generalized root hyperplanes, i.e., tensor product with
        $\CC[\loc]$ over ${H^*_T(\mathrm{pt})}$.
    \end{NB2}
    \begin{NB2}
        It seems the following is unnecessary any more:

    This is an isomorphism of
    $H^{T_\cO}_*(\cR_{T,\bNT})|_{\loc}$-modules, where
    $H^{T_\cO}_*(\cR_{T,\bNT})|_{\loc}$ is a
    $H^{T_\cO}_*(\cR_{T,\bNT})|_\loc$-module by a left multiplication.
    Thus $H^{T_\cO}_*(\cR)|_\loc$ is a free
    $H^{T_\cO}_*(\cR_{T,\bNT})|_\loc$-module of rank $1$.
    \end{NB2}%

    \begin{Claim}
        The composite of
        \begin{equation*}
            H^{G_\cO}_*(\cR) \to H^{T_\cO}_*(\cR)
            \to H^{T_\cO}_*(\cR)|_{\ft^\circ}
            \xrightarrow{\iota_*^{-1}=(\bullet\ast e)^{-1}}
            H^{T_\cO}_*(\cR_{T,\bNT})|_{\ft^\circ}
        \end{equation*}
        is an algebra homomorphism. Here the first map is given by the
        restriction from $G_\cO$ to $T_\cO$, and the second map is
        restriction to $\ft^\circ$.
    \end{Claim}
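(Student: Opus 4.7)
Denote the composite in the Claim by $\phi$, so that by \lemref{lem:convolution_by_e}(2) the first arrow sends $c'\mapsto e\ast c'$, and by \lemref{lem:convolution_by_e}(1) the inverse of the third arrow is the localization of $\iota_*$, where $\iota_*(a)=a\ast e$. Thus $\phi$ is characterized by the equation
\begin{equation*}
    \iota_*\bigl(\phi(c')\bigr)=e\ast c' \quad\text{in } H^{T_\cO}_*(\cR)|_{\ft^\circ}.
\end{equation*}
The plan is to verify the algebra homomorphism property by applying $\iota_*$ to both sides and then appealing to the injectivity of $\iota_*|_{\ft^\circ}$ furnished by \lemref{lem:iota_injective}. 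So one must prove
\(
   \iota_*\bigl(\phi(c_1)\cdot\phi(c_2)\bigr)=\iota_*\bigl(\phi(c_1\ast c_2)\bigr)
\)
in $H^{T_\cO}_*(\cR)|_{\ft^\circ}$, where $\cdot$ is the commutative product on $H^{T_\cO}_*(\cR_{T,\bNT})$ produced in \secref{sec:abelian}.

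The computation will be exactly the chain of associativities used already in \lemref{lem:bimodule-1}. Namely, using $\iota_*(a)=a\ast e$ together with the fact that the left action of $H^{T_\cO}_*(\cR_{T,\bNT})$ on $H^{T_\cO}_*(\cR)$ satisfies $(a\cdot b)\ast x=a\ast(b\ast x)$, I would rewrite
\begin{equation*}
    \iota_*\bigl(\phi(c_1)\cdot\phi(c_2)\bigr)
    =\phi(c_1)\ast\bigl(\phi(c_2)\ast e\bigr)
    =\phi(c_1)\ast\iota_*(\phi(c_2))
    =\phi(c_1)\ast(e\ast c_2).
\end{equation*}
The crucial step is then the bimodule compatibility from \lemref{lem:bimodule} (the left $H^{T_\cO}_*(\cR_{T,\bNT})$-action and the right $H^{G_\cO}_*(\cR)$-action commute), which gives
\begin{equation*}
    \phi(c_1)\ast(e\ast c_2)=\bigl(\phi(c_1)\ast e\bigr)\ast c_2=\iota_*(\phi(c_1))\ast c_2=(e\ast c_1)\ast c_2.
\end{equation*}
Finally, the associativity of the right $H^{G_\cO}_*(\cR)$-action on $H^{T_\cO}_*(\cR)$ concludes with
\(
  (e\ast c_1)\ast c_2=e\ast(c_1\ast c_2)=\iota_*\bigl(\phi(c_1\ast c_2)\bigr).
\)

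There is no serious obstacle here: the argument is essentially the transpose of the proof of \lemref{lem:bimodule-1}, but carried out on $\phi$ rather than on $\iota_*$. The only minor thing to check is that the commutative algebra structure on $H^{T_\cO}_*(\cR_{T,\bNT})$ constructed in \secref{sec:abelian} extends to its localization $H^{T_\cO}_*(\cR_{T,\bNT})|_{\ft^\circ}$ and that the associated left bimodule action on $H^{T_\cO}_*(\cR)|_{\ft^\circ}$ satisfies $(a\cdot b)\ast x=a\ast(b\ast x)$; both are automatic by base change from \lemref{lem:bimodule}. Then since \lemref{lem:iota_injective} makes $\iota_*|_{\ft^\circ}$ an isomorphism, the displayed identities force $\phi(c_1\ast c_2)=\phi(c_1)\cdot\phi(c_2)$, proving the Claim. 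Combined with commutativity of $H^{T_\cO}_*(\cR_{T,\bNT})|_{\ft^\circ}$ and injectivity of $\phi$ (which follows from \lemref{lem:flat} and \lemref{lem:iota_injective}), this yields \propref{prop:commutative}.
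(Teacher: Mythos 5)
Your proof is correct and is essentially the paper's own argument: the paper also characterizes the composite by $\phi(c)\ast e=e\ast c$ and deduces $(\phi(c_1)\cdot\phi(c_2))\ast e=e\ast(c_1\ast c_2)$ from the associativity/bimodule compatibilities of \lemref{lem:bimodule}, then invokes injectivity of $\iota_*=\bullet\ast e$ over $\ft^\circ$. You merely spell out the individual applications of the left-module, bimodule-commuting, and right-module identities that the paper compresses into the phrase ``the associativity of products implies.''
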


    By \lemref{lem:flat}, the composite is injective. Since we already
    know that $H^{T_\cO}_*(\cR_{T,\bNT})$ is commutative, it implies
    the commutativity of $H^{G_\cO}_*(\cR)$.

    Let us give the proof of the claim. The idea is similar to the
    proof of \lemref{lem:convolution}. The first map is $e\ast\bullet$
      by \lemref{lem:convolution_by_e}.
      \begin{NB2}
          We also know that $e\ast\bullet$ is
          $H_{G_\cO}^*(\mathrm{pt})$-linear.
      \end{NB2}%
    Let us write the composite as $c\mapsto c'\defeq(\bullet\ast
    e)^{-1}(e\ast c)$.
    \begin{NB2}
        This is $H_{G_\cO}^*(\mathrm{pt})$-linear.
    \end{NB2}%
    We have $c'\ast e = e\ast c$. We take $c_1$,
    $c_2\in H^{G_\cO}_*(\cR)$ and corresponding elements $c_1'$,
    $c_2'\in H^{T_\cO}_*(\cR_{T,\bNT})|_{\ft^\circ}$. Then $c'_1\ast e
    = e\ast c_1$, $c'_2\ast e = e\ast c_2$. The associativity of
    products implies
    \begin{equation*}
        (c_1'\ast c_2') \ast e = e \ast (c_1\ast c_2).
    \end{equation*}
    It means that the map respects the multiplication.
    \end{NB}%
    \begin{NB}
        An older proof. In particular, it shows that the composite
        algebra homomorphism in Claim is given by the composite of
        \eqref{eq:16}.

    Consider a chain of algebra homomorphisms
    \begin{multline}
        \label{eq:16}
        H^{G_\cO}_*(\cR)\to \End_{H^{T_\cO}_*(\cR_{T,\bNT})}(H^{T_\cO}_*(\cR))^W
\\
        \to \End_{H^{T_\cO}_*(\cR_{T,\bNT})|_\loc}(H^{T_\cO}_*(\cR)|_\loc)^W
        \cong H^{T_\cO}_*(\cR_{T,\bNT})|_\loc^W.
    \end{multline}
    \begin{NB2}
        The first is given by the bimodule structure. The second is
        the restriction. The last isomorphism is given by
        $c\operatorname{id}\to c$ for $c\in
        H^{T_\cO}_*(\cR_{T,\bNT})|_\loc$.
    \end{NB2}%
    If we compose $\iota_*$ at the end,
    \begin{NB2}
        i.e., the evaluation $c\ast 1$,
    \end{NB2}%
    the composite $H^{G_\cO}_*(\cR)\to H^{T_\cO}_*(\cR)|_\loc$ is
    nothing but the induced map from $T_\cO\subset G_\cO$. By
    \lemref{lem:flat} it is injective. In particular, \eqref{eq:16}
    gives an injective algebra homomorphism from $H^{G_\cO}_*(\cR)$ to
    $H^{T_\cO}_*(\cR_{T,\bNT})|_\loc$. Therefore $H^{G_\cO}_*(\cR)$ is
    commutative.
    \end{NB}
\end{proof}

We endow $H^{T_\cO}_*(\cR)\cong
H^{G_\cO}_*(\cR)\otimes_{H^*_G(\mathrm{pt})} H^*_T(\mathrm{pt})$ with an algebra
structure induced from $H^{G_\cO}_*(\cR)$.
Note that {\it a priori\/} it does not have an algebra structure. The
multiplication $\ast$ on $H^{G_\cO}_*(\cR)$ is
$H_G^*(\mathrm{pt})$-linear only in the first variable. In our case, we
have just proved that $H^{G_\cO}_*(\cR)$ is commutative, and hence
also linear in the second variable. Therefore the multiplication is
well-defined. This construction does not work in general, say
$H^{T_\cO\rtimes\CC^\times}_*(\cR)$, as it is noncommutative.

\begin{Remark}
    Let us emphasize further how the commutativity of
    $H^{G_\cO}_*(\cR)$ is important in this construction.

    Consider the usual finite dimensional Steinberg variety $\St$, and
    its analog $\overline\St$.
    \begin{NB}
        Misha and Sasha: I do not yet understand what we can/cannot
        say for $H^{G_\cO\rtimes\CC^\times}_*(\cR)$. Since the
        noncommutative multiplication is not
        $H_{G_\cO\rtimes\CC^\times}^*(\mathrm{pt})$-linear in the
        second variable, I cannot see how to formulate the result. But
        the computation of the convolution still makes sense (at least
        to me).... Maybe a finite dimensional case is useful. But.....

        I am a little confused what is the analog of $\cR_{T,\bNT}$
        here. We should have a counter example.....

        This remark should be expanded. How those claims fail in
        noncommutative case.....
    \end{NB}%
    We also consider $\St_P$, $\overline\St_P$ corresponding to a
    parabolic subgroup $P$. Our $H^{G_\cO}_*(\cR)$ is an analog of
    $H^P_*(\overline\St_P)$, which is isomorphic to $H^G_*(\St_P)$. In
    this situation, $H^P_*(\overline\St_P)$ is an algebra, but
    $H^T_*(\overline\St_P)$ is not in general. It is clear from the
    following: We have $H^P_*(\overline\St_P) \cong e_P
    H^B_*(\overline\St) e_P$ for an idempotent $e_P$. Then
    $H^T_*(\overline\St_P) \cong H^B_*(\overline\St) e_P$. 
    \begin{NB}
        Reference ?
    \end{NB}%
    Thus $H^T_*(\overline\St_P)$ is a bimodule, but not an algebra.
\end{Remark}

We have
\begin{Lemma}\label{lem:iotahom}
    The pushforward homomorphism $\iota_*$ of the embedding
    $\iota\colon \cR_{T,\bNT}\to \cR$, composed with the inverse of
    \eqref{eq:20}, gives an algebra homomorphism
    $H^{T_\cO}_*(\cR_{T,\bNT})\to H^{T_\cO}_*(\cR)\cong
    H^{G_\cO}_*(\cR)\otimes_{H^*_G(\mathrm{pt})} H^*_T(\mathrm{pt})$.
    It becomes an isomorphism over $\loc$. In particular, it is
    injective.
\end{Lemma}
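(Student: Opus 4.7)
The strategy is to extend the argument of \lemref{lem:bimodule-1} from $W$-invariants of $H^{T_\cO}_*(\cR_{T,\bNT})$ to the whole algebra, at the cost of enlarging the target from $H^{G_\cO}_*(\cR)$ to its base change $H^{G_\cO}_*(\cR)\otimes_{H^*_G(\mathrm{pt})}H^*_T(\mathrm{pt})$. Recall from \lemref{lem:convolution_by_e}(1) that $\iota_*(c)=c\ast e$, where $e$ is the fundamental class of the fiber of $\cR\to\Gr_G$ over $[1]$. By \lemref{lem:convolution_by_e}(2) the right action of $H^{G_\cO}_*(\cR)$ on $H^{T_\cO}_*(\cR)$ is $H^*_G(\mathrm{pt})$-linear after forgetting equivariance to $T$; combined with the $H^*_T(\mathrm{pt})$-module structure coming from the bimodule action, this action extends $H^*_T(\mathrm{pt})$-linearly to a right action of $H^{G_\cO}_*(\cR)\otimes_{H^*_G(\mathrm{pt})}H^*_T(\mathrm{pt})$ on $H^{T_\cO}_*(\cR)$. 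Because $H^{G_\cO}_*(\cR)$ is already commutative by \propref{prop:commutative}, this tensor product is a well-defined commutative ring, and $H^{T_\cO}_*(\cR)$ is a right module over it.

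With this in hand, the isomorphism \eqref{eq:20} reads $e\ast\bullet\colon H^{G_\cO}_*(\cR)\otimes_{H^*_G(\mathrm{pt})}H^*_T(\mathrm{pt})\xrightarrow{\cong}H^{T_\cO}_*(\cR)$, and I would define
\[
  \Phi\colon H^{T_\cO}_*(\cR_{T,\bNT})\longrightarrow H^{G_\cO}_*(\cR)\otimes_{H^*_G(\mathrm{pt})}H^*_T(\mathrm{pt})
\]
as the unique map characterized by $c\ast e=e\ast\Phi(c)$, which is exactly the composition claimed in the statement. To check it is an algebra homomorphism, for $c_1,c_2\in H^{T_\cO}_*(\cR_{T,\bNT})$ I would run the chain
\[
  e\ast\Phi(c_1\ast c_2)=(c_1\ast c_2)\ast e=c_1\ast(c_2\ast e)=c_1\ast(e\ast\Phi(c_2))=(c_1\ast e)\ast\Phi(c_2)=(e\ast\Phi(c_1))\ast\Phi(c_2)=e\ast(\Phi(c_1)\ast\Phi(c_2)),
\]
the middle equality being the commutation of the left and right bimodule actions from \lemref{lem:bimodule}, and the last equality being the associativity of the extended right action. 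Since $e\ast\bullet$ is injective, $\Phi(c_1\ast c_2)=\Phi(c_1)\ast\Phi(c_2)$.

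The final assertion that $\Phi$ becomes an isomorphism over $\loc$ is formal: by \lemref{lem:iota_injective} the map $\iota_*$ is an isomorphism after inverting the generalized root hyperplanes, and $e\ast\bullet$ is already an isomorphism by \eqref{eq:20}; hence $\Phi=(e\ast\bullet)^{-1}\circ\iota_*$ is an isomorphism over $\loc$, and in particular injective everywhere thanks to the flatness in \lemref{lem:flat}.

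The main obstacle, as in \lemref{lem:bimodule-1}, lies not in any single diagram chase but in verifying that all three structures used simultaneously fit together: the commutativity of $H^{G_\cO}_*(\cR)$ (so that the target makes sense as a ring), the $H^*_T(\mathrm{pt})$-linearity of the right action (so that it extends to the base change), and the compatibility of left and right bimodule actions on $H^{T_\cO}_*(\cR)$. Each piece is already in the toolbox, so once the framework is set up carefully the verification of the multiplicativity of $\Phi$ is the four-line manipulation above. The same argument carries over without change to the $\CC^\times$-equivariant setting wherever the relevant constructions have been checked $\CC^\times$-equivariantly.
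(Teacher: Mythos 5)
Your proposal is correct and follows exactly the route the paper intends: the paper states this lemma without a separate proof, relying on the identity $\iota_*(c)=c\ast e$, the base-changed algebra structure on $H^{G_\cO}_*(\cR)\otimes_{H^*_G(\mathrm{pt})}H^*_T(\mathrm{pt})$ made possible by \propref{prop:commutative}, the associativity chain already spelled out in the proof of \lemref{lem:bimodule-1}, and \lemref{lem:iota_injective} for the statement over $\ft^\circ$. Your write-up makes explicit the same verifications (well-definedness of the extended right action and the four-term associativity computation), so it matches the paper's argument.
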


Thus $H^{G_\cO}_*(\cR)$ and $H^{T_\cO}_*(\Gr_T)$ are related as
\begin{equation}\label{eq:21}
    H^{T_\cO}_*(\Gr_T)^W
    \xleftarrow{\bz^*} H^{T_\cO}_*(\cR_{T,\bNT})^W
    \xrightarrow{\iota_*}
    H^{T_\cO}_*(\cR)^W     \cong H^{G_\cO}_*(\cR).
\end{equation}
Both $\iota_*$ and $\bz^*$ are algebra homomorphisms, and become
isomorphisms over $\loc$. (See \lemref{lem:injective} for the last
assertion.)

\begin{Proposition}\label{prop:integrable}
    \textup{(1)} We have a $\CC[\ft]^W$-algebra isomorphism
    \begin{equation*}
        H^{T_\cO}_*(\Gr_T)^W \cong \CC[\ft\times T^\vee]^W,
    \end{equation*}
    where $T^\vee$ is the dual torus of $T$.
    \begin{NB}
        Added on Apr. 27.
    \end{NB}
    The $W$-action on $\ft\times T^\vee$ is the usual one.

    \textup{(2)} The quantized algebra
    $H^{T_\cO\rtimes\CC^\times}_*(\Gr_T)^W$ is isomorphic to the
    $W$-invariant part of the ring of $\hbar$-differential operators
    $D_\hbar(T^\vee)$ on $T^\vee$. The homomorphism
    $H_{G_\cO\rtimes\CC^\times}^*(\mathrm{pt})\to
    H^{T_\cO\rtimes\CC^\times}_*(\Gr_T)^W$ is given by invariant
    vector fields via the isomorphism
    $H_{G_\cO\rtimes\CC^\times}^*(\mathrm{pt})\cong
    H_{T\times\CC^\times}^*(\mathrm{pt})^W \cong
    \operatorname{Sym}(\ft^\vee)^W[\hbar]$.
\end{Proposition}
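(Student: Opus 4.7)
My plan is to apply \thmref{abel} to $(T,\bN=0)$. In this case $\cR_{T,0}=\Gr_T$, there are no characters $\xi_i$, so the relation \eqref{relation} collapses to $r^\lambda r^\mu=r^{\lambda+\mu}$, and the presentation in \thmref{abel} yields
\[
H^{T_\cO}_*(\Gr_T)\;\cong\;\Sym(\ft^*)\otimes\CC[Y]
\]
as algebras, where $Y$ is the coweight lattice of $T$. Identifying $\Sym(\ft^*)=\CC[\ft]$ and recognizing the group algebra $\CC[Y]$ as the coordinate ring $\CC[T^\vee]$ of the dual torus (via $Y=X^*(T^\vee)$) produces an isomorphism $H^{T_\cO}_*(\Gr_T)\cong\CC[\ft\times T^\vee]$. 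For the $W$-equivariance, the $N(T)$-action on $\Gr_T$ induces the standard Weyl group action on $Y$ (hence on $T^\vee$), and it acts on $H^*_T(\mathrm{pt})=\CC[\ft]$ in the usual way, so the combined action on $\CC[\ft\times T^\vee]$ is the diagonal one. Taking $W$-invariants gives (1).

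For (2) I run the same argument with loop rotation. With $\bN=0$ every factor $A_i(\lambda,\mu)$ in the quantized relation \eqref{q-relation} is an empty product, so $r^\lambda r^\mu=r^{\lambda+\mu}$ persists, while the cross-commutator \eqref{eq:27} reads $[r^\lambda,\alpha]=\hbar\alpha(\lambda)\,r^\lambda$ for $\alpha\in\ft^*$. Using the identification $\Lie(T^\vee)=\ft^*$, the element $\alpha\in\ft^*$ corresponds to the invariant vector field $\partial_\alpha$ on $T^\vee$ acting on $e^\lambda\in\CC[T^\vee]$ by the scalar $\alpha(\lambda)$. The assignments $r^\lambda\mapsto e^\lambda$ and $\alpha\mapsto\hbar\partial_\alpha$ then match our relations with the defining relations of the ring $D_\hbar(T^\vee)$ of $\hbar$-differential operators on $T^\vee$, giving $H^{T_\cO\rtimes\CC^\times}_*(\Gr_T)\cong D_\hbar(T^\vee)$; taking $W$-invariants yields the claimed isomorphism. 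The description of the map from $H^*_{G\times\CC^\times}(\mathrm{pt})\cong\Sym(\ft^*)^W[\hbar]$ as invariant vector fields is then immediate, since $\alpha\cdot 1\in H^{T_\cO\rtimes\CC^\times}_*(\Gr_T)$ corresponds to $\hbar\partial_\alpha$ by construction.

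I do not anticipate a serious obstacle; everything is bookkeeping on top of \thmref{abel}. The only point worth flagging is that \thmref{abel} is stated for nonzero $\bN$, so one should verify by inspecting its proof that the presentation (with the empty-product convention on $\prod_i$) remains valid when $\bN=0$; this is straightforward from the description $\Gr_T=\bigsqcup_{\lambda\in Y}\{\lambda\}$ and the fact that the convolution diagram for $\cR_{T,0}$ is just the usual one for $\Gr_T$.
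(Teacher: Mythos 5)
Your proof is correct and follows essentially the same route as the paper: the paper also observes that $\Gr_T$ is the discrete set $Y$ and cites the presentation of \thmref{abel} (with no weights $\xi_i$) to get $H^{T_\cO}_*(\Gr_T)\cong\CC[\ft\times T^\vee]$, identifies the $W$-action via the $N(T)$-action on $\Gr_T$, and states that the quantized case is the same. You merely spell out the quantized identification with $D_\hbar(T^\vee)$ in more detail (modulo the usual sign convention in matching \eqref{eq:27} with $[\hbar\partial_\alpha,e^\lambda]$), which the paper leaves implicit.
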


\begin{NB}
    \cite{MR2135527} uses an isomorphism $\ft\cong \ft^\vee$ via a
    choice of an invariant form. But it seems it is more natural to
    use $\ft$.
\end{NB}%

\begin{proof}
    Since $\Gr_T$ consists of points parametrized by the coweight
    lattice $Y$ of $T$, we have $H^{T_\cO}_*(\Gr_T)\cong \CC[\ft\times
    T^\vee]$. (This is a special case of \thmref{abel}.) 
    The $W$-action on $H^{T_\cO}_*(\Gr_T)$ is given by the
    $N(T)$-action on $\Gr_T$, and the induced $W=N(T)/T$-action on
    $\Gr_T\times_T EG\cong Y\times (EG/T) = Y\times BT$, where $EG\to
    BG$, $ET\to BT$ are the classifying spaces for $G$ and $T$. This
    induces the usual $W$-action on $\ft\times T^\vee$.

    The argument for the quantized version is the same.
\end{proof}

Taking spectrum of \eqref{eq:21}, \lemref{lem:convolution}, we get
\begin{equation}\label{eq:75}
    \ft\times T^\vee/W \to \mathcal M_C(T,\bNT)/W
    \leftarrow \mathcal M_C(G,\bN), \qquad
    	\mathcal M_C(G,\bN) \leftarrow \mathcal M_C(G,0).
\end{equation}
Since homomorphisms are injective, those morphisms are \emph{dominant}.

When $\bN=0$, the left and right morphisms are the identity. The
middle morphism is an affine blowup described in
\cite[\S2.5]{MR2135527}. This is not explicitly stated in
\cite{MR2135527}, but is clear from the proof of
\cite[Th.~2.12]{MR2135527}.
\begin{NB}
Misha, could you check that this explanation is fine ? Yes, fine
\end{NB}%

Next consider examples in \subsecref{subsec:abel_examples} (hence
$G=T$). The middle morphism is the identity. The left (and the right)
morphism is given by an open embedding $\CC\times\CC^\times\ni (a,x)\mapsto
(a,x,y=a^N x^{-1})\in \{ xy = a^N \}$.
\begin{NB}
    This embedding is compatible with $N$: $(a,x) \mapsto (a,x,y=a^N
    x^{-1}) \mapsto (a,x,y'=a^{N+1}x^{-1}=ay)$. See
    \remref{rem:further}.
\end{NB}%

\begin{Corollary}\label{cor:coordinate}
    \textup{(1)} We have a birational isomorphism
    \begin{equation*}
        \mathcal M_C(G,\bN)
         \approx 
        \ft\times T^\vee/W
    \end{equation*}
    given by $\bz^*\iota_*^{-1}$, where $\intsys$ in \eqref{eq:17}
    corresponds to the first projection in $\ft\times T^\vee/W$. It is
    an isomorphism over $\ft^\circ\times T^\vee/W$.
    In particular, the generic fiber of $\intsys$ is
    $T^\vee$.\footnote{The third named author thanks Kentaro Hori for
      correcting his mistake.}

    \textup{(2)} Moreover the Poisson structure on $\mathcal
    M_C(G,\bN)|_{\intsys^{-1}(\ft^\circ/W)}$ corresponds to the standard
    one on $\ft^\circ\times T^\vee/W$, the restriction of the
    symplectic structure on $T^*T^\vee/W$ to the open subset
    $\ft^\circ\times T^\vee/W$.
\end{Corollary}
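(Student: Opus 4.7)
The plan is to deduce the corollary directly from the chain of algebra homomorphisms~\eqref{eq:21} together with the identification of $H^{T_\cO}_*(\Gr_T)^W$ in \propref{prop:integrable}. By \lemref{lem:injective} the pullback $\bz^*\colon H^{T_\cO}_*(\cR_{T,\bNT})\to H^{T_\cO}_*(\Gr_T)$ becomes an isomorphism after localizing at $\loc$, and by \lemref{lem:iotahom} the pushforward $\iota_*\colon H^{T_\cO}_*(\cR_{T,\bNT})\to H^{T_\cO}_*(\cR)$ is likewise an isomorphism over $\loc$. Taking $W$-invariants (using \lemref{lem:flat} to pass between $T$- and $G$-equivariant homology) and composing gives an algebra isomorphism
\[
\bz^*\circ\iota_*^{-1}\colon H^{G_\cO}_*(\cR)|_{\loc/W}\xrightarrow{\ \sim\ } H^{T_\cO}_*(\Gr_T)^W|_{\loc/W}\cong\CC[\ft\times T^\vee]^W|_{\loc/W}.
\]
Taking Spec yields the asserted isomorphism of affine schemes over $\loc/W$, establishing part~(1) of the birational claim; the compatibility with $\intsys$ follows because the subalgebra $H^*_G(\mathrm{pt})=\CC[\ft]^W$ sits inside $H^{T_\cO}_*(\Gr_T)^W=\CC[\ft\times T^\vee]^W$ as $\CC[\ft]^W$, i.e.\ as the pullback by the first projection.

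For the statement about the generic fiber, I would simply observe that a generic point of $\ft/W$ lies in $\loc/W$, so the fiber of $\intsys$ there is identified with the corresponding fiber of $\ft\times T^\vee/W\to\ft/W$, which is $T^\vee$ (the $W$-action being free at a generic point of $\ft$).

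For part~(2), the plan is to read off the Poisson bracket from the quantization. By the $\CC^\times$-equivariant version of the same argument, using \lemref{lem:injective} and \lemref{lem:iotahom} for $T_\cO\rtimes\CC^\times$-equivariant homology together with \propref{prop:integrable}(2), the isomorphism $\bz^*\circ\iota_*^{-1}$ lifts to an isomorphism of $\CC[\hbar]$-flat, noncommutative algebras
\[
H^{G_\cO\rtimes\CC^\times}_*(\cR)|_{\loc/W}\xrightarrow{\ \sim\ } D_\hbar(T^\vee)^W|_{\loc/W}.
\]
Since the Poisson bracket on $\cA$ was defined as $\hbar^{-1}[\,\cdot\,,\,\cdot\,]$ on the quantization and the Poisson bracket on $\CC[T^*T^\vee]^W$ is recovered in the same way from $D_\hbar(T^\vee)^W$, the two Poisson structures coincide over $\ft^\circ/W$. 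The linearity of $\intsys^*$ in the Cartan subalgebra (\propref{prop:comm}) together with \propref{prop:integrable}(2), which identifies the image of $H^*_{G\times\CC^\times}(\mathrm{pt})$ with invariant vector fields on $T^\vee$, ensures that what we obtain is exactly the standard symplectic form on $T^*T^\vee/W$ restricted to $\ft^\circ\times T^\vee/W$.

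The only mild subtlety I anticipate is bookkeeping the $\CC^\times$-equivariant analogues of \lemref{lem:injective} and \lemref{lem:iotahom}—making sure that $\iota_*$ and $\bz^*$ commute with the quantized algebra structures, not merely their classical limits—but this is already covered by \lemref{lem:convolution}(3) and the quantized version of \lemref{lem:bimodule-1}, so the argument is essentially a matter of assembling statements already proved in the section.
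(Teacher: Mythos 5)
Your argument is correct and is essentially the paper's own: part (1) is exactly "take Spec of the chain \eqref{eq:21} and use Lemmas~\ref{lem:injective}, \ref{lem:iotahom} plus Proposition~\ref{prop:integrable}(1) to see both arrows become isomorphisms over $\ft^\circ$," and part (2) is the classical limit of the same chain in $T_\cO\rtimes\CC^\times$-equivariant homology via Proposition~\ref{prop:integrable}(2), with the Poisson bracket read off as $\hbar^{-1}[\,\cdot\,,\,\cdot\,]$. The quantized compatibility you flag at the end is indeed supplied by Lemma~\ref{lem:convolution}(3), Lemma~\ref{lem:bimodule-1}(2) and the quantized clause of Lemma~\ref{lem:iota_injective}, so nothing is missing.
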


\begin{Corollary}\label{cor:integral}
    $\mathcal M_C(G,\bN)$ is an integral scheme.
\end{Corollary}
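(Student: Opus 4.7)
The plan is to show that $\cA = H^{G_\cO}_*(\cR)$ embeds as a subring into an integral domain, which immediately forces $\cA$ itself to be a domain (it is nonzero since it contains the unit $e$), and hence $\mathcal M_C(G,\bN) = \Spec \cA$ is integral.

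First I would invoke the injective algebra homomorphism established inside the proof of \propref{prop:commutative}, namely the composite
\begin{equation*}
    H^{G_\cO}_*(\cR) \hookrightarrow H^{T_\cO}_*(\cR) \hookrightarrow H^{T_\cO}_*(\cR)|_{\loc} \xrightarrow[\cong]{\iota_*^{-1}} H^{T_\cO}_*(\cR_{T,\bNT})|_{\loc}.
\end{equation*}
Injectivity of the first two arrows comes from \lemref{lem:flat}, and the last isomorphism is \lemref{lem:iota_injective}; the composite is an algebra map by \lemref{lem:bimodule-1}. So it suffices to verify that the target $H^{T_\cO}_*(\cR_{T,\bNT})|_{\loc}$ is an integral domain.

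For this I would appeal to the explicit abelian presentation in \thmref{abel}. After localizing at the generalized root hyperplanes, every weight $\xi_i$ of $\bNT$ becomes invertible, and the relation \eqref{relation} then shows that for each coweight $\lambda\in Y$ the class $r^\lambda$ becomes invertible, with $r^{-\lambda}$ equal to $r^\lambda$ up to a unit in $\Sym(\ft^*)[\loc]$. Thus localization identifies
\begin{equation*}
    H^{T_\cO}_*(\cR_{T,\bNT})|_{\loc} \cong \CC[\loc]\otimes_{\CC} \CC[Y] = \CC[\loc \times T^\vee],
\end{equation*}
which is manifestly a domain (the coordinate ring of an irreducible affine variety, compatibly with \corref{cor:coordinate}).

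Combining these two steps, $\cA$ is a subring of $\CC[\loc \times T^\vee]$, hence a domain. The main (and really only) technical point is the identification of the localized abelian ring, but this is immediate from \thmref{abel} once one observes $d(\xi_i(\lambda),-\xi_i(\lambda)) = |\xi_i(\lambda)|$, which gives $r^\lambda r^{-\lambda} = \prod_i \xi_i^{|\xi_i(\lambda)|}$, a unit after localization.
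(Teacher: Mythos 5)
Your proof is correct and follows essentially the same route as the paper: the paper embeds $\cA$ into $\CC[\ft^\circ\times T^\vee]$ via the localized chain \eqref{eq:21} and concludes from integrality of the target, while you stop one step earlier at $H^{T_\cO}_*(\cR_{T,\bNT})|_\loc$ and verify directly from \thmref{abel} that it becomes $\CC[\loc\times T^\vee]$ — which is exactly what composing further with $\bz^*$ accomplishes, by \lemref{lem:injective}.
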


\begin{proof}
    We have an injective homomorphism $H^{G_\cO}_*(\cR)\to
    \CC[\ft^\circ\times T^\vee]$. The latter is an integral
    domain. Therefore $H^{G_\cO}_*(\cR)$ is also.
\end{proof}

\begin{Remark}\label{rem:BDG1}
    Let us consider the open subvariety
    $\varpi^{-1}(\ft\setminus\{\text{root hyperplanes}\}/W)$ in
    $\mathcal M_C$. The morphism $\mathcal M_C(G,\bN)\to \mathcal
    M_C(T,\bNT)/W$ in \eqref{eq:75} becomes an isomorphism over this
    open subvariety, thanks to the localization theorem in equivariant
    homology groups. Therefore
    $\CC[\varpi^{-1}(\ft\setminus\{\text{root hyperplanes}\}/W)]$ has
    an explicit presentation from that of $\cA[T,\bNT] = \CC[\mathcal
    M_C(T,\bNT)]$.
    Furthermore, we have an embedding $\cA[T,\bNT]\hookrightarrow
    \cA[T,0]$, where the latter is just a Laurent polynomial ring
    $\CC[\ft\times T^\vee]$.

    These embeddings make sense for quantized Coulomb branches: we
    have $\cAh[G,\bN] \hookrightarrow \cAh[T,\bNT]
    [\hbar^{-1}, (\text{root}+m\hbar)^{-1}]_{m\in{\mathbb Z}}
    \hookrightarrow \cAh[T,0][\hbar^{-1}, (\text{root}+m\hbar)^{-1}]_{m\in{\mathbb Z}}$. 
    Here one can easily check that the multiplicative subset generated by 
    $\hbar, \text{roots}+{\mathbb Z}\hbar$ satisfies
    the Ore condition in $\cAh[T,\bNT]$ and $\cAh[T,0]$ thanks to
    \lemref{lem:former_claim}. Therefore the localization as
    $H^*_T(\mathrm{pt})$-modules have algebra structure. Moreover
    $\cAh[T,\bNT][\hbar^{-1}, (\text{root}+m\hbar)^{-1}]_{m\in{\mathbb Z}}$ is the 
    (localized) ring of $\hbar$-difference operators on $\ft$.

The ring homomorphism $\iota_*\colon \cAh[T,\bNT]^{W}\to\cAh$
becomes an isomorphism if we invert the expressions $\hbar, \alpha+m\hbar$ 
where $\alpha$ is a root of $G$ and $m$ is an integer, considered
as elements in $H^*_{T\times{\mathbb C}^\times}(\mathrm{pt})$ (\ref{sec:bimodule}).
%
Indeed, it suffices to check that for any dominant coweight $\lambda$,
the homomorphism of $(H^*_{T\times\CC^\times}(\operatorname{pt}))^W$-modules
$\iota_*\colon (H_*^{T_\shfO\rtimes\CC^\times}(\cR_{T,\bN_T})_{W\lambda})^W\to
H_*^{G_\shfO\rtimes\CC^\times}(\cR_\lambda)$ becomes an isomorphism after inverting
the expressions $\hbar, \alpha+m\hbar$. To this end note that at a $T$-fixed
point $w\lambda\in\cR_\lambda,\ w\in W$, the quotient of the tangent space to 
$\cR_\lambda$ modulo the tangent space to $\cR_{T,\bN_T}$ equals the tangent space
to $\Gr^\lambda_G$, and the weights of $T\times\CC^\times$ in the latter tangent
space are all of the form $\hbar, \alpha+m\hbar$. 

We further have a ring homomorphism $\bz^*\colon \cAh[T,\bNT]\to
\cAh[T,0]$ (\ref{sec:z}).
By \ref{prop:integrable} $\cAh[T,0]$ is a $\CC[\hbar]$-algebra
generated by $w_{r}$, ${\mathsf u}^{\pm 1}_{r}$ ($1\le r\le \dim T$) with
relations $[{\mathsf u}_{r}^{\pm1}, w_{s}] = \pm\delta_{r,s} \hbar
{\mathsf u}_{r}^{\pm1}$. (Here we take coordinates of $T^\vee$ and the induced
coordinates on $\ft$.)

    In \cite[\S4]{2015arXiv150304817B} it is argued that $\CC[\mathcal
    M_C]$ is embedded into an explicit combinatorial ring, denoted by
    $\CC[\mathcal M_C^{\mathrm{abel}}]$ from a physical intuition.  It
    turns out that $\CC[\mathcal M_C^{\mathrm{abel}}]$ is nothing but
    the coordinate ring of $\varpi^{-1}(\ft\setminus\{\text{root
      hyperplanes}\}/W)$.
    This is obvious from \cite[(4.11)]{2015arXiv150304817B}.
    The explicit presentation in \cite[(4.9)]{2015arXiv150304817B}
    coincides with one induced from $\cA[T,\bNT]$ explained above.
    The quantized case is mentioned in \cite[\S4.5]{2015arXiv150304817B}.
\end{Remark}

\subsection{Flatness guarantees that codimension \texorpdfstring{$1$}{1} is enough}

Let $\ft^\bullet$ be the complement to all pairwise intersections of
generalized root hyperplanes. We have
$\codim_\ft(\ft\setminus\ft^\bullet) = 2$.  We set
$\ft^\bullet=\ft$ if $\dim\ft = 1$, and $\ft^\bullet =\ft\setminus\{0\}$ if
$\dim\ft=2$.

Let $t\in\ft^\bullet\setminus\ft^\circ$. Let $G' = Z_G(t)$ and $\bN' =
\bN^t$, where $Z_G(t)$, $\bN^t$ are as in \lemref{lem:fixedpts}. We
consider $T$ as a maximal torus of $G'$.

We consider the diagrams \eqref{eq:21} for both $(G,\bN)$, $(G',\bN')$
and will study their compatibilities. Let us denote the maps for
$(G',\bN')$ by $\bz^{\prime*}$, $\iota'_*$.

Note that the embedding $\bz\colon \Gr_T\to\cT_{T,\bNT}$ factors as
$\Gr_T\xrightarrow{\bz'} \cT_{T,\bNT'} \xrightarrow{\bz''} \cT_{T,\bNT}$
where $\bz''$ is an embedding of a subbundle. We have also another
embedding $\bz'''\colon \cT_{G',\bN'}\to \cT_{G',\bN}$ of a
subbundle. We have pull-back with support homomorphisms
\(
  \bz^{\prime\prime*}\colon H^{T_\cO}_*(\cR_{T,\bNT})
  \to H^{T_\cO}_*(\cR_{T,\bNT'})
\)
together with $\bz^*$, $\bz^{\prime*}$, $\bz^{\prime\prime\prime*}$ as in
\subsecref{sec:z}.

%
Note also that $\iota\colon\cR_{T,\bNT}\to\cR$ factors as
$\cR_{T,\bNT}\to \cR_{G',\bN}\to\cR$. Let $\iota''$ (resp.\
$\iota'''$) denote the left (resp.\ right) map.

Let us consider the following diagram:
\begin{equation}\label{eq:22}
    \begin{CD}
        \CC[\ft\times T^\vee] = H^{T_\cO}_*(\Gr_T)
        @<{\bz^*}<< H^{T_\cO}_*(\cR_{T,\bNT})
        @>{\iota_*}>> H^{T_\cO}_*(\cR)
\\
        @| @| @AA{\iota'''_*}A
\\
        \CC[\ft\times T^\vee] = H^{T_\cO}_*(\Gr_T)
        @<{\bz^*}<< H^{T_\cO}_*(\cR_{T,\bNT})
        @>{\iota''_*}>> H^{T_\cO}_*(\cR_{G',\bN})
\\
        @| @V{\bz^{\prime\prime*}}VV @VV{\bz^{\prime\prime\prime*}}V
\\
        \CC[\ft\times T^\vee] = H^{T_\cO}_*(\Gr_T)
        @<{\bz^{\prime*}}<< H^{T_\cO}_*(\cR_{T,\bNT'})
        @>{\iota'_*}>> H^{T_\cO}_*(\cR_{G',\bN'}).
    \end{CD}
\end{equation}

\begin{Lemma}\label{lem:sqcomm}
    All squares are commutative.
\end{Lemma}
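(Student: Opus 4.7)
The plan is to verify the four squares of \eqref{eq:22} one at a time, three of which reduce to formal functoriality and one of which requires a base-change argument.

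The top-left square has the identity as three of its four sides, so it commutes tautologically. For the top-right square, the factorization $\iota=\iota'''\circ\iota''$ (as closed embeddings of ind-schemes) together with the functoriality of proper pushforward in equivariant Borel-Moore homology gives $\iota_*=\iota'''_*\circ\iota''_*$, which is precisely the required commutativity. For the bottom-left square, the factorization $\bz=\bz''\circ\bz'$ through $\cT_{T,\bNT'}$ combined with the functoriality of pullback-with-support (\subsecref{subsec:AN}\ref{subsubsec:b}) applied to the chain of Cartesian squares
\[
\begin{CD}
\Gr_T @>{\bz'}>> \cT_{T,\bNT'} @>{\bz''}>> \cT_{T,\bNT} \\
@AAA @AAA @AAA \\
\Gr_T @>>> \cR_{T,\bNT'} @>>> \cR_{T,\bNT}
\end{CD}
\]
yields $\bz^*=\bz^{\prime*}\circ\bz^{\prime\prime*}$.

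The bottom-right square is the only substantial one; the plan is to derive it from base change for the square
\[
\begin{CD}
\cR_{T,\bNT'} @>{\iota'}>> \cR_{G',\bN'} \\
@V{\bz^{\prime\prime}}VV @VV{\bz^{\prime\prime\prime}}V \\
\cR_{T,\bNT} @>{\iota''}>> \cR_{G',\bN}
\end{CD}
\]
where the vertical arrows are induced by the subbundle inclusions $\cT_{T,\bNT'}\hookrightarrow\cT_{T,\bNT}$ and $\cT_{G',\bN'}\hookrightarrow\cT_{G',\bN}$. The first step is to check that this square is Cartesian: the fiber product $\cR_{T,\bNT}\times_{\cR_{G',\bN}}\cR_{G',\bN'}$ consists of triples whose underlying $G'$-bundle reduces to a $T$-bundle (from the $\cR_{T,\bNT}$ factor) and whose section takes values in $\bN'$ and is regular there (from the $\cR_{G',\bN'}$ factor); this is exactly the data defining $\cR_{T,\bNT'}$.

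The second step is to promote this to the claimed identity $\iota'_*\circ\bz^{\prime\prime*}=\bz^{\prime\prime\prime*}\circ\iota''_*$ of homomorphisms between equivariant Borel-Moore homology groups. My plan is to work with the finite-dimensional approximations $\cR^d_{\le\la}$ from \subsecref{subsec:equiv-borel-moore}, chosen compatibly across all four spaces, and apply the standard base-change identity $\iota'_*\circ\bz^{\prime\prime!}=\bz^{\prime\prime\prime!}\circ\iota''_*$ for proper pushforward along $\iota',\iota''$ together with the shriek pullback along $\bz^{\prime\prime},\bz^{\prime\prime\prime}$, which holds on any Cartesian square. The main thing to verify is that the pullback-with-support homomorphism defined in \subsecref{subsec:AN}\ref{subsubsec:b} from the subbundle inclusion agrees (after the degree shift by twice the corank) with the usual refined Gysin pullback, so that base change applies. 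I expect this to be essentially a repeat of the verification done in the proof of \lemref{lem:pull}: both $\bz^{\prime\prime*}$ and $\bz^{\prime\prime\prime*}$ are governed by the canonical trivialization of the normal bundle of a subbundle embedding, and these trivializations match under the Cartesian square above since the normal bundle of $\bz^{\prime\prime}$ is pulled back from that of $\bz^{\prime\prime\prime}$ along $\iota''$. This is the step I expect to be most tedious, but no genuine obstacle arises because all the relevant normal bundles are trivial after passing to finite approximations of $\bN_\cO$.
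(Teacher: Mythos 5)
Your proof is correct and follows essentially the same route as the paper: three squares are formal (composition of pushforwards, composition of pullbacks-with-support, identities), and the bottom-right square is base change once one notes that $\bz^{\prime\prime*}$, $\bz^{\prime\prime\prime*}$ are shriek pullbacks up to shift. The only cosmetic difference is that the paper records the Cartesianness of the ambient square of $\cT$'s while you record that of the $\cR$'s; these are interchangeable here, since the $\cR$-square is obtained from the $\cT$-square by intersecting with the closed conditions defining $\cR_{G',\bN}\subset\cT_{G',\bN}$.
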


\begin{proof}
    The commutativity is obvious except for the right bottom
    square. The commutativity of the right bottom square follows from
    the base change, noticing that $\bz^{\prime\prime*}$,
    $\bz^{\prime\prime\prime*}$ are $\bz^{\prime\prime!}$,
    $\bz^{\prime\prime\prime!}$ up to appropriate shifts respectively, and
    \begin{equation*}
        \begin{CD}
            \cT_{T,\bNT} @>>> \cT_{G',\bN}
            \\
            @AAA @AAA
\\
            \cT_{T,\bNT'} @>>> \cT_{G',\bN'}
        \end{CD}
    \end{equation*}
    is Cartesian.
\end{proof}


\newcommand{\cM}{\mathcal M}

\newcommand{\nc}{\newcommand}
\newcommand{\iso}{\stackrel{\sim}{\longrightarrow}}
\nc{\BA}{{\mathbb{A}}}
\nc{\BC}{{\mathbb{C}}}
\nc{\BF}{{\mathbb{F}}}
\nc{\BG}{{\mathbb{G}}}
\nc{\BN}{{\mathbb{N}}}
\nc{\BQ}{{\mathbb{Q}}}
\nc{\BP}{{\mathbb{P}}}
\nc{\BR}{{\mathbb{R}}}
\nc{\BZ}{{\mathbb{Z}}}

\nc{\bT}{{\mathbf{T}}}
\nc{\bp}{{\mathbf{p}}}
\nc{\bq}{{\mathbf{q}}}
\nc{\bfr}{{\mathbf{r}}}
\nc{\bfs}{{\mathbf{s}}}
\nc{\bfo}{{\mathbf{o}}}
\nc{\bt}{{\mathbf{t}}}

\nc{\CB}{{\mathcal{B}}}
\nc{\CF}{{\mathcal{F}}}
\nc{\CG}{{\mathscr{G}}}
\nc{\CH}{{\mathcal{H}}}
\nc{\CI}{{\mathcal{I}}}
\nc{\CJ}{{\mathcal{J}}}
\nc{\CK}{{\mathcal{K}}}
\nc{\CL}{{\mathcal{L}}}
\nc{\CM}{{\mathcal{M}}}
\nc{\CO}{{\mathcal{O}}}
\nc{\CR}{{\mathcal{R}}}
\nc{\cS}{{\mathcal{S}}}
\nc{\CT}{{\mathcal{T}}}
\nc{\CU}{{\mathcal{U}}}
\nc{\CV}{{\mathcal{V}}}
\nc{\CW}{{\mathcal{W}}}
\nc{\oW}{\overline{\mathcal{W}}{}}
\nc{\bW}{\overset{\bullet}{\mathcal W}{}}
\nc{\uoW}{{\overline{\underline{\mathcal{W}}}}{}}
\nc{\bGZ}{\overset{\bullet}{\CG\CZ}{}}
\nc{\oGZ}{\overset{\circ}{\CG\CZ}{}}
\nc{\CZ}{{\mathscr{Z}}}

\nc{\fg}{{\mathfrak{g}}}
\nc{\fri}{{\mathfrak{i}}}
\nc{\fgl}{{\mathfrak{gl}}}
\nc{\fsl}{{\mathfrak{sl}}}
\nc{\fz}{{\mathfrak{z}}}
\nc{\fB}{{\mathfrak{B}}}
\nc{\fC}{{\mathfrak{C}}}
\nc{\fF}{{\mathfrak{F}}}
\nc{\fM}{{\mathfrak{M}}}
\nc{\fU}{{\mathfrak{U}}}
\nc{\oU}{\overset{\circ}{\mathfrak U}{}^\alpha_{G;B}}

\nc{\sy}{{\mathsf{y}}}
\nc{\sx}{{\mathsf{x}}}
\nc{\sfu}{{\mathsf{u}}}
\nc{\sw}{{\mathsf{w}}}

\nc{\unl}{\underline}
\nc{\ol}{\overline}
\nc{\on}{\operatorname}

\nc{\oZ}{\overset{\circ}{Z}{}}
\nc{\bZ}{\overset{\bullet}{Z}{}}
\nc{\oE}{\overset{\circ}{E}{}}
\nc{\bE}{\overset{\bullet}{E}{}}
\nc{\uZ}{\underline{Z}}
\nc{\uoZ}{\overset{\circ}{\underline Z}{}}
\nc{\uB}{\underline{\mathfrak B}}
\nc{\oB}{\overset{\circ}{\mathfrak B}{}}
\nc{\uoB}{\overset{\circ}{\underline{\mathfrak B}}{}}
\nc{\oA}{\overset{\circ}{\mathbb A}{}}
\nc{\oG}{\overset{\circ}{\mathbb G}{}_m}
\nc{\bA}{\overset{\bullet}{\mathbb A}{}}

\begin{Theorem}\label{prop:flat}
    Let $\cM\stackrel{\varPi}{\longrightarrow}\ft/W$ 
be an affine scheme over $\ft/W$, and let $\cM^\bullet:=\varPi^{-1}(\ft^\bullet/W)$.
We assume that the natural morphism $\varPi_*\CO_\cM\to j_*\varPi_*\CO_{\cM^\bullet}$
is an isomorphism where $j\colon \ft^\bullet/W\hookrightarrow\ft/W$ is an
open embedding. We further assume $\cM$ is equipped with the following data.
    \begin{enumerate}
          \item Assume we are given an isomorphism between the localizations
        \begin{equation*}
            \CC[\cM]\otimes_{\CC[\ft/W]}\CC[\ft^\circ/W]\xrightarrow{\Xi}
            \CC[\ft\times T^\vee]^W\otimes_{\CC[\ft/W]}\CC[\ft^\circ/W]
            \iso H_*^{G_\CO}(\CR_{G,\bN})\otimes_{\CC[\ft/W]}\CC[\ft^\circ/W],
        \end{equation*}
        (the second isomorphism is $\iota_*(\bz^*)^{-1}$ of~\eqref{eq:21}
        plus~\propref{prop:integrable}). The composition is denoted 
        $\Xi^\circ\colon \CC[\cM]\otimes_{\CC[\ft/W]}\CC[\ft^\circ/W]\iso
        H_*^{G_\CO}(\CR_{G,\bN})\otimes_{\CC[\ft/W]}\CC[\ft^\circ/W]$.
          \item Let $t\in\ft^\bullet\setminus\ft^\circ$. Let $G' =
        Z_G(t)$ and $\bN' = \bN^t$, where $Z_G(t)$, $\bN^t$ are as in
        \lemref{lem:fixedpts}. We consider $T$ as a maximal torus of
        $G'$. 
        Assume we are given an isomorphism
        \begin{equation*}
            \Xi^t\colon (\CC[\cM]\otimes_{\CC[\ft/W]}\CC[\ft])_t
            \xrightarrow{\cong}
            (H^{G'_\cO}_*(\cR_{G',\bN'})
            \otimes_{H^*_{G'}(\mathrm{pt})}\CC[\ft])_t
        \end{equation*}
        such that
        \begin{equation*}
            \bz^{\prime*}(\iota'_*)^{-1}
            \left(\Xi^t\otimes_{\CC[\ft]_t}\CC[\ft^\circ]\right) 
            = \Xi\otimes_{\CC[\ft^\circ/W]}\CC[\ft^\circ].
        \end{equation*}
    \end{enumerate}
    Here $\Xi$ is a $\CC[\ft^\circ/W]$-algebra isomorphism, while
    $\Xi^t$ is a $\CC[\ft]_t$-algebra
    \begin{NB}
        module 
    \end{NB}%
    isomorphism. 
    \begin{NB2}
        Misha, I do not think that `$\Xi^t$ is a $\CC[\ft]_t$-module
        isomorphism' is a reasonable assumption. By the compatibility
        above, it is an algebra homomorphism over
        $\CC[\ft^\circ]$. Therefore it should be so also over
        $\CC[\ft]_t$, right ? In practice, I hardly imagine that we
        can construct $\Xi^t$, which is not clear to be an algebra
        isomorphism. I agree.
    \end{NB2}%
    \begin{NB}
    The restriction of the latter one to the generic point is supposed
    to be $W$-equivariant, and its restriction to the $W$-invariants
    must coincide with the generic fiber of the former one (it is
    understood that the generic fiber
    $(H_*^{T_\CO}(\cR_{G',\bN'}))_{\on{loc}}$ of
    $(H^{G'_\cO}_*(\cR_{G',\bN'})\otimes_{H^*_{G'}(\mathrm{pt})}\CC[\ft])_t=
    (H_*^{T_\CO}(\cR_{G',\bN'}))_t$ is identified with
    $(H_*^{T_\CO}(\cR))_{\on{loc}}$ via
    $i'''_*(z'''{}^*_{\on{loc}})^{-1}$).
    \end{NB}
    \begin{NB2}
        Do we really need the $W$-invariance ? No, with your explicit
formulation with $\Xi$, the $W$-invariance is automatic.
    \end{NB2}

    Then $\Xi^\circ$ extends to an isomorphism between $\cM$ and 
    $\Spec(H^{G_\cO}_*(\cR))$ as schemes over $\ft/W$.
\end{Theorem}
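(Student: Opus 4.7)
My plan is to produce the extension in three steps: first glue the local data into an isomorphism over $\ft^\bullet/W$, then verify the analogue of the normality hypothesis on the Coulomb branch side, and finally push forward along the open embedding $j\colon \ft^\bullet/W\hookrightarrow\ft/W$.

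\emph{Step 1 (gluing over $\ft^\bullet/W$).} Each $t\in\ft^\bullet\setminus\ft^\circ$ lies on exactly one generalized root hyperplane and away from all others, so a Zariski-open cover of $\ft^\bullet$ is obtained by $\ft^\circ$ together with open neighbourhoods $V_t$ of the $W$-orbits of such $t$'s, all of whose pairwise intersections are contained in $\ft^\circ$. By a standard spreading-out argument applied term-by-term in the filtered system $H^{G_\cO}_*(\cR_{\le\overline\la})$, each $\Xi^t$ (given over $\CC[\ft]_t$) extends to an algebra isomorphism over some such $V_t$, which descends to $V_t/W$ by taking $W_t$-invariants (using \lemref{lem:flat} for flatness and Galois descent along $\ft\to\ft/W$). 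The compatibility built into hypothesis~(2) forces each of these local isomorphisms to restrict to $\Xi$ on $\ft^\circ/W$, so the gluing axiom for quasi-coherent sheaves of algebras on $\ft^\bullet/W$ yields a single algebra isomorphism
\begin{equation*}
\Xi^\bullet\colon \CC[\cM]\otimes_{\CC[\ft/W]}\CC[\ft^\bullet/W]\iso H^{G_\cO}_*(\cR)\otimes_{\CC[\ft/W]}\CC[\ft^\bullet/W].
\end{equation*}

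\emph{Step 2 (normality for $A:=H^{G_\cO}_*(\cR)$).} I claim that $A\iso\Gamma(\ft^\bullet/W,\widetilde A)$. By \lemref{lem:MayerV}, each $A_{\le\overline\la}:=H^{G_\cO}_*(\cR_{\le\overline\la})$ is an iterated extension of the graded pieces $H^{G_\cO}_*(\cR_\la)\cong H^{G_\cO}_*(\Gr_{G}^{\la})$ (the isomorphism being the Gysin one of \lemref{lem:Rla}); by \lemref{lem:orbit} each such piece is a finitely generated free $\CC[\ft/W]$-module (the polynomial ring on the Lie algebra of $\Stab_G(\la)$). Since $\CC[\ft/W]$ is a polynomial ring, every such extension splits, and $A_{\le\overline\la}$ is itself a finitely generated free $\CC[\ft/W]$-module. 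Regularity of $\ft/W\cong\CC^\ell$ together with $\codim_{\ft/W}(\ft/W\setminus\ft^\bullet/W)\ge 2$ then give $A_{\le\overline\la}\iso\Gamma(\ft^\bullet/W,\widetilde{A_{\le\overline\la}})$. Writing $A=\varinjlim A_{\le\overline\la}$ as a filtered colimit along the injections of \lemref{lem:MayerV}, and using that $\ft^\bullet/W$ is quasi-compact (so $\Gamma(\ft^\bullet/W,-)$ commutes with filtered colimits of quasi-coherent sheaves), produces the claim.

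\emph{Step 3 (conclusion).} Pushing $\Xi^\bullet$ forward along $j$, the hypothesis $\varPi_*\CO_\cM\iso j_*\varPi_*\CO_{\cM^\bullet}$ on $\cM$ together with the identity just proved on the Coulomb branch side gives a $\CC[\ft/W]$-algebra isomorphism $\CC[\cM]\iso H^{G_\cO}_*(\cR)$ which, by construction, restricts to $\Xi$ on $\ft^\circ/W$; this is the desired extension of $\Xi^\circ$ to an isomorphism of $\ft/W$-schemes. I expect the main obstacle to be \emph{Step 2}: while the free-module structure of each $A_{\le\overline\la}$ comes cleanly from \lemref{lem:MayerV}, some care is needed to justify that the extensions indeed split as $\CC[\ft/W]$-modules and that sections over $\ft^\bullet/W$ commute with the filtered colimit, because $\cR$ is only an ind-scheme whose approximations are inverse limits of finite-type schemes in the sense of \subsecref{subsec:equiv-borel-moore}.
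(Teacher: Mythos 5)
Your Steps 2 and 3 are essentially the paper's argument: the paper also uses the short exact sequences of \lemref{lem:MayerV} together with \lemref{lem:orbit} to see that each $H^{G_\cO}_*(\cR_{\le\overline\la})$ is finitely generated flat (hence projective) over $\CC[\ft/W]$, deduces $\CH_{\le\overline\la}\iso j_*j^*\CH_{\le\overline\la}$ from smoothness of $\ft/W$ and $\codim\ge 2$, passes to the filtered union, and then combines this with the hypothesis on $\varPi_*\CO_\cM$ to reduce everything to an identification over $\ft^\bullet/W$.

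The gap is in Step 1. First, the ``spreading-out'' of $\Xi^t$ from $\CC[\ft]_t$ to a Zariski neighbourhood $V_t$ is not justified: the algebras are only filtered colimits of finitely generated $\CC[\ft]$-modules, so applying spreading-out ``term-by-term'' produces opens $V_{t,\overline\la}$ that may shrink as $\overline\la$ grows, and the colimit isomorphism is then only defined over $\bigcap_{\overline\la}V_{t,\overline\la}$, which need not contain any open neighbourhood of $t$; moreover nothing in hypothesis (2) says $\Xi^t$ respects the filtration. Second, and more importantly, you never explain how an isomorphism onto $(H^{G'_\cO}_*(\cR_{G',\bN'})\otimes_{H^*_{G'}(\mathrm{pt})}\CC[\ft])_t$ becomes a statement about $H^{G_\cO}_*(\cR)$ near $t$: these are different algebras, related only through the chain of maps in \eqref{eq:22}. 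The paper's mechanism is \lemref{lem:sqcomm} plus the localization theorem, which show that $\bz^{\prime\prime\prime*}(\iota'''_*)^{-1}$ is an isomorphism over $\CC[\ft]_t$ (since $\cR_{G',\bN'}$ is the $t$-fixed locus), so that $\Xi^t$ is literally the extension of $\Xi^\circ\otimes_{\CC[\ft^\circ/W]}\CC[\ft^\circ]$ to $\CC[\ft]_t$. Once this is in place, no gluing over a Zariski cover is needed at all: by (1) both $\CC[\cM]$ and $H^{G_\cO}_*(\cR)$ are subalgebras of $\CC[\ft^\circ\times T^\vee]^W$, so one only has to check that the two subsets agree, and by your Step 2 a section over $\ft^\bullet/W$ is detected by its behaviour over $\ft^\circ$ together with the local rings at the codimension-one points $t$ --- which is exactly what the corrected use of hypothesis (2) supplies, after taking $W$-invariants. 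With Step 1 replaced by this pointwise argument, your proof coincides with the paper's.
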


\begin{NB}
    Misha, please correct above, if there is a mistake. You can change
    the notation as you like.
\end{NB}

This proposition enables us to determine $\cM$ in two steps. We first
determine the Coulomb branch $H^{G'_\cO}_*(\cR_{G',\bN'})$ for another
pair $(G',\bN')$. Since $G'$ has semisimple rank $1$, it should be
easier than $H^{G_\cO}_*(\cR)$. Then we look for $\cM$ so that various
$H^{G'_\cO}_*(\cR_{G',\bN'})$ are `glued' to form a flat family.

\begin{proof}
\begin{NB}Let $j\colon \ft^\bullet/W\hookrightarrow\ft/W$ denote the open embedding.
Let also $j\colon \cM^\bullet\hookrightarrow\CM$ denote the open embedding of the
preimage of $\ft^\bullet/W$ in $\CM$. We have 
$j_*\varPi_*\CO_{\CM^\bullet}=\varPi_*j_*\CO_{\CM^\bullet}=\varPi_*\CO_\CM$ because $\CM$
is Cohen-Macaulay, and the codimension of $\CM\setminus\CM^\bullet$ in $\CM$
is at least 2 because all the fibers of $\varPi$ have the same dimension 
by flatness. (according to [Theorem~23.1, H.~Matsumura, 
Commutative ring theory, 
Cambridge Studies in Advanced Mathematics 8 (1986), xiv+320pp].
\end{NB}%
Recall that for a dominant coweight $\lambda$ we denote by $\CR_\lambda$
(resp.\ $\CR_{\leq\lambda}$) the preimage of the $G_\CO$-orbit $\Gr_G^\lambda$
(resp.\ of its closure $\ol\Gr{}_G^\lambda$); we also have 
$\CR_{<\lambda}:=\CR_{\leq\lambda}\setminus\CR_\lambda$. Then the closed embedding
$\CR_{<\lambda}\hookrightarrow\CR_{\leq\lambda}$ gives rise to the exact sequence
$0\to H^{G_\CO}_*(\CR_{<\lambda})\to H^{G_\CO}_*(\CR_{\leq\lambda})\to
H^{G_\CO}_*(\CR_\lambda)\to0$~(\lemref{lem:MayerV}). 
Since $H^{G_\CO}_*(\CR_\lambda)=H^{G_\CO}_*(\Gr_G^\lambda)$
is a finitely generated flat $H_{G_\CO}^*(\mathrm{pt})=\BC[\ft/W]$-module, we conclude
inductively that $H^{G_\CO}_*(\CR_{\leq\lambda})$ is a finitely generated flat 
$\BC[\ft/W]$-module. Hence it is a finitely generated projective 
$\BC[\ft/W]$-module. Let $\CH_{\leq\lambda}$ denote the corresponding locally
free coherent sheaf on $\ft/W$. Then the natural morphism
$\CH_{\leq\lambda}\to j_*j^*\CH_{\leq\lambda}$ is an isomorphism (since $\ft/W$
is smooth). Taking the union over all $\lambda$ we obtain that the natural
morphism $\CH\to j_*j^*\CH$ is an isomorphism, where $\CH$ is the quasicoherent
sheaf on $\ft/W$ localizing the $H_{G_\CO}^*(\mathrm{pt})=\BC[\ft/W]$-module
$H^{G_\CO}_*(\CR)$.

We see that in order to identify the $\ft/W$-schemes $\CM$ and
$\Spec(H^{G_\cO}_*(\cR))$ it suffices to identify the quasicoherent
$\ft^\bullet/W$-modules $\varPi_*\CO_{\CM^\bullet}$ and $j^*\CH$: indeed,
due to~(1), both $\BC[\CM]$ and $H^{G_\cO}_*(\cR)$ are subalgebras in
$\BC[\ft^\circ\times T^\vee]^W$, so it suffices to identify them as 
{\em subsets}. 
Now the desired identification is given in~(1) upon restriction to 
$\ft^\circ/W\subset\ft^\bullet/W$, 
and~(2) guarantees that the latter identification extends to $\ft^\bullet/W$.
More precisely,
let $\Xi^\circ = \iota_* (\bz^*)^{-1} \circ \Xi$. Note that maps in the diagram \eqref{eq:22} are $W$-equivariant. Hence $\Xi^\circ$ gives an isomorphism
from $\CC[\mathcal M]\otimes_{\CC[\ft/W]}\CC[\ft^\circ/W]$
to $H^{G_\cO}_*(\cR)\otimes_{H^*_G(\mathrm{pt})}\CC[\ft^\circ/W]$.
We have
\begin{equation*}
    \bz^{\prime*} (\iota'_*)^{-1}
            \left(\Xi^t\otimes_{\CC[\ft]_t}\CC[\ft^\circ]\right) 
            = \bz^*(\iota_*)^{-1}
            \left(\Xi^\circ\otimes_{\CC[\ft^\circ/W]}\CC[\ft^\circ]\right)
\end{equation*}
by the assumption in (2). By \lemref{lem:sqcomm}
\begin{equation*}
    \Xi^t\otimes_{\CC[\ft]_t}\CC[\ft^\circ]
    = \bz^{\prime\prime\prime*}(\iota'''_*)^{-1}
    \left(\Xi^\circ\otimes_{\CC[\ft^\circ/W]}\CC[\ft^\circ]\right).
\end{equation*}
Note that $\bz^{\prime\prime\prime*}(\iota'''_*)^{-1}$ is an isomorphism
over $\CC[\ft]_t$ by the localization theorem in equivariant homology
groups, as $\cR_{G',\bN'}$ is the fixed point set of $t$. Since
$\Xi^t$ extends to $\CC[\ft]_t$ by the assumption,
$\Xi^\circ\otimes_{\CC[\ft^\circ/W]}\CC[\ft^\circ]$ also extends to
$\CC[\ft]_t$. We apply this argument to all generalized roots, we get
an extension to $\CC[\ft^\bullet]$. Taking the $W$-invariant part, we see that
$\Xi^\circ$ extends to $\CC[\ft^\bullet/W]$.
\begin{NB2}
    Do we need any assumption for the uniqueness ? flatness is enough ?
No assumption at all is needed for uniqueness since this extension is not
an extra datum but a property of the isomorphism at the generic point.
\end{NB2}%

    \begin{NB}
        I have looked at the proof of \cite{MR2135527}. I do not yet
        understand why an isomorphism in (2) can be compatible with
        the given isomorphism in (1) at the generic point.
    \end{NB}
\end{proof}

\begin{Remark}
\label{rem:conditions}
There are various ways to guarantee the condition
$\varPi_*\CO_\cM\xrightarrow{\cong} j_*\varPi_*\CO_{\cM^\bullet}$ of~\thmref{prop:flat}.
For instance, it is enough to assume that $\varPi$ is flat, and $\CM$ is
Cohen-Macaulay. In effect, let $j\colon \cM^\bullet\hookrightarrow\CM$ denote also 
the open embedding of the preimage of $\ft^\bullet/W$ in $\CM$. We have 
$j_*\varPi_*\CO_{\CM^\bullet}=\varPi_*j_*\CO_{\CM^\bullet}=\varPi_*\CO_\CM$ because $\CM$
is Cohen-Macaulay, and the codimension of $\CM\setminus\CM^\bullet$ in $\CM$
is at least 2 because all the fibers of $\varPi$ have the same dimension 
by flatness. 
\begin{NB}
(according to \cite[Theorem~23.1]{matsumura}.)
\end{NB}%

Alternatively, it is enough to assume that all the fibers of $\varPi$ have
the same dimension, and $\CM$ satisfies the Serre condition $S_2$, e.g.\ 
$\CM$ is normal.
\end{Remark}


\begin{NB}
The following is an old text, which is more or less, meaningless.

We have an action of $G_\cK$ on $\Gr_G$ changing the trivialization
$\varphi$ on $D^*$. If we restrict it to $T$ (constant maps with
values in $T$), the fixed point set is $\Gr_T$, the affine
Grassmannian for $T$. Similarly we have $(\cR_{G,\bN})^T =
\cR_{T,\bN|_{T}}$, where $\bN|_T$ is the restriction of $\bN$ to
$T$. By the localization theorem in equivariant homology groups, we have
\begin{equation*}
    H^{G_\cO\rtimes\CC^\times}_*(\cR_{G,\bN})_{\mathrm{loc}}
    \cong
    H^{T_\cO\rtimes\CC^\times}_*(\cR_{G,\bN})_{\mathrm{loc}}^W
    \cong
    H^{T_\cO\rtimes\CC^\times}_*(\cR_{T,\bN|_T})_{\mathrm{loc}}^W,
\end{equation*}
where the subscript `loc' means the localization by the fractional
fields of the equivariant cohomology of a point, and $W$ is the Weyl
group. This isomorphism is compatible with the convolution product.
\end{NB}

\begin{NB}
    See the note on 2014-12-16, for a trial of a study for $(G_c,\bN)
    = (\SU(2),(\CC^2)^{\oplus N_f})$.
\end{NB}

\begin{NB}
    The following subsection is not considered in detail. If this is
    really useful, we should change all the above consideration in
    more general settings..... In particular, I need to understand
    what is actually point of \lemref{lem:convolution}.

\subsection{Further analysis of convolution algebras}

We cannot use the same trick for the quantized Coulomb branch as
$H^{G_\cO}_*(\cR)$ is not commutative, and it is not clear to make
$H^{T_\cO}_*(\cR)$ into an algebra, as we remarked above. However we
still have
\begin{equation*}
    H^{G_\cO\rtimes\CC^\times}_*(\cR) \to
    H^{T_\cO\rtimes\CC^\times}_*(\cR) \to
    H^{T_\cO\rtimes\CC^\times}_*(\cR)|_{\ft^\circ}
    \xrightarrow{\iota_*^{-1} = (\bullet\ast e)^{-1}}
    H^{T_\cO\rtimes\CC^\times}_*(\cR_{T,\bNT})|_{\ft^\circ}
\end{equation*}
as in the proof of \propref{prop:commutative} above. This is an algebra homomorphism.

This construction itself may not be so interesting.... But the same
argument should work also for the fixed point set of $t$, where the
final target is the homology of $\cR^t$. This should be useful to
study simple modules of $H^{G_\cO\rtimes\CC^\times}_*(\cR)$, as in
Varagnolo-Vasserot.....
\end{NB}

\begin{NB}
    The following section is probably unnecessary. It is of expository
    in nature, anyway.

\section{Integrable systems}\label{sec:integrable-systems}

By \propref{prop:integrable} our Coulomb branch is equipped with an
integrable system $\intsys\colon \mathcal M_C\to \Spec(H^*_G(\mathrm{pt})) \cong
\CC^\ell$, where $\ell$ is the rank of $G$.

We will propose conjectural explicit description of these integrable
systems, when known hyper-K\"ahler manifolds are expected to be
realized as Coulomb branches.

\subsection{Easy examples}

Let $\CC^\times$ act on $\CC^2$ by $t(x,y) = (tx,t^{-1}y)$. It
preserves the symplectic form $dx\wedge dy$. The moment map
$\intsys\colon\CC^2\to\CC$ is given by $\intsys(x,y) = xy$. A generic fiber
$\{ xy = t\} $ ($t\in\CC^\times$) is $\CC^\times$. We consider
$\intsys\colon\CC^2\to\CC$ as an integrable system, and we will use it as
a building block for more general integrable systems.

It should be noted that $\intsys$ is the complex part of the
hyper-K\"ahler moment map $\CC^2\to \RR^3$. Since we do not describe
the hyper-K\"ahler structure in our proposal~\ref{def:main}, we do not
know how to interpret this observation in more general context.

The above can be generalized to the case of a simple singularity of
type $A$, i.e., $\CC^2/\ZZ_k$. Let $\CC^\times$ act on $\CC^2/\ZZ_k$ by
$t(x,y)\bmod{\ZZ_k} = (t^{1/k}x, t^{-1/k}y)\bmod{\ZZ_k}$.
\begin{NB2}
If we represent $\CC^2/\ZZ_k$ as $\{ XY = Z^k\}$ in $\CC^3$ by
$X = x^k$, $Y = y^k$, $Z = xy$, the action is given by
$t(X,Y,Z) = (tX,t^{-1}Y,Z)$.
\end{NB2}%
The moment map $\intsys$ is given by $\intsys((x,y)\bmod{\ZZ_k}) = xy
\begin{NB2}
    = Z
\end{NB2}%
$.
\begin{NB2}
    I still need to check this assertion.
\end{NB2}

Starting from $\intsys\colon\CC^2\to\CC$, we get
\begin{equation*}
    S^n\intsys \colon S^n\CC^2 \to S^n\CC \cong \CC^n.
\end{equation*}
Components of $S^n\intsys$ are Poisson commuting and a generic fiber is
$(\CC^\times)^n$. Thus we have an integrable system.

\subsection{Toric hyper-K\"ahler manifolds}

\subsection{Affine Grassmannian}

\subsection{Uhlenbeck spaces}

Let $\Uh{d}$ be the Uhlenbeck space, which is a partial
compactification of $\Bun{d}$ the moduli space of algebraic
$G$-bundles over the projective plane $\proj^2$ with the instanton
number $d$ and with trivialization at the line infinity.
It is an affine variety of dimension $2dh^\vee$, where $h^\vee$ is the
dual Coxeter number.

We construct a complete integrable system
$\pi\colon\Uh{d}\to\CC^{dh^\vee}$ using the space $\mathscr GZ$,
introduced in \cite{MR3134906}.

Let us consider the following coordinates on $\proj^2$ and its blowup
at the origin:
\begin{equation*}
    \begin{split}
    & \proj^2 = \{ [x:y:z] \},
\\
    & \widehat\proj^2 = \{ ([X:Y:Z],[s:t]) \mid Xt = Ys \}.
    \end{split}
\end{equation*}
The blowup morphism is $([X:Y:Z],[s:t])\mapsto [X:Y:Z]$. There is an
open embedding of $\CC^2$ in $\widehat\proj^2$ given by
\begin{equation*}
    \CC^2\ni (x,y) \mapsto ([xy:y:1],[x:1]) \in\widehat\proj^2,
\end{equation*}
and the complement is the union of $Z=0$ and $t=Y=0$. Therefore a
(generalized) $G$-bundle on $\proj^2$ trivialized on the line at
infinity can be viewed as a (generalized) $G$-bundle over
$\widehat\proj^2$, trivialized on the union of $Z=0$ and $t=Y=0$.
\end{NB}

\section{Degeneration and its applications}\label{sec:degeneration}

We introduce a natural degeneration of $\mathcal M_C$ and study its
applications in this section.

\subsection{Filtration}\label{sec:filtra}

Let $\overline{\Gr}_{G}^{\la}$ and $\cR_{\le\la} = \cR\cap
\pi^{-1}(\overline{\Gr}_{G}^{\la})$ as in \subsecref{triples}. In the
diagram \eqref{eq:1}, it is known that $m q
p^{-1}(\overline{\Gr}_{G}^{\la}\times\overline{\Gr}_{G}^{\mu})\subset
\overline{\Gr}_{G}^{\la+\mu}$ (see e.g., \cite[Lemma~4.4]{MV2}).
Therefore we have
\begin{equation*}
    \tilde m \tilde q \tilde p^{-1}(\cR_{\le\la}\times\cR_{\le\mu})
    \subset\cR_{\le\la+\mu}
\end{equation*}
as the diagram \eqref{eq:12} is compatible with \eqref{eq:1} under
$\pi\colon\cT\to\Gr_G$. From the definition of the convolution
product, we have
\begin{equation*}
    H^{G_\cO}_*(\cR_{\le\la})\ast H^{G_\cO}_*(\cR_{\le\mu})
    \subset H^{G_\cO}_*(\cR_{\le\la+\mu}).
\end{equation*}
Thus
\begin{Proposition}
    $\cA$ is a filtered algebra with respect to the filtration $\cA =
    H^{G_\cO}_*(\cR) = \bigcup H^{G_\cO}_*(\cR_{\le\la})$. The same is
    true for $\cAh$.
\end{Proposition}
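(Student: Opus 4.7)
The plan is to verify the containment $H^{G_\cO}_*(\cR_{\le\la})\ast H^{G_\cO}_*(\cR_{\le\mu}) \subset H^{G_\cO}_*(\cR_{\le\la+\mu})$ by tracing the convolution diagram \eqref{eq:12} through the closed subvarieties $\cR_{\le\la}$, and then deduce the filtration property. First I would recall the well-known geometric fact stated in the text (and proved in \cite[Lemma~4.4]{MV2}) that the convolution diagram \eqref{eq:1} for the affine Grassmannian satisfies $\bar m\,\bar q\,\bar p^{-1}(\overline{\Gr}_G^\la\times\overline{\Gr}_G^\mu)\subset \overline{\Gr}_G^{\la+\mu}$. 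Then using the fact that the projection $\pi\colon\cT\to\Gr_G$ intertwines the diagram \eqref{eq:12} with \eqref{eq:1}, and that $\cR_{\le\la}=\pi^{-1}(\overline{\Gr}_G^\la)\cap\cR$, the set-theoretic containment $\tilde m\,\tilde q\,\tilde p^{-1}(\cR_{\le\la}\times\cR_{\le\mu})\subset \cR_{\le\la+\mu}$ (already displayed in the excerpt) is immediate.

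Next I would check that the definition of the convolution product $c_1\ast c_2=\tilde m_*(\tilde q^*)^{-1}p^*(c_1\otimes c_2)$ respects this containment at the level of Borel-Moore homology. The pull-back with support $p^*$ sends $H^{G_\cO}_*(\cR_{\le\la})\otimes H^{G_\cO}_*(\cR_{\le\mu})$ to $H^{G_\cO\times G_\cO}_*(p^{-1}(\cR_{\le\la}\times\cR_{\le\mu}))$, the isomorphism $(\tilde q^*)^{-1}$ identifies this with $H^{G_\cO}_*(q(p^{-1}(\cR_{\le\la}\times\cR_{\le\mu})))$, and the proper push-forward $\tilde m_*$ lands in $H^{G_\cO}_*(\cR_{\le\la+\mu})$ by the geometric containment above. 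Thus $\cA$ is filtered by the coweight lattice with partial order given by dominance, and the displayed formula defines an algebra filtration.

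Finally, for $\cAh$ the same argument applies verbatim: all morphisms in \eqref{eq:12} are $\CC^\times$-equivariant for the loop rotation, all the subvarieties $\cR_{\le\la}$ are $\CC^\times$-stable (since the loop rotation preserves the stratification of $\Gr_G$), and the entire construction of $\ast$ carries over to $G_\cO\rtimes\CC^\times$-equivariant homology by the discussion in \subsecref{subsec:equiv-borel-moore} and \thmref{thm:convolution}. I expect no real obstacles here: the only verification of substance is the set-theoretic statement for $\cR$, and this reduces at once to the known statement for $\Gr_G$ via $\pi$, so the proof should amount to three short lines combining \eqref{eq:12}, \eqref{eq:1}, and the definition of $\ast$.
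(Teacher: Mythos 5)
Your proposal is correct and follows essentially the same route as the paper: the set-theoretic containment $\tilde m\,\tilde q\,\tilde p^{-1}(\cR_{\le\la}\times\cR_{\le\mu})\subset\cR_{\le\la+\mu}$ is deduced from \cite[Lemma~4.4]{MV2} via the compatibility of \eqref{eq:12} with \eqref{eq:1} under $\pi$, and the homology statement then follows directly from the definition of $\ast$, with the $\cAh$ case handled by $\CC^\times$-equivariance. No gaps.
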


Let $\gr\cA$ denote the associated graded algebra. It is graded by
$Y^+ = Y/W$, where $Y^+$ is the semi-group of dominant coweights.

Thanks to \lemref{lem:MayerV}(2) the associated graded algebra
$\gr\cA$ is identified with $\bigoplus H^{G_\cO}_*(\cR_\la)$, as an
$H_{G_\cO}^*(\mathrm{pt})$-module. Moreover Lemmas~\ref{lem:orbit} and
\ref{lem:MayerV}(1) imply that
\begin{equation*}
    H^{G_\cO}_*(\cR_\la) \cong H^*_{\operatorname{Stab}_G(\la)}(\mathrm{pt})
    \cap [\cR_\la] \cong \CC[\ft]^{W_\la} [\cR_\la],
\end{equation*}
where $W_\la$ is the stabilizer of $\la$ in the Weyl group $W$.
We regard $[\cR_\la]$ as a class of $\gr H^{G_\cO}_*(\cR)$.

Let us replace $G$ by its maximal torus $T$.
The affine Grassmannian $\Gr_T$ for the torus $T$ consists of discrete
points parametrized by the coweight lattice $Y$ of $T$. Therefore we
have a direct sum decomposition $H^{T_\cO}_*(\cR_{T,\bNT}) =
\bigoplus_{\mu\in Y} H^{T_\cO}_*(\cR_{\mu;T,\bNT})$, where
$\cR_{\mu;T,\bNT}$ is the component corresponding to a coweight
$\mu$. The above filtration for $T$ is just the union of
$H^{T_\cO}_*(\cR_{\mu;T,\bNT})$ such that $\mu$ is in a Weyl group
orbit of a dominant coweight $\mu'$ with $\mu'\le\la$.
\begin{NB}
The following is wrong. May 15, HN.

In particular, the associated graded algebra $\gr\cA[T,\bNT]$
is isomorphic to $\cA[T,\bNT]$
itself, as an algebra. The grading is induced from the $Y$-grading
on $\cA[T,\bNT]$ via the projection $Y\to Y/W$.
\end{NB}%
Thanks to the calculation in \thmref{abel}, this is a ring having an
explicit presentation by generators and relations.

In order to calculate $\gr\cA$, let us relate $\gr\cA$ to
$\gr\cA[T,\bNT]$.
Recall the embedding $\iota\colon\cR_{T,\bNT}\to\cR$ appearing in
\lemref{lem:convolution_by_e}. It is compatible with the filtration
$\cR = \bigcup \cR_{\le\la}$ : $\cR_{T,\bNT}\cap \cR_{\le\la}$
consisting of the inverse image of $W\la$ in $\cR_{T,\bNT}$, where
$W\la$ is the Weyl group orbit of $\la$ considered as a subset of
$\Gr_T$. Therefore we have a homomorphism
\begin{equation*}
    \gr\iota_*\colon \gr \cA[T,\bNT] 
    \to \gr H^{T_\cO}_*(\cR)
    = \gr \cA[G,\bN] 
    \otimes_{H^*_G(\mathrm{pt})} H^*_T(\mathrm{pt}).
\end{equation*}
Thanks to \lemref{lem:iotahom}, it is a graded algebra homomorphism,
which becomes an isomorphism over $\loc$. In particular, it is
injective.

Let us compute structure constants of the multiplication in
$[\cR_\la]$, via $\gr\cA(T,\bNT)$. Recall $r^{\la}$ is the
fundamental class of the fiber of $\cR_{T,\bNT}\to\Gr_T$ at a point
corresponding to a (not necessarily dominant) coweight $\la$. 
(See the proof of \thmref{abel}.)
Let $a_{\la,\mu}\in \CC[\ft]$ denote the coefficient given by $r^\la
r^\mu = a_{\la,\mu} r^{\la+\mu}$. See \eqref{relation} for the
explicit form.

\begin{Proposition}\label{prop:degen}
    Let $\la$, $\mu$ be dominant coweights. Let $f$, $g\in
    \CC[\ft]^{W_\la}$, $\CC[\ft]^{W_\mu}$ respectively. We have the
    following identity in the associated graded ring $\gr \cA$:
    \begin{equation*}
        f [\cR_\la] \ast g [\cR_\mu] =
        a_{\la,\mu} fg [\cR_{\la+\mu}].
    \end{equation*}
    The same is true for $\cAh$.
\end{Proposition}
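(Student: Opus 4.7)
The plan is to reduce the entire computation to the abelian case via the algebra homomorphism $\gr\iota_*\colon\gr\cA[T,\bNT]^W\hookrightarrow\gr\cA$ from \lemref{lem:iotahom}, which is injective and becomes an isomorphism after localization at $\ft^\circ$. Since the abelian ring has the explicit presentation $r^\nu\ast r^{\nu'}=a_{\nu,\nu'}r^{\nu+\nu'}$ from \thmref{abel}, the product computation becomes bookkeeping of monomials.

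The first step is to apply $T$-equivariant localization to the smooth vector bundle $\cR_\la\to\Gr_G^\la$, whose fixed points are $\{z^{w\la}\mid w\in W/W_\la\}$, to obtain
\[
\iota_*^{-1}([\cR_\la])=\sum_{w\in W/W_\la}\frac{r^{w\la}}{e_{w\la}},\qquad e_{w\la}=\prod_{\alpha\in\Delta}\alpha^{\max(0,\langle\alpha,w\la\rangle)},
\]
where $e_{w\la}$ is the equivariant Euler class of $T_{z^{w\la}}\Gr_G^\la$ computed from the $T$-character recorded in the proof of \lemref{lem:orbit}. The $W_\la$-invariant coefficient $f$ is distributed $W$-equivariantly (using \lemref{lem:Winv}) as $w(f)$ in the $w$th summand; the analogous formulas hold for $g$, $[\cR_\mu]$ and $[\cR_{\la+\mu}]$. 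Multiplying in the abelian ring then gives
\[
\iota_*^{-1}(f[\cR_\la])\ast\iota_*^{-1}(g[\cR_\mu])=\sum_{w,w'}\frac{w(f)\,w'(g)\,a_{w\la,w'\mu}}{e_{w\la}\,e_{w'\mu}}\,r^{w\la+w'\mu}.
\]

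Next I would pass to the $(\la+\mu)$-graded piece. Only pairs with $w\la+w'\mu\in W(\la+\mu)$ survive, and a standard inner-product computation, using that $(u\la,\mu)\le(\la,\mu)$ with equality iff $u\la=\la$ for dominant $\la,\mu$, forces $w\la=w'\la$; hence $w\la+w'\mu=w'(\la+\mu)$ and the sum is indexed by $W/W_{\la+\mu}=W/(W_\la\cap W_\mu)$. The surviving contribution becomes
\[
\sum_{w\in W/W_{\la+\mu}}w\!\left(\frac{a_{\la,\mu}\,fg}{e_\la\,e_\mu}\right)r^{w(\la+\mu)},
\]
while the target $a_{\la,\mu}fg[\cR_{\la+\mu}]$ equals $\sum_w w(a_{\la,\mu}fg)\,r^{w(\la+\mu)}/e_{w(\la+\mu)}$. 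Matching therefore reduces, by $W$-equivariance, to the single Euler-class identity $e_\la\cdot e_\mu=e_{\la+\mu}$; this follows from $\max(0,\langle\alpha,\la\rangle)+\max(0,\langle\alpha,\mu\rangle)=\max(0,\langle\alpha,\la+\mu\rangle)$ for each root $\alpha$, which holds because dominance of $\la,\mu$ forces the three pairings to share a common sign. The quantized case proceeds identically with \eqref{q-relation} in place of \thmref{abel}. I expect the main obstacle to be tracking the $\hbar$-shifts in the quantized abelian relation together with the noncommutativity controlled by \lemref{lem:former_claim}, though the classical argument's $W$-symmetric structure, hinging on the combinatorial fact $W_{\la+\mu}=W_\la\cap W_\mu$ for dominant $\la,\mu$, survives intact in the quantum setting.
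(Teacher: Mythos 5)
Your proof is correct and follows the same route as the paper: reduce to the abelian case via the injective graded algebra homomorphism $\gr\iota_*$, expand $f[\cR_\la]$ by localization as $\sum_{\la'\in W\la} wf\cdot r^{\la'}/e(T_{\la'}\Gr_{G}^{\la})$, and invoke the multiplicativity $e(T_\la\Gr_{G}^{\la})\,e(T_\mu\Gr_{G}^{\mu})=e(T_{\la+\mu}\Gr_{G}^{\la+\mu})$ for dominant $\la,\mu$. The only difference is presentational: the paper shortcuts the Weyl-orbit bookkeeping by observing that the product is a priori a $\CC[\ft]^{W_{\la+\mu}}$-multiple of $[\cR_{\la+\mu}]$, so it suffices to extract the coefficient of $r^{\la+\mu}$ alone, whereas you verify all orbit terms explicitly (your intermediate phrase ``forces $w\la=w'\la$'' should read: there is a single $v\in W$ with $w\la=v\la$ and $w'\mu=v\mu$, which is what your coset count over $W/W_{\la+\mu}$ actually uses).
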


\begin{proof}
    We have
\begin{equation}\label{eq:76}
   (\gr\iota_*)^{-1}f[\cR_\lambda] = \sum_{\la'\in W\la}
   \frac{wf \times r^{\la'}}{e(T_{\la'} \Gr_{G}^{\la})},
\end{equation}
where $T_{\la'}\Gr_{G}^{\la}$ is the tangent space of $\Gr_{G}^{\la}$ at
the point $z^{\la'}$ with $\la'\in W\la$, and $e(T_{\la'}
\Gr_{G}^{\la})$ is its equivariant Euler class. Furthermore $w$ for $wf$
is given so that $\la' = w\la$. Then $wf$ is independent of the choice
as $f$ is invariant under $W_\la$.
Since $\gr\iota_*$ is an algebra homomorphism, we can calculate
$f[\cR_\la]\ast g[\cR_\mu]$ from the right hand side. Since it is
enough to compute the coefficient of $r^{\la+\mu}$, we do not need to
worry other terms than $r^\la$, $r^\mu$. Therefore we obtain
\begin{equation*}
        f [\cR_\la] \ast g [\cR_\mu] =
        \frac{e(T_{\la+\mu} \Gr_{G}^{\la+\mu})}
        {e(T_\la\Gr_{G}^{\la}) e(T_\mu \Gr_{G}^{\mu})}
        a_{\la,\mu} fg [\cR_{\la+\mu}].
\end{equation*}
Now we use $e(T_\la \Gr_{G}^{\la}) = \prod_{\alpha\in\Delta^+}
\alpha^{\langle\la,\alpha\rangle}$, where $\alpha$ is considered as an
element of $\CC[\ft]$ (cf.\ the proof of \lemref{lem:orbit}). Therefore
$e(T_\la \Gr_{G}^{\la}) e(T_\mu\Gr_{G}^{\mu})$ cancels with $e(T_{\la+\mu} \Gr_{G}^{\la+\mu})$ and we get the assertion.
\end{proof}

\begin{Remark}
    \begin{NB}
        Why the coefficient is in $\CC[\ft]^{W_{\la+\mu}}$ ? Sasha
        wrote that it is easy to see....
    \end{NB}%
    Let us check that $a_{\la,\mu} fg\in \CC[\ft]^{W_{\la+\mu}}$. From
    the description of $a_{\la,\mu}$ (see \eqref{relation}), it is clear that
    $a_{\la,\mu}\in \CC[\ft]^W$. Note also that $W_\la$ is the subgroup
    of $W$ generated by simple reflections $s_i$ such that the
    corresponding simple roots $\alpha_i$ are perpendicular to $\la$.
    \begin{NB}
        Suppose $w\in W_\la$. If $\alpha\in\Delta^+\cap w(\Delta^-)$,
        we have $0\le \langle \la, \alpha\rangle = \langle w\la,
        \alpha\rangle = \langle \la, w\alpha\rangle \le 0$. Therefore
        $\langle \la,\alpha\rangle = 0$. Now consider a reduced
        expression of $w$.
    \end{NB}%
    Therefore we have $W_{\la+\mu} = W_{\la} \cap W_\mu$, and hence
    $fg\in \CC[\ft]^{W_{\la+\mu}}$.
\end{Remark}

\begin{Remark}\label{rem:BDG2}
    In \cite[\S4.3]{2015arXiv150304817B}, a monopole operator
    $M_{A,p}\in\CC[\mathcal M_C]$ corresponding to a cocharacter $A$
    and an $W_A$-invariant polynomial $p$ is considered. Here $W_A$ is
    the Weyl group of $\Stab_G(A)$. Moreover a formula for the product
    $M_{A,p} M_{A',p'}$ is proposed. (See
    \cite[(4.16,17)]{2015arXiv150304817B}.) They satisfy the triangular
    property, which is related to our filtration.
    Therefore $M_{A,p}$ could be related to our $f[\cR_\la]$ ($f\in
    \CC[\ft]^{W_\la}$) under $A\leftrightarrow \la$, $p\leftrightarrow
    f$. However our $f[\cR_\la]$ lives in $\gr\cA$. It is not clear
    for us how to lift $f[\cR_\la]$ to $\CC[\mathcal M_C]$
    \emph{canonically}. Also it is not clear for us how to define the
    equivariant integration in \cite[(4.16,17)]{2015arXiv150304817B}
    rigorously.
\end{Remark}

\subsection{Closed \texorpdfstring{$G_\cO$}{G\textunderscore O}-orbits}

The discussion in the previous subsection was about the associated
graded, but we can also say something if $\Gr_G^\la$ is a closed
$G_\cO$-orbit.

First note that the fundamental class $[\cR_\la]$ is well-defined in
$H^{G_\cO}_*(\cR)$ as $\cR_{<\la} = \emptyset$. Let $\iota_*$ as in
\lemref{lem:convolution_by_e}, and $r^{\la'}$ denote the fundamental
class of the fiber of $\cR_{T,\bNT}\to\Gr_T$ at a coweight $\la'$ as
above. Then \eqref{eq:76} remains true:
\begin{Proposition}\label{prop:useful}
    Let $\la$ be a dominant weight such that $\Gr_G^\la$ is a closed
    $G_\cO$-orbit. Let $f\in\CC[\ft]^{W_\la}$. Then
    \begin{equation*}
        (\iota_*)^{-1}f[\cR_\lambda] = \sum_{\la'\in W\la}
        \frac{wf \times r^{\la'}}{e(T_{\la'} \Gr_{G}^{\la})}.
    \end{equation*}
\end{Proposition}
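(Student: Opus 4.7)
The plan is to reduce the statement to an application of the equivariant localization theorem, exactly as in the calculation of \eqref{eq:76} above, but now one works inside $\cA$ itself rather than in the associated graded. The point of the closedness hypothesis is precisely that $\cR_\lambda = \cR_{\le\lambda}$ is a closed subvariety of $\cR$, so $[\cR_\lambda]$ and $f[\cR_\lambda]$ already live unambiguously in $H^{G_\cO}_*(\cR)$, and one does not have to pass to $\gr\cA$ to write down the desired identity.

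First I would make sense of the class $f[\cR_\lambda]$ for $f\in\CC[\ft]^{W_\lambda}$: by \lemref{lem:Rla}, $\cR_\lambda\to\Gr_{G}^\lambda$ is an infinite-rank vector bundle and $\Gr_G^\lambda \cong G_\cO/\Stab_{G_\cO}(\lambda)$. The Levi quotient of $\Stab_{G_\cO}(\lambda)$ is $\Stab_G(\lambda)$, whose Weyl group is $W_\lambda$, so $H^*_{\Stab_{G_\cO}(\lambda)}(\mathrm{pt})\cong\CC[\ft]^{W_\lambda}$ and $H^{G_\cO}_*(\cR_\lambda) \cong \CC[\ft]^{W_\lambda}[\cR_\lambda]$ as noted in \subsecref{sec:filtra}. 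Since $\cR_\lambda$ is closed in $\cR$, pushforward gives a well-defined $f[\cR_\lambda]\in H^{G_\cO}_*(\cR)$, and via \eqref{eq:20} a corresponding element of $H^{T_\cO}_*(\cR)$.

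Next I would identify $\iota^{-1}(\cR_\lambda)$. By \lemref{lem:fixedpts} (or the explicit description of $\cR_{T,\bNT}\subset\cR$ as the preimage of $\Gr_T\subset\Gr_G$), this intersection is $\bigsqcup_{\lambda'\in W\lambda}\cR_{\lambda';T,\bNT}$, where $\cR_{\lambda';T,\bNT}$ is the fiber of $\cR_{T,\bNT}\to\Gr_T$ at $\lambda'$ whose fundamental class is $r^{\lambda'}$. The $T$-fixed locus of $\cR_\lambda$ coincides with the preimage in $\cR_\lambda$ of $(\Gr_G^\lambda)^T = W\lambda$, and at each $\lambda'\in W\lambda$ the fiber of the vector bundle $\cR_\lambda\to\Gr_G^\lambda$ is exactly $\cR_{\lambda';T,\bNT}$. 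Hence the normal bundle to this $T$-fixed locus inside $\cR_\lambda$ at $\lambda'$ is $T_{\lambda'}\Gr_G^\lambda$, whose equivariant Euler class is nonzero in the localized ring.

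Then I would apply the equivariant localization theorem. Working in $H^{T_\cO}_*(\cR)\otimes_{\CC[\ft]}\CC(\ft)$, where $\iota_*$ is an isomorphism by \lemref{lem:iota_injective}, the localization formula yields
\[
(\iota_*)^{-1}[\cR_\lambda] = \sum_{\lambda'\in W\lambda} \frac{r^{\lambda'}}{e(T_{\lambda'}\Gr_G^\lambda)}.
\]
To cap with $f\in\CC[\ft]^{W_\lambda}$, observe that via the presentation $\Gr_G^\lambda = G_\cO/\Stab_{G_\cO}(\lambda)$ the element $f$ acts on $H^{G_\cO}_*(\cR_\lambda)$ through $H^*_{\Stab_{G_\cO}(\lambda)}(\mathrm{pt})$; at the fixed point $\lambda' = w\lambda$ the restriction map $\CC[\ft]^{W_\lambda}\to\CC[\ft]$ is $f\mapsto wf$ (independent of the choice of $w$ because $f$ is $W_\lambda$-invariant). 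Combining this with the localization formula above gives the desired identity. The only conceptual step that requires a little care is checking that the action of $\CC[\ft]^{W_\lambda}$ on $[\cR_\lambda]$ restricts to multiplication by $wf$ at $\lambda'$; this is however a standard computation in the equivariant cohomology of a partial flag variety, and the main technical ingredients (the localization isomorphism for $\iota_*$, and the vector bundle structure of $\cR_\lambda$) are already in place.
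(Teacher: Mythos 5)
Your argument is correct and is essentially the paper's own: the paper simply observes that since $\cR_{<\la}=\emptyset$ the class $f[\cR_\la]$ is already well-defined in $H^{G_\cO}_*(\cR)$ and that the localization formula \eqref{eq:76} (fixed points $W\la$ of $\Gr_G^\la$, normal directions $T_{\la'}\Gr_G^\la$, restriction $f\mapsto wf$) holds verbatim without passing to $\gr\cA$. The only cosmetic slip is calling $\bigsqcup_{\la'}\cR_{\la';T,\bNT}$ the $T$-fixed locus — it is the preimage of $\Gr_T$, which is what the localization statement \lemref{lem:iota_injective} for $\iota_*$ actually uses — but this does not affect the proof.
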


Recall that we have explicit structure constants for $r^{\la'}$
(\thmref{abel}) and $\iota_*$ is an algebra homomorphism
(\lemref{lem:bimodule-1}). Therefore this proposition can be used to
compute multiplication of elements of forms $f[\cR_\la]$ with
$\Gr^\la_G$ closed.

\begin{Remark}\label{rem:BDG3}
   It has been noted that monopole operators $M_{A,p}$ have explicit presentation when $A$ is minuscule in \cite[\S4.3]{2015arXiv150304817B}. This observation is compatible with the above proposition, i.e., $f[\cR_\la]$ has well-defined lift when $\la$ is minuscule, in view of \remref{rem:BDG2}. As is noted \cite{2015arXiv150304817B}, there are many examples such that minuscule monopole operators generate $\cA$, for example when $G$ is a product of $\GL$ and $\PGL$ like a quiver gauge theory.  See the proof of \lemref{lem:SL2}(2) below, for example.
\end{Remark}

\subsection{Finite generation}

\begin{Proposition}\label{prop:fg}
    $\cA$ and $\cAh$ are finitely generated. They are noetherian.
\end{Proposition}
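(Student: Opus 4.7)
The strategy is to reduce to showing that the associated graded algebra $\gr\cA$, with respect to the $Y^+$-filtration of Subsection~\ref{sec:filtra}, is a finitely generated $\CC$-algebra. Finite generation of $\cA$ then follows by the standard filtered-graded lifting argument: lift a finite set of homogeneous generators of $\gr\cA$ to elements of $\cA$; by induction on the filtration degree they generate all of $\cA$. The Noetherian property follows in the same way, using Hilbert's basis theorem applied to $\gr\cA$ and the standard filtered-Noetherian lift. The same argument will handle $\cAh$ verbatim, treating $\hbar$ as a central variable throughout.

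First I handle the abelian case. By Theorem~\ref{abel}, $\cA[T,\bNT]$ is generated as a $\CC$-algebra by $\operatorname{Sym}(\ft^*)$ together with the elements $r^\la$ for $\la\in Y$, subject to the relation~\eqref{relation}. I group the coweights $\la\in Y$ by the sign pattern $(\operatorname{sign}\xi_1(\la),\ldots,\operatorname{sign}\xi_n(\la))$ of the weights of $\bN$. Whenever $\la,\mu$ lie in the same pattern, $d(\xi_i(\la),\xi_i(\mu))=0$ for all $i$, so $r^\la r^\mu=r^{\la+\mu}$; the coweights in each pattern are the intersection of $Y$ with a rational polyhedral cone in $Y\otimes\RR$, hence a finitely generated submonoid by Gordan's lemma. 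Since only finitely many sign patterns occur, $\cA[T,\bNT]$ is a finitely generated $\CC$-algebra.

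For the general case, the key input is the injective algebra homomorphism $\iota_*\colon \cA[T,\bNT]^W\hookrightarrow \cA$ of Lemma~\ref{lem:bimodule-1}, which by Lemma~\ref{lem:iotahom} becomes an isomorphism after localization to $\Spec\CC[\ft^\circ/W]$. Applying Hilbert's theorem on finite group invariants to the previous paragraph, $\cA[T,\bNT]^W$ is itself a finitely generated $\CC$-algebra. The filtration on $\cA$ induces a compatible filtration on $\cA[T,\bNT]^W$, and the induced map on the associated graded $\gr\iota_*\colon \gr\cA[T,\bNT]^W\hookrightarrow \gr\cA$ retains the same localization-isomorphism property. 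Combining this with the explicit description $H^{G_\cO}_*(\cR_\la)\cong\CC[\ft]^{W_\la}[\cR_\la]$ of the graded components (Proposition~\ref{prop:degen}) and the fact that each $\CC[\ft]^{W_\la}$ is a free $\CC[\ft]^W$-module of finite rank by Chevalley-Shephard-Todd, one concludes that $\gr\cA$ is obtained from $\gr\cA[T,\bNT]^W$ by adjoining finitely many module generators of the graded pieces, which preserves finite generation.

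The main obstacle is exactly this last step: passing from finite generation of $\gr\cA[T,\bNT]^W$ to finite generation of $\gr\cA$. A priori the two rings agree only after inverting the product of generalized roots, so one must check that the extension is module-finite with a finitely generated set of module generators arising from the ranks of $\CC[\ft]^{W_\la}$ over $\CC[\ft]^W$, controlling that the ``denominators'' appearing in the explicit expressions from Propositions~\ref{prop:degen} and~\ref{prop:useful} can be absorbed into finitely many homogeneous generators. The quantized case $\cAh$ runs identically once we replace $H^*_G(\mathrm{pt})$ by $H^*_{G\times\CC^\times}(\mathrm{pt})=\CC[\ft]^W[\hbar]$ throughout.
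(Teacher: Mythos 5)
Your reduction to finite generation of $\gr\cA$ and your treatment of $\cAh$ and the Noetherian property are fine, and your sign-pattern/Gordan's-lemma argument is exactly the right combinatorial idea. But there is a genuine gap at the step you yourself flag, and it does not close as written. From $\iota_*$ you only get that $\gr\cA$ \emph{contains} a finitely generated subalgebra (the image of $\gr\cA[T,\bNT]^W$) with which it agrees after inverting the generalized roots; that implication alone never yields finite generation of the larger ring. Your attempted repair — that $\gr\cA$ is obtained by ``adjoining finitely many module generators of the graded pieces'' — is false as stated: there is one graded piece $\CC[\ft]^{W_\la}[\cR_\la]$ for \emph{each} dominant coweight $\la$, so you would be adjoining infinitely many module generators over $\CC[\ft]^W$ unless you already control how the classes $[\cR_\la]$ multiply against each other. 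That multiplicative control is precisely the missing ingredient, and it is not supplied by module-finiteness of $\CC[\ft]^{W_\la}$ over $\CC[\ft]^W$.

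The fix is to drop the abelian detour entirely and run your Gordan's-lemma argument directly on $\gr\cA$, using the full strength of \propref{prop:degen} rather than only its description of the graded components: it gives the closed formula $f[\cR_\la]\ast g[\cR_\mu]=a_{\la,\mu}fg[\cR_{\la+\mu}]$ \emph{inside} $\gr\cA$, with $a_{\la,\mu}=\prod_i\xi_i^{d(\xi_i(\la),\xi_i(\mu))}$ as in \eqref{relation}. Decompose the dominant cone into generalized Weyl chambers (cut out by the weight hyperplanes of $\bN$); for $\la,\mu$ in the same chamber every $d(\xi_i(\la),\xi_i(\mu))$ vanishes, so $a_{\la,\mu}=1$ and $[\cR_\la][\cR_\mu]=[\cR_{\la+\mu}]$. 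Gordan's lemma then gives finitely many semigroup generators per chamber, and the corresponding classes $[\cR_{\la_i}]$ together with $\CC[\ft]$ generate $\gr\cA$. This is the paper's argument; your version applies the same semigroup idea to $\cA[T,\bNT]$, where it proves something true but insufficient, and then cannot transport the conclusion across the non-surjective embedding $\iota_*$.
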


\begin{proof}
    As for finite generation, it is enough to check that for $\cA$
    since $\cAh$ is a flat deformation of $\cA$. Since $\cA$ is
    commutative, it is noetherian if it is finitely generated. Then
    $\cAh$ is also noetherian. Thus it is enough to check that $\cA$
    is finitely generated. Furthermore, it is sufficient to show that
    $\gr\cA$ is finitely generated.

    We consider generalized (closed) Weyl chambers given by
    complements of generalized root hyperplanes. Our $\lambda$, $\mu$
    are in the dominant Weyl chamber in the usual sense, but the
    dominant Weyl chamber is further decomposed into union of
    generalized Weyl chambers as there might be weights which are not
    roots.

    Now each generalized chamber is a rational polyhedral cone, hence
    its intersection with the coweight lattice is a finitely generated
    semigroup.
    \begin{NB}
        This is a standard result, e.g., a textbook in toric geometry.
    \end{NB}%
    We take finitely many $[\cR_\lambda]$ so that $\lambda$
    is a generator of the semigroup. If we multiply $[\cR_\lambda]$,
    $[\cR_\mu]$ when $\la$, $\mu$ are in the same generalized Weyl
    chamber, $a_{\lambda\mu}$ is 1 from its definition. Therefore
    $[\cR_\lambda][\cR_\mu] = [\cR_{\lambda+\mu}]$. Therefore
    $[\cR_\lambda]$ as above for each generalized chamber, together
    with generators of $\CC[\ft]$ generate $\gr\cA$.
\end{proof}

\subsection{\texorpdfstring{$\SL(2)$}{SL(2)} and \texorpdfstring{$\PGL(2)$}{PGL(2)} cases}

We determine $\mathcal M_C$ when $G=\SL(2)$ or $\PGL(2)$ in this
subsection.\footnote{This result was taught by Amihay Hanany as a
  result of physical intuition. The third named author thanks him for
  his explanation.}

\begin{Lemma}\label{lem:SL2}
    \textup{(1)} Suppose $G=\SL(2)$. Then $\mathcal M_C(G,\bN)$ is a
    hypersurface in $\CC^3$ of the form $\xi^2 = \delta\eta^2
    -\delta^{N-1}$ \textup($N\ge 1$\textup) or $\xi^2=\delta\eta^2 +
    \eta$ \textup($N=0$\textup), where
    $N=\sum_\mu|\langle\chi,\la_0\rangle|\dim\bN(\chi)/2$. Here
    $\la_0$ is the generator of the coweight lattice, which is
    dominant.

    \textup{(2)} Suppose $G=\PGL(2)$. Then $\mathcal M_C(G,\bN)$ is
    also $\xi^2 = \delta\eta^2 - \delta^{N-1}$ where $N = \sum_\mu
    |\langle\chi,\la_0\rangle|\dim \bN(\chi)/2+1$. Furthermore we have
    $\cR_{\le\la_0} = \cR_{\la_0}$ in this case, and generators are given by
    $\eta = [\cR_{\la_0}]$, $\xi = t[\cR_{\la_0}]$, $\delta = t^2 \in
    H^4_{\CC^\times}(\mathrm{pt})^{\{\pm 1\}}$, up to multiplicative
    constants, where $H^*_{\CC^\times}(\mathrm{pt}) = \CC[t]$. Here
    $\la_0$ is again the generator of the coweight lattice, which is
    dominant. \textup(It is the half of $\la_0$ in \textup{(1)}.\textup).
\end{Lemma}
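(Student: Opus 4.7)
The plan is to first prove part (2) for $G=\PGL(2)$, where $\la_0$ is minuscule, and then deduce part (1) for $G=\SL(2)$ via the finite-quotient formula in \subsecref{sec:triv-properties}\ref{item:finite_quotient}. Since $\pi_1(\PGL(2))=\ZZ/2$ and $\la_0=\alpha^\vee/2$ is the minimal dominant coweight in the non-trivial component, the orbit $\Gr_{\PGL(2)}^{\la_0}\cong\CP^1$ is closed, and no dominant coweight $\mu<\la_0$ exists in that component; hence $\cR_{\le\la_0}=\cR_{\la_0}$ as claimed. This is the key feature that lets us compute in closed form.

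For part (2), I would apply \propref{prop:useful} with $f=1$ and $f=t$ (using $W_{\la_0}=\{1\}$ since $\la_0$ is regular) to obtain explicit lifts
\[
\iota_*^{-1}\eta=\frac{r^{\la_0}-r^{-\la_0}}{\alpha},\qquad \iota_*^{-1}\xi=r^{\la_0}+r^{-\la_0}
\]
in $H^{T_\cO}_*(\cR_{T,\bNT})$, both manifestly Weyl-invariant with $\alpha$ identified with $t\in\ft^*$. Using \thmref{abel} one computes $r^{\pm\la_0}r^{\pm\la_0}=r^{\pm 2\la_0}$ and $r^{\la_0}r^{-\la_0}=c\,\delta^{N-1}$ for an explicit nonzero constant $c=\prod_\chi\chi^{|\langle\chi,\la_0\rangle|}/(t^{\,2(N-1)})$, the key observation being that $\sum_\chi|\langle\chi,\la_0\rangle|\dim\bN(\chi)=2(N-1)$. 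Expanding $\eta^2$ and $\xi^2$ then gives the combination $\delta\eta^2-\xi^2=-4c\,\delta^{N-1}$, which is the claimed relation up to a multiplicative constant (absorbable by rescaling $\eta,\xi$). For generation, I would exploit the filtration of \subsecref{sec:filtra}: by \propref{prop:degen}, $\eta^n$ agrees with $[\cR_{n\la_0}]$ modulo the filtration, so $\CC[\delta,\eta,\xi]$ surjects onto $\gr\cA$ because in each degree $n\ge 1$ the $\CC[\delta]$-module $\CC[\ft]^{W_{n\la_0}}=\CC[t]$ has rank $2$ spanned by $1$ and $t$, accounted for by $\eta^n$ and $\eta^{n-1}\xi$; injectivity of the presentation follows by comparing $\CC[\delta]$-ranks degree by degree.

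For part (1), when $\bN$ is actually a $\PGL(2)$-representation, I would invoke \subsecref{sec:triv-properties}\ref{item:finite_quotient} for the cover $\SL(2)\to\PGL(2)$ to get $\mathcal M_C(\SL(2),\bN)=\mathcal M_C(\PGL(2),\bN)/\Gamma$ with $\Gamma=\ZZ/2$ acting through the $\pi_1(\PGL(2))$-grading, i.e.\ $(\xi,\eta,\delta)\mapsto(-\xi,-\eta,\delta)$. The invariant ring is generated by $\eta^2,\xi\eta,\delta$ with relation $(\xi\eta)^2=\delta(\eta^2)^2-\delta^{N_{\PGL}-1}(\eta^2)$. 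Completing the square via the substitution $\eta^2\mapsto\eta^2+\tfrac12\delta^{N_{\PGL}-2}$ eliminates the linear term and yields $\xi^2=\delta\eta^2-\tfrac14\delta^{2N_{\PGL}-3}$, matching the stated form with $N_{\SL}=2N_{\PGL}-2$. The case $\bN=0$ corresponds to $N_{\PGL}=1$, where no such substitution is possible and the relation $\xi^2=\delta\eta^2-\eta$ (equivalent to $\xi^2=\delta\eta^2+\eta$ after $\eta\mapsto-\eta$) arises directly.

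The main obstacle is the case where $\bN$ is a genuine $\SL(2)$-representation (with weights odd in $X^*(T_{\SL(2)})$), which does not descend to $\PGL(2)$ and so is inaccessible by the quotient argument. For this situation I would instead work directly with $\SL(2)$ by combining the diagram \eqref{eq:21} with the filtration argument: the minimal dominant coweight is $\la_0=\alpha^\vee$, and although $\Gr_{\SL(2)}^{\la_0}$ is not closed (one has $\cR_{\le\la_0}\supsetneq\cR_{\la_0}$), the classes $\delta$ and lifts of $[\cR_{n\la_0}]$ via~\propref{prop:useful} applied inductively to the $\cR_{\le n\la_0}$ provide enough generators, and the relation can then be checked from the explicit bimodule calculation in $H^{T_\cO}_*(\cR_{T,\bNT})$ using \thmref{abel}. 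The extra ``$+\eta$'' in the $N=0$ case reflects the fact that the affine blow-up in \eqref{eq:75} is nontrivial precisely when $\bN=0$, since the unique dominant coweight contribution then forces a nonhomogeneous correction.
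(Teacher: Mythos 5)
Your treatment of part (2) is essentially the paper's: closedness of $\Gr_{\PGL(2)}^{\la_0}$, the localization formula of \propref{prop:useful}, the computation $r^{\la_0}r^{-\la_0}=c\,\delta^{N-1}$ from \thmref{abel}, and generation via $\gr\cA$. Your reduction of part (1) to part (2) by the $\ZZ/2$-quotient of \subsecref{sec:triv-properties}\ref{item:finite_quotient} is a legitimately different route when $\bN$ factors through $\PGL(2)$, and your bookkeeping ($N_{\SL}=2N_{\PGL}-2$, completing the square in $\eta^2$, the exceptional case $N_{\PGL}=1$) checks out there.

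The gap is in the remaining case of part (1), where $\bN$ has odd weights and does not descend to $\PGL(2)$ — e.g.\ $\bN=\CC^2$. Your plan to invoke \propref{prop:useful} "inductively" for $\cR_{\le n\la_0}$ does not work: that proposition requires $\Gr_G^\la$ to be a \emph{closed} $G_\cO$-orbit, and for $\SL(2)$ no nontrivial orbit is closed. Consequently $[\cR_{\la_0}]$ is only well defined in $\gr\cA$; any lift to $\cA$ is ambiguous by elements of $H^{G_\cO}_*(\cR_{<\la_0})=\CC[\delta]$, i.e.\ by $\CC\delta^m$, and this ambiguity feeds directly into the constant term of the relation you are trying to compute. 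The "explicit bimodule calculation" only determines the coefficients of $r^{\pm\la_0}$ in $\iota_*^{-1}$ of a lift (via Euler classes of tangent spaces), not the coefficient of $r^0$, so $\xi^2-\delta\eta^2$ cannot be evaluated this way. What is missing is the paper's mechanism for taming this ambiguity: the relation $\xi^2=\delta\eta^2$ holds in $\gr\cA$, so by homogeneity of the cohomological grading the true relation can only be $\xi^2-\delta\eta^2=a\delta^m+b\delta^n\xi$ or $+b\delta^n\eta$; one then completes the square in $\xi$ (or $\eta$) and uses the fact that $\mathcal M_C$ is smooth over $\ft^\circ=\{\delta\ne0\}$ (\corref{cor:coordinate}) to rule out the degenerate value $a=b^2/4$, after which rescaling gives the stated normal forms. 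Without this step your argument does not determine the right-hand side of the equation, nor does it produce the formula for $N$ in the genuine $\SL(2)$ case (which the paper extracts from \lemref{lem:Rla} and $\dim\Gr^{\la_0}_{\SL(2)}=2$).
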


\begin{proof}
    Let us first consider both $G=\SL(2)$ and $\PGL(2)$ together.

    We use $\gr\cA$.

    Let us identify the torus $T$ with $\CC^\times$ and the coweight
    lattice with $\ZZ$ so that dominant coweights are $\ZZ_{\ge
      0}$. (Hence $\la_0$ in the statement is $1$.) 
    Since $\la$,$\mu\in\ZZ_{> 0}$ are equal up to multiple of
    $\QQ_{>0}$, $\langle\chi,\la\rangle$ and $\langle\chi,\mu\rangle$
    always have the same sign for any weight $\chi$. Therefore
    $a_{\la,\mu}$ in \propref{prop:degen} is $1$. Moreover $W_\la = \{
    \pm 1\}$ if $\la=0$, $=\{1\}$ if $\la > 0$. Therefore $\gr\cA$ is
    generated by $\delta = t^2\in H^4_{\CC^\times}(\mathrm{pt})^{\{\pm
      1\}}$, $\eta = [\cR_1]$, $\xi = t[\cR_1]$, where $t$ is a
    generator of $\CC[\ft]\cong
    H^*_{\CC^\times}(\mathrm{pt})$. Moreover the relation is
    \begin{equation}\label{eq:53}
        \xi^2 = (t[\cR_1])^2 = t^2 [\cR_2] = \delta \eta^2.
    \end{equation}
    The singularities of $\Spec(\gr\cA)$ is at $\xi = \eta = 0$ and
    arbitrary $\delta$. (In particular $\Spec(\gr\cA)$ is \emph{not}
    normal.)

    Since $\gr\cA$ is a degeneration of $\cA$, $\delta$, $\eta$, $\xi$
    are generators of $\cA$, hence $\mathcal M_C$ is also a
    hypersurface in $\CC^3$.

    Note that \eqref{eq:53} is true modulo $H^{G_\cO}_*(\cR_{\le 1})$
    and the defining equation must be homogeneous with $\deg\xi =
    \deg\eta + 1$, $\deg\delta = 2$. The only possibility is
    \begin{equation*}
        \xi^2 - \delta \eta^2 = a \delta^m 
        \text{$+ b \delta^n\xi$ or $+b\delta^n\eta$}
    \end{equation*}
    for some $m$, $n\in\ZZ_{\ge 0}$, $a$, $b\in\CC$.
    ($m = \deg \eta + 1$, $n = (\deg \eta + 1)/2$ or $(\deg\eta+2)/2$.)
    
    \begin{NB}
    The differential of LHS - RHS is
    \begin{equation*}
        \begin{split}
            & 2\xi d\xi - (\eta^2 + a m \delta^{m-1}) d\delta 
            - 2 \delta \eta d\eta
        \text{$+ (nb \delta^{n-1}\xi d\delta + b\delta^n d\xi)$
          or
          $+ (nb \delta^{n-1}\eta d\delta + b\delta^n d\eta)$}
\\
    =\; &
    \begin{cases}
    (2\xi + b\delta^n) d\xi 
    - (\eta^2 + am\delta^{m-1} - nb\delta^{n-1}\xi) d\delta
    - 2\delta\eta d\eta,
    \\
    2\xi d\xi 
    - (\eta^2 + am\delta^{m-1} - nb\delta^{n-1}\eta) d\delta
    - (2\delta\eta - b\delta^n) d\eta.
    \end{cases}
        \end{split}
    \end{equation*}

    Let us look at points where the differential vanishes.

    We already know that $\mathcal M_C$ is nonsingular for $\delta\neq
    0$ by \corref{cor:coordinate} as $0\neq
    \delta\in\ft^\circ$. Therefore we may assume $\delta = 0$. 
    \begin{NB2}
        Therefore $\xi^2 = a\delta_{m0} \text{$+ b\delta_{n0}\xi$ or
          $+b\delta_{n0}\eta$}$.
    \end{NB2}%
    From the coefficient of $d\xi$, we find $\xi = 0$ or $-b/2$ if the
    first case with $n=0$.
    \begin{NB2}
        ($\deg\eta = -1$ or $-2$.)
    \end{NB2}%
    Next look at the coefficient of $d\delta$, $\eta$ must be a
    solution of a quadratic equation, therefore we have at most two
    possibilities for $\xi$. Therefore $\mathcal M_C(G,\bN)$ has at
    most two isolated singularities.
    \end{NB}

    Consider the first case $\xi^2 - \delta\eta^2 = a\delta^m +
    b\delta^n\xi$. We have $m=2n$. We set $\xi' \defeq \xi -
    b\delta^n/2$. Then $(\xi')^2 - \delta\eta^2 = (a -
    b^2/4)\delta^m$. Note that $a=b^2/4$ is not possible, as we know
    that $\mathcal M_C$ is nonsingular for $\delta\neq 0$ by
    \corref{cor:coordinate} as $0\neq \delta\in\ft^\circ$. Therefore
    by rescaling $\delta$ and $\eta$, we get the equation $(\xi')^2 -
    \delta\eta^2 = \delta^m$.

    The other case $\xi^2 - \delta\eta^2 = a\delta^m + b\delta^n\eta$
    is the same if $n\neq 0$. If $n=0$, $m=-1$. Therefore $a\delta^m$
    cannot appear. Hence we have $\xi^2 - \delta\eta^2 = \eta$ by
    rescaling.

    \begin{NB}
        $N = m + 1 = \deg \eta + 2$. This extra $2$ is equal to the
        dimension of the base of $\cR_{1}$, i.e., $\Gr_{1}$.
    \end{NB}%

    We now suppose $G=\PGL(2)$. In this case $\Gr_G^{\la_0}$ is a closed
    $G_\cO$-orbit for $\la_0=1$. Hence \propref{prop:useful} is
    applicable for $\eta$, $\xi$. We have $\Gr_G^{\la_0}\cong \CP^1$ and
    $W\la_0 = \{ \pm \la_0\}$ is the north and south poles. We have
  \begin{equation*}
      (\iota_*)^{-1}(\eta) = \frac{r^{\la_0} - r^{-\la_0}}{t}, \qquad
      (\iota_*)^{-1}(\xi) = {r^{\la_0} + r^{-\la_0}}.
  \end{equation*}
  \begin{NB}
      $\xi = t[\cR_1]$ is identified with $c_1(\shfO(2))\cap[\CP^1]$,
      as the standard homomorphism $B \ni \{ \left[
        \begin{smallmatrix}
            x & a \\ 0 & y
        \end{smallmatrix}\right] \mid x,y\in\CC^\times, a\in\CC \} /\CC^\times 
      \mapsto xy^{-1}\in \CC^\times$ gives $\shfO(2)$. Therefore
      $\int_{\Gr^\la_G} \xi = 2$. This is compatible with the above
      formula, as $\int_{\Gr^\la_G} \iota_*(r^{\pm \la}) = 1$.
  \end{NB}%
  Therefore
  \begin{equation*}
      (\iota_*)^{-1}(\xi^2 - \delta \eta^2)
      = 4 r^{\la_0} r^{-\la_0} = 4 \prod_\mu 
      (\langle \mu,\la_0\rangle t)^{|\langle \mu,\la_0\rangle|},
  \end{equation*}
  where the last equality is by \eqref{relation}. Note that the power
  of $t$ is even as weights appear in pairs $\mu$, $-\mu$. Thus the
  right hand side is a power of $\delta$.
  After rescaling $\xi$, $\delta$, $\eta$, we get an equation $\xi^2 -
  \delta\eta^2 = \delta^m$.

  Finally the formula of $N$ is given by computing the degree of $\eta
  = [\cR_1]$. The rank of $\cT/\cR$ over $\Gr_G^1$ is given by
  \lemref{lem:Rla}, and $\dim \Gr_G^1 = 1$ for $\PGL(2)$, $2$ for
  $\SL(2)$. (For $\PGL(2)$, the above computation also gives the
  formula of $m=N-1$.)
\end{proof}

\begin{NB}
    Let us consider the trivial case $(G,\bN) = (\PGL(2),0)$. In this
    case we have $\xi^2 - \delta\eta^2 = 4$. This is different from
    \cite[\S3.10]{MR2135527}. It is likely that choices of generators
    are different.
\end{NB}%

\begin{NB}
    Let us consider the case $(G,\bN)=(\PGL(2),\mathfrak{pgl}(2))$. We
    have a decomposition $\mathfrak{pgl}(2)= \CC e\oplus\CC h\oplus\CC
    f$, which can be regarded as the root space decomposition. If
    $\la$ is a generator as above, we have $\langle e,\la\rangle = 1$,
    $\langle f,\la\rangle = -1$, $\langle h,\la\rangle = 0$. Therefore
    $\xi^2 - \delta\eta^2 = -4\delta$.

    On the other hand, we have $\mathcal M_C = (\ft\times T^\vee)/W$
    by \subsecref{subsec:prev}\ref{subsub:adjoint}. For $G=\PGL(2)$,
    let $t$, $a^\pm$ be coordinates of $\ft$ and $T^\vee$
    respectively. The Weyl group action is $t\leftrightarrow -t$,
    $a\leftrightarrow a^{-1}$. Therefore $\delta = t^2$, $\eta = (a +
    a^{-1})$, $\xi = t(a-a^{-1})$ are generators with relations
    $\xi^2 - \delta\eta^2 = -4\delta$.
\end{NB}%

\begin{NB}
    Let us check the compatibility between the formula of $N$ for
    $\SL(2)$ and $\PGL(2)$. Since $\la_{\SL(2)} = 2\la_{\PGL(2)}$, we
    have $N_{\SL(2)} = 2N_{\PGL(2)} - 2$. Suppose $\mathcal
    M_C(\PGL(2),\bN) = \{ x^2 = \delta z^2 -
    \delta^{N_{\PGL(2)}-1}\}$. We have $\mathcal M_C(\SL(2),\bN) =
    \mathcal M_C(\PGL(2),\bN)/\ZZ/2$ by
    \subsecref{sec:triv-properties}\ref{item:finite_quotient}. The
    $\ZZ/2$ action is $-1$ on $x$, $z$ and $1$ on $\delta$ by its
    definition in \subsecref{sec:grading}. Therefore the invariant
    ring is generated by $\eta = z^2$, $\delta$, $\xi = zx$ and $x^2 =
    \delta z^2 - \delta^{N_{\PGL(2)}-1} = \delta\eta -
    \delta^{N_{\PGL(2)}-1}$. The defining equation is $\xi^2 = (zx)^2 
    = \eta (\delta \eta - \delta^{N_{\PGL(2)}-1})$. We set
    $\eta' = \eta - \delta^{N_{\PGL(2)}-2}/2$. Then
    $\xi^2 = \delta (\eta')^2 - \delta^{2N_{\PGL(2)}-3}/4$. Since
    $2N_{\PGL(2)} - 3 = N_{\SL(2)} - 1$, the formula is correct.
\end{NB}%

\begin{NB}
  Let us continue the calculation. Since $\xi = zx$, $\eta = z^2$, we
  have
  \begin{equation*}
    \begin{split}
      & (\iota_*)^{-1}(\xi) = \frac{r^{2\lambda_0} - r^{-2\lambda_0}}t,\\
      & (\iota_*)^{-1}(\eta) = \frac{r^{2\lambda_0} + r^{-2\lambda_0}}{t^2} - \frac{2 r^{\lambda_0} r^{-\lambda_0}}{t^2}.
    \end{split}
  \end{equation*}
  Hence $(\iota_*)^{-1}(\eta')$ is
  $\frac{r^{2\lambda_0} + r^{-2\lambda_0}}{t^2} +
  a\delta^{N_{\PGL(2)}-2}$ for a constant $a$. Since we have
  $\xi^2 \equiv \delta(\eta')^2$ modulo $\CC[\delta]$, we must have
  $a=0$. Then
  \begin{equation*}
    (\iota_*)^{-1}(\xi^2 - \delta(\eta')^2)
    = -\frac{4 r^{2\lambda_0} r^{-2\lambda_0}}{t^2} \in \CC \delta^{2(N_{\PGL_2}-1) - 1}
    = \CC \delta^{N_{\SL(2)}-1}.
  \end{equation*}
  Thus the power is also correct.
\end{NB}%

\begin{Remark}\label{rem:Dnsing}
    The hypersurface $\xi^2 = \delta\eta^2 - \delta^{N-1}$ is a simple
    singularity of type $D_N$ if $N\ge 4$. The cases $\xi^2 =
    \delta\eta^2 + \eta$ (`type $D_0$') and $N=1$ are nonsingular. $N=2$ has
    two $A_1$ singularities at $\xi=\delta=0$, $\eta=\pm 1$. $N=3$ is
    isomorphic to the $A_3$-singularity.  (The natural hyper-K\"ahler
    metrics on $D_3$ and $A_3$ are different.)  Compare with
    Example~\ref{ex:typeD}.
\end{Remark}

\subsection{Normality}

\begin{Proposition}\label{prop:normality}
    $\mathcal M_C(G,\bN)$ is a normal variety.
\end{Proposition}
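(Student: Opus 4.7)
The plan is to verify Serre's criterion for normality: since $\mathcal{M}_C$ is an integral Noetherian affine scheme (Corollary~\ref{cor:integral} and Proposition~\ref{prop:fg}), it suffices to show that it is regular in codimension one (R1) and satisfies condition S2.

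For R1, I begin with Corollary~\ref{cor:coordinate}: $\mathcal{M}_C$ is smooth over $\varpi^{-1}(\ft^\circ/W)$, so $\mathrm{Sing}(\mathcal{M}_C)\subset\varpi^{-1}((\ft\setminus\ft^\circ)/W)$. The codimension-one components of $(\ft\setminus\ft^\circ)/W$ are the images of individual generalized root hyperplanes. Take a generic point $t\in\ft^\bullet\setminus\ft^\circ$ on such a hyperplane, so that $t$ lies on exactly one generalized root hyperplane. By Lemma~\ref{lem:fixedpts}, the $\exp(\RR t)$-fixed locus of $\cR_{G,\bN}$ equals $\cR_{G',\bN'}$ with $G'=Z_G(t)$ of semisimple rank at most one and $\bN'=\bN^t$. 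The localization theorem, applied through the compatibility diagram of Lemma~\ref{lem:sqcomm} and the isomorphisms $\bz^{\prime\prime *}, \bz^{\prime\prime\prime *}$ of Section~\ref{sec:abel}, identifies the completion of $\cA$ along the ideal of $t$ with the corresponding completion of $H_*^{G'_\cO}(\cR_{G',\bN'})$. Thus R1 at points over $t$ will follow from normality of $\mathcal{M}_C(G',\bN')$.

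The key reduction is therefore to the rank-$\leq 1$ case, where normality can be read off explicit presentations: in the abelian case from Theorem~\ref{abel} (the resulting ring is a toric-style hypersurface intersection of the type analyzed in Section~\ref{sec:toric-hyper-kahler}, manifestly normal); in the $G'=\SL(2)$ or $\PGL(2)$ case (possibly times a torus, handled by \ref{item:product} of \subsecref{sec:triv-properties}) from Lemma~\ref{lem:SL2}, where $\mathcal{M}_C(G',\bN')$ is a surface of the form $\xi^2=\delta\eta^2-\delta^{N-1}$ or $\xi^2=\delta\eta^2+\eta$. Either hypersurface is normal: the Jacobian criterion shows the singular locus is isolated (consisting at most of the rational double points of type $D_N$ described in Remark~\ref{rem:Dnsing}), hence of codimension $2$, and such hypersurface singularities in a smooth $3$-fold satisfy S2 automatically.

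For S2, I invoke flatness of $\cA$ over $\CC[\ft/W]=H^*_G(\mathrm{pt})$ established in Lemma~\ref{lem:flat}, which refines to projectivity of each filtration piece $H_*^{G_\cO}(\cR_{\leq\lambda})$ via the odd-vanishing in Lemma~\ref{lem:MayerV}. Since $\ft/W$ is smooth (hence Cohen--Macaulay), the standard fact that a flat morphism with Cohen--Macaulay base and Cohen--Macaulay fibres has Cohen--Macaulay total space will reduce S2 to Cohen--Macaulayness of the fibres of $\varpi$. Arguing fiberwise via the localization theorem, the fibre over $t$ is controlled by $H_*(\cR^t)=H_*(\cR_{G',\bN'})$ for the same rank-$\leq 1$ pair as above, where Cohen--Macaulayness is clear from the explicit toric/hypersurface descriptions.

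The main obstacle is the transfer step in R1: making precise how normality of the rank-one slice $\mathcal{M}_C(G',\bN')$ implies that of $\mathcal{M}_C(G,\bN)$ at a generic codimension-one point of $\varpi^{-1}(t)$. A clean way to package this is to run Theorem~\ref{prop:flat} backwards: one shows the formal/étale neighbourhood of $\varpi^{-1}(t)$ in $\mathcal{M}_C$ is pulled back, after flat base change to a punctured neighbourhood of $t$ in $\ft$, from $\mathcal{M}_C(G',\bN')\times(\text{smooth})$, and then invokes descent of R1 under smooth morphisms. Once R1 and S2 are in hand, Serre's criterion gives normality of $\mathcal{M}_C(G,\bN)$.
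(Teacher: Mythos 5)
Your overall strategy (Serre's criterion plus reduction, via equivariant localization at points of $\ft^\bullet$, to centralizers $Z_G(t)$ of semisimple rank $\le 1$, whose Coulomb branches are computed explicitly) is the same as the paper's for the $(R_1)$ half. But your $(S_2)$ argument has a genuine gap. You propose to get $(S_2)$ from Cohen--Macaulayness of the total space, reducing via ``flat over a smooth base with CM fibres'' to CM-ness of the fibres of $\varpi$, and you claim the fibre over $t$ is ``controlled by $H_*(\cR^t)$ via the localization theorem.'' This confuses localization with specialization: the localization theorem identifies $\cA\otimes_{\CC[\ft/W]}\CC[\ft]^{W_t}_t$ (inverting functions not vanishing at $t$) with the corresponding localization for $(Z_G(t),\bN^t)$; it says nothing about the fibre $\varpi^{-1}(t)=\Spec(\cA\otimes_{\CC[\ft/W]}\CC)$, which is a closed fibre, not a localization. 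Moreover, Cohen--Macaulayness of $\cA$ is explicitly left open in the paper (``In many examples, $\cA$ is Cohen--Macaulay\dots we do not have counter-examples at this moment''), so any route through CM-ness of the fibres is not available.

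The paper's actual route to $(S_2)$ is different and you should adopt it: first prove the elementary lemma that an affine scheme $X$ is normal provided an open $U\subset X$ with $\codim(X\setminus U)\ge 2$ is normal and every regular function on $U$ extends to $X$ (the extension property \emph{is} $(S_2)$ across $X\setminus U$). Take $U=\varpi^{-1}(\ft^\bullet/W)$; its complement has codimension $2$ because $\varpi$ is flat with irreducible total space, so all fibres have the same dimension. The extension property is exactly what is established in the first part of the proof of Theorem~\ref{prop:flat}: each filtration piece $H^{G_\cO}_*(\cR_{\le\la})$ is a finitely generated \emph{projective} $\CC[\ft/W]$-module (by Lemmas~\ref{lem:orbit} and \ref{lem:MayerV}), hence the corresponding sheaf satisfies $\CH\xrightarrow{\sim}j_*j^*\CH$ on the smooth base $\ft/W$. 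Normality of $U$ itself (which gives both $(R_1)$ and $(S_2)$ there) then follows from normality of the rank-$\le1$ local models via the genuine localization isomorphism. One more secondary point: when $Z_G(t)$ has semisimple rank $1$ it need not split as $T'\times H$ with $H=\SL(2)$ or $\PGL(2)$; there is in general a finite central subgroup $\Gamma$ with $1\to\Gamma\to Z_G(t)\to T'\times H\to1$, and one must use \subsecref{sec:triv-properties}\ref{item:finite_quotient} to identify $\cA[Z_G(t),\bN^t]$ with $\Gamma^\wedge$-invariants and then note that normality passes to quotients by finite groups. Your ``possibly times a torus'' elides this step.
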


The proof occupies this subsection.
We start with the following lemma.

\begin{Lemma}
    Let $X$ be an affine scheme of finite type over a field $\Bbbk$
    and let $U$ be an open subset of $X$ such that the complement has
    dimension $\geq 2$. Assume that a) $U$ is normal, b) any regular
    function on $U$ extends to $X$. Then $X$ is normal.
\end{Lemma}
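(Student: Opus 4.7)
The statement is equivalent to showing that the normalization morphism $\pi\colon\widetilde X\to X$ is an isomorphism. Since normality is a local property and every normal ring is reduced, one first reduces to the case where $X$ is reduced (the complement of $U$ has codimension $\geq 2$, so $U$ is schematically dense in $X$; any nilpotent in $\mathcal O(X)$ would restrict to $0$ on $U$, contradicting the injectivity of $\mathcal O(X)\hookrightarrow\mathcal O(U)$ that one obtains from density together with hypothesis~(b)). Granting this, $\pi$ is a finite birational morphism.

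Next, since $U$ is normal, $\pi$ is an isomorphism over $U$; write $\widetilde U=\pi^{-1}(U)\cong U$. Take $\widetilde f\in\mathcal O(\widetilde X)$ and restrict it to $\widetilde U=U$, obtaining a regular function on $U$. By hypothesis~(b) this extends to some $f\in\mathcal O(X)$. The pullback $\pi^*f$ and $\widetilde f$ now agree on $\widetilde U$, which is dense in $\widetilde X$ because $\widetilde X$ is reduced and $\widetilde X\setminus\widetilde U=\pi^{-1}(X\setminus U)$ has codimension $\geq 2$ in $\widetilde X$ (as $\pi$ is finite). Hence $\pi^*f=\widetilde f$, so the finite ring map $\pi^*\colon\mathcal O(X)\to\mathcal O(\widetilde X)$ is surjective, and therefore $\pi$ is a closed immersion. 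A finite birational closed immersion is an isomorphism, so $\widetilde X\cong X$ and $X$ is normal.

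\textbf{Main obstacle.} The delicate step is the reducedness reduction and the verification that $U$ is schematically dense in $\widetilde X$: one has to check that the codimension condition on $X\setminus U$ really forces $U$ to meet every irreducible component and to do so with dense image under $\pi$. Once that is granted, the rest of the argument is the formal extension–restriction bookkeeping sketched above. (In the application to $\mathcal M_C=\mathcal M_C(G,\bN)$, which is what this lemma is designed for, one takes $U=\varpi^{-1}(\ft^\bullet/W)$: over $\ft^\circ/W$ the space $\mathcal M_C$ is smooth by~\corref{cor:coordinate}, and at points of $(\ft^\bullet\setminus\ft^\circ)/W$ it is locally modeled, via~\lemref{lem:fixedpts} and~\thmref{prop:flat}, on a Coulomb branch for a rank-one pair $(G',\bN')$, which is normal by~\secref{sec:abelian} and~\lemref{lem:SL2}; the extension property~(b) on $U$ follows from the fact that $H^{G_\cO}_*(\cR)$ is built from the $H^{G_\cO}_*(\cR_{\leq\lambda})$ which are flat $\CC[\ft/W]$-modules, so their sections over $\ft^\bullet/W$ extend across the codimension-two locus.)
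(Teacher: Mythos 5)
Your argument is correct in substance but takes a genuinely different route from the paper. You run the normalization $\pi\colon\widetilde X\to X$ and use hypothesis (b) to force $\pi^{\#}\colon\mathcal{O}(X)\to\mathcal{O}(\widetilde X)$ to be surjective, hence an isomorphism. The paper instead invokes Serre's criterion: $(R_1)$ for $X$ follows from $(R_1)$ for $U$ together with $\codim(X\setminus U)\ge 2$, and $(S_2)$ for $X$ follows from $(S_2)$ for $U$ together with (b), since $(S_2)$ is (essentially) the statement that regular functions extend across codimension-two closed subsets. The two proofs trade the same input in opposite directions: Serre's criterion packages the extension property as a depth condition and never mentions $\widetilde X$, while your version is more hands-on and makes visible exactly where finiteness of the normalization (excellence of finite-type $\Bbbk$-algebras) enters. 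Both are acceptable; the paper's is shorter once one grants the $(S_2)$ characterization.

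The step you yourself flag as delicate is indeed where your write-up is wrong as stated. Injectivity of $\mathcal{O}(X)\to\mathcal{O}(U)$ does \emph{not} follow from topological density of $U$ plus hypothesis (b): take $X=\Spec \Bbbk[x,y,z]/(xz,yz,z^2)$ and $U=X\setminus\{0\}\cong\mathbb{A}^2\setminus\{0\}$. Then $U$ is normal, the complement has codimension $2$, and every regular function on $U$ extends to $X$ (the restriction $\mathcal{O}(X)\to\Bbbk[x,y]=\mathcal{O}(U)$ is surjective), yet $z\neq 0$ restricts to $0$ on $U$ and $X$ is not even reduced. So the lemma as literally stated requires either that $X$ be reduced with no embedded components, or that (b) be strengthened to ``extends \emph{uniquely}'', i.e.\ that $\mathcal{O}(X)\to\mathcal{O}(U)$ be bijective. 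This is not peculiar to your approach: the paper's own proof makes the same tacit assumption (the correct characterization of $(S_2)$ is that $\mathcal{O}_X\to j_*\mathcal{O}_U$ is an \emph{isomorphism}, not merely surjective), and in the only application $X=\mathcal M_C$ is already integral by \corref{cor:integral}, with the isomorphism $\varPi_*\mathcal{O}_{\mathcal M}\to j_*\varPi_*\mathcal{O}_{\mathcal M^\bullet}$ verified in \thmref{prop:flat}. Once that injectivity is granted, the remainder of your argument — schematic density of $\pi^{-1}(U)$ in the reduced scheme $\widetilde X$ because finite surjective morphisms preserve codimension, hence surjectivity of $\pi^{\#}$, hence $\pi$ an isomorphism — goes through.
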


\begin{proof}
    We use Serre's criterion $(R_1)$ and $(S_2)$. By a) $U$ satisfies both $(R_1)$ and $(S_2)$.

    By $(R_1)$ for $U$ and $\codim(X\setminus U) \geq 2$, $X$ satisfies the
    condition $(R_1)$. The condition $(S_2)$ is guaranteed by $(S_2)$ for $U$ and b),
    as $(S_2)$ is equivalent to that any regular function on $V$
    extends to $X$ for arbitrary open subscheme $V\subset X$ with
    $\codim (X\setminus V)\ge 2$.
    \begin{NB}
        Consider $\mathcal H^i_{X\setminus V}(X,\shfO_X)$ for $i=0,1$.
    \end{NB}%
\end{proof}

\begin{NB}
Sasha's message on Sep.~12:

\begin{verbatim}
let X be an affine scheme
 of finite type over a field k and let U be an open subset of 
X such that the complement has dimension \geq 2. Assume that
a) U is normal
b) Any regular function on U extends to X
Then X is normal.

If we assume the lemma, then we can take X={\mathcal M}_C and take U
to be the preimage of the corresponding
open subset in \mathfrak t/W (take all elements which lie in at most
one generalized root hyperplane).
Then normality of U probably reduces to SL(2).
\end{verbatim}
\end{NB}

We take $X = \mathcal M_C(G,\bN)$, $U = \intsys^{-1}(\ft^\bullet/W)$. The
condition b) is satisfied thanks to the first part of the proof of
\thmref{prop:flat}. We also know $\intsys$ is flat (\lemref{lem:flat})
and $X$ is irreducible (\corref{cor:integral}). Therefore fibers of
$\intsys$ all have the same dimension, hence the complement of $U$ has
codimension $2$.
\begin{NB}
    See \cite[III.\@ Cor.\@ 9.6]{MR0463157}.
\end{NB}%
Therefore it is enough to show that $U$ is normal.

\begin{NB}
    Added on Sep.\@ 27:
    
    Probably it is enough to consider the $T_\cO$-equivariant
    cohomology groups, but the following is enough.
\end{NB}%
By the localization theorem in equivariant cohomology groups, we have
an isomorphism
\begin{equation*}
    H^{Z_G(t)_\cO}_*(\cR_{Z_G(t),\bN^t})\otimes_{H^*_{Z_G(t)}(\mathrm{pt})}
    \CC[\ft]_t^{W_t}
        \cong 
    H^{G_\cO}_*(\cR_{G,\bN})\otimes_{H^*_G(\mathrm{pt})} \CC[\ft]_t^{W_t}
\end{equation*}
for $t\in\ft$. (Recall \secref{sec:abel}, especially \eqref{eq:22}.)
Here $W_t$ is the Weyl group of $Z_G(t)$, which is the subgroup of $W$
fixing $t$. Therefore it is enough to show the normality of $\mathcal
M_C(Z_G(t),\bN^t)$ for each $t\in\ft^\bullet$.
If $t\in\ft^\circ$, we have $\mathcal M_C(Z_G(t),\bN^t)\cong T^*
T^\vee$. So it is smooth.
Therefore we may assume $t\in\ft^\bullet\setminus\ft^\circ$.
By \subsecref{sec:fixed-points}, there is a unique generalized root
$\alpha$ such that $\langle t,\alpha\rangle = 0$.

As remarked at the beginning of \secref{sec:abel}, $Z_G(t)$ has
semisimple rank at most $1$, and we know the answer for $\SL(2)$,
$\PGL(2)$ and a torus $T$. The remaining task is a reduction to these
cases.

\begin{proof}[Proof of \propref{prop:normality}]
    The same (or simpler) argument as in \lemref{lem:SL2} shows that
    $\mathcal M_C(G,\bN)$ for $G=\CC^\times$ is a hypersurface
    $xy=w^N$ for some $N=0,1,2,\dots$.
    \begin{NB}
        Here $w$ is a generator of $\CC[\ft] =
        H^*_{\CC^\times}(\mathrm{pt})$, $x$, $y$ are $[\cR_1]$,
        $[\cR_{-1}]$ or opposite.
    \end{NB}%

    Slightly more generally, suppose that $\alpha$ is a generalized
    root of type (I). Then $Z_G(t) = T$, $\bN^t =
    \bN^T\oplus\bigoplus_{m\in\ZZ}\bN(m\alpha)$. Let us consider
    $\cA[Z_G(t),\bN^t] = \cA[T,\bigoplus_{m\in\ZZ}\bN(m\alpha)]$. Let
    $Y(T)$ be the coweight lattice of $T$. Since $\alpha$ is a weight of
    $T$, we have a homomorphism $\Phi\colon Y(T)\to \ZZ$ given by the
    pairing with $\alpha$.
    \begin{NB}
        $\Phi$ could be non surjective, e.g., $\mu(t) = t^2$,
        $\mu\colon\CC^\times\to\CC^\times$.
    \end{NB}%
    Let us consider the kernel and image
    \begin{equation*}
        0 \to \Ker\Phi \to Y(T)\to \Ima \Phi \to 0.
    \end{equation*}
    Since $\Ker\Phi$ and $\Ima\Phi$ are both free,
    \begin{NB}
        as both are submodules of free modules,
    \end{NB}%
    this exact sequence splits, hence $Y(T) \cong
    \Ker\Phi\oplus\Ima\Phi$. Let us take $\la_g\in \Ima\Phi\cong\ZZ$, a
    generator. We suppose $\langle\la_g,\alpha\rangle > 0$. Then by
    \thmref{abel}, $\cA[T,\bigoplus_{m\in\ZZ}\bN(m\alpha)]$ is
    generated by $\CC[\ft]$, $r^\la$ ($\la\in\Ker\Phi$), $r^{\pm
      \la_g}$ with the relation
    \begin{equation*}
        r^\la r^{\mu} = r^{\la+\mu}
        \quad(\la,\mu\in\Ker\Phi), \quad
        r^{\la_g} r^{-\la_g} = \prod_{m\in \ZZ} (m\alpha)^{|m|
          \langle\la_g,\alpha\rangle\dim \bN(m\alpha)},
    \end{equation*}
    where $\alpha$ is regarded as an element in $\ft^*$. Therefore
    $\mathcal M_C(Z_G(t),\bN^t)$ is a product of $T^* T'$ for one
    dimensional lower torus $T'$ and a simple singularity $xy = w^N$
    of type $A$ for some $N=1,2,\dots$.

    If $\alpha$ is a type (II) generalized root, $Z_G(t)$ has
    semisimple rank $1$. Let us denote by $Z$ the neutral connected component
of the center of $Z_G(t)$. It acts trivially on $\bN^t$ since its Lie algebra
$\on{Lie}(Z)\subset\ft$ is the kernel of $\alpha$ in the Cartan subalgebra, 
but the weights of $\bN^t$
are multiples of $\alpha$. Hence the action of $Z_G(t)$ on $\bN^t$ factors 
through $H:=Z_G(t)/Z$. Note that $H$ is isomorphic to $\SL(2)$ or $\PGL(2)$ 
(recall that $Z_G(t)$ is connected).
Let $D\subset Z_G(t)$ be the derived subgroup, and let $T':=Z_G(t)/D$ be the
quotient torus. The kernel of the diagonal morphism $Z_G(t)\to T'\times H$
is a finite abelian subgroup $\Gamma$ (in fact, $\Gamma$ is either $\{\pm1\}$
or trivial), so that we have an exact sequence
$1\to\Gamma\to Z_G(t)\to T'\times H\to1$. As we have just seen, the 
representation of $Z_G(t)$ in $\bN^t$ factors through its quotient 
$Z_G(t)\to T'\times H\to H$. Hence $H^{Z_G(t)_\shfO}(\CR_{Z_G(t),\bN^t})=
H^{(T'\times H)_\shfO}(\CR_{T'\times H,\bN^t})^{\Gamma^\wedge}=\CC[T^*T^{\prime\vee}\times{\mathcal M}_C(H,\bN^t)]^{\Gamma^\wedge}$
by \secref{sec:triv-properties}\ref{item:product},\ref{item:finite_quotient}.
Since ${\mathcal M}_C(H,\bN^t)$
is normal according to~\lemref{lem:SL2}, $(T^*T^{\prime\vee}\times{\mathcal M}_C(H,\bN^t))/\Gamma^\wedge$
is normal as well.
\end{proof}

\subsection{Adjoint matters}

The purpose of this subsection is to prove the following result,
mentioned in \subsecref{subsec:prev}\ref{subsub:adjoint}.

\begin{Proposition}
\label{prop:ad2}
For a reductive group $G$ and its adjoint representation $\fg$, the
birational isomorphism $\bz^*\iota_*^{-1}\colon
\CM_C(G,\fg)|_{\Phi^{-1}(\ft^\circ/W)}\simeq (\ft^\circ\times
T^\vee)/W$ of \corref{cor:coordinate} extends to a biregular
isomorphism $\CM_C(G,\fg)\iso (\ft\times T^\vee)/W$.
\end{Proposition}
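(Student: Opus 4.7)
The plan is to apply \thmref{prop:flat} with $\mathcal{M} = (\mathfrak{t} \times T^\vee)/W$ and $\varPi$ the projection induced by the first factor. First I would verify the technical hypotheses on $\mathcal{M}$. Since $\mathfrak{t} \times T^\vee$ is smooth and $W$ is finite, $\mathcal{M}$ is normal. The fiber of $\varPi$ over a point $[t] \in \mathfrak{t}/W$ is $T^\vee/W_{\bar t}$, which has constant dimension $\dim T$; by \remref{rem:conditions} this already guarantees the required pushforward condition. The generic-stratum isomorphism
\[
    \Xi\colon \CC[\mathcal{M}]\otimes_{\CC[\mathfrak{t}/W]}\CC[\mathfrak{t}^\circ/W] \xrightarrow{\cong} \CC[\mathfrak{t}\times T^\vee]^W\otimes_{\CC[\mathfrak{t}/W]}\CC[\mathfrak{t}^\circ/W]
\]
is tautological, and its composition with $\iota_*(\bz^*)^{-1}$ is the content of \corref{cor:coordinate}.

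Next I would examine the generalized root structure. For $\bN=\mathfrak{g}$ every nonzero weight is a root, so the set of generalized roots equals the usual root system $\Delta$ and $\mathfrak{t}^\circ$ is the complement of the root hyperplanes. Fix $t \in \mathfrak{t}^\bullet \setminus \mathfrak{t}^\circ$; then $t$ lies in exactly one root hyperplane $\ker\alpha$, and \lemref{lem:fixedpts} identifies $Z_G(t)$ as a connected reductive subgroup of semisimple rank $1$ with $\bN^t = \mathfrak{z}_\mathfrak{g}(t) = \mathrm{Lie}(Z_G(t))$, which is the adjoint representation of $Z_G(t)$. The Weyl group $W_t$ of $Z_G(t)$ is $\{1, s_\alpha\}$.

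The core step is constructing $\Xi^t$, i.e.\ computing $\mathcal{M}_C(Z_G(t),\mathfrak{z}_\mathfrak{g}(t))$ and matching it with the localization of $(\mathfrak{t}\times T^\vee)/W$ at $t$, which looks like $(\mathfrak{t} \times T^\vee)/W_t$ over a punctured neighborhood. Decompose $Z_G(t)$ as an almost direct product $Z \cdot H$ where $Z$ is the neutral component of the center and $H \cong \SL(2)$ or $\PGL(2)$, giving a short exact sequence $1 \to \Gamma \to Z_G(t) \to (Z\times H)/\Gamma' \to 1$ with $\Gamma$ finite. The adjoint representation splits as $\mathrm{Lie}(Z) \oplus \mathrm{Lie}(H)$ with $Z$ acting trivially, so \subsecref{sec:triv-properties}\ref{item:product},\ref{item:trivial},\ref{item:finite_quotient} give
\[
    \mathcal{M}_C(Z_G(t),\mathfrak{z}_\mathfrak{g}(t)) \cong \bigl(T^*Z^\vee \times \mathcal{M}_C(H,\mathrm{Lie}(H))\bigr)/\Gamma^\wedge.
\]
By \lemref{lem:SL2} applied to $H$ with $\bN=\mathrm{Lie}(H)$ (for which $N=2$ in the $\PGL(2)$ case, and correspondingly for $\SL(2)$), the hypersurface $\xi^2 = \delta\eta^2 - \delta^{N-1}$ is visibly $(\mathfrak{t}_H \times H_T^\vee)/W_t$ via the $W_t$-invariants $\delta = t_H^2$, $\eta = a+a^{-1}$, $\xi = t_H(a-a^{-1})$. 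Pasting with the torus factor and taking $\Gamma^\wedge$-invariants identifies $\mathcal{M}_C(Z_G(t),\mathfrak{z}_\mathfrak{g}(t))$ with $(\mathfrak{t}\times T^\vee)/W_t$, localized appropriately at $t$.

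The hard part will be verifying the compatibility condition $\bz^{\prime*}(\iota'_*)^{-1}(\Xi^t\otimes \CC[\mathfrak{t}^\circ]) = \Xi \otimes \CC[\mathfrak{t}^\circ]$. Both sides are, by construction, built from the same abelian-side presentation of \thmref{abel} and the embedding $\iota_*$ of \lemref{lem:iotahom}; so it suffices to trace through the identifications and check that the generators $\delta,\eta,\xi$ of $\mathcal{M}_C(H,\mathrm{Lie}(H))$ coincide with the restrictions of the standard $W_t$-invariant coordinates on $\mathfrak{t}\times T^\vee$ under $\bz^*\iota_*^{-1}$. This compatibility reduces to the computation already performed in the proof of \lemref{lem:SL2}, where $(\iota_*)^{-1}(\eta), (\iota_*)^{-1}(\xi)$ were expressed in terms of the abelian generators $r^{\pm\lambda_0}$. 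With these ingredients \thmref{prop:flat} then delivers the desired biregular isomorphism $\mathcal{M}_C(G,\mathfrak{g}) \cong (\mathfrak{t}\times T^\vee)/W$.
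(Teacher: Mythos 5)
Your proposal is correct and follows essentially the same route as the paper: apply \thmref{prop:flat}, reduce at $t\in\ft^\bullet\setminus\ft^\circ$ to the semisimple-rank-one centralizer via the decomposition from the proof of \propref{prop:normality}, and settle the rank-one case by the explicit $\PGL(2)$/$\SL(2)$ computation of \lemref{lem:SL2}. The only point to make explicit in your final compatibility check is that passing from $(\iota_*)^{-1}(\eta),(\iota_*)^{-1}(\xi)$ to $\bz^*(\iota_*)^{-1}(\eta),\bz^*(\iota_*)^{-1}(\xi)$ requires the Euler-class correction factors of \subsecref{sec:chang-repr}, and it is precisely for the adjoint representation that these cancel the denominators in $t$ and produce the coordinates $r^{\la_0}+r^{-\la_0}$ and $t(r^{\la_0}-r^{-\la_0})$, making the map surjective rather than merely birational.
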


\begin{proof}
    We use the criterion in \thmref{prop:flat}.\footnote{The third
      named author thanks Ryosuke Kodera for his suggestion to give
      this proof.} In this case generalized roots are nothing but
    usual roots. For $t\in\ft^\bullet\setminus\ft^\circ$, there is a
    single root $\alpha$ with $\langle\alpha,t\rangle = 0$. By the
    commutativity of \eqref{eq:22}, $\Xi^t$ is also given by
    \corref{cor:coordinate}, but for $Z_G(t)$. Therefore it is enough
    to check the assertion for $Z_G(t)$. By the argument in the last
    part of the proof in \propref{prop:normality}, we can replace
    $Z_G(t)$ by $\PGL(2)$. ($\SL(2)$ can be replaced by $\PGL(2)$, as
    we are considering the adjoint representation.)
    \begin{NB}
        Let $T_{Z_G(t)}$ (resp.\ $T_H$) denote a maximal torus of
        $Z_G(t)$ (resp. $H$) in the proof. We can make them so that
        $1\to\Gamma\to T_{Z_G(t)}\to T'\times T_H\to 1$. Then we have
        the induced exact sequence $1\to \pi_1(T_{Z_G(t)})\to
        \pi_1(T'\times T_H) \to \pi_0(\Gamma)=\Gamma \to 1$.
        Taking the Pontryagin duals, we have $1\to \Gamma^\wedge \to
        T^{\prime\vee}\times T_H^\vee \to T_{Z_G(t)}^\vee \to 1$.
    \end{NB}%

    For $G=\PGL(2)$, let us use the computation in the proof of
    \lemref{lem:SL2}(2). Thanks to \subsecref{sec:chang-repr}, we have
    $\bz^*(\iota_*)^{-1}(\eta) = r^{\la_0} + r^{-\la_0}$,
    $\bz^*(\iota_*)^{-1}(\xi) = t(r^{\la_0} - r^{-\la_0})$, where $r^{\pm
      \la_0}$ is the fundamental class of the point $\pm\la_0$ ($=\pm 1$)
    in $\Gr_T$. Now we see that $\bz^*\iota_*^{-1}$ is an isomorphism,
    as $r^{\la_0}$ and $t$ are coordinates of $T^\vee$ and $\ft$
    respectively.
\end{proof}

\subsection{Symplectic form}

\begin{Proposition}\label{prop:symplectic}
    The Poisson structure is symplectic on the smooth locus of
    $\mathcal M_C$.
\end{Proposition}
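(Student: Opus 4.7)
The plan is to show that the top power $\pi^{\wedge\ell}$ of the Poisson bivector (here $\ell=\dim T=\frac12\dim\mathcal M_C$) is nowhere vanishing on the smooth locus $\mathcal M_C^{\mathrm{sm}}$. On $\mathcal M_C^{\mathrm{sm}}$ this is a regular section of the line bundle $\det T\mathcal M_C^{\mathrm{sm}}$, so its zero locus $D$ is either empty or a Weil divisor. By \corref{cor:coordinate}(2), the Poisson structure on $\intsys^{-1}(\ft^\circ/W)$ coincides with the standard symplectic structure on $\ft^\circ\times T^\vee/W$, so $\pi^{\wedge\ell}$ is nowhere zero there. Hence $D\subset\intsys^{-1}((\ft\setminus\ft^\circ)/W)$.

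Next I would exploit flatness of $\intsys\colon\mathcal M_C\to\ft/W$ (\lemref{lem:flat} plus \corref{cor:integral}): all fibers are equidimensional of dimension $\ell$, so for any closed subvariety $Z\subset\ft/W$ the preimage $\intsys^{-1}(Z)$ has codimension equal to $\codim_{\ft/W}Z$. Since $\codim_{\ft/W}(\ft/W\setminus\ft^\bullet/W)\ge 2$, the candidate divisor $D$ must meet $\intsys^{-1}(\ft^\bullet/W)$ in codimension one. It therefore suffices to rule out the vanishing of $\pi^{\wedge\ell}$ at smooth points lying over $\ft^\bullet/W$.

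Pick such a point $p$, let $t=\intsys(p)$, and let $\alpha$ be the unique generalized root with $\langle\alpha,t\rangle=0$. By the $\CC^\times$-equivariant localization underlying \thmref{prop:flat} (applied already at the quantized level to $\cAh$), the localizations $\cA\otimes_{\CC[\ft/W]}\CC[\ft]_t$ and $\cA[Z_G(t),\bN^t]\otimes_{H^*_{Z_G(t)}(\mathrm{pt})}\CC[\ft]_t$ are isomorphic as Poisson algebras. So I am reduced to verifying that the Poisson structure on $\mathcal M_C(G',\bN')$ is symplectic on its smooth locus, where $G'=Z_G(t)$ has semisimple rank $\le 1$. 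Using \subsecref{sec:triv-properties}\ref{item:product},\ref{item:finite_quotient} to strip off a factor of $T^*T'$ and pass to a finite quotient (as in the last step of the proof of \propref{prop:normality}), the remaining cases are: (i) $G'$ is a torus, where by \thmref{abel} the relevant factor is a surface $xy=w^N$, and (ii) $G'\in\{\SL(2),\PGL(2)\}$, where by \lemref{lem:SL2} the relevant factor is a surface $\xi^2=\delta\eta^2-\delta^{N-1}$ (or $\xi^2=\delta\eta^2+\eta$).

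Each of these surfaces is a hypersurface in $\CC^3$, and the Poisson bracket—being of degree $-1$ and matching the standard one on the complement of $\{\delta=0\}$ by \corref{cor:coordinate}(2)—is, up to a nonzero scalar, the restriction of the canonical Poisson structure on $\CC^3$ given by partial derivatives of the defining equation $F$, namely $\{\delta,\eta\}=\partial F/\partial\xi$, etc. This Poisson structure is symplectic exactly on the smooth locus of the hypersurface, since its rank drops iff $dF$ vanishes. Hence $\pi^{\wedge\ell}$ does not vanish at $p$, contradicting $p\in D$. Therefore $D=\emptyset$. The main obstacle in this plan is making precise the claim that the localization isomorphism of \thmref{prop:flat} respects Poisson brackets; I would handle this by running the same argument one level up at the quantized $\CC^\times$-equivariant homology (which is manifestly compatible with the $\CC^\times$-localization used to define the localization maps) and then specializing $\hbar\to 0$.
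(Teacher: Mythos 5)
Your proof is correct, and its skeleton coincides with the paper's: reduce via \corref{cor:coordinate} and a codimension-two argument to points lying over $\ft^\bullet\setminus\ft^\circ$, use localization to pass to $Z_G(t)$ of semisimple rank $\le 1$, and then verify the statement on the explicit surfaces $xy=w^N$, $\xi^2=\delta\eta^2-\delta^{N-1}$, $\xi^2=\delta\eta^2+\eta$. (Your formulation via the zero divisor of $\pi^{\wedge\ell}\in H^0(\det T\mathcal M_C^{\mathrm{sm}})$ and the paper's formulation via Hartogs for the $2$-form are two phrasings of the same codimension-two principle.)

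Where you genuinely diverge is the endgame. The paper pulls back the standard symplectic form along $\bz^*(\iota_*)^{-1}$ and checks by explicit coordinate computation, surface by surface, where it extends and stays nondegenerate. You instead identify the Poisson structure with the canonical Jacobian structure $\{x,y\}=\partial F/\partial z$ (and cyclic) on the hypersurface $\{F=0\}\subset\CC^3$, which degenerates exactly where $dF=0$; this gives a uniform, computation-free treatment of all the surfaces at once. Two remarks. First, the "degree $-1$" observation is not needed: agreement with the standard structure on the dense open set $\{\delta\neq0\}$ already forces equality (not just proportionality) with the Jacobian structure, since both bivectors are regular on the surface. Second, your worry about the localization isomorphism being Poisson can be resolved more cheaply than by quantizing: by \corref{cor:coordinate}(2) applied to \emph{both} $(G,\bN)$ and $(Z_G(t),\bN^t)$, together with the compatibility \lemref{lem:sqcomm}, the two Poisson bivectors agree with the standard one on the common dense open subset $\ft^\circ\times T^\vee/W$ after localization at $t$, and regular bivectors are determined by their restriction to a dense open set. (The paper's route sidesteps this point entirely by extending a fixed $2$-form rather than comparing two Poisson structures.) The one step both you and the paper leave implicit is the finite quotient by $\Gamma^\wedge$ in the rank-one case: there one should note that a finite group acting symplectically has no symplectic reflections, so the smooth locus of the quotient is the image of the free locus and inherits the symplectic form.
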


\begin{proof}
    By \corref{cor:coordinate}, we already know the assertion
    over $\intsys^{-1}(\ft^\circ/W)$. By Hartogs theorem, it is enough
    to check that the symplectic form extends and is nondegenerate up
    to codimension $2$. As in the proof of \propref{prop:normality},
    we check it for $\intsys^{-1}(\ft^\bullet/W)$, then it is enough to
    assume $G=\CC^\times$, $\SL(2)$ or $\PGL(2)$.

    Let us consider $G=\CC^\times$. Then $\mathcal M_C$ is a
    hypersurface $xy=w^N$ in $\CC^3$ ($N=0,1,\dots$). The birational
    isomorphism $\bz^*(\iota_*)^{-1}\colon \mathcal M_C
    \overset{\approx}{\dashrightarrow} T^*\CC^\times =
    \CC\times\CC^\times$ in \corref{cor:coordinate} is given by
    $(x,y,w)\mapsto (w,x)$ defined over $w\neq 0$. Moreover the
    symplectic structure on $T^*\CC^\times$ is $x^{-1}dx\wedge dw$. We
    can rewrite it as $- y^{-1}dy\wedge dw$.
\begin{NB}
    $xdy + y dx = Nw^{N-1} dw$.
\end{NB}%
Hence it is a well-defined symplectic form over $\{x\neq 0\}\cup
\{y\neq 0\} = \mathcal M_C\setminus \{ x=y=w=0\}$. If $N=0$, $\mathcal
M_C = \CC\times\CC^*$, i.e., it is well-defined everywhere.
If $N=1$, we have $w=xy$, hence $x^{-1} dx \wedge dw= dx\wedge
dy$. Hence it is also well-defined and non-degenerate over the whole
$\mathcal M_C$. (It is also a consequence of Hartogs theorem.)
For general $N$, it is a symplectic form on $\mathcal M_C \cong
\CC^2/(\ZZ/N)$, descending from the standard one on $\CC^2$,
divided by $N$.
\begin{NB}
    Let $(z_1,z_2)\in\CC^2$, hence $x=z_1^N$, $y=z_2^N$,
    $w=z_1z_2$. Then $x^{-1} dx\wedge dw = N dz_1/z_1 \wedge (z_1 dz_2
    + z_2 dz_1) = N dz_1\wedge dz_2$.
\end{NB}%

Let us next consider the case $G=\PGL(2)$. Let $\xi = t[\cR_\la]$,
$\eta = [\cR_\la]$ as in the proof of \lemref{lem:SL2}.
We use $\bz^*(\iota_*)^{-1}\colon \mathcal
M_C\overset{\approx}{\dashrightarrow} (\CC\times\CC^\times)/(\ZZ/2)$.
Let $t$, $a^\pm$ be coordinates of $\CC$ and $\CC^\times$ respectively. The Weyl group action is $t\leftrightarrow -t$, $a\leftrightarrow a^{-1}$.
First note that $\delta$ is sent to $t^2$.
Recall $(\iota_*)^{-1}(\eta) = t^{-1}(r^\la - r^{-\la})$,
$(\iota_*)^{-1}(\xi) = r^\la + r^{-\la}$ in the proof of
\lemref{lem:SL2}(2). Thanks to \subsecref{sec:chang-repr}, we have
$\bz^* (\iota_*)^{-1}(\eta) = t^{N-2}(a \pm a^{-1})$,
$\bz^*(\iota_*)^{-1}(\xi) = t^{N-1}(a \mp a^{-1})$ up to the same
multiplicative constant.
\begin{NB}
    For example, $N=1$ if $\bN=0$. Then $\bz^*=\id$.
\end{NB}%
Here $\pm$ is determined according to the parity of $N$
so that those are invariant under the Weyl group action.
\begin{NB}
    $\bz^*(\iota_*)^{-1}(\xi^2) = t^{2N-2} (a^2 + a^{-2} \mp 2)
    = t^2 \bz^*(\iota_*)^{-1}(\eta^2) \mp 4 t^{2N-2}
    = \delta \bz^*(\iota_*)^{-1}(\eta^2) \mp 4 \delta^{N-1}$.
\end{NB}%
Let us rescale $\xi$, $\eta$ so that these formulas are true without
ambiguity. (Hence the defining equation is $\xi^2 = \delta\eta^2 \mp
4\delta^{N-1}$.)

The standard symplectic form on $\CC\times\CC^\times$ is
$a^{-1}da\wedge dt$, and descends to
$\CC\times\CC^\times/(\ZZ/2)$. Its pull-back is 
\begin{multline*}
    \frac12 \xi^{-1} d\eta\wedge d\delta \quad \text{over $\xi\neq 0$},\qquad
    \frac12 \delta^{-1}\eta^{-1} d\xi\wedge d\delta \quad
    \text{over $\delta\eta\neq 0$}, \qquad
\\
    \left( \eta^2 \mp 4(N-1)\delta^{N-2}\right)^{-1}d\eta\wedge d\xi \quad
    \text{over $\eta^2\mp 4(N-1)\delta^{N-2} \neq 0$}.
\end{multline*}
\begin{NB}
Since
    \begin{equation*}
        \begin{split}
            & d\delta = 2t dt, \\
            & d\eta = t^{N-2}\left( (N-2)(a\pm a^{-1})t^{-1} dt +
              (a\mp a^{-1})a^{-1}da\right),\\
            & d\xi = t^{N-1}\left( (N-1)(a\mp a^{-1}) t^{-1}dt +
              (a\pm a^{-1})a^{-1}da\right),
        \end{split}
    \end{equation*}
we have
\begin{equation*}
    \begin{split}
        d\eta \wedge d\delta &= 2t^{N-1} (a\mp a^{-1}) a^{-1} da
        \wedge dt = 2\xi a^{-1} da \wedge dt,
        \\
        d\xi\wedge d\delta &= 2 t^N (a\pm a^{-1}) a^{-1}da \wedge dt
        = 2 \delta \eta a^{-1} da \wedge dt,
        \\
        d\eta\wedge d\xi & = t^{2N-3}\left\{(N-2)(a\pm a^{-1})^2 -
          (N-1)(a\mp a^{-1})^2\right\}t^{-1}dt\wedge a^{-1}da
        \\
        &= - \left( (N-2) \eta^2 -
          (N-1)\delta^{-1}\xi^2\right) a^{-1}da\wedge dt
        \\
        & = \left( \eta^2 \mp 4(N-1)\delta^{N-2}\right)
        a^{-1}da\wedge dt.
    \end{split}
\end{equation*}
\end{NB}%
Therefore the pull-back is well-defined and nondegenerate over
$\mathcal M_C\setminus \{ \xi = \delta\eta = \eta^2 \mp
4(N-1)\delta^{N-2} = 0\}$. The complement is empty if $N=1$; $\xi =
\delta = 0$, $\eta = 2$, $-2$ if $N=2$; and $\xi=\delta=\eta=0$ if
$N\ge 3$. It is exactly the singular locus of $\mathcal M_C$. In fact,
it is the descent of the standard symplectic form on $\CC^2$ up to
constant if $N\ge 3$.

For $G=\SL(2)$, lifts of $[\cR_\la]$, $t[\cR_\la]$ are well-defined
only up to $\CC\delta^m$ for appropriate $m$. (In fact, either of two
is well-defined by the degree reason.) By adjusting the ambiguity, we
take $\eta$, $\xi$ so that $\bz^* (\iota_*)^{-1}(\eta) = t^{N-2}(a \pm
a^{-1})$, $\bz^*(\iota_*)^{-1}(\xi) = t^{N-1}(a \mp a^{-1})$ are
true.
\begin{NB}
    Other possibility is $\bz^* (\iota_*)^{-1}(\eta) = t^{N-2}(a \pm
    a^{-1}) + c_1 t^{N-2}$, $\bz^*(\iota_*)^{-1}(\xi) = t^{N-1}(a \mp
    a^{-1}) + c_2 t^{N-1}$ for constants $c_1$, $c_2$. Since they must
    be invariant under the Weyl group, $c_1 = 0$ if $N$ is odd, $c_2 =
    0$ otherwise. As $c_1 = 0$ if $N=1$, it means that we do not have
    negative power of $t$, and hence it can be actually absorbed.
\end{NB}%
Then the remaining argument is the same. The case $N=0$ is
exceptional. In this case, there is no ambiguity of $\CC\delta^m$ by
degree reason. We determine $\eta$ as $\bz^* (\iota_*)^{-1}(\eta) =
t^{-2}(a + a^{-1} - 2)$ so that the defining equation $\xi^2
\begin{NB}
    = t^{-2}(a^2 - 2 + a^{-2}) = \delta (\eta + 2\delta^{-1})^2 - 4
    \delta^{-1}
\end{NB}%
= \delta \eta^2 + 4\eta$ has no negative powers of $\delta$. 
Then the symplectic form is $\xi^{-1}
d\eta\wedge d\delta/2$ over $\xi\neq 0$, $(\delta\eta+2)^{-1}d\xi\wedge
d\delta/2$ over $\delta\eta+2\neq 0$ and $\eta^{-2} d\eta\wedge d\xi$ over $\eta\neq 0$.
\begin{NB}
    We have
    \begin{equation*}
        \begin{split}
            & d\delta = 2t dt, \\
            & d\eta = t^{-2} \left( -2(a+a^{-1}-2) t^{-1}dt + (a -
              a^{-1})
              a^{-1}da\right),\\
            & d\xi = t^{-1}\left( -(a - a^{-1}) t^{-1}dt + (a +
              a^{-1})a^{-1}da\right),
        \end{split}
    \end{equation*}
    Hence
    \begin{equation*}
        \begin{split}
        d\eta\wedge d\delta & = 2t^{-1} (a - a^{-1}) a^{-1} da\wedge dt
        = 2\xi a^{-1}da\wedge dt,\\
        d\xi\wedge d\delta &= 2 (a + a^{-1}) a^{-1}da \wedge dt
        = 2 (\delta \eta + 2 )a^{-1} da \wedge dt,
        \\
        d\eta\wedge d\xi & = t^{-4} \left\{ - (a - a^{-1})^2 
          + 2(a + a^{-1} - 2)(a+a^{-1})
          \right\} a^{-1} da\wedge dt \\
        &= t^{-4}  \left\{
          (a+a^{-1}-2)^2
                    \right\} a^{-1} da\wedge dt 
                    = \eta^2 a^{-1} da\wedge dt.
        \end{split}
    \end{equation*}
\end{NB}%
Since $\xi=\eta=0$ and $\delta\eta+2=0$ cannot happen simultaneously, we
conclude that the symplectic form is well-defined and non-degenerate
on the whole $\mathcal M_C$. (In this case $\mathcal M_C$ is nonsingular.)
\end{proof}

\subsection{Changing a summand to its dual}\label{subsec:change_dual2}

Let us combine \subsecref{subsec:change_dual} with \thmref{prop:flat}
to show that $\cM_C(G,\bN)$ depends only on $\bM=\bN\oplus\bN^*$.
Let $\underline{\bN}$ be a summand of $\bN$, and let $\bN'$ be the
representation of $G$ obtained by replacing $\underline{\bN}$ by its dual
$\underline{\bN}^*$.

Let us define an automorphism of
$H^{T_\cO}_*(\Gr_T)\cong\CC[\ft\times T^\vee]$ by
\begin{equation*}
    r^\lambda \mapsto (-1)^{\sum_\chi\max(\chi(\lambda),0)
      \dim \underline{\bN}(\chi)} r^\lambda,
\end{equation*}
where $\underline{\bN}(\chi)$ is the weight $\chi$ subspace of
$\underline{\bN}$. This definition is based on the definition in
\subsecref{subsec:change_dual}. This is $W$-equivariant, hence induces
an automorphism of $\ft\times T^\vee /W$.
\begin{NB}
For $w\in W$ we have
\begin{equation*}
    r^{w\lambda} \mapsto 
    (-1)^{\sum_\chi\max(\chi(w\lambda),0)\dim\underline{\bN}(\chi)} r^{w\lambda}
    = (-1)^{\sum_\chi\max(w\chi(\lambda),0)\dim\underline{\bN}(w\chi)} r^{w\lambda}
    = (-1)^{\sum_\chi\max(\chi(\lambda),0)\dim\underline{\bN}(\chi)} r^{w\lambda}.
\end{equation*}
\end{NB}%
Composing with birational isomorphisms
$\cM_C(G,\bN) \dashrightarrow \ft\times T^\vee/W$,
$\cM_C(G,\bN') \dashrightarrow \ft\times T^\vee/W$ in
\corref{cor:coordinate}, we consider the above as a birational
isomorphism $\cM_C(G,\bN)\dashrightarrow \cM_C(G,\bN')$.
We claim that it extends to a biregular isomorphism
$\cM_C(G,\bN)\xrightarrow{\cong} \cM_C(G,\bN')$.
By \thmref{prop:flat} and a similar argument in the proof of
\propref{prop:normality}, we may assume $G$ is either torus, $\PGL(2)$
or $\SL(2)$.
\begin{NB}
  Let us check how to deal with a finite group $\Gamma$. It appears as
  $1\to\Gamma\to Z_G(t)\to T'\times H\to 1$ so that the representation
  $\bN^t$ factors through $H$. ($T'$ acts trivially.)  Let us denote
  the surjection $Z_G(t)\to T'\times H$ by $p$. In order to define
  birational isomorphisms for $\cM_C(Z_G(t),\bN^t)$ and
  $\cM_C(T'\times H,\bN^t)$, let us take a maximal torus $T$ of
  $Z_G(t)$ and its image $p(T)$ in $T'\times H$, and
  $\CC^\times\to p(\CC^\times)$ is a $\Gamma$-covering. Then
  $r^\lambda$'s for $H^{T_\cO}_*(\Gr_T)$ are those for
  $H^{p(T)_\cO}_*(\Gr_{p(T)})$ corresponding to
  $T^\vee\subset p(T)^\vee$. Since $\bN^t$ factors through $H$, the
  definition
  $r^\lambda \mapsto (-1)^{\sum_\chi\max(\chi(\lambda),0) \dim
    \underline{\bN}(\chi)} r^\lambda$ is unchanged, hence it is
  compatible with the quotient by $\Gamma^\vee$.
\end{NB}%

By \subsecref{subsec:change_dual} the assertion is true when $G$ is
torus.

Suppose $G = \PGL(2)$. We may assume $\underline{\bN} = S^k(\CC^2)$
with $k$ even. Then $\chi(\lambda_0)$ is $k/2$, $k/2-1$,\dots,
\begin{NB}
$0$, \dots,
\end{NB}%
$-k/2$.
\begin{NB}
  Note
  $\lambda_0(t) = \left[\begin{smallmatrix} t^{1/2} & 0 \\ 0 &
      t^{-1/2}\end{smallmatrix}\right]$.
\end{NB}%
Therefore $\sum_\chi \max(\chi(\pm\lambda^0),0)\dim\underline{\bN}(\chi)
= k(k+2)/8$.
\begin{NB}
  $l(l+1)/2$ if $k=2l$. $1$ if $l=1$, $3$ if $l=2$, $6$ if $l=3$, $10$ if $l=4$, \dots. $(4m+1)(2m+1)$ if $l=4m+1$, $(2m+1)(4m+3)$ if $l=4m+2$,
  $(4m+3)(2m+2)$ if $l=4m+3$, $2m(4m+1)$ if $l=4m$.
\end{NB}%
Since $(\iota_*)^{-1}\xi = r^{\lambda_0} + r^{-\lambda_0}$,
$(\iota_*)^{-1}\eta = (r^{\lambda_0} - r^{-\lambda_0})/t$,
$\delta = t^2$ (see the proof of \lemref{lem:SL2}),
$(\xi,\eta,\delta)$ is mapped to $(\pm\xi,\pm\eta,\delta)$, hence the
equation $\xi^2 = \delta\eta^2 - c\delta^{N-1}$ ($c\in\CC$) is preserved.

Suppose $G=\SL(2)$ and $\underline{\bN} = S^k(\CC^2)$. Then
$\chi(\lambda_0)$ is $k$, $k-2$,\dots, $-k$. Therefore
$\sum_\chi \max(\chi(\pm\lambda^0),0)\dim\underline{\bN}(\chi)
= k(k+2)/4$ if $k$ is even, $(k+1)^2/4$ if $k$ is odd.
\begin{NB}
  $1$ if $k=1$, $2$ if $k=2$, $4$ if $k=3$, $6$ if $k=4$,
  $5+3+1 = 9$ if $k=5$, $7+5+3+1 = 16$ if $k=7$, \dots
\end{NB}%
As in the proof of \propref{prop:symplectic}, we adjust $\xi$, $\eta$
by $\CC\delta^m$ so that $(\iota_*)^{-1}\xi = (r^{\lambda_0} - r^{-\lambda_0})/t$,
$(\iota_*)^{-1}\eta = (r^{\lambda_0} + r^{-\lambda_0})/t^2$.
\begin{NB}
  See {\bf NB} before \remref{rem:Dnsing}.
\end{NB}%
Thus the birational isomorphism is given by
$(\xi,\eta,\delta)\mapsto (\pm\xi,\pm\eta,\delta)$, hence the equation
$\xi^2 = \delta\eta^2 - c\delta^{N-1}$ ($c\in\CC$) is preserved.

\subsection{Another degeneration}
\label{sec:another}

We have a $\CC^\times$-action on $\cR$ induced from the dilatation on
$\bN$. Let us denote the variable for the equivariant cohomology of
this $\CC^\times$ by $\mathbf t$, i.e., $H^*_{\CC^\times}(\mathrm{pt})
= \CC[\mathbf t]$.
Let us consider the embedding $\bz^*\colon
H^{\CC^\times\times G_\cO\rtimes\CC^\times}_*(\cR)\to H^{\CC^\times\times G_\cO\rtimes\CC^\times}_*(\Gr_G)
\cong H^{G_\cO\rtimes\CC^\times}_*(\Gr_G)[\mathbf t]$ (where the second factor 
$\CC^\times$ stands for the loop rotation). We extend it to 
$H^{\CC^\times\times G_\cO\rtimes
  \CC^\times}_*(\cR)\otimes_{\CC[\mathbf t]}\CC[\mathbf t,\mathbf
t^{-1}] \to H^{G_\cO\rtimes\CC^\times}_*(\Gr_G)[\mathbf t,\mathbf t^{-1}]$.
Let $\cL\equiv \cL_{G,\bN}$ be the pull-back of the $\CC[\mathbf
t^{-1}]$-lattice $H^{G_\cO\rtimes\CC^\times}_*(\Gr_G)[\mathbf t^{-1}]$ by $\bz^*$. We
have the induced injective ring homomorphism
\begin{equation}\label{eq:55}
    \overline \bz^*\colon \cL/\mathbf t^{-1}\cL \to H^{G_\cO\rtimes\CC^\times}_*(\Gr_G).
\end{equation}

\begin{Proposition}
\label{prop:another}
$\overline \bz^*\colon \cL/\mathbf t^{-1}\cL \iso H^{G_\cO\rtimes\CC^\times}_*(\Gr_G)$ 
is an isomorphism.
\end{Proposition}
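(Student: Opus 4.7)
The plan is to deduce this from equivariant localization with respect to the extra $\CC^\times$ that dilates $\bN$. Since the text already records that $\overline{\bz}^*$ is an injective ring homomorphism, only surjectivity has to be established, which in turn amounts to showing that $\bz^*$ carries $\cL$ \emph{onto} $H^{G_\cO\rtimes\CC^\times}_*(\Gr_G)\otimes_\CC\CC[\mathbf t^{-1}]$ inside the localization of the source.

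The first step is to identify the $\CC^\times$-fixed locus. The $\CC^\times$ scales the section $s$ in a triple $(\scP,\varphi,s)$ with strictly positive weight $\frac12$, so its fixed subscheme in $\cR$ is exactly the zero section $\bz(\Gr_G)$. The same holds on every finite-dimensional approximation: for each dominant $\lambda$ and $d\gg 0$, the tangent directions to $\cR_{\le\la}^d$ transverse to the zero section $\overline{\Gr}_G^\la$ all have $\CC^\times$-weight $\frac12$, so $(\cR_{\le\la}^d)^{\CC^\times}=\overline{\Gr}_G^\la$.

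The second step is to apply the standard Atiyah--Bott--Borel localization theorem at each finite level. For each $\lambda$ and $d\gg 0$, pull-back with support to the fixed locus gives a $\CC[\mathbf t,\mathbf t^{-1}]$-linear isomorphism
\[
\bz^*\colon H^{G_i\times\CC^\times\times\CC^\times}_*(\cR_{\le\la}^d)\otimes_{\CC[\mathbf t]}\CC[\mathbf t,\mathbf t^{-1}]
\xrightarrow{\cong}
H^{G_i\times\CC^\times}_*(\overline{\Gr}_G^\la)\otimes_\CC\CC[\mathbf t,\mathbf t^{-1}].
\]
These isomorphisms are compatible with the affine fibrations $\tilde p^d_e$ and the closed embeddings $\cR_{\le\mu}^d\hookrightarrow\cR_{\le\la}^d$ of \subsecref{subsec:equiv-borel-moore}, because all of these maps are $\CC^\times$-equivariant and restrict on fixed loci to the identity on $\overline{\Gr}_G^\la$ (resp.\ to $\overline{\Gr}_G^\mu\hookrightarrow\overline{\Gr}_G^\la$). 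Passing to the inverse limit in $d$ and the direct limit in $\la$ with the conventions of \subsecref{subsec:equiv-borel-moore} yields the global localization isomorphism
\[
\bz^*\colon H^{\CC^\times\times G_\cO\rtimes\CC^\times}_*(\cR)\otimes_{\CC[\mathbf t]}\CC[\mathbf t,\mathbf t^{-1}]
\xrightarrow{\cong}
H^{G_\cO\rtimes\CC^\times}_*(\Gr_G)\otimes_\CC\CC[\mathbf t,\mathbf t^{-1}].
\]

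Once this is in place, the conclusion is formal: by the very definition of $\cL$ as the pull-back of the $\CC[\mathbf t^{-1}]$-lattice, the displayed isomorphism restricts to an isomorphism $\bz^*\colon\cL\xrightarrow{\cong}H^{G_\cO\rtimes\CC^\times}_*(\Gr_G)\otimes\CC[\mathbf t^{-1}]$ of $\CC[\mathbf t^{-1}]$-modules. Quotienting both sides by $\mathbf t^{-1}$ gives $\overline{\bz}^*\colon\cL/\mathbf t^{-1}\cL\xrightarrow{\cong}H^{G_\cO\rtimes\CC^\times}_*(\Gr_G)$, as required. The only genuinely technical point, and the one I expect to require the most care, is the compatibility of localization with the (co)limits defining $H^{\CC^\times\times G_\cO\rtimes\CC^\times}_*(\cR)$; this is routine but must be checked in the same spirit as the well-definedness of $H^{G_\cO}_*(\cR)$ in \subsecref{subsec:equiv-borel-moore}.
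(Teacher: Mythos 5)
Your identification of the $\CC^\times$-fixed locus (the zero section $\Gr_G\subset\cR$) is correct, and the idea of extracting the statement from the large-$\mathbf t$ limit is the right one, but the central step fails: the localization theorem does \emph{not} give an isomorphism after inverting only $\mathbf t$. The normal directions to $\Gr_{G}^{\la}$ inside $\cR_\la$ are the spaces $z^\la\bN_\cO/(z^\la\bN_\cO\cap\bN_\cO)$ of \lemref{lem:Rla}, and their weights under the full torus $\CC^\times_{\mathrm{dil}}\times T\times\CC^\times_{\mathrm{rot}}$ have the form $\mathbf t+\chi+(j-\tfrac12)\hbar$ with $\chi$ a nonzero weight of $\bN$. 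Localization forces you to invert \emph{all} of these, and they are not units in $\CC[\mathbf t,\mathbf t^{-1}]\otimes H^*_{G\times\CC^\times}(\mathrm{pt})$. Concretely, take $G=T=\CC^\times$ and $\bN$ the standard character: then $\bz^*(r^{-1})=(\mathbf t+a+\tfrac{\hbar}{2})\,r^{-1}_{\Gr_T}$, so $r^{-1}_{\Gr_T}$ does not lie in the image of $\bz^*$ even after inverting $\mathbf t$, and your displayed "isomorphism" is only an injection with cokernel supported on the hyperplane $\mathbf t+a+\tfrac{\hbar}{2}=0$. Since the final "formal" step rests entirely on that isomorphism, the argument collapses.

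What is actually true, and what the paper exploits, is weaker but sufficient: the Euler class $e\bigl(z^\la\bN_\cO/(z^\la\bN_\cO\cap\bN_\cO)\bigr)$ is a \emph{monic} polynomial in $\mathbf t$ of degree $d_\la$. Hence $\mathbf t^{-d_\la}[\cR_\la]$ lies in $\cL$ and its image under $\overline{\bz}^*$ equals $[\Gr_{G}^{\la}]$ plus terms that vanish at $\mathbf t^{-1}=0$; the lower-order corrections are precisely the contributions you cannot invert away. To convert this leading-term computation into surjectivity of $\overline{\bz}^*$ one passes to the associated graded of the filtration of \subsecref{sec:filtra} (checking, via the Rees construction, that taking $\gr$ commutes with the specialization at $\mathbf t^{-1}=0$), where the classes $[\Gr_{G}^{\la}]$ together with $H^*_{G\times\CC^\times}(\mathrm{pt})$ generate the target. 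You should replace your localization step by this associated-graded argument; injectivity, which you correctly noted is already recorded in \eqref{eq:55}, is the only part of your write-up that survives as stated.
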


\begin{proof}
Recall the multifiltrations on $H^{\CC^\times\times G_\cO\rtimes\CC^\times}_*(\cR),\ 
H^{\CC^\times\times G_\cO\rtimes\CC^\times}_*(\Gr_G)$ introduced
in~\subsecref{sec:filtra}. They induce the multifiltrations on 
$H^{\CC^\times\times G_\cO\rtimes\CC^\times}_*(\cR)\otimes_{\CC[\mathbf t]}\CC[\mathbf t,
\mathbf t^{-1}],\ H^{G_\cO\rtimes\CC^\times}_*(\Gr_G)[\mathbf t,\mathbf t^{-1}]$, 
and $\bz^*$ is compatible
with the filtrations. Hence it induces a ring homomorphism
\begin{equation}\label{eq:55.5}
\on{gr}\overline \bz^*\colon \on{gr}(\cL/\mathbf t^{-1}\cL)=
(\on{gr}\cL)/\bt^{-1}(\on{gr}\cL)\to\on{gr}H^{G_\cO\rtimes\CC^\times}_*(\Gr_G). 
\end{equation}
The first equality is a version of the Zassenhaus lemma. More generally,
let $F$ be a filtered quasicoherent sheaf on an algebraic variety $S$ with a 
point $s\in S$. Then we have an isomorphism $\on{gr}(F_s)\iso(\on{gr}F)_s$.
In effect, the Rees construction associates to $F$ a $\BG_m$-equivariant
quasicoherent sheaf $RF$ on $S\times\BA^1$, and $\on{gr}F$ is the restriction
of $RF$ to $S\times\{0\}$. Hence both $\on{gr}(F_s)$ and $(\on{gr}F)_s$ are
naturally isomorphic to the fiber of $RF$ at $(s,0)\in S\times\BA^1$.

So it suffices to prove that $\on{gr}\overline \bz^*$
is surjective. But $\on{gr}\overline \bz^*[\CR_\lambda]=
(\bt^{d_\lambda}+\on{lower})\cap[\Gr_\lambda]$ where `lower' stands for the 
terms in $\CC[\ft]^{W_\lambda}[\bt]$ of degree lower than $d_\lambda$ in $\bt$.
Therefore $\on{gr}\cL$ is spanned by $\mathbf
t^{-d_\la}[\CR_\la]$ and $\mathbf t^{-d_\la}[\CR_\la]$ is sent to $[\Gr_\la]$
at $\mathbf t = \infty$.
\end{proof}

\begin{NB}
    Why taking $\gr$ and specialization at $\mathbf t=\infty$ commute ?

    Sasha's message on Oct.\ 22:

    Let $F$ be a filtered quasi-coherent sheaf on a scheme $S$ over a
    field $k$ and let $s$ be a $k$-point of $S$. I want to construct an
    isomorphism between $\gr(F)_s$ and $\gr(F_s)$. But a filtered $F$ on $S$ is
    the same as a $\mathbb G_m$-equivariant quasi-coherent sheaf
    $\widetilde{F}$ on $S\times \mathbb A^1$ and $\gr(F)$ is just the
    restriction of $\widetilde{F}$ to $S\times \{ 0\}$. Thus both $\gr(F)_s$
    and $\gr(F_s)$ are naturally isomorphic to the fiber of $\widetilde{F}$
    at the point $(s,0)$.
\end{NB}

\begin{Remark}
Thus we obtain a sheaf of algebras over $\BP^1$ with coordinate $\bt$.
From the proof of~\propref{prop:another}, it is a sheaf of filtered algebras, 
and the associated graded sheaf is clearly flat, so the initial sheaf is flat
as well. If we drop the loop rotation equivariance, we obtain a flat sheaf of
commutative algebras; let $\cM_C^{\bt}\to\BP^1$ denote its relative spectrum.
The grading by degree of equivariant homology gives rise to an action of
$\BG_m$ on $\cM_C^{\bt}$ such that the projection $\cM_C^{\bt}\to\BP^1$ is
equivariant with respect to the natural action of $\BG_m$ on $\BP^1$.
Hence all the fibers over $\bt\ne0,\infty$ are isomorphic.
\end{Remark}

\begin{Remark}
It seems likely that when $\bN=\fg$ is the adjoint representation,
$H^{\CC^\times\times G_\cO\rtimes\CC^\times}_*(\cR)$ is the spherical subalgebra
in the graded Cherednik algebra (alias trigonometric DAHA)~\cite[4.1]{oy},
while $H^{G_\cO\rtimes\CC^\times}_*(\Gr_G)$ is the spherical subalgebra in the
trigonometric Nil-DAHA (cf.~\cite[1.2]{cf}). The above degeneration is
related to the Inozemtsev degeneration~\cite[Section~2]{ino} of the quantum 
Calogero-Moser system into the quantum Toda system.
\end{Remark}

\appendix

\section{A moduli space of solutions of Nahm's equations and homology
  of affine Grassmannian}\label{sec:monop-moduli-spac}

We use mainly a compact Lie group $G_c$, rather than its
complexification $G$ in this section.

The Coulomb branch $\mathcal M_C$ of the gauge theory $G_c=\SU(k)$, $\bN
= 0$ is identified with $M^0_k$, the moduli space of centered
monopoles by physical arguments (\cite{MR1490862} for $k=2$,
\cite{MR1443803} for arbitrary $k$).

Let us generalize this result to arbitrary compact Lie group $G_c$ in
our definition of $\mathcal M_C$. We shall use a moduli space of
solutions of Nahm's equations, studied (in more general setting) by
Bielawski \cite{MR1438643}.

Let $G_c^\vee$ be the Langlands dual group of $G_c$. We consider
Nahm's equations
\begin{equation*}
    \nabla_t T_\alpha + \frac12 \sum_{\beta,\gamma}
    \varepsilon_{\alpha\beta\gamma}[T_\beta,T_\gamma] = 0 \qquad (\alpha,\beta,\gamma=1,2,3)
\end{equation*}
on the interval $(-1,1)$, where $T_\alpha$ is a $\g_c^\vee$-valued
function. We require that $T_\alpha$ has at most simple poles at
$t=\pm 1$. Then its residue $\operatorname{res}_{t=\pm 1}T_\alpha$
defines a Lie algebra homomorphism $\rho_c\colon\su(2)\to\g_c^\vee$.
Then we further require that $\rho_c$ is the restriction of a
homomorphism $\rho\colon\algsl(2)\to\g^\vee$ such that $y=\rho(
\begin{smallmatrix}
    0 & 0 \\ 1 & 0
\end{smallmatrix}
)$
is a principal nilpotent element in $\g^\vee$.

We consider the moduli space of solutions, that is the quotient by the
group $\mathcal G_{G_c^\vee}$ of gauge transformations $\gamma\colon
[-1,1]\to G_c^\vee$ such that $\gamma(\pm 1)$ is the identity
element.

\begin{Theorem}\label{thm:pure}
    The Coulomb branch $\mathcal M_C$ of the pure gauge theory for
    $G_c$ \textup(i.e., $\bN = 0$\textup) is the moduli space of
    solutions of Nahm's equations for the Langlands dual group
    $G_c^\vee$.
\end{Theorem}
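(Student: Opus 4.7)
The plan is to combine two known results. First, by Definition~\ref{def:main} the Coulomb branch for $\bN=0$ is $\mathcal M_C=\Spec H^{G_\cO}_*(\Gr_G)$, since $\cR=\Gr_G$. The theorem of Bezrukavnikov--Finkelberg--Mirkovi\'c \cite[Th.~2.12]{MR2135527} (already invoked in \subsecref{subsec:prev}\ref{subsub:pure}) identifies this spectrum with the affine scheme $\mathfrak Z^{G^\vee}_{\mathfrak g^\vee}$ of pairs $(g,x)$ such that $x$ lies in a fixed Kostant slice $\Sigma\subset\mathfrak g^\vee$ and $\operatorname{Ad}_g(x)=x$. So the first step is simply to recall this identification $\mathcal M_C\cong \mathfrak Z^{G^\vee}_{\mathfrak g^\vee}$.

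The second, and really the only substantive, step is to identify $\mathfrak Z^{G^\vee}_{\mathfrak g^\vee}$ with the Nahm moduli space described in the statement. This is exactly the content of Bielawski's work \cite{MR1438643}. The input is that one considers solutions of Nahm's equations on $(-1,1)$ with values in $\mathfrak g_c^\vee$ whose residues at $t=\pm1$ are determined by the restriction to $\su(2)$ of a principal $\algsl_2$--triple $\rho\colon\algsl_2\to\mathfrak g^\vee$ (so $y=\rho(f)$ is a principal nilpotent), modulo gauge transformations trivial at both endpoints. Bielawski constructs, by combining a parallel transport/scattering map and the Kostant--Kovalev slice theorem, a canonical holomorphic map from this moduli space to pairs $(g,x)\in G^\vee\times\Sigma$ with $\operatorname{Ad}_g(x)=x$, where $x$ is essentially $\nabla_t+T_1+iT_2$ evaluated near a boundary (conjugated into the Kostant slice using the principal $\algsl_2$), and $g\in G^\vee$ is the monodromy/scattering data from $t=-1$ to $t=+1$. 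The main technical point is to verify that this map is a biholomorphism onto $\mathfrak Z^{G^\vee}_{\mathfrak g^\vee}$; this is done in \cite{MR1438643} by a dimension count, a normality argument, and explicit construction of an inverse by solving Nahm's equations with prescribed scattering data.

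Thus the proof reduces to writing the composition $\mathcal M_C \cong \mathfrak Z^{G^\vee}_{\mathfrak g^\vee} \cong \{\text{Nahm moduli}\}$. The only real verification to carry out is that Bielawski's conventions (choice of interval, normalization of residues, choice of Kostant slice, and the distinction between $G_c^\vee$-valued gauge and $G^\vee$-valued algebraic data) match those of \cite{MR2135527}; this amounts to choosing the same principal $\algsl_2$ on both sides and checking that the scattering map lands in the same Kostant slice used by BFM. The expected main obstacle, and really the only one, is making this comparison of conventions explicit and verifying that the biholomorphism from \cite{MR1438643} is an \emph{isomorphism of complex affine varieties} (not merely of complex manifolds), so that it matches the scheme $\Spec H^{G_\cO}_*(\Gr_G)$ on the nose; this is not difficult, since both sides are normal affine varieties and the map is an analytic isomorphism.
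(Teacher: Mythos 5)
Your overall strategy --- quote a BFM-type computation of $H^{G_\cO}_*(\Gr_G)$ and match it against Bielawski's description of the Nahm moduli space --- is exactly the paper's, but the way you glue the two halves together differs from the paper and, as written, has a gap. The paper does \emph{not} go through $\mathfrak Z^{G^\vee}_{\g^\vee}$. It instead uses \cite[Th.~3]{MR2422266} to write $\Spec H^{G_\cO}_*(\Gr_G)$ as the reduction $\overline\mu_\CC^{-1}(y,y)/(N^\vee\times N^\vee)$ of $T^*G^\vee$ by $N^\vee\times N^\vee$ at the principal nilpotent character, and then observes that Bielawski's \cite[Cor.~4.1]{MR1438643} identifies the Nahm moduli space with $\mu_\CC^{-1}(S(\rho)\times S(\rho))\subset T^*G^\vee$; since the Kostant--Slodowy slice satisfies $S(\rho)=\nu^{-1}(y)/N^\vee$, the two descriptions coincide by a one-line rewriting. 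With this choice of intermediate model the comparison of conventions you worry about at the end is essentially vacuous.

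The gap in your version is the claim that Bielawski constructs a scattering map identifying the Nahm moduli space with $\mathfrak Z^{G^\vee}_{\g^\vee}$, i.e.\ with pairs $(g,x)$, $x$ in the Kostant slice, $\operatorname{Ad}_g(x)=x$. That is not what \cite{MR1438643} proves: his output is the subvariety $\mu_\CC^{-1}(S(\rho)\times S(\rho))$ of $T^*G^\vee$, whose defining conditions are $\xi\in S(\rho)$ and $-\operatorname{Ad}(g^{-1})\xi\in S(\rho)$, not the centralizer condition. The identification $\overline\mu_\CC^{-1}(y,y)/(N^\vee\times N^\vee)\cong\mathfrak Z^{G^\vee}_{\g^\vee}$ that your route requires is a genuinely separate statement; the paper obtains it only indirectly (both sides compute the same ring $H^{G_\cO}_*(\Gr_G)$, by \cite{MR2135527} and \cite{MR2422266} respectively) and notes that an explicit construction is \cite[Th.~6.3]{2014arXiv1404.6305T}. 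So either cite Teleman for that step, or --- simpler and closer to the paper --- replace your first step by the \cite[Th.~3]{MR2422266} description and avoid $\mathfrak Z^{G^\vee}_{\g^\vee}$ altogether.
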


This result immediately follows once we combine
\cite{MR2135527,MR2422266} with \cite{MR1438643}, as reviewed below.

Since the moduli space is a hyper-K\"ahler quotient, it is natural to
expect that this remains true in physicists' (not yet mathematically
precise) definition of $\mathcal M_C$.

While preparing the manuscript, we have noticed that this statement is
mentioned in \cite[Remark~6.4]{2014arXiv1404.6305T}.

\begin{Remark}
    When $G_c = \U(k)$, the moduli space of solutions of Nahm's
    equations is isomorphic to the space of based maps from $\CP^1$ to
    itself \cite{MR769355}. This description is the $A_1$ case of
    \ref{pestun}: As for a quiver gauge theory of type $ADE$, the
    Coulomb branch $\mathcal M_C$ is the space of based maps from
    $\CP^1$ to the flag variety of the corresponding type.
\end{Remark}

\subsection{Homology of affine Grassmannian}

When $\bN=0$, our proposal~\ref{def:main} states $\mathcal M_C$ is the
spectrum of the equivariant Borel-Moore homology group
$H^{G_\cO}_*(\Gr_{G})$ of the affine Grassmannian $\Gr_{G} =
G_\cK/G_\cO$ of $G$, equipped with an algebra structure given by the
convolution. Let us use a description which naturally arises from
\cite[Th.~3]{MR2422266}.
\begin{NB}
    The result is true for $G = \GL(k)$ ?
\end{NB}%

Let $G^\vee$ be the Langlands dual group and $\g^\vee$ its Lie
algebra.
Let $\rho\colon\algsl(2)\to\g^\vee$ be a Lie algebra homomorphism
such that $y = \rho(
\begin{smallmatrix}
    0 & 0 \\ 1 & 0
\end{smallmatrix}
)$
is a principal nilpotent element in $\g^\vee$.
Let $N^\vee$ be the unipotent subgroup of $G^\vee$ whose Lie algebra
${\mathfrak n}^\vee$ is the sum of negative eigenspaces of $h = \rho(
\begin{smallmatrix}
    -1 & 0 \\ 0 & 1
\end{smallmatrix}
).$
\begin{NB}
    We have $[h,y] = 2y$, $[h,x]=-2x$.
\end{NB}%
If we regard $y$ as an element of ${\mathfrak n}^{\vee*}$ via an
invariant pairing on $\g^\vee$, it is stabilized by $N^\vee$.
\begin{NB}
    $(y, [n_1,n_2]) = 0$ for $n_1$, $n_2\in\mathfrak n$ as the
    $(-1)$-eigenspace of $h$ is $0$.
\end{NB}%

The cotangent bundle $T^* G^\vee = G^\vee\times\g^\vee$ is a
holomorphic symplectic manifold with a $G^\vee\times G^\vee$-action by
left and right multiplication. Here we identify $\g^{\vee*}$ with
$\g^\vee$ by the invariant inner product.
We have a (complex) moment map $\mu_\CC\colon
G^\vee\times\g^\vee\to\g^{\vee*}\oplus\g^{\vee*}$ given by $\mu_\CC(g,\xi)
= (\xi,-\operatorname{Ad}(g^{-1})\xi)$ for $g\in G^\vee$,
$\xi\in\g^\vee$.
Let $\overline\mu_\CC\colon G^\vee\times\g^\vee\to {\mathfrak
  n}^{\vee*}\oplus{\mathfrak n}^{\vee*}$ be the moment map for the
$N^\vee \times N^\vee$-action, that is the composite of $\mu_\CC$ and
the natural projection $\g^{\vee*}\oplus\g^{\vee*}\to{\mathfrak
  n}^{\vee*}\oplus{\mathfrak n}^{\vee*}$. Now 
\cite[Th.~3]{MR2422266} implies
\begin{equation}
    \label{eq:25}
    \Spec(H^{G_\cO}_*(\Gr_{G})) \cong
    \overline\mu_\CC^{-1}(y,y)/N^\vee\times N^\vee.
\end{equation}

\begin{Remark}
    In \cite[Th.~3]{MR2422266}, $H^{G_\cO\rtimes\CC^\times}_*(\Gr_G)$
    is described as a quantum hamiltonian reduction of the ring of
    differential operators on $G^\vee$. The above description is its
    classical limit. On the other hand, $H^{G_\cO}_*(\Gr_G)$ is
    described as $\mathfrak Z^{G^\vee}_{\g^\vee}$ in the earlier paper
    \cite[Th.~2.12]{MR2135527} as we mentioned in
    \subsecref{subsec:prev}\ref{subsub:pure}.
    Since both are $H^{G_\cO}_*(\Gr_G)$, we have an isomorphism
    $\overline\mu_\CC^{-1}(y,y)/N^\vee\times N^\vee \cong \mathfrak
    Z^{G^\vee}_{\g^\vee}$. While preparing the manuscript, we have
    learned that an explicit construction of the isomorphism is given
    in \cite[Th.~6.3]{2014arXiv1404.6305T}.
\end{Remark}

\subsection{Moduli space of solutions of Nahm's equations}

Let $M_{G_c^\vee}$ be the moduli space in \thmref{thm:pure}. By
\cite{MR1438643} it is a submanifold of $T^* G^\vee$ defined as
follows.
Let $\rho\colon\algsl(2)\to\g^\vee$ as above. We
define Kostant-Slodowy slice $S(\rho) = y + Z(x)$, where $x = \rho(
\begin{smallmatrix}
    0 & 1 \\ 0 & 0
\end{smallmatrix}
)$,
$y = \rho(
\begin{smallmatrix}
    0 & 0 \\ 1 & 0
\end{smallmatrix}
)$
and $Z(x)$ is the centralizer of $x$ in $\g^\vee$.
Now \cite[Cor.~4.1]{MR1438643} states $M_{G_c^\vee} =
\mu^{-1}_\CC(S(\rho)\times S(\rho))$.

Let us further rewrite $M_{G_c^\vee}$ as a holomorphic symplectic
quotient of $T^* G^\vee$. The result is implicit in
\cite[\S3]{MR1438643}. The point is that $S(\rho)$ equals to
$\nu^{-1}(y)/N^\vee$, where $N^\vee$ is the unipotent subgroup of
$G^\vee$ as above, and $\nu\colon\g^{\vee*}\to\mathfrak n^{\vee*}$ is the
natural projection.
\begin{NB}
   Note $\nu^{-1}(y) = y + \mathfrak b^\vee$.
\end{NB}%
Since $\overline\mu_\CC = (\nu\times\nu)\circ \mu_\CC$, we have
\(
    M_{G_c^\vee} \cong \overline\mu^{-1}_\CC(y,y)/N^\vee\times N^\vee.
\)

Combining with \eqref{eq:25}, we finish the proof of \thmref{thm:pure}.

\subsection{Centered \texorpdfstring{$\SU(2)$}{SU(2)}-monopoles}

In the remainder of this section, we explain that \thmref{thm:pure}
reproduces \cite{MR1490862,MR1443803} for $G_c= \SU(k)$.

Let $M_k$ be the framed moduli space of charge $k$, $\SU(2)$-monopoles
on $\RR^3$ (see \cite{MR934202}). By \cite{MR709461}, it is naturally
bijective to the moduli space of solutions of Nahm's equations with
values in $\mathfrak{u}(k)$ on the interval $(-1,1)$ such that
$T_\alpha$ has at most simple poles at $t=\pm 1$ and its residue
$\operatorname{res}_{t=\pm 1}T_\alpha$ gives an irreducible
$k$-dimensional representation $\rho$ of $\SU(2)$.
They are even isomorphic as hyper-K\"ahler manifolds \cite{MR1215288}.
Since the irreducible representation corresponds to the principal
nilpotent for $\gl(k)$, we conclude $M_k = M_{\U(k)}$.
We identify $M_k$ with the moduli space of solutions of Nahm's
equations for $G_c=\U(k)$ hereafter.

We need to modify this description in the case of centered monopoles.
Let $\tilde M_k$ be the space of solutions, and $\mathcal G_k =
\mathcal G_{\U(k)}$ be the group of gauge transformations, i.e., the
space of maps $\gamma\colon [-1,1]\to \U(k)$ such that $\gamma(\pm 1)
= 1$. We have $M_k = \tilde M_k/\mathcal G_k$. We introduce a larger
group $\tilde{\mathcal G}_k$ consisting of maps $\gamma$, for which we
require $\gamma(-1) = 1$ and $\gamma(1)$ commutes with the pole
$\alpha$ of $T_\alpha$ for $\alpha=1,2,3$. Since
$\operatorname{res}T_\alpha$ gives an irreducible representation,
$\gamma(1)$ must be scalar. Moreover, the action of $\tilde{\mathcal
  G}_k$ is still free by the irreducibility of $(\nabla_t,
T_\alpha)$. We have a free action of $\tilde{\mathcal G}_k/\mathcal
G_k = \U(1)$ on $M_k$.

We also have an $\RR^3$-action on $M_k$ given by $T_\alpha\mapsto
T_\alpha + ix_\alpha$ ($x_\alpha\in\RR$, $\alpha=1,2,3$). The quotient
\begin{equation*}
    M^0_k = (M_k/\U(1))/\RR^3
\end{equation*}
is called the moduli space of centered monopoles. In \cite{MR934202},
this space was introduced in terms of monopoles, one can check that it
is equivalent to the above definition. The detail is left as an
exercise for the reader.
\begin{NB}
    See ``2014-11-13 Nahm's equation and cyclic covering.xoj''.
\end{NB}%

Choose a trivialization of the vector bundle, and write $\nabla = d +
T_0$. By the gauge transformation
\begin{equation*}
    \gamma(t) \defeq \exp\left(\frac{\operatorname{id}}k
      \int_{-1}^t \tr T_0(s)ds
      \right) \in \tilde{\mathcal G}_k,
\end{equation*}
we can make $\tr T_0 = 0$. The Nahm's equation implies that
$\frac{d}{dt} \tr T_\alpha = 0$, hence we can make $\tr T_\alpha = 0$
by the $\RR^3$-action. Therefore
\begin{equation*}
    M^0_k = \{ (T_0,T_\alpha) \colon (-1,1) \to \su(k) \}/
    \tilde{\mathcal G}_{k}',
\end{equation*}
where $(d+T_0,T_\alpha)$ satisfies Nahm's equation and the condition
of the pole. The group $\tilde{\mathcal G}_{k}'$ is the subgroup of
$\tilde{\mathcal G}_k$ preserving the condition $\tr T_0 = 0$. Therefore it
consists of maps $\gamma\colon [-1,1]\to \SU(k)$ with $\gamma(-1) = 1$,
$\gamma(1) \in \ZZ_k = \U(1)\cap\SU(k)$.

Since $T_0$, $T_\alpha$ are $\su(k)$-valued, we can replace
$\tilde{\mathcal G}_{k}'$ by the space of maps $\gamma\colon [-1,1]\to
\SU(k)/\ZZ_k$. Moreover as $[-1,1]$ is contractible, such $\gamma$
lifts uniquely to $\SU(k)$ when we set $\gamma(-1) = 1$. Therefore the
group is unchanged under this replacement. Thus
\(
   \tilde{\mathcal G}_{k}' = \mathcal G_{\SU(k)/\ZZ_k},
\)
where $\mathcal G_{\SU(k)/\ZZ_k}$ is the space of maps $\gamma\colon
[-1,1]\to \SU(k)/\ZZ_k$ such that $\gamma(\pm 1) = 1$.
\begin{NB}
    We can represent the gauge group $\mathcal G^0_{G}$ for a compact
    Lie group $G$ with $\operatorname{Lie} G = \su(k)$, as
    \begin{equation*}
        \mathcal G_G = \left\{ \gamma\in\mathcal G_{\SU(k)/\ZZ_k} \middle|
          [\gamma] \in \pi_1(G) \right\},
    \end{equation*}
    where $[\gamma]$ is the class in $\pi_1(\SU(k)/\ZZ_k) = \ZZ_k$,
    which is well-defined as $\gamma(\pm 1) = 1$, and we regard
    $\pi_1(G)$ as a subgroup of $\pi_1(\SU(k)/\ZZ_k)$ via the covering
    $G\to \SU(k)/\ZZ_k$.
\end{NB}%
Thus we see that $M^0_k$ is nothing but $M_{\SU(k)/\ZZ_k}$.

The Langlands dual group of $\SU(k)$ is $\SU(k)/\ZZ_k$, hence
$\mathcal M_C$ in \thmref{thm:pure} is the moduli space $M^0_k$ of
centered $\SU(2)$-monopoles with charge $k$, as given in
\cite{MR1490862,MR1443803}.

\begin{NB}
\section{Localization and convolution}

Suppose that $M$ is smooth and have a torus action. Let $M^T$ denote
the fixed point set.

Suppose also that we have a proper $T$-equivariant morphism $M\to
X$. Let us introduce $Z = M\times_X M$ and $Z^T = M^T\times_X M^T$.
\begin{NB2}
    The latter is the $T$-fixed point set in $Z$, hence there is no
    fear of confusion.
\end{NB2}%

We have convolution products on $H^T_*(Z)$ and $H^T_*(Z^T) =
H_*(Z^T)\otimes H^*_T(\mathrm{pt})$. The purpose of this section is to
construct an algebra isomorphism
\begin{equation*}
    \br\colon H_*^T(Z)_{\text{loc}} \to 
    H_*^T(Z^T)_{\text{loc}},
\end{equation*}
where $\bullet_{\text{loc}}$ denotes the tensor product
$\bullet\otimes_{H^*_T(\mathrm{pt})}{\mathrm{Frac}(H^*_T(\mathrm{pt}))}$.

We factorize the inclusion $M^T\times M^T \to M\times M$ into two:
\begin{equation*}
    M^T\times M^T \xrightarrow{i} M\times M^T \xrightarrow{j} M\times M.
\end{equation*}
We introduce $\tilde Z \defeq j^{-1}(Z)$. So we have
\begin{equation*}
    Z^T \xrightarrow{i} \tilde Z \xrightarrow{j} Z.
\end{equation*}
We use the same letters $i$, $j$ for maps for brevity.

We define a `mixed' pull back homomorphism
\begin{equation*}
    \br\colon H_*^T(Z)_{\text{loc}} \xrightarrow{i_*^{-1} j^*}
    H_*^T(Z^T)_{\text{loc}},
\end{equation*}
where $i_*\colon H_*^T(Z^T)\to H_*^T(\tilde Z)$ becomes an isomorphism
after $\bullet_{\text{loc}}$ by the fixed point localization theorem
*****, and $i_*^{-1}$ is its inverse. Note also that the homomorphism
\(
   j^* = \bullet\cap [M\times M^T] \colon
   H_*^T(Z)\to H_*^T(\tilde Z)
\)
is the restriction with supports defined {\it relative\/} to the
ambient submanifold $M\times M^T$ of $M\times M$ (see
\cite[\S2.6.21]{CG}). Note that $j^{-1}(Z) = \tilde Z$.

\begin{Proposition}[\cite{MR3013034}]
    $\br$ commutes with the convolution product, i.e., $\br(c_{12}\ast
    c_{23}) = \br(c_{12})\ast \br(c_{23})$ for $c_{12}$, $c_{23}\in
    H_*^T(Z)_{\mathrm{loc}}$.
\end{Proposition}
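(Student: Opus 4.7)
The strategy is to unwind both sides using the convolution diagrams on triple products and to move $\br = i_*^{-1} j^*$ past the operations using base change and compatibility of refined pullbacks with pushforward. Concretely, convolution is defined by
\[
c_{12}\ast c_{23} = p_{13*}\bigl(p_{12}^* c_{12} \cap p_{23}^* c_{23}\bigr)
\]
on $Z_{123} := M\times_X M\times_X M \subset M\times M\times M$, where the intersection uses the smooth ambient $M\times M\times M$. The same formula on $M^T\times M^T\times M^T$ defines the convolution on $H^T_*(Z^T)_{\mathrm{loc}}$.

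First I would introduce mixed intermediate spaces: let $\tilde Z_{123} := M\times_X M^T\times_X M^T$ (viewed inside $M\times M^T\times M^T$), and $Z^T_{123} := M^T\times_X M^T\times_X M^T$, together with the evident inclusions $i_{123}\colon Z^T_{123}\to \tilde Z_{123}$ and $j_{123}\colon \tilde Z_{123}\to Z_{123}$. The ambient spaces $M\times M\times M$, $M\times M^T\times M^T$, $M^T\times M^T\times M^T$ are all smooth, so refined pullbacks $j_{123}^*$ (relative to $M\times M^T\times M^T\hookrightarrow M\times M\times M$) and the corresponding convolution intersection pairings are well-defined.

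Next, the plan is to prove two compatibilities. (a) The refined pullback $j_{123}^*$ intertwines the intersection $p_{12}^*(\bullet)\cap p_{23}^*(\bullet)$ on $Z_{123}$ with $p_{12}^*(j^*\bullet)\cap p_{23}^*(\bullet)$ on $\tilde Z_{123}$; this is the excess-intersection/base-change identity for the Cartesian square
\[
\begin{CD}
\tilde Z_{123} @>{j_{123}}>> Z_{123} \\
@VVV @VVV \\
M\times M^T\times M^T @>>> M\times M\times M,
\end{CD}
\]
combined with the fact that $j$ on the second and third factors is a pullback with a smooth ambient. (b) The pushforward $p_{13*}$ commutes with $j^*$ via base change for
\[
\begin{CD}
\tilde Z_{123} @>{j_{123}}>> Z_{123} \\
@V{p_{13}}VV @VV{p_{13}}V \\
\tilde Z @>{j}>> Z.
\end{CD}
\]
Combining (a) and (b), one gets $j^*(c_{12}\ast c_{23}) = j^*c_{12} \ast' \bullet$, where $\ast'$ is a hybrid convolution on $M\times M^T\times M^T$. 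Applying the analogous argument with $i_{123}$ replacing $j_{123}$, using that $i_*$ is an isomorphism after localization by the equivariant localization theorem, and that the normal bundle Euler classes appearing on both sides cancel (the normal bundle of $M^T\times M^T\times M^T$ in $M\times M^T\times M^T$ vs.\ in itself), one obtains $\br(c_{12}\ast c_{23}) = \br(c_{12})\ast \br(c_{23})$.

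The main obstacle will be step (a): keeping track of the refined Gysin/intersection pullbacks with their proper normal bundle contributions and verifying that the excess-intersection terms match up consistently on all three factors. The key input that makes this manageable is that after inverting the torus equivariant parameters, the Euler classes of normal bundles to $M^T$ inside $M$ become invertible, and the localization theorem identifies $H^T_*(M^T\times_X M^T)_{\mathrm{loc}}$ with $H^T_*(M\times_X M)_{\mathrm{loc}}$ via $i_*$ (up to multiplication by these Euler classes), so the cancellation is forced by the formalism. This is exactly the content of~\cite[Proposition~3]{MR3013034}, and I would follow that argument, adapted to our geometric setup.
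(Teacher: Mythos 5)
Your plan is correct in substance, but it is not the route the paper takes; in fact it coincides with the authors' discarded first draft (a direct diagram chase over the triple product), whereas the proof that appears in the paper is a three-line argument by associativity. The paper sets $J \defeq i_*(\Delta_{M^T}) \in H^T_*(\tilde Z)$ and observes that $j^*(c) = c\ast J$ and $i_*(c^T) = J\ast c^T$ for the natural $(H^T_*(Z),H^T_*(Z^T))$-bimodule structure on $H^T_*(\tilde Z)$; thus the defining relation $j^*(c)=i_*(c^T)$ of $c^T=\br(c)$ reads $c\ast J = J\ast c^T$, and multiplicativity follows at once from
\begin{equation*}
  c_{12}\ast c_{23}\ast J \;=\; c_{12}\ast J\ast c_{23}^T \;=\; J\ast c_{12}^T\ast c_{23}^T .
\end{equation*}
All the base-change bookkeeping you propose on $M\times M^T\times M^T$ is thereby absorbed into the (independently needed) associativity of these mixed convolutions. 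Two remarks on your version. First, your steps (a) and (b) are right and there is no genuine excess-intersection issue: the relevant squares are Cartesian and the refined pullbacks are taken relative to smooth ambient inclusions, so base change holds on the nose --- \emph{provided} you use the relation $j^*c=i_*c^T$ directly (pushing $c^T$ forward at the middle factor) rather than writing $\br$ as $i^*j^*$ divided by $e(N)$. Second, that last point matters beyond aesthetics: the paper deliberately avoids any mention of $e(N)$ because in the intended application ($M=\cT$, $M^T=\Gr_G$) the fixed locus has infinite codimension and the Euler class of the normal bundle is undefined, whereas $i_*$ is still literally an isomorphism there (indeed $\tilde Z=Z^T$ in that case). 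Your argument as framed, with its appeal to cancellation of Euler classes after localization, is confined to the finite-dimensional smooth setting.
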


This is simple to check, if we use the formula
\begin{equation*}
    \br(c_{12}) = \frac1{e(N)\boxtimes 1} i^* j^* c_{12},
\end{equation*}
where $N$ is the normal bundle of $M^T\subset M$ and $e(N)$ is its
equivariant Euler class. But the following proof avoids the explicit
usage of $e(N)$, where such a thing cannot be consider if $M^T\subset
M$ is an infinite codimension.

\begin{Remark}
    For an application we take $M = \cT$, $X = \bN_\cK$, $T =
    \CC^\times$, and the $T$-actions on $M$, $X$ are ones induced from
    multiplication on $\bN$. Hence $M^T = \Gr_G$. In this special
    case, the homomorphism $\br$ is, in fact, defined over $H_*^T(Z)$
    {\it without\/} taking $\bullet_{\text{loc}}$. Indeed, $\tilde Z =
    Z^T = \Gr_G\times \Gr_G$, hence
    \begin{equation*}
        i_* \colon H^T_*(Z^T)\to H^T_*(\tilde Z)
    \end{equation*}
    is just an identity operator. In particular, $\br$ is an embedding
    $H^T_*(Z)\to H^T_*(Z^T)$ of algebras.
\end{Remark}

This result is a direct consequence of \cite[\S5.3]{MR3118615}. Let
$c\in H^T_*(Z)$ and $c^T = \br(c)$. Then $j^*(c) = i_*(c^T)$. It was
called two classes (two objects in the derived category in the
original setting) are {\it compatible\/} when this equality holds in
\cite[\S5.3]{MR3118615}. Let $J \defeq i_*(\Delta_{M^T})$. Then we
have $j^*(c) = c\ast J$, $i_*(c^T) = J\ast c^T$, where the convolution
products are
\(
   H_*^T(Z)\otimes H_*^T(\tilde Z)\to H_*^T(\tilde Z)
\)
and
\(
   H_*^T(\tilde Z)\otimes H_*^T(Z^T)\to H_*^T(\tilde Z)
\)
respectively. Therefore $c$ and $c^T$ are compatible if and only if
$c\ast J = J\ast c^T$. Now the assertion is clear from the
associativity of the convolution product:
\begin{equation*}
    c_{12}\ast c_{23} \ast J 
    = c_{12} \ast J\ast c_{23}^T = J \ast c_{12}^T \ast c_{23}^T.
\end{equation*}

\begin{NB2}
Here is the original treatment.

\begin{proof}
    Let us assume $M = X$, and hence $M$ is compact first.

    We use the notation $M_1$, $M_2$, $M_3$ so that $c_{12}$ is a
    cohomology class on $M_1\times M_2$, etc. We write $c_{12}^T =
    \br(c_{12})$, etc.  Thus $i_{12*} c_{12}^T = j_{12}^* c_{12}$, etc.

    Consider the following commutative diagram:
    \begin{equation*}
        \begin{CD}
            M_1\times M_3 @<p_{13}<< M_1\times M_2 \times M_3
            @>p_{12}\times p_{23}>> M_1\times M_2 \times M_2 \times M_3
\\
@A{j_{13}}AA @AA{\id_{M_1}\times j_{23}}A
@AA{\id_{M_1\times M_2}\times j_{23}}A
\\
M_1\times M_3^T @<p'_{13}<< M_1\times M_2 \times M_3^T
@>(p_{12}\times p_{23})'>> M_1\times M_2 \times M_2 \times M_3^T
\\
@| @AA{\id_{M_1}\times i_{23}}A @AA{\id_{M_1\times M_2}\times i_{23}}A
\\
M_1\times M_3^T @<p''_{13}<< M_1\times M_2^T \times M_3^T
@>(p_{12}\times p_{23})''>> M_1\times M_2 \times M_2^T \times M_3^T
\\
@| @| @AA{j_{12}\times \id_{M_2^T\times M_3^T}}A
\\
M_1\times M_3^T @<p_{13}''<< M_1\times M_2^T \times M_3^T
@>(p_{12}\times p_{23})'''>> M_1\times M_2^T \times M_2^T \times M_3^T
\\
@A{i_{13}}AA @AA{i_{12}\times\id_{M_3^T}}A
@AA{i_{12}\times \id_{M_2^T\times M_3^T}}A
\\
M_1^T\times M_3^T @<p_{13}^T<< M_1^T\times M_2^T \times M_3^T
@>p_{12}^T\times p_{23}^T>> M_1^T\times M_2^T \times M_2^T \times M_3^T,
\end{CD}
\end{equation*}
where vertical arrows are products of $i_{12}$ or $j_{12}$ and the
identity operators,
all horizontal arrows are projections restricted to appropriate
spaces.

Chasing the diagram, we obtain the desired equality:
\begin{equation}\label{eq:9}
    \begin{split}
    & j_{13}^* p_{13*}(p_{12}\times p_{23})^*(c_{12}\boxtimes c_{23})
\\
=\; & p'_{13*}(\id_{M_1}\times j_{23})^*(p_{12}\times p_{23})^*(c_{12}\boxtimes c_{23})
\\
=\; & p'_{13*}(p_{12}\times p_{23})^{\prime*}
(\id_{M_1\times M_2}\times j_{23})^*(c_{12}\boxtimes c_{23})
\\
=\; & p'_{13*}(p_{12}\times p_{23})^{\prime*}
(\id_{M_1\times M_2}\times i_{23})_* (c_{12}\boxtimes c_{23}^T)
\\
=\; & p'_{13*}(\id_{M_1}\times i_{23})_* (p_{12}\times p_{23})^{\prime\prime*}
(c_{12}\boxtimes c_{23}^T)
\\
=\; & p'_{13*}(\id_{M_1}\times i_{23})_*
(p_{12}\times p_{23})^{\prime\prime\prime*}
(j_{12}\times\id_{M_2^T\times M_3^T})^*
(c_{12}\boxtimes c_{23}^T)
\\
=\; & p'_{13*}(\id_{M_1}\times i_{23})_*
(p_{12}\times p_{23})^{\prime\prime\prime*}
(i_{12}\times\id_{M_2^T\times M_3^T})_*
(c_{12}^T\boxtimes c_{23}^T)
\\
=\; & p'_{13*}(\id_{M_1}\times i_{23})_*
(i_{12}\times\id_{M_3^T})_*
(p_{12}^T\times p_{23}^T)^*
(c_{12}^T\boxtimes c_{23}^T)
\\
=\; & i_{13*}p_{13*}^T(p_{12}^T\times p_{23}^T)^*(c_{12}^T\boxtimes c_{23}^T).
\end{split}
\end{equation}
Here we have used the base change at the first, fourth, and seventh
equalities. The relevant squares are left-top, right-second, and
right-bottom respectively. We also used the assumptions $j_{23}^*
c_{23} = i_{23*} c_{23}^T$, $j_{12}^* c_{12} = i_{12*} c_{12}^T$ at
the third and sixth equalities respectively. The remaining second,
fifth and eighth equalities just follow from the commutativity.

Next consider a general case. The above diagram is replaced by
    \begin{equation*}
        \begin{CD}
            Z_{13} @<p_{13}<< (p_{12}\times p_{23})^{-1}(Z_{12}\times Z_{23})
            @>p_{12}\times p_{23}>> Z_{12}\times Z_{23}
\\
@A{j_{13}}AA @AA{\id_{M_1}\times j_{23}}A
@AA{\id_{M_1\times M_2}\times j_{23}}A
\\
\tilde Z_{13}@<p'_{13}<<(p_{12}\times p_{23})^{\prime-1}
(Z_{12}\times \tilde Z_{23})
@>(p_{12}\times p_{23})'>> Z_{12}\times \tilde Z_{23}
\\
@| @AA{\id_{M_1}\times i_{23}}A @AA{\id_{M_1\times M_2}\times i_{23}}A
\\
\tilde Z_{13} @<p''_{13}<< (p_{12}\times p_{23})^{\prime\prime-1}
(Z_{12}\times Z_{23}^T)
@>(p_{12}\times p_{23})''>> Z_{12}\times Z_{23}^T
\\
@| @| @AA{j_{12}\times \id_{M_2^T\times M_3^T}}A
\\
\tilde Z_{13} @<p_{13}''<< (p_{12}\times p_{23})^{\prime\prime\prime-1}
(\tilde Z_{12} \times Z_{23}^T)
@>(p_{12}\times p_{23})'''>> \tilde Z_{12} \times Z_{23}^T
\\
@A{i_{13}}AA @AA{i_{12}\times\id_{M_3^T}}A
@AA{i_{12}\times \id_{M_2^T\times M_3^T}}A
\\
Z_{13}^T@<p_{13}^T<< (p_{12}^T\times p_{23}^T)^{-1}(Z_{12}^T\times Z_{23}^T)
@>p_{12}^T\times p_{23}^T>> Z_{12}^T\times Z_{23}^T.
\end{CD}
\end{equation*}
We repeat the computation in \eqref{eq:9}, the only thing to be
checked is that the base change formula remains to be true even if the
pull-back homomorphism is replaced by the restriction with support.
\end{proof}
\end{NB2}%

\begin{verbatim}
\begin{equation*}
\xymatrix@!0{
& M_1 \ar@{->}[rr]\ar@{->}'[d][dd]
& & N_1 \ar@{->}[dd]
\\
X_1 \ar@{->}[ur]\ar@{->}[rr]\ar@{->}[dd]
& & Y_1 \ar@{->}[ur]\ar@{->}[dd]
\\
& M_2 \ar@{->}'[r][rr]
& & N_2
\\
X_2\ar@{->}[rr]\ar@{-}[ur]
& & Y_2 \ar@{->}[ur]
}
\end{equation*}
\end{verbatim}

\begin{equation*}
\xymatrix@!0{
& M_1 \ar@{->}[rr]\ar@{->}'[d][dd]
& & N_1 \ar@{->}[dd]
\\
X_1 \ar@{->}[ur]\ar@{->}[rr]\ar@{->}[dd]
& & Y_1 \ar@{->}[ur]\ar@{->}[dd]
\\
& M_2 \ar@{->}'[r][rr]
& & N_2
\\
X_2\ar@{->}[rr]\ar@{-}[ur]
& & Y_2 \ar@{->}[ur]
}
\end{equation*}
\end{NB}

\bibliographystyle{myamsalpha}
\bibliography{nakajima,mybib,coulomb}

\subsection*{Erratum}

\begin{enumerate}

\item In Proposition 6.2, the formula
$f[\cR_\lambda] \ast g[\cR_\mu] = a_{\lambda,\mu} fg
[\cR_{\lambda+\mu}]$ is correct for $\gr\cA$, but \emph{not} true for
$\gr \cAh$. Replace $g$ in the right hand side by
$g(\bullet+\hbar\lambda)$. This shift appears when we exchange the
order of $g$ and $r^\lambda$ in the proof.
\end{enumerate}


\end{document}